\DeclareMathOperator{\esssup}{ess\,sup}
\renewcommand{\Re}{\mathbb{R}}
\newcommand{\ds}{\displaystyle}
\newcommand{\half}{\frac{1}{2}}
\newcommand{\weak}{\rightharpoonup}
\newcommand{\embed}{\hookrightarrow}
\newcommand{\cembed}{\hookrightarrow \!\!\!\! \rightarrow}
\newcommand{\vph}{\vphantom{A^{A}_{A}}}
\newcommand{\sst}{\,\mid\,}
\newcommand{\dbyd}[2]{\frac{d #1}{d #2}}
\newcommand{\dbydp}[2]{\frac{\partial #1}{\partial #2}}
\newcommand{\eqnref}[1]{(\ref{eqn#1})}
\newcommand{\phat}{\hat{p}}
\newcommand{\shat}{\hat{s}}
\newcommand{\uhat}{\hat{u}}
\newcommand{\Ahat}{\hat{A}}
\newcommand{\Chat}{\hat{C}}
\newcommand{\Fhat}{\hat{F}}
\newcommand{\What}{\hat{W}}
\newcommand{\bbA}{\mathbb{A}}
\newcommand{\bbE}{\mathbb{E}}
\newcommand{\bbN}{\mathbb{N}}
\newcommand{\bbP}{\mathbb{P}}
\newcommand{\bbQ}{\mathbb{Q}}
\newcommand{\bbR}{\mathbb{R}}
\newcommand{\bbS}{\mathbb{S}}
\newcommand{\bbX}{\mathbb{X}}
\newcommand{\bbY}{\mathbb{Y}}
\newcommand{\bbEt}{\tilde{\mathbb{E}}}
\newcommand{\bbPt}{\tilde{\mathbb{P}}}
\newcommand{\bbXt}{\tilde{\mathbb{X}}}
\newcommand{\gbar}{{\bar{g}}}
\newcommand{\ubar}{{\bar{u}}}
\newcommand{\Fbar}{{\bar{F}}}
\newcommand{\Wbar}{{\bar{W}}}
\newcommand{\ft}{\tilde{f}}
\newcommand{\gt}{\tilde{g}}
\newcommand{\ut}{\tilde{u}}
\newcommand{\Ft}{\tilde{F}}
\newcommand{\Wt}{\tilde{W}}
\newcommand{\omegat}{\tilde{\omega}}
\newcommand{\Omegat}{\tilde{\Omega}}
\newcommand{\bfb}{{\bf b}}
\newcommand{\bfu}{{\bf u}}
\newcommand{\bfv}{{\bf v}}
\newcommand{\calA}{{\cal A}}
\newcommand{\calB}{{\cal B}}
\newcommand{\calF}{{\cal F}}
\newcommand{\calG}{{\cal G}}
\newcommand{\calH}{{\cal H}}
\newcommand{\calL}{{\cal L}}
\newcommand{\calM}{{\cal M}}
\newcommand{\calN}{{\cal N}}
\newcommand{\calP}{{\cal P}}
\newcommand{\calU}{{\cal U}}
\newcommand{\calX}{{\cal X}}
\newcommand{\calFt}{\tilde{\cal F}}
\newcommand{\norm}[1]{\| {#1} \|}
\newcommand{\Hone}{{H^1(D)}}
\newcommand{\Honeo}{{H^1_0(D)}}
\newcommand{\Lone}{{L^1(D)}}
\newcommand{\lone}[1]{\norm{#1}_\Lone}
\newcommand{\Ltwo}{{L^2(D)}}
\newcommand{\ltwo}[1]{\norm{#1}_\Ltwo}
\newcommand{\Ltwoo}{{L^2(D)/\Re}}
\newcommand{\ltwoo}[1]{\norm{#1}_\Ltwoo}
\newcommand{\Woneqo}{{W^{1,q}_0(D)}}
\newcommand{\Lq}{{L^q(D)}}
\newcommand{\Lfour}{{L^4(D)}}
\newcommand{\lfour}[1]{\norm{#1}_\Lfour}
\newcommand{\Lsix}{{L^6(D)}}
\newcommand{\lsix}[1]{\norm{#1}_\Lsix}
\newcommand{\LtwoLtwo}{{L^2[0,T;\Ltwo]}}
\newtheorem{theorem}{Theorem}[section]
\newtheorem{lemma}[theorem]{Lemma}
\newtheorem{corollary}[theorem]{Corollary}
\newtheorem{proposition}[theorem]{Proposition}
\newtheorem{definition}[theorem]{Definition}
\newtheorem{assumption}[theorem]{Assumption}
\newtheorem{remark}[theorem]{Remark}
\theoremstyle{definition}
\newtheorem{example}[theorem]{Example}
\newenvironment{theorem*}{{\bf Theorem}\em}{\rm\mbox{}}
\newenvironment{lemma*}{{\bf Lemma}\em}{\rm\mbox{}}
\newenvironment{corollary*}{{\bf Corollary}\em}{\rm\mbox{}}
\newenvironment{proposition*}{{\bf Proposition}\em}{\rm\mbox{}}
\newenvironment{claim}{{\it Claim:}\em}{\rm\mbox{}}
\def\keywords{\par{\bf Keywords:}\ \small \ignorespaces}
\newcommand{\bbEhat}{\hat{\mathbb{E}}}
\newcommand{\bbPhat}{\hat{\mathbb{P}}}
\newcommand{\bbXhat}{\hat{\mathbb{X}}}
\renewcommand{\Hone}{{H^1(D)}}
\renewcommand{\Honeo}{{H^1_0(D)}}
\renewcommand{\Lone}{{L^1(D)}}
\renewcommand{\lone}[1]{\norm{#1}_\Lone}
\renewcommand{\Ltwo}{{L^2(D)}}
\renewcommand{\ltwo}[1]{\norm{#1}_\Ltwo}
\renewcommand{\Ltwoo}{{L^2(D)/\Re}}
\renewcommand{\ltwoo}[1]{\norm{#1}_\Ltwoo}
\newcommand{\normH}[1]{\norm{#1}_H}
\newcommand{\normU}[1]{\norm{#1}_U}
\newcommand{\normUp}[1]{\norm{#1}_{U'}}
\newcommand{\LtwoH}{{L^2[0,T;H]}}
\newcommand{\LtwoU}{{L^2[0,T;U]}}
\newcommand{\ltwoH}[1]{\norm{#1}_\LtwoH}
\newcommand{\ltwoU}[1]{\norm{#1}_\LtwoU}
\newcommand{\LpH}{{L^p[0,T;H]}}
\newcommand{\LqU}{{L^q[0,T;U]}}
\newcommand{\lqU}[1]{\norm{#1}_\LqU}
\newcommand{\LqpUp}{{L^{q'}[0,T;U']}}
\newcommand{\lqpUp}[1]{\norm{#1}_\LqpUp}
\newcommand{\LrU}{{L^r[0,T;U]}}
\newcommand{\lrU}[1]{\norm{#1}_\LrU}
\newcommand{\LinfH}{{L^\infty[0,T;H]}}
\newcommand{\linfH}[1]{\norm{#1}_\LinfH}
\newcommand{\LtwoUp}{{L^2[0,T;U']}}
\newcommand{\ltwoUp}[1]{\norm{#1}_\LtwoUp}
\newcommand{\LinfUp}{{L^\infty[0,T;U']}}
\newcommand{\linfUp}[1]{\norm{#1}_\LinfUp}
\begin{document}
\title{Numerical Approximation of Nonlinear SPDE's}

\author{Martin Ondrej\'{a}t\thanks{The Czech Academy of Sciences,
    Institute of Information Theory and Automation,
    Pod Vod\'arenskou v\v e\v z\'{\i} 4, 
    182 00 Prague 8,
    Czech Republic, 
    Supported by the Czech Science Foundation grant no. 19-07140S
  }
\and Andreas Prohl\thanks{Mathematisches Institut der Universit\"at T\"ubingen,
  Auf der Morgenstelle 10,
  D-72076 T\"ubingen, Germany. 
}
\and Noel J. Walkington\thanks{Department of Mathematics, Carnegie Mellon
    University, Pittsburgh, PA 15213, USA. Supported in part by National
    Science Foundation Grant DREF-1729478.  This
    work was also supported by the NSF through the Center for
    Nonlinear Analysis. 
  }
}

\date{\today}

\maketitle

\begin{abstract} 
  The numerical analysis of stochastic parabolic partial
  differential equations of the form
  $$
  du + A(u) = f \,dt + g \, dW,
  $$
  is surveyed, where $A$ is a partial operator and $W$ a Brownian
  motion. This manuscript unifies much of the theory developed over
  the last decade into a cohesive framework which integrates
  techniques for the approximation of deterministic partial
  differential equations with methods for the approximation of
  stochastic ordinary differential equations. The manuscript is
  intended to be accessible to audiences versed in either of these
  disciplines, and examples are presented to illustrate the
  applicability of the theory. 
\end{abstract}

\keywords{SPDE's, weak martingale solution, fully discrete scheme,
  numerical analysis}

\section{Introduction}\label{s-intro}
We consider the numerical approximation of solutions
of stochastic partial differential equations (SPDE's) of the form
\begin{equation} \label{eqn:spde}
du + A(u) \, dt = f \, dt + g \, dW,
\qquad u(0) = u^0.
\end{equation}
The solution $u := \{u(t)\, \vert \, t \in [0,T]\}$ is a stochastic
process taking values in a Banach space $U$. The function $A:U
\rightarrow U'$, processes $f$, $g$, and the random variable $u^0$ are
specified, and $W := \{W_t\, \vert \, t \geq 0 \}$ is a Wiener process
on a filtered probability space $(\Omega, \calF, \{ \calF(t)\}_{0 \leq
  t \leq T},\bbP)$.

The existence theory for \eqnref{:spde} was first developed for
linear spatial operators and then extended in various directions. 
The analysis of numerical schemes to approximate solutions of
\eqnref{:spde} has paralleled this development within the last decade.
\begin{enumerate} 
\item[(i)] The stochastic linear heat equation: $A(u) =
   -\Delta u$; \cite{GY1,Y1}.

\item[(ii)] Problems with Lipschitz nonlinearities: $A(u) =
  -\Delta u + F(u)$; \cite{GM1,D1,LPS1,K1}.

\item[(iii)] Semi-linear equations which involve locally
  Lipschitz nonlinearities:
  \begin{enumerate} 
    \item The Allen-Cahn equation: 
      $A(u) = -\Delta u + (\vert u\vert^2 - 1) u$; \cite{KLM1,SS1,MP1},

    \item The nonlinear Schr\"odinger equation: 
      $A(u) = -i (\Delta u + \vert u\vert^2 u)$; \cite{DB1}.

    \item The incompressible Navier-Stokes equation 
      $A(u) = -\Delta u + (u \cdot \nabla)u$; \cite{Pri1,BCP1,CP1,GTW2017}.

    \item The Landau-Lifshitz equation: $A(u) = u
      \times (u \times \Delta u) - u \times \Delta u$; \cite{BBNP1}.
    \end{enumerate}
  
  \item[(iv)] Very few results are available for the numerical
    approximation of stochastic versions of degenerate parabolic equations,
    such as the stochastic porous-medium equation \cite{GG1}.
\end{enumerate}
For the first two cases, semigroup techniques are often used to
construct mild solutions of \eqnref{:spde}; a comprehensive exposition
of this theory may be found in the monograph \cite{DPZ1}.  Variational
approaches were developed in \cite{KR1,PrRo07} to accommodate nonlinear
equations where the concept of a mild solution is not available. The
more general notion of a ``weak martingale solution'' is required
to obtain the existence of solutions for the last two equations in (iii), and
(iv).

The collective effort of this work is a unification of techniques from
stochastic analysis  and numerical analysis of PDE's, resulting in a
general convergence theory for implementable discretizations of a wide class
of nonlinear SPDE's. This theory
provides the technical tools needed to realize the Lax Richtmeyer
meta--theorem:
\begin{quote}
  \em A numerical scheme converges if (and only if) it is stable and
  consistent.
\end{quote}
For this purpose, we distill and adapt ideas from
\cite{BBNP1,BCP1,HS1,HS2,OS1} to develop a general convergence theory
for numerical schemes comprising of the following steps:
\begin{enumerate}
\item {\bf Estimates:} Structural properties of the particular SPDE
  inherited by the discrete schemes are used to bound the numerical
  approximations uniformly with respect to discretization parameters. While
  the specific structure and bounds are problem dependent,
  standard tools from stochastic analysis (independence, filtrations,
  adaptedness) are utilized to accommodate the stochastic term.

\item {\bf Compactness:} Compactness properties of Banach spaces are
  used in an essential fashion when the operator $A$ is nonlinear. For
  deterministic PDE's ($g \equiv 0$ in \eqnref{:spde}) the
  Banach-Alaoglu and Lions-Aubin theorems are used to identify
  limits of approximate solutions.  In the stochastic setting the
  solutions are random variables taking values in Banach spaces and
  the deterministic arguments are augmented with the Prokhorov theorem
  to obtain convergence of laws.

\item[3.] {\bf Convergence:} Concepts of weak and strong solutions are
  used in both, the deterministic and stochastic setting to specify in
  what sense a function $u$ is a solution of the equation. While the
  meaning of a weak solution is very different in each setting, it has
  the same purpose; it extends the concept of a solution to
  accommodate situations where strong (or classical) solutions may not
  exist. In this work, the concepts of a weak solution for the
  deterministic and stochastic setting are combined to construct
  weak martingale solutions as a limit of solutions to discrete
  approximations of \eqnref{:spde}.
\end{enumerate}

Bounds upon the numerical approximations establish stability of the
numerical schemes. For consistent (Galerkin) approximations of the
parabolic problems under consideration we show
$$
\text{stability} \quad \Rightarrow \quad
\text{stability \& compactness} \quad \Rightarrow \quad
\text{convergence},
$$
so that the Lax Richtmeyer theorem is realized.  The goal of this
article is to present these ideas in a context accessible to audiences
from either numerical PDE's or stochastic analysis. To achieve this, key
results required from each area will be stated and their role
explained prior to their use. 

To reduce the technical overhead, we first consider the situation
where $A:U \rightarrow U'$ is linear, and the Wiener process is
scalar-valued. Extensions to include nonlinear drift operators $A$ and
spatial noise will be considered in subsequent sections. These
extensions are mainly technical in the sense that once the additional
definitions, concepts, and properties are acquired, it becomes clear
that the ideas and proofs in the simplified setting extend directly to
the more general situation.

We finish this section with a terse review of the essential concepts
from numerical PDE's and stochastic processes required for the development
of weak martingale solutions to equation \eqnref{:spde}. 

\subsection{Numerical Partial Differential Equations}
This section reviews the abstract setting where tools from
functional analysis can be applied to solve PDE's. Solutions are sought in a Banach space $U$, and a pivot
space construction is used to characterize the partial differential
operator under consideration. Specifically, $U$ is assumed to be
densely embedded in a Hilbert space $H$, and when $H$ is identified
with its dual by the Riesz theorem we have $U \embed H \embed U'$.
Then $u \in U$ is identified with the dual element $\iota(u) \in
U'$ by
$$
\iota(u)(v) = (u,v)_H, \qquad v \in U.
$$
If $f \in U'$ we frequently write $(f,v)=f(v)$ so that $(f,v)=(f,v)_H$
when $f \in H$.

Solutions of time dependent problems are viewed as (strongly
measurable) functions from the interval $[0,T]$ to various Banach
spaces. The Bochner spaces are the natural Banach spaces that arise
in this context; for example,
\begin{align*}
\LtwoU &= \{u:[0,T] \rightarrow U \sst \int_0^T \normU{u(t)}^2 \, dt < \infty \}, \\
\LinfH &= \{u:[0,T] \rightarrow H \sst 
\esssup_{0 \leq t \leq T} \normH{u(t)} < \infty \}.
\end{align*}
Similar notation is used for the continuous functions, $C[0,T;U]$,
and H\"older continuous functions, $C^{0,\theta}[0,T;U]$, from $[0,T]$ to
a Banach space $U$.

The space $U$ is constructed so that the partial differential
operator, $A$, in equation \eqnref{:spde} maps $U$ to $U'$. In 
this situation it is possible to define $a:U \times U \rightarrow \Re$
by 
$$
a(u,v) = A(u)(v) , \qquad u, v \in U.
$$
The canonical example of this construction is the Laplacian; $A(u) = -\Delta
u$ on a bounded domain $D \subset \Re^d$ with homogeneous boundary
data.  Letting $H = \Ltwo$ and $U$ be the Sobolev space
$$
U = \Honeo \equiv \{u \in \Ltwo \sst \nabla u \in \Ltwo^d, 
\,\, u|_{\partial \Omega} = 0 \}, 
$$
then
$$
A(u)(v)
= (-\Delta u, v)
\equiv \int_D \nabla u . \nabla v\, dx
= a(u,v),
\qquad u,v \in U.
$$
In this setting, a weak solution of the (stationary) PDE $Au = f$ satisfies
\begin{equation}\label{f_three}
u \in U \qquad a(u,v) = f(v), \qquad v \in U.
\end{equation}
The solution of this second order PDE is ``weak'' in the sense that it is
only required to have one square integrable derivative and the datum $f
\in U'$ need not be regular.  For linear problems the following
theorem establishes existence of weak solutions in many situations.

\begin{theorem}[Lax Milgram 1954] Let $U$ be a Hilbert space and
$a:U \times U \rightarrow \Re$ be bilinear and suppose that
there exist constants $C_a$, $c_a > 0$ such that
$$
|a(u,v)| \leq C_a \normU{u} \normU{v}, 
\quad \text{ and } \quad
a(u,u) \geq c_a \normU{u}^2, \qquad u, v \in U.
$$
Then for each $f \in U'$ there exists a unique $u \in U$ such that
$$
a(u,v) = f(v), \qquad v \in U.
$$
Moreover, $\normU{u} \leq \norm{f}_{U'}/c_a$.
\end{theorem}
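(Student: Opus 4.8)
The plan is to recast the variational problem as an operator equation on the Hilbert space $U$ and to solve that equation by a contraction-mapping argument, reading off uniqueness and the a priori bound separately from coercivity. First I would apply the Riesz representation theorem twice. For each fixed $u \in U$ the map $v \mapsto a(u,v)$ is linear and, by the boundedness hypothesis $|a(u,v)| \le C_a\|u\|_U\|v\|_U$, bounded on $U$; hence there is a unique $Au \in U$ with $a(u,v) = (Au,v)_U$ for all $v \in U$. Bilinearity of $a$ makes $A \colon U \to U$ linear, and $\|Au\|_U \le C_a\|u\|_U$ shows $A$ is bounded. Likewise the given $f \in U'$ is represented by a unique $\phi \in U$ with $f(v) = (\phi,v)_U$. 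The problem ``find $u \in U$ with $a(u,v) = f(v)$ for all $v$'' is thus equivalent to solving $Au = \phi$ in $U$.

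Next I would invert $A$ via a fixed point. For a parameter $\rho > 0$ to be chosen, define $T_\rho \colon U \to U$ by $T_\rho w = w - \rho(Aw - \phi)$, so that fixed points of $T_\rho$ are exactly the solutions of $Au = \phi$. Setting $z = w_1 - w_2$,
$$
\|T_\rho w_1 - T_\rho w_2\|_U^2 = \|z - \rho Az\|_U^2 = \|z\|_U^2 - 2\rho\,(Az,z)_U + \rho^2\|Az\|_U^2 \le \bigl(1 - 2\rho c_a + \rho^2 C_a^2\bigr)\|z\|_U^2,
$$
where I used coercivity in the form $(Az,z)_U = a(z,z) \ge c_a\|z\|_U^2$ together with $\|Az\|_U \le C_a\|z\|_U$. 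Choosing $\rho = c_a/C_a^2$ makes the constant equal to $1 - c_a^2/C_a^2 < 1$, so $T_\rho$ is a contraction on the complete metric space $U$; the Banach fixed point theorem then provides a unique fixed point $u$, which solves the variational equation.

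For uniqueness and the quantitative estimate, no fixed-point machinery is needed: if $u$ satisfies $a(u,v) = f(v)$ for all $v \in U$, then the choice $v = u$ together with coercivity gives $c_a\|u\|_U^2 \le a(u,u) = f(u) \le \|f\|_{U'}\|u\|_U$, hence $\|u\|_U \le \|f\|_{U'}/c_a$. Applied to the difference of two solutions, for which the functional on the right vanishes, this forces the difference to be zero, proving uniqueness.

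The one point requiring genuine care — the ``main obstacle'', such as it is — is that $a$ is not assumed symmetric, so $A$ need not be self-adjoint and one cannot invert it by a second application of Riesz; the contraction estimate above is exactly what replaces symmetry, and it is why the full coercivity constant $c_a$, rather than merely injectivity of $A$, is required. A fixed-point-free alternative is to verify that $\mathrm{Ran}(A)$ is both closed and dense: closedness follows from $\|Az\|_U \ge c_a\|z\|_U$ (a consequence of $c_a\|z\|_U^2 \le (Az,z)_U \le \|Az\|_U\|z\|_U$), and density from the observation that any $w$ orthogonal to $\mathrm{Ran}(A)$ satisfies $0 = (Aw,w)_U = a(w,w) \ge c_a\|w\|_U^2$, so $w = 0$; either route completes the argument.
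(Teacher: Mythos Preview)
The paper states this classical theorem without proof, so there is nothing to compare against. Your argument is correct and is in fact one of the standard proofs: Riesz representation reduces the problem to $Au=\phi$, and the contraction estimate for $T_\rho$ with $\rho=c_a/C_a^2$ handles the non-symmetric case cleanly (note the implicit use of $c_a\le C_a$, which follows from $c_a\|u\|_U^2\le a(u,u)\le C_a\|u\|_U^2$, to get a contraction constant strictly below $1$). The alternative closed-range/density argument you sketch is also complete and is the other textbook route.
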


Given $f:(0,T) \rightarrow U'$ and $u^0 \in H$,
a weak solution of the evolution equation $\partial_t u + Au = f$ on
$(0,T)$ with $u(0) = u^0$ is a function $u:[0,T] \rightarrow U$
satisfying
\begin{equation} \label{eqn:weakHeat}
(u(t), v)_H + \int_0^t a(u,v)\, ds 
= (u^0,v)_H + \int_0^t (f,v)\, ds, 
\qquad v \in U, 
\,\, t \in [0,T].
\end{equation}
The pivot space construction is used to characterize
$\partial_t u(t) \in U'$; for almost every $t \in (0,T)$
$$
(\partial_t u(t), v) = 
\lim_{h \rightarrow 0} \frac{(u(t+h) - u(t), v)_H}{h},
\qquad v \in U.
$$

If $U_h \subset U$ is a finite dimensional subspace, a natural
numerical scheme to approximate weak solutions of the stationary
problem $Au = f$ is obtained by seeking a function $u_h \in U_h$ which
satisfies the weak statement (\ref{f_three}) for each ``test
function'' $v_h \in U_h$.  To obtain a fully discrete scheme for the
evolution equation $\partial_t u + Au = f$ it is necessary to also
approximate the time derivative. If $N \in \bbN$ and $\tau = T/N$ is a
time step, the implicit Euler scheme computes approximations
$\{u^n_{h\tau}\}_{n=1}^N \subset U_h$ of $\{u(t^n)\}_{n=1}^N$ on a
uniform partition $\{t^n\}^N_{n=0}$ of $[0,T]$ as solutions of
\begin{equation}\label{euler-intro}
(u^n_{h\tau} - u^{n-1}_{h\tau}, v_h)_H
+ \tau a(u^n_{h\tau}, v_h) = \tau (f^n_{h\tau},v_h), 
\qquad v_h \in U_h, \quad n = 1,2, \ldots N,
\end{equation}
with $u^0_{h\tau}$, and $f^n_{h\tau}$ approximations of $u^0$ and
$f(t^n)$.  The finite element methodology \cite{BrSc08} provides a systematic method
to construct finite dimensional subspaces of the function space $U$.
These subspaces consist of piecewise polynomial functions on a
partition of the domain $D \subset {\mathbb R}^d$; the index $h > 0$
denotes the maximal diameter of a partition (the mesh size). If $a:U
\times U \rightarrow \Re$ satisfies the hypotheses of the Lax Milgram
theorem, then so too does
$$
a_\tau(u_h, v_h) \equiv (u_h,v_h)_H + \tau a(u_h, v_h),
\qquad u_h, v_h \in U_h,
$$
which ensures the existence of a unique solution to the implicit Euler scheme
(\ref{euler-intro}).

Compactness properties of various Banach spaces are required to
obtain and identify limits of numerical solutions. For the parabolic
problems under consideration the space $U$ will always be compactly
embedded into the pivot space $H$; we write $U \cembed H$. For the
evolution problem with $U \cembed H \cembed U'$ a typical compactness
result for the associated Bochner spaces is the following 
\cite[Theorem 5]{Si87}.

\begin{theorem} \label{thm:cpt1}
  Let $U \cembed B \embed U'$ be embeddings of Banach
  spaces and $1 \leq p \leq \infty$. Then
  $L^p[0,T; U] \cap C^{0,\theta}[0,T;U'] \cembed L^p[0,T;B]$
  (and in $C[0,T;B]$ if $p=\infty$).
\end{theorem}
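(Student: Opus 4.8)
The plan is to obtain the result from two classical ingredients --- an Ehrling-type interpolation inequality and the Arzel\`a--Ascoli theorem for functions with values in a Banach space --- as in \cite{Si87}. Fix a bounded sequence $(u_k)_{k\in\bbN}$ in $L^p[0,T;U]\cap C^{0,\theta}[0,T;U']$, with $\norm{u_k}_{L^p[0,T;U]}+\norm{u_k}_{C^{0,\theta}[0,T;U']}\le M$ for all $k$; the goal is to extract a subsequence converging in $L^p[0,T;B]$, and in $C[0,T;B]$ when $p=\infty$.

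First I would record the interpolation inequality: since $U\cembed B$ and $B\embed U'$, for every $\varepsilon>0$ there is $C_\varepsilon>0$ with $\norm{v}_B\le\varepsilon\norm{v}_U+C_\varepsilon\norm{v}_{U'}$ for all $v\in U$. (If this failed there would be $v_n$ with $\norm{v_n}_B=1$, $\norm{v_n}_U\le 1/\varepsilon$ and $\norm{v_n}_{U'}\to 0$; a $B$-convergent subsequence, guaranteed by $U\cembed B$, would have a limit that vanishes in $U'$, hence in $B$ --- contradicting $\norm{v_n}_B=1$.) Applying this to $u_j(t)-u_k(t)$ and taking the $L^p$-norm in $t$ (with $L^\infty$-norms and $C[0,T;B]$ replacing the $L^p$-norms and $L^p[0,T;B]$ when $p=\infty$), and using $\norm{u_j-u_k}_{L^p[0,T;U]}\le 2M$ together with $\norm{\,\cdot\,}_{L^p[0,T;U']}\le T^{1/p}\norm{\,\cdot\,}_{C[0,T;U']}$, gives
\[
\norm{u_j-u_k}_{L^p[0,T;B]}\le 2M\varepsilon+C_\varepsilon T^{1/p}\norm{u_j-u_k}_{C[0,T;U']}.
\]
So it suffices to find a subsequence that is Cauchy in $C[0,T;U']$: once that is done, choosing $\varepsilon$ small and then going far out along the subsequence makes the right-hand side as small as desired.

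To produce such a subsequence I would invoke Arzel\`a--Ascoli in $C[0,T;U']$, which requires equicontinuity and pointwise relative compactness. Equicontinuity is immediate from $\norm{u_k(s)-u_k(t)}_{U'}\le M|s-t|^\theta$, uniform in $k$. For the pointwise statement, fix $t$ and $\delta>0$ and split $u_k(t)=\tfrac1\delta\int_t^{t+\delta}\big(u_k(t)-u_k(s)\big)\,ds+\tfrac1\delta\int_t^{t+\delta}u_k(s)\,ds$ (using $[t-\delta,t]$ near $t=T$). The first term has $U'$-norm $\le M\delta^\theta$; the second is a Bochner average of a $U$-valued map with $U$-norm $\le M\delta^{-1/p}$ (or $\le M$ if $p=\infty$) by H\"older's inequality and the $L^p[0,T;U]$ bound, so it lies in a fixed ball of $U$, which is relatively compact in $B$ and hence in $U'$. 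Thus $\{u_k(t):k\in\bbN\}$ lies within $M\delta^\theta$ of a relatively compact set for every $\delta$; letting $\delta\downarrow 0$ shows it is totally bounded, hence relatively compact, in the Banach space $U'$. Arzel\`a--Ascoli then yields the desired subsequence, and the displayed estimate finishes the cases $1\le p<\infty$.

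For $p=\infty$ one must in addition see that the limit lies in $C[0,T;B]$ and that convergence is uniform. Applying the interpolation inequality to $u_k(s)-u_k(t)$ gives $\norm{u_k(s)-u_k(t)}_B\le 2M\varepsilon+C_\varepsilon M|s-t|^\theta$ on the full-measure set where $u_k$ takes values in $U$; letting $\varepsilon$ and then $|s-t|$ tend to $0$ shows each $u_k$ has a representative in $C[0,T;B]$ and that $(u_k)$ is equicontinuous into $B$, while pointwise relative compactness in $B$ follows at once from $\norm{u_k(t)}_U\le M$ and $U\cembed B$; Arzel\`a--Ascoli in $C[0,T;B]$ then concludes.

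The one genuinely delicate point is the pointwise precompactness used above: the only bound available in the ``good'' space $U$ is the \emph{integral} bound from $L^p[0,T;U]$, not a pointwise one, and the local-averaging identity is precisely what converts it --- modulo a $U'$-small error controlled by the H\"older continuity --- into membership in a bounded, hence $B$-precompact, subset of $U$. Everything else reduces to routine uses of H\"older's and Minkowski's inequalities and the elementary bound $\norm{\int f}\le\int\norm{f}$ for Bochner integrals.
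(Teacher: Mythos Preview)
The paper does not supply its own proof of this theorem; it is stated with a citation to \cite[Theorem 5]{Si87}. Your argument is correct and is precisely the standard proof of that result: Ehrling's interpolation inequality reduces convergence in $L^p[0,T;B]$ (resp.\ $C[0,T;B]$) to convergence in $C[0,T;U']$, and Arzel\`a--Ascoli together with the local-averaging decomposition for pointwise relative compactness supplies the latter.
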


For the pivot space, we use $\normH{u}^2 \leq \normU{u} \normUp{u}$ and
\cite[Theorem 7]{Si87},
$$
C^{0,\theta}[0,T;U'] \cap L^1[0,T;U] \cembed L^2[0,T; H].
$$

\begin{figure}
\begin{center}
\includegraphics[height=1.7in]{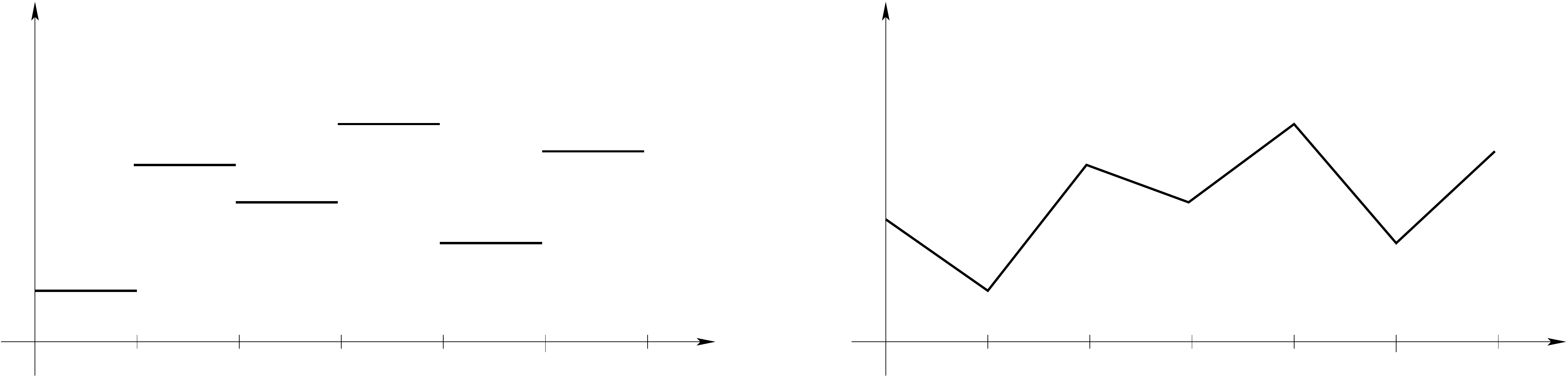}
\put(-405,-5){$t^n$}
\put(-290,15){$t$}
\put(-305,-5){$T$}
\put(-485, 33){$u_{h\tau}^1$}
\put(-420, 62){$u_{h\tau}^n$}
\put(-318, 78){$u_{h\tau}^N$}
\put(-28,-5){$T$}
\put(-240, 50){$u_{h\tau}^0$}
\put(-190, 20){$u_{h\tau}^1$}
\put(-95,-5){$t^n$}
\put(-95, 87){$u_{h\tau}^n$}
\put(-30, 78){$u_{h\tau}^N$}
\caption{Piecewise constant $u_{h\tau}$ and piecewise
affine interpolation $\uhat_{h\tau}$ of $\{u^n_{h\tau}\}_{n=0}^N$.}
\label{fig:uhatu}
\end{center}
\end{figure}

\subsubsection{Skorokhod Space}
The implicit Euler scheme (\ref{euler-intro}) gives a sequence
$\{u^n\}_{n=0}^N$ which can be interpolated to give either a
piecewise affine function $\uhat_{h\tau}$ or a piecewise constant
function $u_{h\tau}$ (see Figure \ref{fig:uhatu}), which satisfy the
equation
$$
\dbyd{\uhat_{h\tau}}{t} + A(u_{h\tau}) = f_{h\tau}, 
\qquad \text{ in } U'.
$$
Typically bounds are obtained by multiplying this equation by
$u_{h\tau}$, so discontinuous trial and test functions should be
admissible; however, it is desirable to retain some of the continuity
properties of $\uhat_{h\tau}$.  For this reason it is convenient to
pose the problem in the Skorokhod--type space\footnote{Functions in
  the Skorokhod space $D[0,T;U']$ are continuous from the right
  (\`a droite). For parabolic problems the initial data is less regular
  than the solution at later times, so it is natural to consider
  functions $G[0,T;U']$ continuous from the left (\`a gauche).}  (see
Figure \ref{fig:caglad}),
$$
G[0,T; U'] = \big\{u:[0,T] \rightarrow U' \sst 
u(t) = \lim_{s \rightarrow t^-} u(s) \text{ and }
\lim_{s \rightarrow t^+} u(s) \text{ exists} \big\},
\quad \text{(i.e. caglad functions)}.
$$
Developing a general theory in this context is extremely useful for
applications since stochastic solutions are not very
regular in time. Consistency errors of the form $A(u_{h\tau}) -
A(\uhat_{h\tau})$ would arise if test functions were required to
be continuous in time, and frequently these may not vanish as 
$(h,\tau) \rightarrow (0,0)$.

\begin{figure}
\begin{center}
\includegraphics[height=1.7in]{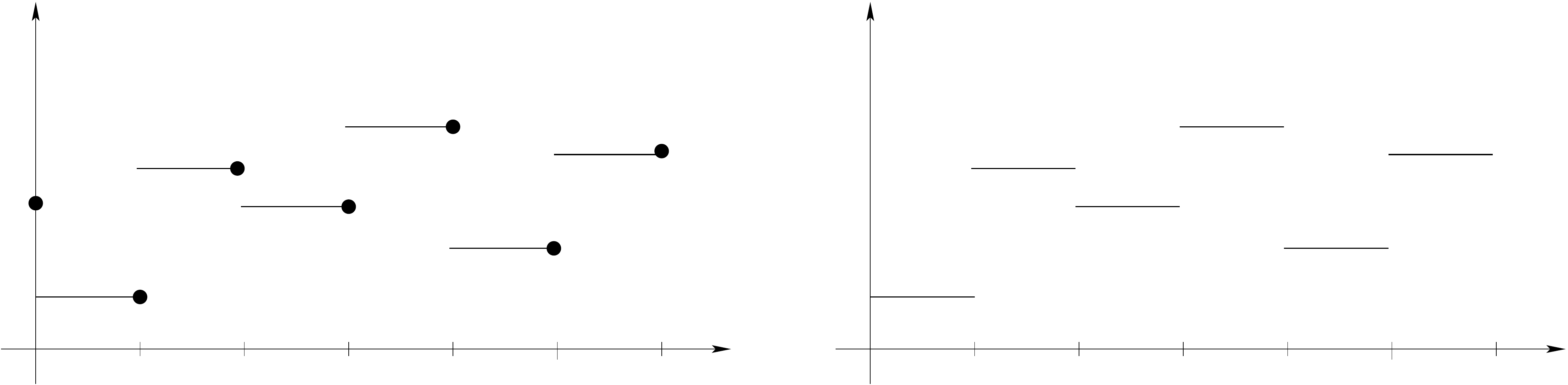}
\put(-395,-5){$t^m$}
\put(-410,15){$s$}
\put(-327,-5){$t^n$}
\put(-340,15){$t$}
\put(-293,-5){$T$}
\put(-480, 33){$u_{h\tau}^1$}
\put(-413, 62){$u_{h\tau}^m$}
\put(-348, 50){$u_{h\tau}^n$}
\put(-315, 79){$u_{h\tau}^N$}
\put(-510, 57){$u_{h\tau}^0$}
\put(-130,-5){$t^{n-1}$}
\put(-90,-5){$t^n$}
\put(-28,-5){$T$}
\put(-210, 33){$f_{h\tau}^1$}
\put(-110, 87){$f_{h\tau}^n$}
\put(-43, 79){$f_{h\tau}^N$}
\caption{Indexing of piecewise constant caglad functions $u_{h\tau}$
  and  Bochner functions $f_{h\tau}$.}\label{fig:caglad}
\end{center}
\end{figure}

The construction of the Skorokhod metric is technical, and for
completeness we present it here; however, the explicit formula will
not be needed. Let $\Lambda$ be the set of strictly
increasing functions $\lambda:[0,T] \rightarrow [0,T]$ satisfying
$\lambda(0)=0$ and $\lambda(T) = T$, and set
$$
\gamma(\lambda) = \sup_{0 \leq s < t \leq T} \left|
\ln \left(\frac{\lambda(t)-\lambda(s)}{t-s} \right) \right| .
$$
The Skorokhod metric is
$$
d_G(u,v) = \inf_{\lambda \in \Lambda} 
\max\left(\gamma(\lambda), \linfUp{u - v \circ \lambda} \right).
$$
The following lemma contains the properties of $G[0,T; U']$ required
in the sequel.

\begin{lemma} \label{lem:skorohod} 
  Let $U$ be a Banach space and $G[0,T; U']$ denote the caglad
  functions on $[0,T]$ taking values in $U'$ endowed with the Skorokhod
  metric, $d_G(.,.)$.
  \begin{enumerate}
  \item $G[0,T; U']$ is complete and is separable when $U'$ is separable.

  \item The following embeddings are continuous,
    $$
    C[0,T; U'] \embed G[0,T; U'] \embed L^s[0,T;U'], 
    \qquad 1 \leq s < \infty,
    $$
    and $\linfUp{u} \leq d_G(0,u)$ so $G[0,T;U'] \subset
    \LinfUp$. However, the inclusion is not an embedding since convergence
    in $G[0,T; U']$ does not imply uniform convergence.  

  \item If $d_G(u,u_n) \rightarrow 0$, then $u_n(t) \rightarrow u(t)$
    for $t = 0$, $t=T$, and at every time $t \in (0,T)$ where $u$ is
    continuous.  In particular,
    \begin{itemize}
    \item $u_n(t) \rightarrow u(t)$ for almost every $t \in [0,T]$
      since there is at most a countable set of times $t \in [0,T]$ at
      which a function in $G[0,T;U']$ is discontinuous.

    \item If the limit $u$ is continuous, then $u_n(t) \rightarrow u(t)$
      for every $t \in [0,T]$.
    \end{itemize}
    
  \item If $0 = t^0 < t^1 < \ldots < t^N = T$, then the linear function
    $\phi:C^{0,\theta}[0,T;U'] \rightarrow G[0,T;U']$ for which
    $\phi(u)$ is the piecewise constant caglad interpolant of
    $\{u(t_i)\}_{i=0}^N$ is continuous, and 
    $$
    d_G(\phi(u), \phi(v)) \leq \norm{u-v}_{C[0,T;U']}
    \quad \text{ and } \quad
    d_G(\phi(u), u) 
    \leq \norm{u}_{C^{0,\theta}[0,T; U']}
    \left(\max_{1 \leq n \leq N} (t^n-t^{n-1}) \right)^\theta.
    $$
  \end{enumerate}
\end{lemma}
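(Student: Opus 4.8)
\emph{Sketch of the intended proof.} The plan is to treat $(G[0,T;U'],d_G)$ exactly as one treats the classical Skorokhod space of cadlag functions, but with left and right interchanged. The one estimate used throughout is a bi-Lipschitz sandwich: taking $s=0$ and then $t=T$ in the supremum defining $\gamma$ gives, for every $\lambda\in\Lambda$, $e^{-\gamma(\lambda)}(t-s)\le\lambda(t)-\lambda(s)\le e^{\gamma(\lambda)}(t-s)$, whence $\norm{\lambda-\mathrm{id}}_{C[0,T]}\le T(e^{\gamma(\lambda)}-1)$ and likewise for $\lambda^{-1}$, so $\gamma(\lambda)\to0$ forces $\lambda,\lambda^{-1}\to\mathrm{id}$ uniformly. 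First I would record that $d_G$ is a metric: symmetry from $\gamma(\lambda^{-1})=\gamma(\lambda)$ and the substitution $s\mapsto\lambda(s)$ inside $\linfUp{u-v\circ\lambda}$; the triangle inequality from $\gamma(\lambda_2\circ\lambda_1)\le\gamma(\lambda_1)+\gamma(\lambda_2)$ (telescoping logarithms) together with the fact that precomposition by an element of $\Lambda$ leaves a sup over $[0,T]$ unchanged; and $d_G(u,v)=0\Rightarrow u=v$ because the continuity points of $v$ are dense (item 3) and a caglad function is determined on a dense set by left continuity.

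Items 2 and 4 then reduce to the choice $\lambda=\mathrm{id}$, which makes $d_G$ dominated by the sup-norm distance: this gives $d_G(u,v)\le\norm{u-v}_{C[0,T;U']}$ for continuous $u,v$, hence continuity of $C[0,T;U']\embed G[0,T;U']$, and in item 4 it reduces the two bounds to $\max_i\norm{u(t^i)-v(t^i)}_{U'}\le\norm{u-v}_{C[0,T;U']}$ and to the cellwise estimate $\norm{\phi(u)(t)-u(t)}_{U'}=\norm{u(t^i)-u(t)}_{U'}\le\norm{u}_{C^{0,\theta}[0,T;U']}(t^i-t^{i-1})^\theta$ on $(t^{i-1},t^i]$, the first bound making $\phi$ $1$-Lipschitz, hence continuous. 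Also $\linfUp u\le d_G(0,u)$ since $\linfUp{u\circ\lambda}=\linfUp u$ for every $\lambda\in\Lambda$; with boundedness of caglad (regulated) functions this yields $G[0,T;U']\subset\LinfUp$ and then the continuous embedding into $L^s[0,T;U']$ via $\norm{u}_{L^s[0,T;U']}\le T^{1/s}\linfUp u$. That this inclusion is not an embedding is exhibited by $u_n=\mathbf 1_{(t_0+1/n,\,T]}e$ with $0\ne e\in U'$, $t_0\in(0,T)$: the piecewise-affine $\lambda_n$ stretching $[0,t_0]$ onto $[0,t_0+1/n]$ has $\gamma(\lambda_n)\to0$ and $u_n\circ\lambda_n=\mathbf 1_{(t_0,T]}e$, so $d_G(u_n,\mathbf 1_{(t_0,T]}e)\to0$, whereas $\linfUp{u_n-\mathbf 1_{(t_0,T]}e}=\norm e_{U'}$ for all $n$. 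For item 3, pick $\lambda_n\in\Lambda$ with $\gamma(\lambda_n)\to0$ and $\linfUp{u-u_n\circ\lambda_n}\to0$; at $t\in\{0,T\}$ one has $\lambda_n(t)=t$, so $u_n(t)=(u_n\circ\lambda_n)(t)\to u(t)$, and at an interior continuity point $t$ of $u$ the splitting $u_n(t)-u(t)=[(u_n\circ\lambda_n)(\lambda_n^{-1}(t))-u(\lambda_n^{-1}(t))]+[u(\lambda_n^{-1}(t))-u(t)]$ has first bracket $\le\linfUp{u_n\circ\lambda_n-u}\to0$ and second bracket $\to0$ since $\lambda_n^{-1}(t)\to t$ and $u(t-)=u(t)=u(t+)$; the countability of the discontinuity set follows because for each $\varepsilon>0$ the set $\{t:\norm{u(t+)-u(t)}_{U'}>\varepsilon\}$ is finite (an accumulation point $s$ would contradict the existence of $u(s-)$ or $u(s+)$, by comparing $u(t_k)$ and $u(t_k+)$ for $t_k$ near $s$), and the discontinuity set is the countable union over $\varepsilon=1/k$. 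The remaining bullets of item 3 are immediate consequences.

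The substance is item 1. For completeness, since a Cauchy sequence converges once a subsequence does, pass to a subsequence $(v_k)$ with $d_G(v_k,v_{k+1})<2^{-k}$ and choose $\mu_k\in\Lambda$ with $\gamma(\mu_k)<2^{-k}$ and $\linfUp{v_k-v_{k+1}\circ\mu_k}<2^{-k}$; put $\psi_k=\mu_k\circ\cdots\circ\mu_1$, $\psi_0=\mathrm{id}$. Then $\norm{\psi_k-\psi_{k-1}}_{C[0,T]}=\norm{\mu_k-\mathrm{id}}_{C[0,T]}$ is summable, so $\psi_k\to\psi$ uniformly, and $\gamma(\psi_k)\le\sum_j\gamma(\mu_j)$ is bounded, so $\psi$ is bi-Lipschitz, hence a homeomorphism in $\Lambda$; also $\linfUp{v_{k+1}\circ\psi_k-v_k\circ\psi_{k-1}}=\linfUp{v_{k+1}\circ\mu_k-v_k}<2^{-k}$, so $(v_k\circ\psi_{k-1})_k$ is uniformly Cauchy with uniform limit $w$, which is again caglad (uniform limits of caglad functions are caglad, by a Cauchy criterion for the one-sided limits). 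Then $v_k\to w\circ\psi^{-1}$ in $d_G$: taking $\lambda=\psi\circ\psi_{k-1}^{-1}$ gives $(w\circ\psi^{-1})\circ\lambda=w\circ\psi_{k-1}^{-1}$, so $\linfUp{v_k-(w\circ\psi^{-1})\circ\lambda}=\linfUp{v_k\circ\psi_{k-1}-w}\to0$, while $\gamma(\psi\circ\psi_{k-1}^{-1})\to0$ because $\psi\circ\psi_{k-1}^{-1}$ is the uniform limit in $q$ of the tails $\mu_q\circ\cdots\circ\mu_k$, each with $\gamma\le\sum_{j\ge k}2^{-j}$, and $\gamma$ is lower semicontinuous under uniform convergence of uniformly bi-Lipschitz maps. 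For separability when $U'$ is separable, the key lemma is that every caglad (indeed every regulated) function $u:[0,T]\to U'$ is a uniform limit of $U'$-valued step functions, equivalently that for each $\varepsilon>0$ there is a partition on each cell of which $u$ oscillates by less than $\varepsilon$; given such a partition, the piecewise-constant caglad interpolant of $u$ is within $\varepsilon$ of $u$ in $\linfUp\cdot$, hence within $\varepsilon$ in $d_G$, and moving the breakpoints to rational times costs only a small $\gamma(\lambda)$ (a piecewise-affine $\lambda$) while replacing the finitely many values by points of a fixed countable dense subset of $U'$ costs another $\varepsilon$, so these countably many functions are $d_G$-dense.

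I expect the completeness argument to be the main obstacle: controlling the infinite composition $\psi_k$, and in particular checking that the residual time changes $\psi\circ\psi_{k-1}^{-1}$ have $\gamma\to0$ (pushing $\gamma$ through a uniform limit) while keeping all composition orders straight, is the one genuinely delicate point. The step-function approximation lemma behind separability is the second most technical ingredient; everything in items 2--4 is routine once the bi-Lipschitz sandwich for $\Lambda$ is in hand.
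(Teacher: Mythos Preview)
The paper states this lemma without proof, as a summary of well-known properties of the Skorokhod-type space (the cadlag theory, e.g.\ Billingsley, transported to the caglad setting). Your sketch is correct and is precisely the standard argument: the bi-Lipschitz sandwich for $\lambda\in\Lambda$ handles items 2--4 routinely, and your completeness proof via fast subsequences, infinite composition of time changes, and lower semicontinuity of $\gamma$ under uniform limits of uniformly bi-Lipschitz maps is the textbook $J_1$-completeness argument.

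One small gap in item 2: you derive $\norm{u}_{L^s}\le T^{1/s}\linfUp{u}$ and $\linfUp{u}\le d_G(0,u)$, but these together do not give continuity of $G[0,T;U']\embed L^s[0,T;U']$, because the $L^\infty$ bound controls only the distance from $0$, not differences---and you yourself exhibit that $d_G(u_n,u)\to0$ need not force $\linfUp{u_n-u}\to0$. The correct argument combines your item 3 (pointwise a.e.\ convergence) with the uniform bound $\linfUp{u_n}\le d_G(0,u)+d_G(u,u_n)$ and dominated convergence in $L^s$; this is implicit in your ingredients but not in the sentence as written.
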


\subsection{Stochastic Processes}\label{sp1}
All of the random variables we consider will be measurable mappings
from a probability space $(\Omega,\calF, \bbP)$ to a topological space
$\bbX$ equipped with the Borel $\sigma$-algebra $\calB(\bbX)$, and we
adopt the terminology that a {\em (stochastic) process} is a function
from a time interval $[0,T]$ to a set of random variables.  Implicit
in the statement of equation \eqnref{:spde} is the presence of a
filtration $\{ \calF(t)\}_{0 \leq t \leq T}$ on $(\Omega, \calF, \bbP)$.  In
order to apply standard results from probability all filtrations are
assumed to satisfy the ``usual conditions'' \cite{KaSh98}, namely,
\begin{enumerate}
\item $\calF(0)$ contains all the null sets.
\item $\calF(t) = \cap_{s>t} \calF(s)$.
\end{enumerate}
An analogous terminology is utilized for discrete filtrations,
$\calF^0 \subset \calF^1 \subset \ldots \subset \calF^N$.

The probability of a measurable set $B \in \calF$ is denoted by
$\bbP[B]$, and the expected value of a random variable $X$ by
$\bbE[X]$.  The conditional expectation of a random variable $u$ with
respect to a sub-$\sigma$-algebra $\calG$ of $\calF$ is denoted by
$\bbE[u|\calG]$. 

In order to exploit arguments from both functional and stochastic
analysis it is convenient to view a solution of equation
\eqnref{:spde} as both a random variable with values in a Bochner
space (for example, $u \in L^2(\Omega, L^\infty[0,T;H])$), and as a
stochastic process (for example, $u \in L^2[0,T; L^p(\Omega,U')]$).
While both may be viewed as Bochner spaces, a much richer theory is
available for the subspace of stochastic processes {\em adapted} to a
filtration; that is, when $u(t)$ is $\calF(t)$-measurable for all $0 \leq t \leq T$.  

To construct the stochastic integral of a random variable with values
in equivalence classes of functions, such as $G:\Omega\to L^2[0,T;H]$,
a jointly measurable adapted representation $g:[0,T]\times\Omega\to H$
is required for which $g(\cdot, \omega)\in G(\omega)$ for every
$\omega\in\Omega$. Specifically, the stochastic integral is correctly
defined only for jointly measurable adapted processes with paths in
$L^2[0,T;H]$ almost surely. Such $g$ exists if and only if $\Omega\to
L^2[0,T;H]:\omega\mapsto\mathbf 1_{[0,t]}G(\omega)$ is
$\calF(t)$-measurable for every $t\in[0,T]$ in which case an appropriate
selection is the ``precise representative'' \cite{EvGa92} given by
$$
g(t,\omega)=\lim_{n\to\infty}n\int_{(t-1/n)_+}^tG(\omega)\,ds,
\qquad \text{ if the limit exists,}
$$
and $g(t,\omega)=0$ otherwise.  This representative is actually
predictable; that is, measurable with respect to the $\sigma$-algebra
generated by left continuous adapted processes. When identifying a
random variable taking values in a Bochner space with a process we will
tacitly assume that a jointly measurable element of the equivalence
class is taken so that the stochastic calculus is available.

\subsubsection{Martingales}
An important class of adapted processes is the martingales. Given a
filtration $\{ \calF(t)\}_{t \geq 0}$ on a probability space, an
adapted process $\{u(t)\}_{t \geq 0}$ with values in a Banach space $U$ is
an $\{ \calF(t)\}_{t \geq 0}$--martingale if at each time it is
integrable, $\bbE[\norm{u(t)}_U] < \infty$, and if it has conditionally
independent increments, $\bbE[u(t)-u(s) | \calF(s)] = 0$ when $s \leq
t$.  In particular, $\bbE[u(t)|\calF(s)] = \bbE[u(s)|\calF(s)] =
u(s)$; the second equality following since $u$ is adapted.  For $T >
0$ and $H$ a Hilbert space, we denote by ${\mathcal M}_T^2(H)$ the set
of all $H$-valued, square integrable martingales with continuous
paths.  If indistinguishable processes are considered as one process,
this is a Banach space when endowed with the norm
$$ 
\Vert X \Vert_{{\mathcal M}^2_T(H)} 
= \norm{X}_{L^2(\Omega, L^\infty[0,T;H])}
\equiv \Bigl( {\mathbb E}\bigl[
\sup_{t \in [0,T]} \Vert X(t) \Vert_{H}^2\bigr]\Bigr)^{1/2} .
$$ 
Note that the time at which the supremum is taken depends upon $\omega
\in \Omega$ and two processes $X$ and $Y$ are {\em indistinguishable}
if there exists a set $A \subset \Omega$ with $\bbP[A] = 1$ for which
$X(t,\omega) = Y(t,\omega)$ for all $\omega \in A$ and $t \in [0,T]$.

The quadratic variation $\langle X \rangle$ of a process $X \in {\mathcal M}_T^2(H)$,
defined next, plays a central role in the subsequent theory.

\begin{definition}\label{simn} 
  Let $T > 0$, $H$ be a separable Hilbert space, and $( \Omega, \calF,
  \{ \calF(t)\}_{t \geq 0}, \bbP)$ be a filtered probability space.
  The quadratic variation $\langle X \rangle$ of $X \in {\mathcal M}_T^2(H)$ is a
  symmetric, non--negative bilinear process $\langle X(t)
  \rangle : H \times H \rightarrow \Re$ satisfying:
  \begin{enumerate}
  \item (Adaptedness) For each $t \in [0,T]$ the real-valued
    random variable $\langle X(t) \rangle(u,v)$ is $\calF(t)$-measurable.

  \item (Continuity) $t \mapsto \langle X(t) \rangle(u,v)$ is
    continuous for every $\omega \in \Omega$ and $u,v\in H$.
    
  \item (Normalization) $\langle X(0) \rangle = 0$.

  \item (Monotonicity) $\langle X(t) \rangle (u,u) \geq \langle X(s)
    \rangle (u,u)$ for every $u \in H$ and $0 \leq s \leq t \leq T$.
    
  \item (Variation) The function $t \mapsto (X(t), u)_H (X(t), v)_H -
    \langle X(t) \rangle (u,v)$ is a continuous real--valued martingale
    for each pair $u,v \in H$.
  \end{enumerate}
\end{definition} 

Note that for each time $\langle X(t) \rangle$ is a semi--inner
product so is characterized by $\{\langle X(t) \rangle(u,u) \sst u \in
H\}$, or by the Riesz maps $L(t):H \rightarrow H$ for which
$(L(t)(u),v)_H = \langle X(t) \rangle (u,v)$. A ``standard Wiener
process'' (or Brownian motion) is a real-valued martingale $W \in
\calM^2_T(\Re)$ satisfying $W(0) = 0$, with $\bbE[W(t)] = 0$, and
quadratic variation $\langle W(t) \rangle = t$ for all $0 \leq t \leq T$.

The quadratic variation process appears in the isometry for Ito
integrals, and the statement of the Burkholder--Davis--Gundy (BDG)
inequalities.  The construction of the Ito integral, and a proof of the
BDG inequalities involve significant technical developments; however,
numerical schemes considered here involve processes taking values at
discrete times which eliminates much of the technical overhead.  Let
$\{t^n\}_{n=0}^N$ be a uniform partition of $[0,T]$ with time step
$\tau = T/N$, and $\{ {\mathcal F}^n\}_{n=0}^N$ be a (discrete)
filtration of $(\Omega, {\mathcal F}, {\mathbb P})$.  In this context
discrete Ito integrals take the form
\begin{equation} \label{eqn:discreteIto}
X^n_{\tau} = \sum_{m=1}^n g^{m-1}_{\tau} \xi^m_{\tau}, 
\qquad n=1,2, \ldots,N
\quad \text{ and } \quad X^0_{\tau} \equiv 0,
\end{equation}
where $g^{m-1}_\tau$ is an $\calF^{m-1}$--measurable random variable
with values in a Hilbert space $H$, and for each $m = 1,2, \ldots, N$
the increments $\{\xi^m_{\tau}\}_{n=1}^N$ are real--valued
random variables which satisfy the following standing assumptions. 

\begin{assumption} \label{ass:spde0} (with parameter $p \geq
  2$)  For each $N \in \bbN$ let $\{t^n\}_{n=0}^N$ be the uniform
  partition of $[0,T]$ with time step $\tau = T/N$. Then $\bigl(
  \Omega, \calF, \{\calF^n\}_{n=0}^N, \bbP \bigr)$ is a (discretely)
  filtered probability space and the real-valued random variables $\{
  \xi^n_\tau\}_{n=1}^N$ satisfy
  \begin{enumerate}
  \item (Zero average) ${\mathbb E}[\xi^n_\tau] = 0$.

  \item (Variance) ${\mathbb E}[|\xi^n_{\tau}|^2] = \tau \equiv T/N$.

  \item (Bounds) $\xi^n_{\tau} \in L^{p}(\Omega)$, and there exists a
    constant $C > 0$ such that ${\mathbb E}[|\xi^n_{\tau}|^{p}] \leq C
    \tau^{p/2}$.

  \item (Independence) $\xi^n_\tau$ is $\calF^n$-measurable and
    independent of $\{\calF^m\, \vert \, 0 \leq m \leq n-1\}$.
  \end{enumerate}
\end{assumption}

Increments of the form $\xi^n_\tau = W(t^n) - W(t^{n-1})$ with $W$ a
standard Wiener process on a filtration of $\bigl(
\Omega, \calF, \bbP \bigr)$ satisfy the above assumptions but lack
practical realization. In a numerical context, discrete random
variables $\{\xi^n_{\tau}\}_{n=1}^N$ taking values $\pm \sqrt{\tau}$
with the same probability of $1/2$, and $\calF^n$ the $\sigma$-algebra
generated by $\{\xi^m_\tau\}_{m=1}^n$ are a practical, convenient, and
admissible  choice satisfying Assumption~\ref{ass:spde0}. Setting
\begin{equation} \label{eqn:AB1}
W^0_{\tau} = 0
\qquad \mbox{and} \qquad
W^n_{\tau} = \sum_{m=1}^n \xi_{\tau}^m, 
\qquad n = 1,2, \ldots, N,
\end{equation}
the piecewise linear interpolant of $\{W^n_{\tau}\}_{n=0}^N$ is the
discrete Ito integral with $g^{m-1}_\tau \equiv 1$ and plays the role of a
standard Wiener process in the discrete setting.

Under Assumption~\ref{ass:spde0}, the
process $\{X^n_{\tau} \}_{n={0}}^N$ of equation \eqnref{:discreteIto} is
adapted to $\{\calF^n\}_{n=0}^N$, and
$$
\bbE[X^n_{\tau} - X^{n-1}_{\tau} | \calF^{n-1}] 
= \bbE[g^{n-1}_{\tau} \xi^n_{\tau}  | \calF^{n-1}] 
= g^{n-1}_{\tau} \bbE[\xi^n_{\tau}  | \calF^{n-1}] 
= 0.
$$
This shows that $\{X^n_{\tau}\}_{n={0}}^N$ is a (discrete) martingale;
the discrete Ito isometry is then immediate,
\begin{eqnarray*}
\bbE[ \normH{X^n_{\tau}}^2]
&=& \bbE \Bigl[\sum_{k,m = 1}^n 
(g^{k-1}_{\tau}, g^{m-1}_{\tau})_H \xi^k \xi^m_{\tau} \Bigr] \\
&=& \sum_{m = 1}^n 
\bbE\bigl[ \normH{g^{m-1}_{\tau}}^2 |\xi^m_{\tau}|^2 \bigr] +
2 \sum_{k < m} 
\bbE\bigl[ (g^{k-1}_{\tau}, g^{m-1}_{\tau})_H \xi^k_{\tau} \xi^m_{\tau} \bigr] \\
&=& 
\sum_{m = 1}^n  \bbE[ \normH{g^{m-1}_{\tau}}^2 ] \tau.
\end{eqnarray*}
The last line follows from Assumption \ref{ass:spde0}$_4$,
$$
\bbE[\normH{g^{m-1}_{\tau}}^2 |\xi^m_{\tau}|^2]
= \bbE[\normH{g^{m-1}_{\tau}}^2] \, \bbE[|\xi^m_{\tau}|^2]
= \bbE[\normH{g^{m-1}_{\tau}}^2] \tau,
$$
and when $k < m$ the cross terms vanish,
$$
\bbE\left[(g^{k-1}_{\tau}, g^{m-1}_{\tau})_H \xi^k_{\tau} \xi^m_{\tau} \right]
=\bbE\left[(g^{k-1}_{\tau}, g^{m-1}_{\tau})_H \xi^k_{\tau}\right] \, \bbE[\xi^m_{\tau} ]
=\bbE\left[(g^{k-1}_{\tau}, g^{m-1}_{\tau})_H \xi^k_{\tau} \right] \cdot 0. 
$$
A similar calculation shows that its discrete quadratic variation is
$$
\langle X^n \rangle (u,v) = \sum_{m=1}^n \tau (g^{m-1},u)_H (g^{m-1},v)_H,
\qquad n \geq 1.
$$
Note that in the discrete setting $\langle X^n \rangle (u,v)$ must be
$\calF^{n-1}$--measurable (predictable). The following
theorem shows that the quadratic and cross variations characterize
the Ito integral.

\begin{theorem} \label{thm:mglRepresentation}
  Let $U$ be a separable Banach space and $H$ a Hilbert space
  with $U \embed H \embed U'$ dense inclusions. Let $(\Omega, \calF,
  \{\calF(t)\}_{0 \leq t \leq T}, \bbP)$ be a filtered probability
  space and $X$, $g$, and $W$ be $U'$, $H$ and real--valued process
  respectively, with $X$ and $W$ continuous.
  Suppose that for each $v \in U$ the processes
  $$
  W(t), \quad
  W^2(t) - t, \quad
  (X(t), v), \quad
  (X(t),v)^2 - \int_0^t (g(s),v)_H^2 \, ds, \quad
  (X(t),v) W(t) - \int_0^t (g(s),v)_H \, ds,
  $$
  are all real-valued martingales.
  Then $W$ is a standard Wiener process and
  $$
  (X(t), v) = \int_0^t (g(s), v)_H \, dW(s), 
  \qquad v \in U.
  $$
\end{theorem}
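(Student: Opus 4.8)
\textit{Proof plan.} I would treat the two conclusions in turn. First, $W$: by hypothesis $W$ is a continuous real-valued martingale, and since $W^{2}(t)-t$ is a martingale, the uniqueness of the quadratic variation --- the only continuous, adapted, non-decreasing, null-at-zero process $\langle W\rangle$ for which $W^{2}-\langle W\rangle$ is a martingale (Definition~\ref{simn} with $H=\Re$) --- forces $\langle W(t)\rangle=t$. Together with the normalization $W(0)=0$ (whence $\bbE[W(t)]=0$ and $\bbE[W^{2}(t)]=t<\infty$, so $W\in\calM^{2}_{T}(\Re)$ by Doob's inequality), this is exactly the definition of a standard Wiener process recorded above, and the It\^o integral against $W$ is now available. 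Next fix $v\in U$, write $M(t)=(X(t),v)$ and $h(s)=(g(s),v)_{H}$, and read the remaining three hypotheses through the same uniqueness principle: $M$ is a continuous martingale with $\langle M\rangle_{t}=\int_{0}^{t}h^{2}\,ds$ and cross variation $\langle M,W\rangle_{t}=\int_{0}^{t}h\,ds$.

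Set $I(t):=\int_{0}^{t}h(s)\,dW(s)$, a continuous local martingale (note $\int_{0}^{T}h^{2}\,ds=\langle M\rangle_{T}<\infty$ a.s.). The standard calculus of stochastic integrals yields $\langle I\rangle_{t}=\int_{0}^{t}h^{2}\,d\langle W\rangle_{s}=\int_{0}^{t}h^{2}\,ds$, then $\langle I,W\rangle_{t}=\int_{0}^{t}h\,d\langle W\rangle_{s}=\int_{0}^{t}h\,ds$, and, by Kunita--Watanabe, $\langle I,M\rangle_{t}=\int_{0}^{t}h\,d\langle W,M\rangle_{s}=\int_{0}^{t}h^{2}\,ds$. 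Hence for $N:=M-I$, bilinearity of the bracket gives
$$
\langle N\rangle_{t}=\langle M\rangle_{t}-2\langle M,I\rangle_{t}+\langle I\rangle_{t}=\int_{0}^{t}h^{2}\,ds-2\int_{0}^{t}h^{2}\,ds+\int_{0}^{t}h^{2}\,ds=0 .
$$
A continuous local martingale with vanishing bracket is indistinguishable from its initial value, so $M(t)=I(t)+(X(0),v)$ for all $t$ a.s.; with the normalization $X(0)=0$ (the counterpart of $W(0)=0$, consistent with $X^{0}_{\tau}\equiv0$ in the discrete scheme) this is the claimed identity for that $v$. If a single exceptional null set is wanted, take a countable dense subset of $U$, obtain the identity there, and extend using that both sides are linear and continuous in $v$.

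The argument is conceptually short, and the only genuine work is routine stochastic-calculus bookkeeping: to justify the existence of $I$ and the formulas for $\langle I\rangle$, $\langle I,W\rangle$, $\langle I,M\rangle$ one localizes by stopping times to reduce to bounded integrands and $L^{2}$ martingales and then passes to the limit, and ``zero bracket implies constant'' is the standard consequence of $N^{2}-\langle N\rangle$ being a local martingale. The step I would single out as the crux is the first move in each half --- recognising that the bare assertion that $W^{2}(t)-t$ (respectively $(X(t),v)^{2}-\int_{0}^{t}(g,v)_{H}^{2}\,ds$, respectively $(X(t),v)W(t)-\int_{0}^{t}(g,v)_{H}\,ds$) is a martingale is, by uniqueness in the Doob--Meyer/quadratic-variation decomposition, precisely the identification of $\langle W\rangle$, $\langle M\rangle$, and $\langle M,W\rangle$. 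Once those three brackets are pinned down, the rest is the one-line computation above.
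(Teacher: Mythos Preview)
Your argument is correct and matches the paper's own proof sketch: both identify $\langle W\rangle$, $\langle (X,v)\rangle$, and $\langle (X,v),W\rangle$ from the martingale hypotheses via uniqueness of the bracket, then expand $\langle (X,v)-\int_0^\cdot(g,v)_H\,dW\rangle$ by bilinearity and the rule $\langle M,\int h\,dW\rangle=\int h\,d\langle M,W\rangle$ to get zero. You are more explicit than the paper about the $X(0)=0$ normalization (indeed unstated in the theorem but used in its application, where $X(t)=u(t)-u(0)-\int_0^t F\,ds$) and about the density argument for a single null set; these are welcome clarifications rather than a different route.
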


\begin{proof} (sketch)
  We show that the quadratic variation of $(X(t) - \int_0^t g \, dW, v)$
  vanishes using the following calculus for the quadratic variations
  of real-valued martingales $X$ and $Y$:
  \begin{itemize}
  \item $\langle X + Y \rangle = \langle X \rangle + 2 \langle X, Y
    \rangle + \langle Y \rangle$, where the cross variation
    $\langle X, Y \rangle$ is determined from the parallelogram law,
    $
    4 \langle X, Y \rangle = \langle X + Y \rangle - \langle X - Y \rangle.
    $

  \item If $Y(t) = \int_0^t g \, dW$ then $\langle X, Y
      \rangle(t) = \int_0^t g\,d \langle X,W \rangle(t)$.
  \end{itemize}
  Using this calculus for the adapted process $(X(t) - \int_0^t g \,
  dW, v)$ gives the result.
  \begin{eqnarray*}
    \lefteqn{\left\langle(X,v)
        -\int_0^\cdot(g,v)_H\,dW \right\rangle(t)} \\
    &=&\left\langle(X,v)\right\rangle(t)-2\left\langle(X,v),
      \int_0^\cdot(g,v)_H\,dW \right\rangle(t)
    +\left\langle\int_0^\cdot(g,v)_H\,dW\right\rangle(t)
    \\
    &=&\left\langle(X,v)\right\rangle(t)-2\int_0^t(g,v)_H \,
    d\left\langle(X,v),W\right\rangle
    +\left\langle\int_0^\cdot(g,v)_H\,dW\right\rangle(t)
    \\
    &=&\int_0^t(g(s),v)_H^2\,ds-2\int_0^t(g(s),v)_H^2\,ds
    +\int_0^t(g(s),v)_H^2\,ds=0.
  \end{eqnarray*}
  The middle term takes the form shown 
  since
    \begin{align*}
      \langle (X,v) + W \rangle(t) 
      &= \int_0^t (g,v)^2_H \, dt + 2 \int_0^t (g,v)_H \, dt + t ,
      \qquad \text{ and }\\
      \langle (X,v) - W \rangle(t) 
      &= \int_0^t (g,v)^2_H \, dt - 2 \int_0^t (g,v)_H \, dt + t .
    \end{align*}
    Then $\langle (X,v), W \rangle (t) = \int_0^t (g,v)_H \, dt$, so that
    $d \langle (X,v), W \rangle (t) = (g(t),v)_H \, dt.$
  \end{proof}

The (discrete) BDG inequality, stated next, shows that moments of a
discrete $\{\calF^n\}_{n=0}^N$-martingale taking values in a Hilbert
space may be bounded by their quadratic variations, \cite[Remark
3.3]{Pisier_75}.

\begin{theorem}[Burkholder-Davis-Gundy (BDG)] \label{thm:BDG} Let
  $(\Omega, \calF, \bbP)$ be a probability space with (discrete)
  filtration $\{\calF^n\}_{n={0}}^N$ and let $\{X^n_{\tau}\}_{n=0}^N$
  with $X^0_{\tau}\equiv 0$ be a (discrete)
  $\{\calF^n\}_{n={0}}^N$-martingale taking values in a separable
  Hilbert space $H$.  Then for each $p \geq 1$ there exist constants
  $0 < c_p < C_p$ such that
  $$
  c_p \bbE\left[
    \left(\sum_{n=1}^N \normH{X^n_{\tau}-X^{n-1}_{\tau}}^2 \right)^{p/2} \right]
  \leq \bbE\left[ {\max_{0 \leq n \leq N}} \normH{X^n_{\tau}}^p \right]
  \leq C_p \bbE\left[
    \left(\sum_{n=1}^N \normH{X^n_{\tau}-X^{n-1}_{\tau}}^2 \right)^{p/2} \right].
   $$
\end{theorem}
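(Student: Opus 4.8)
\emph{(Outline.)} The plan is to deduce the Hilbert‑valued inequality from the \emph{scalar} Burkholder--Davis--Gundy inequality by an elementary algebraic identity together with a quadratic‑inequality argument, so that no probabilistic input beyond the one‑dimensional case is needed. Write $d^n = X^n_\tau - X^{n-1}_\tau$, put $X^*_N = \max_{0\le n\le N}\normH{X^n_\tau}$, and let $S_N^2 = \sum_{m=1}^N\normH{d^m}^2$ be the sum of squared increments that appears at both ends of the asserted inequality. Expanding $\normH{X^n_\tau}^2 = \normH{X^{n-1}_\tau}^2 + 2(X^{n-1}_\tau,d^n)_H + \normH{d^n}^2$ and summing shows that
\[
2M^n \;:=\; \normH{X^n_\tau}^2 - S_n^2 \;=\; 2\sum_{m=1}^n (X^{m-1}_\tau, d^m)_H
\]
defines a real‑valued $\{\calF^n\}$‑martingale with $M^0 = 0$, since $X^{m-1}_\tau$ is $\calF^{m-1}$‑measurable and $\bbE[d^m\mid\calF^{m-1}] = 0$. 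Because $\normH{X^n_\tau}^2 = S_n^2 + 2M^n$, because $n\mapsto S_n$ is nondecreasing, and because $S_N^2 = \normH{X^N_\tau}^2 - 2M^N$, one reads off the two‑sided estimate
\[
\max_{0\le n\le N}\normH{X^n_\tau}^2 \;\le\; S_N^2 + 2\max_{0\le n\le N}|M^n|,
\qquad
S_N^2 \;\le\; (X^*_N)^2 + 2\max_{0\le n\le N}|M^n| ,
\]
so the whole theorem reduces to controlling $\bbE\bigl[(\max_n|M^n|)^{p/2}\bigr]$.

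For that I would first bound the sum of squared increments of the scalar martingale $M$ by Cauchy--Schwarz in $H$: $\sum_{m=1}^N (X^{m-1}_\tau,d^m)_H^2 \le (X^*_N)^2 S_N^2$. The classical Burkholder--Davis--Gundy inequality for real‑valued discrete martingales (valid for every exponent in $(0,\infty)$), applied with exponent $p/2$, then gives a constant $C$ with
\[
\bbE\Bigl[\bigl(\max_n|M^n|\bigr)^{p/2}\Bigr]
\;\le\; C\,\bbE\Bigl[\bigl(\textstyle\sum_{m=1}^N (X^{m-1}_\tau,d^m)_H^2\bigr)^{p/4}\Bigr]
\;\le\; C\,\bbE\bigl[(X^*_N)^{p/2}S_N^{p/2}\bigr]
\;\le\; C\,\bbE\bigl[(X^*_N)^p\bigr]^{1/2}\bbE\bigl[S_N^p\bigr]^{1/2},
\]
the last step again by Cauchy--Schwarz. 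Writing $a=\bbE[(X^*_N)^p]^{1/2}$ and $b=\bbE[S_N^p]^{1/2}$, raising the two displayed estimates to the power $p/2$, using $(u+v)^{p/2}\le\max(1,2^{p/2-1})(u^{p/2}+v^{p/2})$, and taking expectations yields, for explicit constants $\alpha_p,\beta_p>0$,
\[
a^2 \le \alpha_p b^2 + \beta_p ab , \qquad b^2 \le \alpha_p a^2 + \beta_p ab .
\]
Each is a quadratic inequality in the ratio $a/b$ (the degenerate case $b=0$ forces $X^n_\tau\equiv 0$, hence is trivial), so $a\le\gamma_p b$ and $b\le\gamma_p a$ with $\gamma_p=\tfrac12(\beta_p+\sqrt{\beta_p^2+4\alpha_p})>1$; these are exactly the two asserted bounds, with $C_p=\gamma_p^2$ and $c_p=\gamma_p^{-2}<1<C_p$.

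The only point needing care is that the quadratic inequalities are vacuous unless $a$ and $b$ are finite; but in the discrete, fixed‑$N$ setting this is automatic once one of them is, since Cauchy--Schwarz gives $X^*_N\le\sqrt N\,S_N$ and conversely $S_N\le\sum_{m=1}^N\normH{d^m}\le 2N\,X^*_N$, so $a<\infty\iff b<\infty$, while if both are infinite the asserted inequalities hold trivially. The main obstacle is therefore entirely the scalar Burkholder--Davis--Gundy inequality --- the surrounding algebra is routine --- and in particular its validity at the sub‑unit exponent $p/2$ when $1\le p<2$, where a soft $L^2$/Doob argument no longer suffices and one needs either the Davis decomposition or a good‑$\lambda$ inequality comparing the stopping times $\inf\{n:|M^n|>\lambda\}$ and $\inf\{n:\sum_{m\le n}(M^m-M^{m-1})^2>\delta^2\lambda^2\}$ with the $p=2$ estimate. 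Alternatively one can skip the reduction and simply invoke the general Banach‑space result of \cite{Pisier_75}, since a Hilbert space has martingale type and cotype $2$.
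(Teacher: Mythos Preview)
Your argument is correct. The paper does not actually prove this theorem: it simply cites \cite[Remark~3.3]{Pisier_75}, which gives the inequality for Banach spaces with martingale type and cotype~$2$ --- an alternative you yourself mention at the end. Your route is genuinely different and more self-contained: the squaring identity $\normH{X^n_\tau}^2 = S_n^2 + 2M^n$ collapses the Hilbert-valued problem to the \emph{scalar} BDG inequality applied to the real martingale $M^n=\sum_{m\le n}(X^{m-1}_\tau,d^m)_H$ at exponent $p/2$, after which the quadratic-inequality bootstrap in $a=\bbE[(X^*_N)^p]^{1/2}$ and $b=\bbE[S_N^p]^{1/2}$ closes the argument with constants independent of $N$. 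The gain is that only the one-dimensional Davis/good-$\lambda$ machinery is needed, and the Hilbert structure enters solely through the parallelogram expansion and Cauchy--Schwarz; the cost is that the argument is specific to Hilbert spaces and does not reach the Banach-space generality of Pisier's result. Your treatment of the finiteness issue --- using the crude $N$-dependent comparisons $X^*_N\le\sqrt{N}\,S_N$ and $S_N\le 2N\,X^*_N$ only to ensure $a<\infty\iff b<\infty$, not for the final constants --- is exactly the right way to make the bootstrap rigorous.
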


When the martingale is the discrete Ito integral
\eqnref{:discreteIto}, the moments of the quadratic variation can be
bounded by the Bochner norms of {$\{g^n_{\tau}\}_{n=0}^{N-1}$}, which
is the content of the following lemma.

\begin{lemma} \label{lem:mglVariation} Let $(\Omega, \calF, \bbP)$ be
  a probability space with (discrete) filtration $\{\calF^n\}_{n=0}^N$
  and let $\{X^n_{\tau}\}_{n=0}^N$ be the discrete ($\{\calF^n\}_{n=0}^N$-adapted) Ito integral in
  \eqnref{:discreteIto} taking values in a Hilbert space $H$ with data
  $\{g^n_\tau\}_{n=0}^{N-1} \subset L^p(\Omega,H)$ and increments
  $\{\xi^n_\tau\}_{n=1}^N$ satisfying Assumption \ref{ass:spde0}, with
  $2 \leq p$.
  Then
  $$
   \bbE\left[
     \left(\sum_{m=1}^n \normH{g^{m-1}_{\tau} \xi^m_\tau}^2 
     \right)^{p/2} \right]
   \leq C_p (n \tau)^{p/2 - 1} 
   \sum_{m=1}^n \tau \norm{g^{m-1}_{\tau}}_{L^p(\Omega,H)}^p
   \qquad {n=1,\ldots,N},
   $$
   where $C_p>0$ is a constant depending upon $p$ and
   the constant in Assumption \ref{ass:spde0}$_3$.
\end{lemma}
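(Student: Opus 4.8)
The plan is to reduce the estimate to a term-by-term bound after pulling the exponent $p/2$ inside the sum. First I would observe that $\normH{g^{m-1}_\tau \xi^m_\tau}^2 = \normH{g^{m-1}_\tau}^2\,|\xi^m_\tau|^2$, so the quantity to be controlled is $\bbE\bigl[\bigl(\sum_{m=1}^n \normH{g^{m-1}_\tau}^2|\xi^m_\tau|^2\bigr)^{p/2}\bigr]$. Since $p\ge 2$, the map $x\mapsto x^{p/2}$ is convex, so Jensen's inequality applied to the uniform average over $m\in\{1,\dots,n\}$ (equivalently, the power--mean inequality, or H\"older) gives, for nonnegative reals $a_1,\dots,a_n$, the pathwise bound $\bigl(\sum_m a_m\bigr)^{p/2}\le n^{p/2-1}\sum_m a_m^{p/2}$. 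Applying this with $a_m=\normH{g^{m-1}_\tau}^2|\xi^m_\tau|^2$ and then taking expectations yields
$$
\bbE\Bigl[\Bigl(\sum_{m=1}^n \normH{g^{m-1}_\tau}^2|\xi^m_\tau|^2\Bigr)^{p/2}\Bigr]
\le n^{p/2-1}\sum_{m=1}^n \bbE\bigl[\normH{g^{m-1}_\tau}^p\,|\xi^m_\tau|^p\bigr].
$$

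Next I would exploit the adaptedness/independence structure exactly as in the discrete It\^o isometry computation carried out above: by Assumption~\ref{ass:spde0}$_4$, $g^{m-1}_\tau$ is $\calF^{m-1}$--measurable while $\xi^m_\tau$ is independent of $\calF^{m-1}$, so $\normH{g^{m-1}_\tau}^p$ and $|\xi^m_\tau|^p$ are independent nonnegative random variables and $\bbE[\normH{g^{m-1}_\tau}^p|\xi^m_\tau|^p]=\bbE[\normH{g^{m-1}_\tau}^p]\,\bbE[|\xi^m_\tau|^p]$. The hypothesis $g^{m-1}_\tau\in L^p(\Omega,H)$ makes the first factor finite, and Assumption~\ref{ass:spde0}$_3$ bounds the second by $C\tau^{p/2}$; hence each summand is at most $C\tau^{p/2}\norm{g^{m-1}_\tau}_{L^p(\Omega,H)}^p$.

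Finally I would collect constants. Combining the two bounds,
$$
\bbE\Bigl[\Bigl(\sum_{m=1}^n \normH{g^{m-1}_\tau \xi^m_\tau}^2\Bigr)^{p/2}\Bigr]
\le C\,n^{p/2-1}\tau^{p/2}\sum_{m=1}^n \norm{g^{m-1}_\tau}_{L^p(\Omega,H)}^p,
$$
and the asserted form follows from the identity $n^{p/2-1}\tau^{p/2}=(n\tau)^{p/2-1}\tau$, taking $C_p=C$ (the constant from Assumption~\ref{ass:spde0}$_3$).

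I do not expect a serious obstacle here. The only points needing care are that convexity of $x\mapsto x^{p/2}$ is exactly where $p\ge 2$ enters (for $p<2$ the inequality reverses), and that the independence factorization must be applied before, not after, invoking the moment bound on the increments. Note that the Burkholder--Davis--Gundy inequality (Theorem~\ref{thm:BDG}) is not used in this lemma; rather, this estimate is the complementary ingredient that, combined with BDG, bounds moments of the discrete It\^o integral \eqnref{:discreteIto} by Bochner norms of the integrand data $\{g^n_\tau\}$.
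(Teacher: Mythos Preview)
Your argument is correct. The pathwise power--mean (Jensen/H\"older) bound $\bigl(\sum_m a_m\bigr)^{p/2}\le n^{p/2-1}\sum_m a_m^{p/2}$ followed by the independence factorization and Assumption~\ref{ass:spde0}$_3$ yields exactly the claimed inequality, with $C_p$ equal to the constant in Assumption~\ref{ass:spde0}$_3$.

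Your route is genuinely different from, and more elementary than, the paper's. The paper argues indirectly: it invokes the lower bound in the discrete BDG inequality (Theorem~\ref{thm:BDG}) to dominate the quadratic--variation moment by the maximal moment $\bbE\bigl[\max_{k\le n}\normH{X^k_\tau}^p\bigr]$, and then applies the Burkholder--Rosenthal inequality to control the latter by a predictable--variation term plus a jump term, each of which is then estimated using independence and the moment bound on the increments. Your direct approach bypasses both BDG and Burkholder--Rosenthal entirely; in fact the paper itself uses your elementary step verbatim later, inside the proof of Lemma~\ref{lem:bound}, when it writes $\bigl(\sum_m \normH{g^{m-1}}^2|\xi^m|^2\bigr)^{p/2}\le n^{p/2-1}\sum_m \normH{g^{m-1}}^p|\xi^m|^p$ and then factors the expectation. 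What the paper's longer argument buys is an explicit bound on the maximal martingale moment (the ``middle term'' of BDG) that is sometimes useful on its own, but for the stated lemma your three--line proof suffices and gives a cleaner constant.
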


\begin{proof} (sketch) 
The discrete process with $X^0_{\tau} = 0$ and $X^n_{\tau} = \sum_{k=1}^n
g_\tau^{k-1} \xi^k_\tau$ for $n = 1,2,\ldots$ is a (discrete) martingale,
and the Burkholder--Rosenthal inequality \cite[Theorem 5.50]{Pi16}
bounds the middle term in the BDG inequality as
$$
\Bbb E\,\left[\max_{1\le k\le n}
\normH{\sum_{m=1}^k g^{m-1}_\tau \xi^m_\tau}^p \right]
\leq \beta_p \bbE \left[\sum_{k=1}^n 
\bbE\left[\normH{g^{k-1} \tau\xi^k_\tau}^2 \sst
  \calF^{k-1}\right]\right]^{p/2}                 
+\beta_p \bbE\left[\max_{1\le k\le n}
\normH{g^{k-1}_\tau \xi^k_\tau}^p\right],
$$
where $\beta_p$ is a constant depending only upon $p \geq 2$.
Since $g_\tau^{k-1}$ is $\calF^{k-1}$-measurable and $\xi^k_\tau$
is independent of $\calF^{k-1}$ it follows that
\begin{align*}                                                                  
\bbE\left[\sum_{k=1}^n 
\bbE\left[\normH{g^{k-1}_\tau \xi^k_\tau}^2 \sst \calF^{k-1}\right] 
\right]^{p/2}
&= \bbE\left[ \sum_{k=1}^n \normH{g^{k-1}_\tau}^2
\bbE\left[(\xi^k_\tau)^2\right] \right]^{p/2} \\ 
&\leq C \tau^{p/2} 
\bbE\left[\sum_{k=1}^n \normH{g^{k-1}_\tau}^2\right]^{p/2} \\
&\leq C \tau^{p/2}n^{p/2-1}
\bbE\left[\sum_{k=1}^n \normH{g^{k-1}_\tau}^p\right],
\end{align*}
where $C$ is the constant  in Assumption \ref{ass:spde0}$_3$.
The bound on second term is direct,
\begin{align*}                                                                  
\bbE\left[\max_{1\le k\le n} 
\normH{g^{k-1}_\tau\xi^k_\tau}^p\right]
\leq \bbE\left[\sum_{k=1}^n 
\normH{g^{k-1}_\tau\xi^k_\tau}^p \right] 
=\sum_{k=1}^n\Bbb E\left[\normH{g^{k-1}_\tau}^p \right]
\bbE\left[(\xi^k_\tau)^p \right] 
\leq C\tau^{p/2} \sum_{k=1}^n\bbE\left[ \normH{g^{k-1}_\tau}^p \right].
\end{align*}
\end{proof}

\subsubsection{Convergence in Law}
Below we construct numerical schemes whose solutions converge in law
to a limit.  For this purpose,
it is necessary to show that
solutions of a discrete approximation of the equation (\ref{eqn:spde}) will pass to
solutions of the SPDE (\ref{eqn:spde}) with this mode of convergence. In the deterministic case,
the following two properties are used ubiquitously to identify limits:
\begin{itemize}
\item Norm bounded subsets of reflexive Banach spaces are weakly
  sequentially compact. That is, if $A \subset U$ is a norm bounded
  set of a reflexive Banach space $U$, then there exist a sequence
  $\{u_n\}_{n=1}^\infty \subset A$ and $u$ in the closed convex hull
  of $A$ such that $u_n \weak u$.

\item Continuous convex functions $\psi:U \rightarrow \Re$ are
  sequentially weakly lower semi--continuous. That is, if
  $\{u_n\}_{n=1}^\infty \subset U$ and $u_n \weak u$ then $\psi(u)
  \leq \liminf_{n\rightarrow \infty} \psi(u_n)$.
\end{itemize}
We present analogous results for random variables with convergence in
law in place of weak convergence. 

If $(\Omega, \calF, \bbP)$ is a probability space and $X:(\Omega,\calF)
\rightarrow (\bbX, \calB(\bbX))$ is a random variable with values in
the topological space $\bbX$ with its Borel $\sigma$-algebra
$\calB(\bbX)$, then the law of $X$ on $\bbX$ is the measure
$$
\calL(X)[B] = \bbP[\omega \in \Omega \sst X(\omega) \in B],
\qquad B \in \calB(\bbX).
$$
If $\{X_n\}_{n=1}^\infty$ is a sequence of such random variables, the
laws converge (weakly) to the measure $\bbPt$ on $(\bbX, \calB(\bbX))$,
and we write $\calL(X_n) \Rightarrow \bbPt$, iff
$$
\bbEt[\psi]
\equiv \int_{\bbX} \psi(x) \, d\bbPt(x)
= \lim_{n \rightarrow \infty} \bbE[\psi \circ X_n]
\qquad \psi \in C_b(\bbX),
$$
where $C_b(\bbX)$ denotes the set of bounded continuous real-valued
functions on $\bbX$. In the current context $\bbX$ will typically be a
product of spaces; for example, 
\begin{align*}
\bbX &= G[0,T;U'] \cap \LrU_{weak} \times \LqpUp,  \\
\text{ or } \qquad 
\bbX &= G[0,T;U'] \cap \LrU_{weak} \times \LqpUp_{weak},
\end{align*}
where $\LrU_{weak}$ and $\LqpUp_{weak}$ denote the spaces $\LrU$ and
$\LqpUp$ endowed with the weak topology. 
The space $C_b(\bbX)$ has insufficient functions to identify the
limits when the factor spaces have the weak topology; however, the
lemma below shows that in many situations a larger class of test
functions is available when the sequence of laws are tight.

\begin{definition} \label{def:tight} Let $\bbX$ be a topological space
  and $\calB(\bbX)$ denote its Borel $\sigma$-algebra. 
  \begin{itemize}
  \item A sequence of probability measures $\{\bbP_n\}_{n=1}^\infty$
    on $(\bbX, \calB(\bbX))$ is tight if for every $\epsilon > 0$
    there exists a compact set $K_\epsilon \subset \bbX$ for which
    $\bbP_n[K_\epsilon] \geq 1 - \epsilon$ for all $n = 1, 2, \ldots$.

  \item A sequence of random variables $\{X_n\}_{n=1}^\infty$ taking
    values in $\bbX$ is tight if their laws $\{\calL(X_n)\}_{n=1}^\infty$
    are tight.
  \end{itemize}
\end{definition}

Tight subsets of probability measures on separable metric spaces 
play a similar role to norm bounded sequences in reflexive Banach spaces
in the sense that they are both weakly sequentially compact.

\begin{lemma}\label{lem:mapping_theorem}
  Let $\mathbb X$ be a topological space with a countable sequence of
  continuous functions separating points and $\{\bbP_k\}_{k=1}^\infty$
  be tight on $\mathbb X$ and $\bbP_k\Rightarrow \bbP$.
\begin{enumerate}
\item Let $\zeta_k,\zeta:\mathbb X\to\Bbb R$ be Borel measurable for
  $k\in\Bbb N$. Define
  $$
  N=\{x\in{\mathbb X} \sst \exists\{x_k\},\,x_k\to x\text{ in }\mathbb
  X\text{ such that }\{\zeta_k(x_k)\}\text{ does not converge to
  }\zeta(x)\}
  $$
  and assume that $\bbP^*[N]=0$, i.e.
  $
  \inf\,\{\bbP[B]: N\subseteq B\in \calB(\mathbb X)\}=0.
  $
  
  Then $\bbP_k[\zeta_k\in\cdot]\Rightarrow \bbP[\zeta\in\cdot]$ and if
  $$
  \lim_{R\to\infty}\left[\sup_k\int_{[|\zeta_k|>R]}|\zeta_k|\,d\bbP_k\right] 
  = 0
  \qquad \text{ then } \qquad
  \lim_{k\to\infty}\int_{\mathbb X}\zeta_k\,d\bbP_k
  = \int_{\mathbb X}\zeta\,d\bbP.
  $$
  In particular, if $\epsilon  > 0$ and
  $$
  \sup_k\int_{\mathbb X}|\zeta_k|^{1+\varepsilon}\,d\bbP_k < \infty
  \qquad \text{ then } \qquad
  \lim_{k\to\infty}\int_{\mathbb X}\zeta_k\,d\bbP_k
  = \int_{\mathbb X}\zeta\,d\bbP.
  $$

\item Let $\zeta:\mathbb X\to[0,\infty]$ be such that $[\zeta\le t]
  \equiv \{x \in \mathbb X \sst \zeta(x) \leq t\}$ is sequentially closed
  for every $t\ge 0$. Then $\zeta$ is $\bbP$-measurable as well as
  $\bbP_k$-measurable for every $k\ge 1$ and
  $$
  \int_{\mathbb X}\zeta\,d\bbP\le\liminf_{k\to\infty}\int_{\mathbb
    X}\zeta\,d\bbP_k.
  $$
\end{enumerate}
\end{lemma}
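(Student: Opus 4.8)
The plan is to deduce both assertions from the Skorokhod representation theorem in its version for nonmetric spaces (Jakubowski): since $\bbX$ carries a countable family of continuous real-valued functions separating its points and the tight sequence $\{\bbP_k\}$ satisfies $\bbP_k\Rightarrow\bbP$, one obtains a probability space $(\bar{\Omega},\bar{\calF},\bar{\bbP})$ (say $[0,1]$ with Lebesgue measure) carrying $\bbX$-valued measurable maps $Y_k,Y$ with $\calL(Y_k)=\bbP_k$, $\calL(Y)=\bbP$, and $Y_k\to Y$ $\bar{\bbP}$-almost surely in the topology of $\bbX$, at least along any prescribed subsequence. Throughout I would combine this with the subsequence principle: a full sequence converges once every subsequence has a further subsequence converging to the fixed candidate limit.

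First I would clear away the measurability technicalities using tightness. Tightness supplies compacts $K_m\subset\bbX$ with $\bbP_k[K_m]\ge1-1/m$ for all $k$; since $\bbX$ is Hausdorff each $K_m$ is closed, so $\bbP[K_m]\ge1-1/m$ as well, and all of $\bbP,\bbP_1,\bbP_2,\dots$ are concentrated on the $\sigma$-compact set $\bbX_0=\bigcup_m K_m$. The separating family embeds each compact $K_m$ homeomorphically into $\bbR^{\mathbb N}$, so every $K_m$ is a compact metric space and hence its sequentially closed subsets are closed; consequently $[\zeta\le t]\cap\bbX_0=\bigcup_m([\zeta\le t]\cap K_m)$ is Borel for each $t\ge0$, so $\zeta$ is Borel on $\bbX_0$ and therefore measurable for the completions of $\calB(\bbX)$ under $\bbP$ and under each $\bbP_k$ --- the measurability claim of part~2. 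The hypothesis on $[\zeta\le t]$ also makes $\zeta$ sequentially lower semicontinuous on $\bbX$: if $x_n\to x$ and $\ell=\liminf_n\zeta(x_n)<\infty$, then for every $t>\ell$ infinitely many $x_n$ lie in the sequentially closed set $[\zeta\le t]$, forcing $x\in[\zeta\le t]$, and letting $t\downarrow\ell$ gives $\zeta(x)\le\ell$. Finally, the set $N$ of part~1 is negligible under the representation: for any Borel $B\supseteq N$, the event $\{Y\in N\}\subseteq\{Y\in B\}$ has $\bar{\bbP}$-measure $\bbP[B]$, and $\inf_B\bbP[B]=\bbP^*[N]=0$, so $\{Y\in N\}$ lies in a $\bar{\bbP}$-null set.

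For part~1, I would fix a subsequence, extract a further one along which $Y_k\to Y$ $\bar{\bbP}$-a.s.\ in $\bbX$, and observe that on the a.s.\ event where additionally $Y\notin N$ the very definition of $N$ forces $\zeta_k(Y_k)\to\zeta(Y)$; hence $\zeta_k(Y_k)\to\zeta(Y)$ $\bar{\bbP}$-a.s., so in law, i.e.\ $\bbP_k[\zeta_k\in\cdot]\Rightarrow\bbP[\zeta\in\cdot]$ along the sub-subsequence, and the full sequence follows since the limit is fixed. The uniform-integrability hypothesis $\lim_{R\to\infty}\sup_k\int_{[|\zeta_k|>R]}|\zeta_k|\,d\bbP_k=0$ says exactly that $\{\zeta_k(Y_k)\}_k$ is uniformly integrable on $\bar{\Omega}$ (hence bounded in $L^1$), so Vitali's convergence theorem upgrades the a.s.\ convergence to $\int\zeta_k\,d\bbP_k=\bar{\bbE}[\zeta_k(Y_k)]\to\bar{\bbE}[\zeta(Y)]=\int\zeta\,d\bbP$, again first along the sub-subsequence and then for the whole sequence. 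The ``in particular'' clause is immediate from $\int_{[|\zeta_k|>R]}|\zeta_k|\,d\bbP_k\le R^{-\varepsilon}\sup_j\int_{\bbX}|\zeta_j|^{1+\varepsilon}\,d\bbP_j\to0$.

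For part~2, with $\zeta$ already known to be measurable, I would choose a subsequence realizing $\liminf_k\int_{\bbX}\zeta\,d\bbP_k$, extract the representation $Y_k\to Y$ $\bar{\bbP}$-a.s.\ in $\bbX$ along a further subsequence, apply sequential lower semicontinuity of $\zeta$ to get $\zeta(Y)\le\liminf_k\zeta(Y_k)$ $\bar{\bbP}$-a.s., and conclude with Fatou's lemma:
$$
\int_{\bbX}\zeta\,d\bbP=\bar{\bbE}[\zeta(Y)]\le\bar{\bbE}\Bigl[\liminf_k\zeta(Y_k)\Bigr]\le\liminf_k\bar{\bbE}[\zeta(Y_k)]=\liminf_k\int_{\bbX}\zeta\,d\bbP_k ,
$$
the final equality holding by the choice of subsequence. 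The hard part is really only the nonmetric Skorokhod representation theorem itself; once it and the tightness reduction to the $\sigma$-compact carrier $\bbX_0$ (whose compact pieces are metrizable) are in hand, every step above is a one-line almost-sure argument, and the sole nuisance is the repeated passage to sub-subsequences.
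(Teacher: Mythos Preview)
Your proof is correct but takes a genuinely different route from the paper. The paper argues directly via the Portmanteau theorem: for a closed $V\subset\Re$ and a compact $C_\varepsilon$ coming from tightness, it bounds $\limsup_k\bbP_k[\zeta_k\in V]$ by $\varepsilon+\bbP\bigl[\bigcap_n\overline{\bigcup_{j\ge n}[\zeta_j\in V]\cap C_\varepsilon}\bigr]$ and then shows (using metrizability of $C_\varepsilon$ and the definition of $N$) that this intersection is contained in $[\zeta\in V]\cup N$; part~2 is handled analogously by applying Portmanteau to the closed set $[\zeta\le t]\cap C_\varepsilon$ and then integrating via the layer--cake formula and Fatou. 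No representation theorem is invoked.

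Your approach trades that set-theoretic bookkeeping for Jakubowski's nonmetric Skorokhod theorem: once almost-sure convergence of representations is in hand, both parts reduce to one-line applications of Vitali and Fatou. The cost is that Jakubowski's theorem is itself a substantial result (and, as you acknowledge, only gives subsequential representations, so you must thread the subsequence principle through every step). The paper's argument is more self-contained and lighter in prerequisites; yours is conceptually more transparent and makes the mechanism --- ``a.s.\ convergence plus integrability'' --- explicit. Both exploit the same structural fact that compacts in $\bbX$ are metrizable, which you use to establish measurability in part~2 and which the paper uses to replace closures by sequential closures inside $C_\varepsilon$.
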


This lemma may be viewed as an extension of the classical Portmanteau
theorem and is similar to the mapping theorem in \cite[Theorem
2.7]{Billingsley99}. We provide a proof of this result in the
Appendix.  The following corollary uses this lemma to show that
sequentially continuous test functions are available in the current
setting. The class of weakly sequentially continuous functions is
substantially larger than the weakly continuous functions since weakly
convergent sequences are norm bounded while neighborhoods in the weak
topology are not.

\begin{corollary} \label{cor:Law} Let $(\Omega, \calF, \bbP)$ be a
  probability space, $1 < p < \infty$, and $\calU$ be a separable
  reflexive Banach space, and let $\calU_{weak}$ denote
  $\calU$ endowed with the weak topology.
  \begin{itemize}
  \item Let $\psi:\calU \rightarrow \Re$ be weakly sequentially continuous. If
    the laws of $\{u_n\}_{n=1}^\infty$ converge on $\bbX = \calU_{weak}$
    to a measure $\bbPt$ and
    $\{\psi(u_n)\}_{n=1}^\infty$ is bounded in $L^p(\Omega)$, then
    $\psi(u)$ is integrable on $(\bbX, \calB(\bbX), \bbPt)$ and
    $$
    \bbEt\left[ \psi(u) \vph\right] 
    = \lim_{n\rightarrow \infty} \bbE\left[\psi(u_n) \vph\right].
    $$

  \item Let $\psi:\calU \rightarrow \Re$ be continuous, convex, and
    bounded below.  If the laws of $\{u_n\}_{n=1}^\infty$ converge on
    $\bbX = \calU_{weak}$ to a measure $\bbPt$, then $\psi(u)$ is
    measurable on $(\bbX, \calB(\bbX))$ and
    $$
    \bbEt\left[ \psi(u) \vph\right] 
    \leq \liminf_{n\rightarrow \infty} \bbE\left[ \psi(u_n) \vph\right].
    $$
  \end{itemize}
\end{corollary}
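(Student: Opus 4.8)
The plan is to obtain both statements as direct applications of Lemma~\ref{lem:mapping_theorem} with $\bbX=\calU_{weak}$, once I have checked that $\calU_{weak}$ satisfies the structural hypothesis of that lemma. Since $\calU$ is reflexive, $(\calU')'=\calU''=\calU$ is separable, and a Banach space whose dual is separable is itself separable, so $\calU'$ is separable; choosing a countable norm-dense set $\{f_j\}_{j\ge1}\subset\calU'$ then gives a countable family of $\calU_{weak}$-continuous functions which, by Hahn--Banach, separates points of $\calU$. Separability of $\calU'$ also makes every closed ball $\bar B(0,n)\subset\calU$ metrizable in the weak topology; since balls are convex and norm-closed they are weakly closed (Mazur), hence Borel in $\calU_{weak}$. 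I would use these two facts throughout to pass measurability and (sequential) continuity questions to the metrizable balls and then assemble the conclusions over $\calU=\bigcup_n\bar B(0,n)$. The one hypothesis of Lemma~\ref{lem:mapping_theorem} not literally written in the corollary is tightness of $\{\calL(u_n)\}$ on $\calU_{weak}$; this is the standing assumption under which ``the laws converge on $\calU_{weak}$'' is to be read (in the applications it is produced by a uniform moment bound $\sup_n\bbE[\norm{u_n}_\calU^r]<\infty$ together with the weak compactness of closed balls in the reflexive space $\calU$), so I would simply invoke it.

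For the first bullet I would apply Lemma~\ref{lem:mapping_theorem}(1) with $\zeta_k\equiv\zeta\equiv\psi$, $\bbP_k=\calL(u_n)$, and $\bbP=\bbPt$. First, $\psi$ is Borel on $\calU_{weak}$: restricted to each ball $\bar B(0,n)$ the weak topology is metrizable, so weak sequential continuity of $\psi$ is genuine weak continuity there, whence $\{\psi\le s\}\cap\bar B(0,n)$ is Borel and $\{\psi\le s\}$ is a countable union of such sets. Next, the exceptional set $N$ of the lemma is empty, because sequential convergence in $\calU_{weak}$ is exactly weak convergence and $\psi$ is weakly sequentially continuous, so $\zeta_k(x_k)=\psi(x_k)\to\psi(x)=\zeta(x)$ whenever $x_k\to x$ in $\calU_{weak}$; thus $\bbP^*[N]=0$. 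Integrability of $\psi$ against $\bbPt$ I would get by applying Lemma~\ref{lem:mapping_theorem}(2) to $|\psi|$ (its sublevel sets are weakly sequentially closed, being preimages of closed sets under a sequentially continuous map), which yields $\bbEt[|\psi(u)|]\le\liminf_n\bbE[|\psi(u_n)|]\le\sup_n\bbE[|\psi(u_n)|^p]^{1/p}<\infty$. Finally, since $\{\psi(u_n)\}$ is bounded in $L^p(\Omega)$ with $p>1$, the ``in particular'' clause of Lemma~\ref{lem:mapping_theorem}(1) applies with $\varepsilon=p-1>0$ and gives $\bbEt[\psi(u)]=\lim_n\bbE[\psi(u_n)]$.

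For the second bullet I would first reduce to a nonnegative integrand by choosing $M$ with $\psi\ge-M$ and setting $\zeta:=\psi+M\colon\calU_{weak}\to[0,\infty]$. For every $t\ge0$ the set $[\zeta\le t]=\{u:\psi(u)\le t-M\}$ is convex (convexity of $\psi$) and norm-closed (continuity of $\psi$), hence weakly closed by Mazur's theorem and in particular weakly sequentially closed; the same observation shows $\psi$ is Borel on $\calU_{weak}$. Lemma~\ref{lem:mapping_theorem}(2) then applies to $\zeta$ and gives measurability of $\zeta$ -- equivalently of $\psi(u)$ -- on $(\bbX,\calB(\bbX))$ together with $\int_{\calU_{weak}}\zeta\,d\bbPt\le\liminf_n\int_{\calU_{weak}}\zeta\,d\calL(u_n)$, that is $\bbEt[\psi(u)]+M\le\liminf_n\bbE[\psi(u_n)]+M$; cancelling $M$ gives the asserted inequality.

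I expect the main difficulty to lie not in any single estimate but in the topological bookkeeping inside the non-metrizable space $\calU_{weak}$: one must carefully separate what the hypotheses on $\psi$ actually supply (weak \emph{sequential} continuity in the first case, a convex norm-closed -- hence weakly \emph{sequentially} closed -- sublevel structure in the second) from what Lemma~\ref{lem:mapping_theorem} and its conclusions demand (Borel measurability, and the set $N$), and the reduction to the weakly metrizable balls together with Mazur's theorem is precisely the bridge between the two. Everything else -- the H\"older/Chebyshev uniform-integrability check behind the choice $\varepsilon=p-1$, and the constant shift in the second bullet -- is routine.
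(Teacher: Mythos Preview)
Your proposal is correct and follows essentially the same route as the paper: both bullets are reduced to the two parts of Lemma~\ref{lem:mapping_theorem}, with Mazur's theorem supplying the weakly (sequentially) closed sublevel sets in the convex case. The paper's sketch is terser---it obtains Borel measurability of $\psi$ in one line by invoking the coincidence $\calB(\calU)=\calB(\calU_{weak})$ for separable reflexive $\calU$ (weak sequential continuity implies strong continuity, hence strong Borel measurability, hence weak Borel measurability), whereas you localize to metrizable balls; both are fine. You also correctly flag that tightness of $\{\calL(u_n)\}$ on $\calU_{weak}$ is an implicit standing hypothesis needed to invoke the lemma, which the paper's statement leaves tacit.
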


\begin{proof} (sketch) The first result will follow from the first
  statement of the lemma.  Since the Borel $\sigma$-algebras for $\calU$
  and $\calU_{weak}$ coincide $\psi$ is Borel measurable. In addition,
  since $\psi$ is weakly sequentially continuous it follows that the
  set $N$ in the lemma is empty.

  The final result follows from the second statement of the lemma and
  Mazur's theorem which states that continuous convex functions on a
  Banach space are weakly lower semi--continuous.
\end{proof}

The following example illustrates the use of these results to identify
and bound  initial and final values for the evolution problems
under consideration.

\begin{example} \label{eg:ic}
  Let $U$ be a separable Banach space, $H$ a Hilbert space, and
  $U \embed H \embed U'$ be dense embeddings.
  Suppose that $\{u_n\}_{n=1}^\infty$ are random variables on
  $(\Omega, \calF, \bbP)$ taking values in $G[0,T;U']$, and
  $\calL(u_n) \Rightarrow \bbPt$.

  For $t \in [0,T]$ fixed, the mapping $u \in G[0,T;U'] \mapsto u(t)
  \in U'$ is Borel, and if $p \geq 1$ the function
  $\zeta:U' \mapsto [,\infty]$ given by
  $$
  \zeta(u) = \left\{ \begin{array}{cc}
      \normH{u}^p & u \in H, \\
      \infty & \text{ otherwise},
    \end{array} \right.
  $$
  is convex and lower semi--continuous. It follows from
  the second statement of Lemma \ref{lem:mapping_theorem} that
  $$
  \bbEt\left[\normH{u(t)}^p \right]
  \leq \liminf_{n \rightarrow \infty} \bbE\left[\normH{u_n(t)}^p \right].
  $$

  Next, suppose that $u_n(0)$ converges to a limit in $L^p(\Omega,H)$.
  Then the laws of $(u_n(0), u_n)$ are tight on $H \times G[0,T; U']$,
  so passing to a subsequence we may assume their laws converge to a
  limit, $\calL(u_n(0), u_n) \Rightarrow \bbQ$, on $H \times G[0,T;
  U']$. If $f \in C_b(G[0,T; U'])$ then
  $$
  \int_{H \times G[0,T;U']} f(u) \, d\bbQ(u^0, u)
  = \bbE^{\mathbb Q}[f(u)] 
  = \lim_{n \rightarrow \infty} \bbE[f(u_n)]
  = \int_{G[0,T; U']} f(u) \, d\bbPt(u),
  $$
  shows $\bbPt$ is the second marginale of $\bbQ$. 

  Assume that
  $\normH{u^0_n}$ and $\normUp{u_n}$, and hence $\norm{(u^0_n,u_n)}_{H
    \times G[0,T;U']}$, have bounded moments of order $p > 1$, and fix
  $v \in U$. Then the mapping $(u^0, u) \mapsto |(u^0 -
  u(0),v)|$ is continuous on $H \times G[0,T;U']$, and it follows from
  the first statement of the lemma that
  $$
  \bbE^{\mathbb Q}\left[\big|(u^0 - u(0),v)\big| \right] 
  = \lim_{n \rightarrow \infty} 
  \bbE\left[\big|(u_n(0) - u_n(0),v)\big| \right] 
  = 0,
  $$
  whence $u(0)=u^0$  $\bbQ$-almost surely.
  From the Tonelli theorem we then conclude
  \begin{eqnarray*}
    \bbEt\left[\normH{u(0)} \right]
    &=& \int_{G[0,T;U']} \normH{u(0)} \, d\bbPt(u) \\
    &=& \int_{H \times G[0,T;U']} \normH{u(0)} \, d\bbQ(u^0,u) \\
    &=& \int_{H \times G[0,T;U']} \normH{u^0} \, d\bbQ(u^0,u) \\
    &=& \lim_{n \rightarrow \infty} \bbE\left[\normH{u^0_n} \right],
  \end{eqnarray*}
  the last line following since $(u^0,u) \mapsto \normH{u^0}$ is
  continuous on $H \times G[0,T; U']$. Similarly, if $\normH{u^0_n}$
  has moments of order $p > 1$ then
  $$
  \bbEt\left[\normH{u(0)}^s \right] 
  = \lim_{n \rightarrow \infty} \bbE\left[\normH{u^0_n}^s \right],
  \qquad 1 \leq s < p.
  $$
  \end{example}

\subsection{Stochastic Partial Differential Equations} \label{sec:stochPDE's} 
Combining the ideas from the previous section provides a formulation
of the stochastic evolution equation \eqnref{:spde} amenable to
analysis by results from functional analysis and probability theory.
Letting $U \embed H \embed U'$ be dense embeddings and writing $a(u,v)
= (A(u),v)$, a solution of \eqnref{:spde} may be viewed as a process
taking values in $U$ which at each time $t \in [0,T]$ satisfies
\begin{equation} \label{eqn:Spde}
(u(t),v)_H + \int_0^t a(u,v)\, ds 
= (u^0, v)_H 
+  \int_0^t ( f,v )\, ds
+ \int_0^t (g,v)_H \, dW,
\qquad
v \in U.
\end{equation}
The last integral in this equation is the Ito integral corresponding
to a Wiener process $W$ defined on the filtered probability space.
The distinction between a (stochastically) weak and strong solution
of (\ref{eqn:Spde}) is as follows:

\begin{itemize}
\item For a stochastically strong solution of \eqnref{:spde}, a
  filtered probability space $(\Omega, \calF, \{\calF(t)\}_{0 \leq t
    \leq T}, \bbP)$ and random variables $f$, $g$, $W$, and $u^0$
  {\em are specified}, and the
  solution $u:[0,T] \rightarrow U$ is a process adapted to $\{
  \calF(t)\}_{0 \leq t\leq T}$ which satisfies \eqnref{:Spde}.

\item For a stochastically weak solution of \eqnref{:spde}, {\em laws}
  $\bbP_f$, $\bbP_g$ and $\bbP_0$ of the data
  are specified, and a solution consists of a probability space
  $(\Omegat, \calFt, \{\calFt(t)\}_{0 \leq t \leq T}, \bbPt)$ and
  adapted processes $u$, $f$, $g$, and $W$, which satisfy
  \begin{itemize}
  \item $\calL(f) = \bbP_f$,
  \item $\calL(g) = \bbP_g$,
  \item $\calL(W)$ is an instance of the standard Wiener measure,
  \item $\calL(u(0)) = \bbP_0$,
  \end{itemize}
  and $(u,f,g,W)$ satisfy \eqnref{:Spde} $\bbPt$ almost surely.
\end{itemize}

Clearly a strong solution is also a weak solution, the major
distinction between the two concepts is that the construction of a
filtered probability space is a part of the solution process for weak
solutions. Since filtered probability spaces and Wiener processes are
not available in a computational context, only weak solutions are
commutable in practice.

\begin{definition} \label{def:mglSoln} Let $T>0$ and $U \embed H$ be a
  dense embedding of the Banach space $U$ into a Hilbert space $H$ so
  that $U \embed H \embed U'$.  Then $(\Omegat, \calFt, \{
  \calFt(t)\}_{0 \leq t\leq T}, \bbPt)$ and random variables
  $u$, $f$, $g$, and $W$ on this space are a weak martingale solution of
  \eqnref{:Spde} if
  \begin{itemize}
  \item[(i)] $(\Omega, \calF, \{ \calF(t)\}_{0 \leq t\leq T}, \bbP)$ is a
    filtered probability space satisfying the usual conditions, 
    $f$ and $g$ are adapted, and $u^0$ is $\calF(0)$-measurable.

  \item[(ii)] $W = \{ W(t) \, \vert \, 0 \leq t \leq T\}$ is a
    standard real-valued Wiener process on
    $(\Omegat, \calFt, \{ \calFt(t)\}_{0 \leq t\leq T}, \bbPt)$.

  \item[(iii)]$u: [0,T] \times \Omega \rightarrow U$ is adapted to
    $\{ \calFt(t)\}_{0 \leq t\leq T}$, and
    \begin{enumerate}
    \item[(a)] $u \in C[0,T; U']$ $\bbPt$--a.s.,
    \item[(b)] equation \eqnref{:Spde} holds $\bbPt$-a.s., for
      every $v \in U$ and every $0 \leq t \leq T$.
    \end{enumerate}
  \end{itemize}
\end{definition}

The remainder of this manuscript considers the numerical approximation
of weak martingale solutions using (pseudo) random number generators
to simulate the role of noise in \eqnref{:Spde}.  For simplicity of
presentation we will consider a real-valued Wiener process; extensions
to infinite-dimensional and cylindrical noise are outlined in Section
\ref{ssect:cylinder}.

\subsubsection{Ito's Formula}
A version of Ito's formula is available for weak martingale solutions
of stochastic PDE's taking values in a Banach space
\cite{Kr13,KR1,PrRo07}. The Ito formula stated next considers
weak martingale solutions of the equation $du = F \, dt + g \, dW$ with
$F$ taking values in $U'$ and $g$ taking values in the pivot space $H$.
Writing equation \eqnref{:spde} as
$$
du = (f - A(u)) \, dt + g \, dW \equiv F \, dt + g \, dW,
$$
shows that it takes the form assumed in the theorem.

\begin{theorem} \label{thm:Ito} Let $(\Omega,\calF,
  \{\calF_t\}_{t=0}^T, \bbP)$ be a filtered probability space
  satisfying the usual conditions, $U$ be a separable Banach space,
  $H$ a Hilbert space, and $U \embed H \embed U'$ be dense embeddings.
  With $1 < q < \infty$, let $F \in L^{q'}(\Omega, \LqpUp)$ and $g \in
  L^2(\Omega, \LtwoH)$ be jointly measurable (as functions of
  $(t,\omega)$) adapted processes, and $W$ be a standard Wiener
  process.  If $u^0 \in L^2(\Omega,H)$, and a process $u \in
  L^q(\Omega, \LqU)$ with $(u,g)_H \in L^2(\Omega \times (0,T))$
  satisfies
  $$
  (u(t), v) = (u^0,v) + \int_0^t (F(s),v) \, ds + \int_0^t (g(s),v)_H \, dW(s),
  \qquad v \in U,
  $$
  then there is an adapted version of $u$ with values in $C[0,T;H]$
  for which
  $$
  \bbE \left[ \sup_{0 \leq t \leq T} \normH{u(t)}^2 \right] < \infty,
  $$
  and
  $$
  \bbE\left[(1/2) \Vert {u(t)}\Vert^2_H\right] 
  = \bbE\left[(1/2) \normH{u^0}^2 + \int_0^t (F(s), u(s)) 
    + (1/2) \normH{g(s)}^2 \, ds \right].
  $$
\end{theorem}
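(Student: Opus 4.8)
The plan is to prove this classical variational It\^o formula along standard lines \cite{Kr13,KR1,PrRo07}: reduce the $U'$-valued equation to finitely many real-valued semimartingales, apply the elementary It\^o formula, pass to the limit to obtain a \emph{pathwise} energy identity, and then read off the continuity and integrability assertions. To set up, I would read the hypothesis as the identity $u(t) = u^0 + \int_0^t F(s)\,ds + \int_0^t g(s)\,dW(s)$ in $U'$, holding $\bbP$-a.s. for all $t$, so that $u$ already admits an adapted version with paths in $C[0,T;U']$; this is the version I would work with throughout.

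Next I would fix a countable $\{e_k\}_{k \geq 1} \subset U$ that is total in $H$, let $P_n$ be the $H$-orthogonal projection onto $\mathrm{span}\{e_1,\dots,e_n\}$ (so $P_n$ has range in $U$), and apply the finite-dimensional It\^o formula to $\normH{P_n u(t)}^2 = \sum_{k \leq n}(u(t),e_k)^2$, using $d(u,e_k) = (F,e_k)\,dt + (g,e_k)_H\,dW$ from the equation. With $P_n g(s) := \sum_{k \leq n}(g(s),e_k)_H\,e_k \in U$ and all pairings understood as the $U'$--$U$ duality, this yields
$$
\normH{P_n u(t)}^2 = \normH{P_n u^0}^2 + 2\int_0^t (F(s),P_n u(s))\,ds + 2\int_0^t (P_n g(s),u(s))\,dW(s) + \int_0^t \normH{P_n g(s)}^2\,ds .
$$
Letting $n \to \infty$, the first and last terms converge to $\normH{u^0}^2$ and $\int_0^t \normH{g(s)}^2\,ds$ by dominated convergence ($\normH{P_n\,\cdot\,} \leq \normH{\,\cdot\,}$, and $g \in \LtwoH$ a.s.), and the target is the pathwise identity
$$
\normH{u(t)}^2 = \normH{u^0}^2 + 2\int_0^t (F(s),u(s))\,ds + 2\int_0^t (g(s),u(s))_H\,dW(s) + \int_0^t \normH{g(s)}^2\,ds, \qquad \bbP\text{-a.s.}, \ \forall t .
$$
The hard part will be the limit in the drift term (and in the cross term of the stochastic integral): showing $(F(s),P_n u(s)) \to (F(s),u(s))$ in $L^1(\Omega \times (0,T))$ requires $P_n u(s) \to u(s)$ in $H$, which presupposes $u(s) \in H$ a.e. --- the very regularity being sought. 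The resolution is a bootstrap: from the $n$-level identity and the hypotheses (that $\int_0^T (F(s),u(s))\,ds$ makes sense since $F \in L^{q'}(\Omega,\LqpUp)$ and $u \in L^q(\Omega,\LqU)$) one first extracts the uniform bound $\sup_n \bbE\int_0^T \normH{P_n u(s)}^2\,ds < \infty$, which forces $u(s) \in H$ for a.e.\ $(s,\omega)$ with $P_n u(s) \to u(s)$ in $H$; only then can one justify convergence of the drift term and, via dominated convergence for It\^o integrals together with the hypothesis $(g,u)_H \in L^2(\Omega \times (0,T))$, of the stochastic term. Reconciling the bound needed for the convergence with the convergence itself (and, in the general Banach setting, circumventing the lack of control of $\normU{P_n u}$ by $\normH{P_n u}$) is the delicate point; an alternative is to discretize/mollify in time, where the analogous difficulty is handled by a Lions--Magenes-type cancellation $\int (\dot u_\varepsilon, u_\varepsilon)\,ds \approx \int (\dot u_\varepsilon, u)\,ds$. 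For the complete argument I would defer to \cite{Kr13,KR1,PrRo07}.

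Granted the pathwise identity, the remaining assertions are routine. Its right-hand side is a.s.\ continuous in $t$ (the Lebesgue integrals are continuous, and the It\^o integral has a continuous version since $(g,u)_H \in L^2(\Omega \times (0,T))$), so $t \mapsto \normH{u(t)}$ is a.s.\ continuous; combined with the $U'$-continuity of $u$ and the now-available uniform-in-$t$ $H$-bound, $u(t_n) \weak u(t)$ in $H$ whenever $t_n \to t$, and weak convergence together with convergence of norms gives strong convergence in $H$, i.e.\ an adapted version with paths in $C[0,T;H]$. Taking $\sup_t$ in the identity and estimating --- $\int_0^T |(F,u)|\,ds \leq \lqpUp{F}\,\lqU{u} \in L^1(\Omega)$ by H\"older (one more H\"older step in $\omega$ using the hypotheses on $F$ and $u$), $\int_0^T \normH{g}^2\,ds \in L^1(\Omega)$, and $\bbE\big[\sup_t \big|\int_0^t (g,u)_H\,dW\big|\big] \leq C\,\bbE\big[\big(\int_0^T (g(s),u(s))_H^2\,ds\big)^{1/2}\big] < \infty$ by the continuous Burkholder--Davis--Gundy inequality (Theorem~\ref{thm:BDG}) --- yields $\bbE[\sup_{0 \leq t \leq T} \normH{u(t)}^2] < \infty$. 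In particular $t \mapsto \int_0^t (g,u)_H\,dW$ is then a genuine mean-zero martingale, so taking expectations in the pathwise identity and dividing by two gives the stated energy equality.
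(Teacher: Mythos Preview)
The paper does not give a proof of Theorem~\ref{thm:Ito}. It is stated as a known result, with the sentence preceding the statement attributing it to \cite{Kr13,KR1,PrRo07}; no argument is supplied in the text. So there is nothing in the paper to compare your sketch against.

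Your outline is the standard route taken in those references: project onto finite-dimensional subspaces of $U$, apply the scalar It\^o formula coordinatewise, and pass to the limit, with the bootstrap you describe (first extract $u\in L^2(\Omega\times(0,T);H)$ from the projected identities, then justify the limits in the drift and stochastic terms) being exactly the subtle step that those papers handle carefully. Your candid acknowledgment that this step is delicate and your deferral to \cite{Kr13,KR1,PrRo07} for the complete argument is consistent with how the paper itself treats the result. One minor point: your reference to Theorem~\ref{thm:BDG} for the continuous BDG inequality is slightly off, since that theorem in the paper is the \emph{discrete} BDG inequality; the continuous version is of course standard but is not the statement labeled~\ref{thm:BDG} here.
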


\subsubsection{Uniqueness of Solutions} \label{sec:unique}
This section shows that if the solution of the deterministic equation
is unique then the laws of weak martingale solutions of the
corresponding SPDE with additive noise will also be unique. Writing
equation \eqnref{:spde} as
$$
du = (f \, dt + g \, dW) - A(u) \, dt \equiv dV - A(u) \, dt,
$$
then (the law of) $V$ depends upon (laws of) the data $(f,g,W)$. Theorem
\ref{thm:uniqueLaw} below shows that the law of a solution $u$ to an equation
of this form will depend only upon the law of $V$ when $A(.)$ satisfies
the following assumption. 

\begin{assumption} \label{ass:A} If $\lambda>0$ and $u_1, u_2 \in C[0,T;U] \cap
  L^r[0,T;U']$ satisfy $A(u_1), A(u_2) \in L^1[0,T;U']$ and
  $$
  (u_2(t)-u_1(t), w)_H 
  + \int_0^t \lambda \left( A(u_2(s))- A(u_1(s)), v \vph\right) \,ds = 0,
  \quad t\in[0,T], \quad v \in U,
  $$
  then $u_1=u_2$. (Note that if this holds for some $T>0$ then it holds for
  all $T>0$.)  
\end{assumption}

This assumption will always be considered in the context where $U$ is
a separable Banach space, $H$ is a Hilbert space, the embeddings $U
\embed H \embed U'$ are dense, and $A:U \rightarrow U'$.

\begin{definition} 
  Let $(\Omega,\calF,\bbP)$ be a probability space and $\bbX_1 =
  C[0,T;U'] \cap L^r[0,T;U']_{weak}$ with $1 < r < \infty$, and $A:U
  \rightarrow U'$.  Then a pair of random variables $(u,V)$ taking
  values in $\bbX_1 \times C[0,T;U']$ satisfy
  \begin{equation}\label{eqdet}
    du=dV - A(u)\,dt,
  \end{equation}
  if $ \bbP\,[u\in S]=1 $ for some $\sigma$-compact set $S$ in
  $\bbX_1$, $A(u) \in L^1[0,T;U']$ almost surely, and
  $$
  \bbP\left[(u(t),v)_H = (V(t),v) - 
    \int_0^t (A(u(s)),v) \,ds\right]=1,\qquad t\in[0,T], \quad v \in U.
  $$
\end{definition}

The following theorems establish uniqueness when the partial
differential operators satisfying Assumption \ref{ass:A}, and may be
viewed as extensions of the classical Yamada-Watanabe theory to the
situation where the data $f$ and $g$ are random.

\begin{theorem}[Joint Uniqueness in Law] \label{thm:uniqueLaw}
  Let Assumption \ref{ass:A} hold. If $(u^i,V^i)$ satisfy
  \eqref{eqdet} on a probability space $(\Omega^i,\calF^i,\Bbb
  P^i)$ and $\calL(V^0)=\calL(V^1)$, then $\calL(u^0,V^0)=\calL(u^1,V^1)$.
\end{theorem}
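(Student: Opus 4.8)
The plan is to run the classical Yamada--Watanabe gluing argument, taking advantage of the fact that \eqref{eqdet} imposes \emph{no} adaptedness or filtration condition, so that it is a purely pathwise relation between the trajectories of $u$ and $V$. The key structural input is that Assumption~\ref{ass:A} provides \emph{pathwise uniqueness}: if $(u_1,V)$ and $(u_2,V)$ both satisfy \eqref{eqdet} with the \emph{same} $V$ on one probability space, subtracting the two identities gives
$$
(u_2(t)-u_1(t),v)_H + \int_0^t \bigl(A(u_2(s))-A(u_1(s)),v\bigr)\,ds = 0,
\qquad t\in[0,T],\ v\in U,
$$
which is exactly the hypothesis of Assumption~\ref{ass:A} with $\lambda=1$, whence $u_1=u_2$ almost surely. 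Since a solution $(u^i,V^i)$ is given, the theorem follows once two copies of the solution can be built over a shared $V$.

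Concretely, set $\nu=\calL(V^0)=\calL(V^1)$ and $\mu^i=\calL(u^i,V^i)$. By hypothesis $u^i$ is concentrated on a $\sigma$-compact $S^i\subset\bbX_1$; with $S=S^0\cup S^1$ both $\mu^0,\mu^1$ live on $S\times C[0,T;U']$. First I would disintegrate each $\mu^i$ over its second marginal $\nu$, obtaining a $\nu$-measurable family of probability kernels $V\mapsto K^i_V$ on $\bbX_1$ with $\mu^i(dx,dV)=K^i_V(dx)\,\nu(dV)$. Then on $\Omega^\ast=\bbX_1\times\bbX_1\times C[0,T;U']$ I define
$$
\bbP^\ast(dx_0,dx_1,dV)=K^0_V(dx_0)\,K^1_V(dx_1)\,\nu(dV),
$$
with coordinate maps $\pi_0,\pi_1,\pi_V$. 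Integrating out $x_1$ (resp.\ $x_0$) recovers the laws $\calL^{\bbP^\ast}(\pi_0,\pi_V)=\mu^0$ and $\calL^{\bbP^\ast}(\pi_1,\pi_V)=\mu^1$. Because ``satisfying \eqref{eqdet}'' --- namely the existence of a $\sigma$-compact carrier, $A(u)\in L^1[0,T;U']$ almost surely, and the identity $(u(t),v)_H=(V(t),v)-\int_0^t(A(u(s)),v)\,ds$ holding almost surely for each fixed $t,v$ --- is a property of the joint law of $(u,V)$ alone, both $(\pi_0,\pi_V)$ and $(\pi_1,\pi_V)$ satisfy \eqref{eqdet} on $(\Omega^\ast,\bbP^\ast)$, and crucially \emph{with the common process $\pi_V$}.

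The argument then closes by subtraction as above: on a $\bbP^\ast$-full set, after testing against a countable dense subset of $U$ and using that both sides are continuous in $t$ (the integral because $A(\pi_j)\in L^1[0,T;U']$) and continuous and linear in $v$, the displayed identity holds simultaneously for all $t\in[0,T]$ and all $v\in U$; on that event $\pi_0,\pi_1$ lie in the class to which Assumption~\ref{ass:A} applies (this being part of what it means for $(\pi_j,\pi_V)$ to satisfy \eqref{eqdet}), so Assumption~\ref{ass:A} with $\lambda=1$ forces $\pi_0=\pi_1$ $\bbP^\ast$-almost surely. Hence $\calL^{\bbP^\ast}(\pi_0,\pi_V)=\calL^{\bbP^\ast}(\pi_1,\pi_V)$, i.e.\ $\mu^0=\mu^1$, which is the assertion $\calL(u^0,V^0)=\calL(u^1,V^1)$.

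The step I expect to be the main obstacle is the disintegration: $\bbX_1$ carries the intersection of a norm topology and a weak topology and is not, in general, a Polish space, so regular conditional probabilities are not available off the shelf. This is precisely where the hypothesis that each solution is supported on a $\sigma$-compact $S^i\subset\bbX_1$ is used --- restricted to $S$ the effective state space is $\sigma$-compact and metrizable, hence a Souslin space on which the disintegration of Borel probability measures is legitimate and on which the weak and norm Borel structures coincide. A secondary, more routine point is checking that each clause defining \eqref{eqdet} is genuinely measurable with respect to the law, so that it transfers across the gluing, together with the density-and-continuity upgrade from ``for each $(t,v)$, almost surely'' to ``almost surely, for all $(t,v)$'' needed before Assumption~\ref{ass:A} can be invoked.
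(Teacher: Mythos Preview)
Your proposal is correct and follows essentially the same Yamada--Watanabe gluing argument as the paper: disintegrate over the common law $\nu=\calL(V^i)$ using the $\sigma$-compact support to justify regular conditional probabilities (the paper's Lemma~\ref{lem:RPM}), couple the two solutions over a single $V$ on a product space, and invoke Assumption~\ref{ass:A} pathwise to force $\pi_0=\pi_1$ almost surely. The only cosmetic difference is that the paper packages the construction as an intermediate Proposition~\ref{prop:YW} which goes one step further to extract a Borel selection $k_\theta$ with $u=k_\theta(V)$ a.s., since that form is also needed for the companion Strong Existence result (Theorem~\ref{thm:SE}); for Theorem~\ref{thm:uniqueLaw} alone your stopping point suffices.
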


\begin{theorem}[Strong Existence]\label{thm:SE} 
  Let Assumption \ref{ass:A} hold and let there exist a solution
  $(\tilde u,\tilde V)$ of \eqref{eqdet} on some probability space. If
  $(\Omega,\calF,\bbP)$ is a probability space, $V$ is a
  $C[0,T;U^\prime]$-valued random variable with $\calL(V)=\calL(\tilde
  V)$ then there exists a unique $\Bbb X_1$-valued random variable $u$
  with a $\sigma$-compact range such that $(u,V)$ is a solution of
  \eqref{eqdet}. Moreover, $u$ is $(\calF_t^{V,0})$-adapted where
  $(\calF_t^{V,0})$ denotes the $\bbP$-augmentation of the filtration
  generated by $V$.
\end{theorem}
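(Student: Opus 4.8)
The plan is to run the Yamada--Watanabe argument: pathwise uniqueness (Assumption~\ref{ass:A}) together with the existence of the single solution $(\tilde u,\tilde V)$ of \eqref{eqdet} will produce a Borel map from driving paths to solutions. First I would set $\mu=\calL(\tilde u,\tilde V)$, a Borel probability measure on $\bbX_1\times C[0,T;U']$ concentrated on $S\times C[0,T;U']$ for some $\sigma$-compact $S\subseteq\bbX_1$. Because $S$ is $\sigma$-compact it is a standard Borel space, so $\mu$ disintegrates against its second marginal $\calL(\tilde V)=\calL(V)$: there is a Borel kernel $v\mapsto\nu(v)\in\calP(\bbX_1)$ with $\nu(v)$ supported on $S$ for $\calL(\tilde V)$-a.e.\ $v$ and $\mu(dx\,dv)=\nu(v)(dx)\,\calL(\tilde V)(dv)$.

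Next I would show the kernel is a point mass. On an auxiliary space carry $\bar V$ with $\calL(\bar V)=\calL(\tilde V)$ together with two $\bbX_1$-valued random variables $Y_1,Y_2$ that are conditionally independent given $\bar V$, each with conditional law $\nu(\bar V)$. The key observation is that ``$(u,V)$ solves \eqref{eqdet}'' is a property of the joint law $\calL(u,V)$ alone: the clause $\bbP[u\in S]=1$ depends only on $\calL(u)$, the clause $A(u)\in L^1[0,T;U']$ a.s.\ does too, and the integral identity, required for every $t\in[0,T]$ and every $v\in U$, reduces by continuity in $t$ and separability of $U$ to a single almost-sure statement about the path $(u,V)$. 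Since $\calL(Y_i,\bar V)=\mu=\calL(\tilde u,\tilde V)$, both $(Y_1,\bar V)$ and $(Y_2,\bar V)$ solve \eqref{eqdet}; they share the driving $\bar V$, so subtracting the two integral identities and invoking pathwise uniqueness (Assumption~\ref{ass:A}, whose hypotheses hold on the $\sigma$-compact range) gives $Y_1=Y_2$ a.s. As $Y_1,Y_2$ are conditionally i.i.d.\ given $\bar V$, this forces $\nu(\bar V)$ to be a Dirac mass a.s., i.e.\ $\nu(v)=\delta_{F(v)}$ for a Borel map $F:C[0,T;U']\to\bbX_1$ (defined $\calL(\tilde V)$-a.e., extended arbitrarily off a null set), and $\mu$ is the image of $\calL(\tilde V)$ under $v\mapsto(F(v),v)$.

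I would then set $u=F(V)$ on the given space $(\Omega,\calF,\bbP)$. Because $\calL(V)=\calL(\tilde V)$, the pair $(u,V)=(F(V),V)$ has law $\mu=\calL(\tilde u,\tilde V)$, so by the joint-law characterization $(u,V)$ solves \eqref{eqdet}, with $u\in S$ a.s.\ and hence $\sigma$-compact range. Uniqueness is then immediate: any other $\bbX_1$-valued $u'$ with $\sigma$-compact range for which $(u',V)$ solves \eqref{eqdet} satisfies, together with $u$, the hypotheses of Assumption~\ref{ass:A} with the common driver $V$, so $u=u'$ a.s.

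Finally, for adaptedness I would exploit that \eqref{eqdet} is causal. Restricting the integral identity to $[0,t]$ shows $u|_{[0,t]}$ solves the analogue of \eqref{eqdet} on $[0,t]$ driven by $V|_{[0,t]}$; by the parenthetical in Assumption~\ref{ass:A} this restricted problem again has pathwise uniqueness, so repeating the disintegration argument on $[0,t]$ yields a Borel map $F_t$ with $u|_{[0,t]}=F_t(V|_{[0,t]})$ a.s., whence $u(t)$ is measurable with respect to the $\bbP$-completion of $\sigma(V(s):0\le s\le t)=\calF_t^{V,0}$; running this over $t$ gives that $u$ is $(\calF_t^{V,0})$-adapted. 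I expect the main obstacle to be the bookkeeping forced by the non-metrizable space $\bbX_1$ (the reason the statement restricts to $\sigma$-compact range): one must confirm that $S$ is standard Borel so that disintegration and the joint-law characterization of solutions are legitimate, and, for the last step, that $F$ and $F_t$ can be chosen consistently so that $F(v)|_{[0,t]}$ and $F_t(v|_{[0,t]})$ agree $\calL(\tilde V)$-a.s.\ --- verifying this compatibility is the one point where genuine care, rather than routine manipulation, is required.
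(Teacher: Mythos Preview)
Your proposal is correct and follows the same Yamada--Watanabe paradigm as the paper: disintegrate the joint law $\calL(\tilde u,\tilde V)$ against its second marginal, couple two conditionally i.i.d.\ copies over a common driver, invoke pathwise uniqueness to force the kernel to be Dirac, and read off the Borel map $F$. The paper packages this argument as a separate proposition (producing the map $k_\theta$) and then simply sets $u=k_\theta(V)$; your inline version is equivalent, and your remarks about needing $\sigma$-compact range to carry out the disintegration are on point --- the paper handles this by embedding $\bbX_1$ into a Polish space via separating continuous functions.

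The one place where the paper does something slightly different is the adaptedness step. You restrict the equation directly to $[0,t]$ and repeat the argument on that subinterval. The paper instead uses a time-rescaling: for $\tau\in(0,T]$ set $\lambda=\tau/T$ and consider $V_\lambda(t)=V(\lambda t)$; then $u(\lambda\,\cdot)$ solves $du=dV_\lambda-\lambda A(u)\,dt$ on the \emph{fixed} interval $[0,T]$, and the Yamada--Watanabe map for this $\lambda$-equation yields $u(\tau)=u_\lambda(T)$ as a Borel function of $V_\lambda$, hence $\calF_\tau^{V,0}$-measurable. This explains why Assumption~\ref{ass:A} is stated with the parameter $\lambda$: the rescaling trades a variable time horizon for a variable coefficient, allowing the whole argument to live on $[0,T]$ throughout. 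Your direct-restriction approach is perfectly valid (and the parenthetical in Assumption~\ref{ass:A} licenses it), but the paper's rescaling avoids re-instantiating the function spaces on $[0,t]$ and sidesteps the $F/F_t$ compatibility issue you flagged, since one compares $u(\lambda\,\cdot)$ and $k_{\theta_\tau}(V_\lambda)$ directly via pathwise uniqueness for the $\lambda$-equation.
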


The proofs of these two theorems are presented in the Appendix.

\section{Numerical Approximation of SPDE's} \label{sec:introSpde} To
construct numerical approximations of the weak statement
\eqnref{:Spde} let $U_h \subset U$ be a (finite-dimensional) subspace,
and $\{ t^n\}^N_{n=0}$ be a uniform partition of $[0,T]$ with time
step $\tau = T/N > 0$. A (pseudo) random number generator is used to
generate sampled random variables $\xi^n_{\tau}(\omega) \in \Re$
satisfying Assumptions \ref{ass:spde0}. Then $u^n_{h\tau} \equiv
u^n_{h\tau}(\omega) \in U_h$ is a solution of
\begin{equation} \label{eqn:Spdeh}
(u^n_{h\tau}-u^{n-1}_{h\tau}, v_h)_H + \tau a(u^n_{h\tau}, v_h) 
= \tau ( f^n_{h\tau}, v_h) + (g^{n-1}_{h\tau},v_h)_H \xi^n_{\tau},
\quad
v_h \in U_h,
\quad 1 \leq n \leq N.
\end{equation}
In this equation,  $f^n_{h\tau}$ is  a $U'$-valued approximation of $f$, $g^n_{h\tau}$ is a $H$-valued 
approximation of $g$, and $u^0_{h\tau}$ is a $U_h$-valued
approximation of $u^0$; for example,
\begin{equation}\label{eqn:AA1}
f^n_{h\tau} = \frac{1}{\tau} \int_{t^{n-1}}^{t^n} f(s) \, ds 
\quad \text{ and } \quad
g^n_{h\tau} = \frac{1}{\tau} \int_{t^{n-1}}^{t^n} g(s) \, ds,
\end{equation}
and $u^0_{h\tau}$ is the orthogonal projection of $u^0$ onto $U_h \subset H$.
In general, $f$ and $g$ may depend upon $u$ ($u_{h \tau}$ in the
discrete case), so both $h$ and $\tau$ are included in the notation
$f^n_{h\tau}$ and $g^n_{h\tau}$.

The specific bounds available for solutions of a particular equation
\eqnref{:spde} depend in an essential fashion upon the structure of
the operator $A$.  For this reason a passage to the limit in this term
in a numerical scheme is problem dependent. In contrast, there is a
commonality of the structure in the temporal terms which facilitates a
convergence theory for implicit Euler approximations of this class of
problems provided bounds upon the solution are available.

Writing $F(t) = f(t) - A(u(t))$, the spatial dependence
of the equation is characterized by a single process taking values
in $U'$. With this notation the implicit Euler scheme \eqnref{:Spdeh}
becomes: Find $u^n_{h\tau}(\omega) \in U_h$ such that
\begin{equation} \label{eqn:eulerF}
(u^n_{h\tau},v_h)_H 
= (u^{n-1}_{h\tau},v_h)_H 
+ \tau ( F^n_{h\tau}, v_h )
+ (g^{n-1}_{h\tau}, v_h)_H \xi^n_{\tau},
\qquad v_h \in U_h, 
\,\, 1 \leq n \leq N,
\end{equation}
with the $U'$-valued $F_{h\tau}^n$ defined by
$F_{h\tau}^n(v) = f_{h\tau}^n(v) - a(u_{h\tau}^n, v)$.

Theorem \ref{thm:main} below establishes conditions under which
solutions of this abstract difference scheme will converge to a weak
martingale solution. Assumption \ref{ass:spde0} on the stochastic
increments $\{ \xi_\tau^n\}_{n=1}^N$, and the following assumptions on
the data and discrete spaces will be assumed throughout.

\begin{assumption} \label{ass:spde1} $U \embed H$ is a dense embedding
  of a Banach space $U$ into a Hilbert space $H$. The discrete subspace 
  $U_h \subset U$, and data of the numerical scheme \eqnref{:eulerF}
  with time step $\tau = T/N$ with $N \in \bbN$ and $t^n \equiv n
  \tau$ satisfy:
  \begin{enumerate}
  \item $(\Omega,\calF,\{\calF^n\}_{n=0}^N, \bbP)$ is a
    (discretely) filtered probability space satisfying the usual
    assumptions.

  \item $\{F^n_{h\tau}\}_{n=1}^N$ 
    is adapted to $\{\calF^n\}_{n=1}^N$ with values in $U'$.

  \item $\{g^{n}_{h\tau}\}_{n=0}^{N-1}$
    is adapted to $\{\calF^n\}_{n=0}^{N-1}$ with values in $H$.

  \item The initial datum $u^0_{h\tau}$ is an $H$-valued random variable
    that is $\calF^0$-measurable.

  \item \label{it:hDense} For each $v \in U$, there exists a sequence
    $\{v_h\}_{h > 0}\subset U_h$ such that $\lim_{h \rightarrow 0} v_h
    \rightarrow v$.

  \item The restrictions of the orthogonal projections $P_h:H
    \rightarrow U_h$ to $U$ are stable in the sense that there exists
    a constant $C > 0$ independent of $h > 0$ such that
    $\normU{P_h(v)} \leq C \normU{v}$.
  \end{enumerate}
\end{assumption}

The last two conditions are density and stability conditions on the
spatial discretizations and, in a finite element context, are
satisfied under mild restrictions on the triangulations of the domain
\cite{CrTh87}.

We make frequent use of the following notation. Piecewise constant
temporal interpolants of 
$\{F^n_{h\tau}\}_{n=1}^N$, and $\{g^{n-1}_{h\tau}\}_{n=1}^N$ are
denoted by $F_{h\tau}$, and $g_{h\tau}$ respectively. 
With $\{u^n_{h\tau}\}_{n=0}^N$ taking values in $U_h$ and
$\{W^n_{\tau}\}_{n=0}^N$ as in \eqnref{:AB1}, $\uhat_{h\tau}$ and
$\What_{\tau}$ denote the piecewise linear interpolants respectively,
and $u_{h\tau}$ will denote the piecewise constant caglad interpolant;
see Figure \ref{fig:caglad}. Accordingly, we denote by $F_{h\tau}$ and 
$g_{h\tau}$  the piecewise constant caglad interpolants
of $\{F^n_{h\tau}\}_{n=1}^N$ and $\{g_{h\tau}^n\}_{n=0}^{N-1}$.
In Section \ref{sec:mglSoln} we establish the following theorem which
is the main result of this manuscript.

\begin{theorem} \label{thm:main} Let $T >0$, $(\Omega, \calF, \bbP)$
  be a probability space, $U$ be a separable reflexive Banach space,
  $H$ a Hilbert space, and $U \cembed H \cembed U'$ be compact, dense
  embeddings.  For every pair of numerical parameters $(\tau, h)$
  with $\tau = T/N \in \bbN$ let Assumptions \ref{ass:spde1} and
  \ref{ass:spde0} hold with parameter $p > 2$, and let $\{
  u_{h\tau}^n\}_{n=0}^N$ be a solution of \eqnref{:eulerF} with data
  $(u^0_{h\tau},F_{h\tau},g_{h\tau})$.  Assume for some $1 < q,r <
  \infty$ that
  \begin{enumerate}
  \item $\{\norm{u_{h\tau}}_{L^p(\Omega,\LrU)}\}_{h, \tau>0}$ is bounded.

  \item \label{hyp:Fhtau} $\{\norm{F_{h\tau}}_{L^p(\Omega,
      \LqpUp)}\}_{h,\tau>0}$ is bounded.

  \item \label{hyp:FuBounded} $\{F_{h\tau}(u_{h\tau})\}_{h,\tau > 0}$
    is bounded in $L^{p/2}(\Omega, L^1(0,T))$.

  \item $\{\norm{g_{h\tau}}_{L^p(\Omega, L^p[0,T;H])}\}_{h,\tau>0}$ 
    is bounded.

  \item The initial data $\{u^0_{h\tau}\}_{h,\tau>0}$ are bounded in
    $L^p(\Omega, H)$ and converge in $L^2(\Omega,H)$
    as $(h, \tau) \rightarrow (0,0)$.
  \end{enumerate}
  Then the following properties hold.
  \begin{enumerate}
  \item $\{\norm{u_{h\tau}}_{L^p(\Omega,\LinfH)}\}_{h,\tau>0}$ and
    $\{\norm{\uhat_{h\tau}}_{L^p(\Omega,
      C^{0,\theta}[0,T;U'])}\}_{h,\tau >0}$ with $0 < \theta <
    \min(1/2-1/p,1/q)$ are bounded.

   \item There exist a probability space $(\Omegat, \calFt, \bbPt)$
     and a random variable $(u,F,g,W)$ on $\Omegat$ with values in
     $$
     \bbX \equiv G[0,T;U'] \cap \LrU_{weak}
     \times \LqpUp_{weak}
     \times \LtwoH_{weak}
     \times C[0,T],
     $$
     and a subsequence $(h_k, \tau_k) \rightarrow (0,0)$ for which the
     laws of $\bigl\{(u_{h_k \tau_k}, F_{h_k \tau_k}, g_{h_k
       \tau_k}, \hat{W}_{\tau_k})\bigr\}_{k=1}^\infty$ converge to the
     law of $(u,F,g,W)$,
     $$
     \calL(u_{h_k\tau_k}, F_{h_k\tau_k}, g_{h_k\tau_k}, \What_{\tau_k})
     \ \Rightarrow \calL(u,F,g,W),
     $$
     with $\bbPt\big[u \in C[0,T;U'] \cap \LinfH \big] = 1$.
     Here $\LrU_{weak}$ and $\LqpUp_{weak}$ denote the
     spaces $\LrU$ and $\LqpUp$ endowed with the weak
     topology.

   \item If, in addition, the laws of $\{g_{h\tau}\}_{h,\tau > 0}$ are
     tight on $\LtwoH$ (which, for example, is the case when
     $g_{h\tau}$ converges in $L^2(\Omega,\LtwoH)$) then the laws converge
     on
     $$
     \bbX \equiv G[0,T;U'] \cap \LrU_{weak}
     \times \LqpUp_{weak}
     \times \LtwoH
     \times C[0,T],
     $$
     and there exists a filtration $\{\calFt(t)\}_{0 \leq t
       \leq T}$ satisfying the usual conditions for which $F$ is
     adapted, $g$ has a predictable representative in $L^2((0,T)
     \times \Omegat; U')$, and $W$ is a real-valued Wiener process,
     such that for all $0 \leq t \leq T$
     \begin{equation}\label{eqn-7}
       (u(t), v)_H = (u^0,v)_H 
       + \int_0^t ( F, v )\, ds
       + \int_0^t {( g, v )} \, dW,
       \qquad v \in U.
     \end{equation}

   \item \label{it:LsV} If additionally $V \embed {U'}$ is a separable
     reflexive Banach space and $\{u_{h\tau}\}_{h,\tau > 0}$ is
     bounded in $L^p(\Omega,L^s[0,T;V])$ for some $1 < s < \infty$,
     then the laws converge on
     $$
     \bbX \equiv G[0,T;U']
     \cap \LrU_{weak}
     \cap L^s[0,T;V]_{weak} 
     \times \LqpUp_{weak}
     \times \LtwoH
     \times C[0,T].
     $$
     If $U \cembed V$ is compact and $1 \leq \shat < s$, then the laws
     converge in
     $$
     \bbX \equiv G[0,T;U']
     \cap \LrU_{weak}
     \cap L^{\shat}[0,T;V]
     \times \LqpUp_{weak}
     \times \LtwoH
     \times C[0,T].
     $$
     
   \item If $U_0 \subset U$ is a subspace and if Assumption
     $\text{\ref{ass:spde1}}_\text{\ref{it:hDense}}$ is weakened to:
     \begin{itemize}
     \item[(\ref{it:hDense}')] For each $v \in U_0$, there exists a
       sequence $\{v_h\}_{h > 0} \subset U_h$ such that $v_h \rightarrow
       v$ for $h \rightarrow 0$.
     \end{itemize}
     the above still hold except that 
     $$
     (u(t), v)_H = ({u^0},v)_H 
     + \int_0^t (F(s), v ) \, ds
     + \int_0^t (g, v ) \, dW,
     \qquad v \in U_0.
     $$

   \item \label{it:Fsum} If $F_{h\tau} = \sum_{\ell=1}^{L}
     F^{(\ell)}_{h\tau}$ and each summand is bounded as in Hypothesis
     \ref{hyp:Fhtau}, then the above holds mutatis mutandis with
     $$
     \bbX \equiv G[0,T;U'] \cap \LrU_{weak}
     \times \LqpUp_{weak}^L
     \times \LtwoH
     \times C[0,T],
     $$
     and 
     $$
     (u(t), v)_H = (u^0,v)_H 
     + \int_0^t \big(\sum_{\ell=1}^L F^{(\ell)}(s), v \big) \, ds
     + \int_0^t {(g, v )} \, dW,
     \qquad v \in U.
     $$
     If $F^{(\ell)}_{h\tau}$ converges strongly in
     $L^p(\Omega,\LqpUp)$ for an index $1 \leq \ell \leq L$, then
     the laws converge when the corresponding factor space of
     $\LqpUp^L$ has the strong topology.
   \end{enumerate}
 \end{theorem}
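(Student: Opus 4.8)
\textbf{Proof proposal for item \ref{it:Fsum}.}
The idea is that this statement is an immediate variant of parts~1--3 of the theorem: one carries the $L$-tuple $(F^{(1)}_{h\tau},\dots,F^{(L)}_{h\tau})$ through the argument in place of the single process $F_{h\tau}$, retaining in the limit the information about the individual summands, while observing that only the sum $F=\sum_\ell F^{(\ell)}$ enters the limiting equation. First I would note that, since each $F^{(\ell)}_{h\tau}$ is bounded as in Hypothesis~\ref{hyp:Fhtau} and $F_{h\tau}=\sum_{\ell=1}^{L}F^{(\ell)}_{h\tau}$, the triangle inequality shows $\{\norm{F_{h\tau}}_{L^p(\Omega,\LqpUp)}\}_{h,\tau>0}$ is bounded, so all hypotheses of Theorem~\ref{thm:main} hold for the data $(u^0_{h\tau},F_{h\tau},g_{h\tau})$ (the remaining hypotheses on $F_{h\tau}(u_{h\tau})$, $g_{h\tau}$ and $u^0_{h\tau}$ are used verbatim). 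In particular the a~priori bounds of part~1, namely on $\norm{u_{h\tau}}_{L^p(\Omega,\LinfH)}$ and on $\norm{\uhat_{h\tau}}_{L^p(\Omega,C^{0,\theta}[0,T;U'])}$ for $0<\theta<\min(1/2-1/p,1/q)$, are available.

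Because $U$ is separable and reflexive, $\LqpUp$ is a separable reflexive Banach space, and by the standard fact that bounded sets of such a space are relatively weakly compact and metrizable in the weak topology, bounded sets of $\LqpUp^L$ endowed with the product weak topology have the same property; hence the laws of $(F^{(1)}_{h\tau},\dots,F^{(L)}_{h\tau})$ are tight on $\LqpUp^L_{weak}$. Combining this with the tightness already established in parts~2--3 for the remaining components --- $u_{h\tau}$ on $G[0,T;U']\cap\LrU_{weak}$ (via the bound on $\uhat_{h\tau}$ in $C^{0,\theta}[0,T;U']$ together with Lemma~\ref{lem:skorohod} and Theorem~\ref{thm:cpt1}), $g_{h\tau}$ on $\LtwoH$ (under the additional tightness hypothesis of part~3), and $\What_\tau$ on $C[0,T]$ --- the joint laws are tight on $\bbX$. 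By Prokhorov's theorem I would extract a subsequence $(h_k,\tau_k)\to(0,0)$ along which $\calL(u_{h_k\tau_k},F^{(1)}_{h_k\tau_k},\dots,F^{(L)}_{h_k\tau_k},g_{h_k\tau_k},\What_{\tau_k})$ converges, and then apply the Skorokhod representation theorem --- in the form valid on $\bbX$, which carries a countable family of continuous functions separating points, exactly as in part~3 --- to obtain a probability space $(\Omegat,\calFt,\bbPt)$ and random variables $(u,F^{(1)},\dots,F^{(L)},g,W)$ with $\bbX$-valued paths converging $\bbPt$-a.s.\ and having the same laws as the prelimit tuples.

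Next I would set $F:=\sum_{\ell=1}^{L}F^{(\ell)}$. Pathwise, $F^{(\ell)}_{h_k\tau_k}\weak F^{(\ell)}$ in $\LqpUp$ (i.e.\ tested against every element of $\LqU$) for each $\ell$, so $F_{h_k\tau_k}\weak F$ in $\LqpUp$; testing against $\mathbf 1_{[0,t]}v\in\LqU$ gives $\int_0^t(F_{h_k\tau_k},v)\,ds\to\int_0^t(F,v)\,ds$ $\bbPt$-a.s.\ for every $v\in U$ and $t\in[0,T]$. From here the identification of $(u,F,g,W)$ as a weak martingale solution is identical to that in the proof of part~3: the $p>2$ moment bounds supply the uniform integrability needed to pass, via Corollary~\ref{cor:Law} and Lemma~\ref{lem:mapping_theorem}, the discrete martingale properties of $W^n_\tau$, $(W^n_\tau)^2-t^n$ and of the discrete counterparts of $(X(t),v)$, $(X(t),v)^2-\int_0^t(g,v)_H^2\,ds$, $(X(t),v)W(t)-\int_0^t(g,v)_H\,ds$ (all verified at the discrete level through Assumption~\ref{ass:spde0}) to the limit, where $(X(t),v)=(u(t),v)_H-(u^0,v)_H-\int_0^t(F,v)\,ds$. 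Theorem~\ref{thm:mglRepresentation} then yields, for every $v\in U$ and $0\le t\le T$,
\[
(u(t),v)_H=(u^0,v)_H+\int_0^t(F,v)\,ds+\int_0^t(g,v)\,dW,
\]
and substituting $\int_0^t(F,v)\,ds=\sum_{\ell=1}^{L}\int_0^t(F^{(\ell)},v)\,ds$ gives the asserted identity; adaptedness of $F$ (hence of each $F^{(\ell)}$ through the reconstruction of the filtration) and the predictable representative of $g$ are obtained as in part~3.

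For the refinement, suppose $F^{(\ell_0)}_{h\tau}$ converges strongly in $L^p(\Omega,\LqpUp)$ for some $\ell_0$. Since $\LqpUp$ is a Polish space, a convergent sequence of laws on it is tight, so $\{\calL(F^{(\ell_0)}_{h\tau})\}$ is tight on $\LqpUp$ with its strong topology; I would then include that factor with the strong topology when forming $\bbX$ and repeat the tightness/Prokhorov/Skorokhod steps, after which $F^{(\ell_0)}_{h_k\tau_k}\to F^{(\ell_0)}$ strongly $\bbPt$-a.s., which in particular retains the weak convergence used above, leaving the identification unchanged. I expect no genuine obstacle here: the only substantive new point is the bookkeeping of keeping the summands separate --- the limiting equation sees only $F=\sum_\ell F^{(\ell)}$, but retaining each $F^{(\ell)}$ is what makes this version useful when one summand is a nonlinear term to be identified later by a separate monotonicity or compactness argument on its own factor --- and everything else is a verbatim repetition of parts~1--3, since the martingale characterization depends on $F_{h\tau}$ only through the term $\int_0^\cdot(F_{h\tau},v)\,ds$, which is linear in the tuple.
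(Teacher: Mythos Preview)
Your core observation is correct and matches the paper's intent: item~\ref{it:Fsum} is indeed pure bookkeeping, obtained by carrying the $L$-tuple $(F^{(1)}_{h\tau},\dots,F^{(L)}_{h\tau})$ through the compactness and identification steps in place of the single $F_{h\tau}$, with tightness on the extra factor $\LqpUp_{weak}^L$ coming from Banach--Alaoglu exactly as for a single $F$, and with the strong-topology refinement following because a strongly convergent sequence has tight laws on the Polish space $\LqpUp$.

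Where you diverge from the paper is in invoking the Skorokhod representation theorem. The paper never does this: it takes $\Omegat=\bbX$ with $\bbPt$ the Prokhorov limit, defines $(u,F,g,W)$ as the coordinate projections on $\bbX$, and passes every expectation to the limit directly via the extended Portmanteau result Lemma~\ref{lem:mapping_theorem} (handling functionals that are merely sequentially continuous off a $\bbPt^*$-null set, which is how the point evaluations $u\mapsto u(t)$ are dealt with). Your phrase ``exactly as in part~3'' is therefore a misreading --- part~3 contains no Skorokhod step. Either route works: Jakubowski--Skorokhod would give almost-sure convergence on a new space, after which Vitali suffices and Lemma~\ref{lem:mapping_theorem} and Corollary~\ref{cor:Law} become unnecessary; the paper's route stays on the canonical space and trades almost-sure convergence for the mapping-theorem machinery. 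Your write-up mixes the two (invoking Skorokhod and then still citing Lemma~\ref{lem:mapping_theorem} and Corollary~\ref{cor:Law}), which is harmless but redundant --- choose one and drop the other.
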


This theorem can be viewed as an instance of the Lax--Richtmeyer
equivalence theorem or an infinite dimensional version of Donsker's
theorem with random walk in $U'$. The stability hypothesis of the
Lax--Richtmeyer theorem is identified with the bounds assumed upon
$\{(u_{h\tau}, F_{h\tau}, g_{h\tau})\}_{h, \tau > 0}$, and the
convergence is as stated.  The analysis of numerical schemes for each
of the examples introduced at the beginning of Section \ref{s-intro}
all included the following steps.
\begin{enumerate}
\item Bounds upon the approximate solution were first derived which always
  contained the hypotheses of Theorem \ref{thm:main} as a subset.  The
  implicit Euler scheme has been used ubiquitously in both the
  deterministic (PDE) and probabilistic (SODE) setting and
  bounds for stochastic PDE's follow upon integrating the ideas from
  these two disciplines.

\item The ideas introduced in Section \ref{sec:mglSoln} below for
  the proof of Theorem \ref{thm:main} were utilized to establish
  convergence to a weak martingale solution. In addition to 
  those introduced in the previous two sections, these include
  appropriate versions of the Kolmogorov-Centsov theorem to
  establish pathwise continuity, and the theorems of Prokhorov
  and Lions-Aubin  to establish compactness.

\item Compactness properties were developed in order to show that the
  limit $F$ took the form $F = f - A(u)$ (and $g \equiv \gamma(u)$ if $g_{h\tau}
  \equiv \gamma_{h\tau}(u_{h\tau})$).  This involves an interchange of limits;
  the numerical scheme will be ``consistent'' if $F_{h\tau} \equiv
  F(u_{h\tau}) \Rightarrow F(u)$.

  Frequently this step, which involves the spatial terms, was not
  well-delineated from the previous step which establishes convergence
  of the time stepping scheme. In the deterministic setting
  consistency is usually direct once the compactness is established;
  however, in the stochastic setting additional arguments are
  required.  In the next section we illustrate how convergence in law
  is used to establish consistency. Note that if additional bounds are
  available for a specific problem (as in Statement \ref{it:LsV} of the
  theorem) more test functions are available when the solutions
  converge in law, and these can be used to show consistency.
\end{enumerate}

\subsection{Consistency of the Spatial Terms} \label{se-consist}
Theorem \ref{thm:main} shows that the implicit Euler scheme
\eqnref{:eulerF} is consistent in the sense that (along a subsequence)
the laws of the discrete solution $(u_{h\tau}, F_{h\tau}, g_{h\tau},
\What_{\tau})$ converge to the laws of a limit $\bbPt =
\calL(u,F,g,W)$ satisfying (\ref{eqn-7}).  In order to recover a
solution of \eqnref{:Spde} it is necessary to show that $F = f - A(u)$
on the support of $\bbPt$, and, if the diffusion term depends upon the
solution, $g_{h\tau} = G(u_{h\tau})$, that $g = G(u)$.  Convergence in
law will be used to show this; recall that this mode of convergence
guarantees that
$$
\bbE[\phi(u_{h\tau}, F_{h\tau}, g_{h\tau}, \What_{\tau})]
\rightarrow \int_\bbX \phi(u,F,g, {W}) \, d \bbPt(u,F,g,W),
\quad \text{ for all } \quad \phi \in C_b(\bbX).
$$
A judicious selection of test functions in Lemma
\ref{lem:mapping_theorem} and Corollary \ref{cor:Law} is made to
establish consistency.

In all of the examples $F = f - A(u) = F^{(1)} + F^{(2)}$ is a sum, 
and Statement \ref{it:Fsum} in Theorem \ref{thm:main} shows that it is sufficient to
consider consistency of each term separately. Specifically, with
$$
\bbX \equiv G[0,T;U'] \cap \LrU_{weak}
\times \LqpUp_{weak}^2
\times \LtwoH
\times C[0,T],
$$
we have
$$
\calL(u_{h_k\tau_k}, (f_{h_k\tau_k}, A(u_{h\tau})), 
g_{h_k\tau_k}, \What_{\tau_k}) \Rightarrow \calL(u,(f,a),g,W) \equiv \bbPt, 
\qquad \text{ on } \bbX.
$$
Typically, the data $\{f_{h\tau}\}_{h,\tau > 0}$ are an approximation of a
specified random variable with law $\bbP_f$, and the discrete
approximations are constructed so that $\calL(f_{h\tau}) \Rightarrow
\bbP_f$ on $\LqpUp$. This will be the case if, for example,
$f_{h\tau}$ converges to a limit in $L^p(\Omega,\LqpUp)$.  It is then
immediate that $\calL(f) = \bbP_f$.

When ${\mathcal L}(u_{h\tau}) \Rightarrow {\mathcal L}(u)$ and ${\mathcal L}(A(u_{h\tau})) \Rightarrow {\mathcal L}(a)$ it is
necessary to show $a = A(u)$ on the support of $\bbPt$. The next
example shows that this is easily verified when $A$ is linear, and the
following example uses Corollary \ref{cor:Law} to establish this for a
nonlinear problem.

\begin{example}[linear equations] \label{eg:1} Let $A:U 
  \rightarrow U'$ be linear and continuous, $\normUp{A(u)} \leq C_a
  \normU{u}$. For $v \in \LtwoU$ fixed, the mapping
  $u \mapsto A(u)(v)$ is linear and continuous on $L^2[0,T;U']$, hence
  weakly continuous, so
  $$
  \phi(u, (f,a),g,W) 
  = \left| \int_0^T (a - A(u), v)  \, ds\right| \wedge 1
  $$
  is continuous on $\bbX = L^2[0,T;U]_{weak} \times
  L^2[0,T;U']_{weak}^2 \times \LtwoH \times C[0,T]$ and bounded.
  Consistency is then immediate,
  $$
  \bbEt \left[ \left| \int_0^T (a - A(u), v) \, ds\right| \wedge 1 \right]
  = \lim_{(h_k,\tau_k) \rightarrow (0,0)}
  \bbE\left[ \left| \int_0^T (A(u_{h_k \tau_k}) - A(u_{h_k\tau_k}), v)
      \, ds\right| \wedge 1 \right]
  = 0.
  $$
\end{example}

The next example considers the common situation where
the spatial operator is a compact perturbation of a linear operator.

\begin{example}[stochastic Navier Stokes equation] \label{eg:2}
  Solutions of the stochastic Navier-Stokes equation take values in
  the divergence free Sobolev space $U_0 = \{u \in \Honeo^3 \sst
  div(u) = 0\}$, with $D \subset \Re^3$ bounded and Lipschitz.
  However, numerical solutions are computed in the larger space $U =
  \Honeo^3$, and the spatial operator $A:U \rightarrow U'$ is
  \begin{eqnarray}
    (A(u),v) &=& 
    (1/2) \left((u.\nabla)u, v \vph \right) 
    - (1/2) \left(u, (u.\nabla) v \vph \right) 
    + \left(2 \mu D(u), \nabla v \vph\right) \label{eqn:Ans} \\
    &\equiv& \sum_{ij=1}^d \int_D 
    (1/2)\left(u_j \dbydp{u_i}{x_j} v_i - u_i u_j \dbydp{v_i}{x_j} \right)
    + \mu \left(\dbydp{u_i}{x_j} + \dbydp{u_j}{x_i} \right) \dbydp{v_i}{x_j},
    \qquad v \in U, \nonumber
  \end{eqnarray}
  where $D(u) = 1/2 (\nabla u + \nabla u^T)$.
  The last term on the right is bilinear and continuous and is
  accommodated as in the prior example. Appropriate exponents
  for this example are $r=2$, $q = 8$, $q'=8/7$.
  
  Let $\Ahat:U \rightarrow U'$ denote the operator
  $$
  (\Ahat(u),v) = (1/2) \left((u.\nabla)u, v \vph \right) 
    - (1/2) \left(u, (u.\nabla) v \vph \right).
  $$
  For $v \in L^4[0,T; U]$ fixed, we show that
  $$
  \phi(u,\Fhat) = \left|\int_0^T (\Fhat - \Ahat(u), v) \, ds\right|
  $$
  is sequentially continuous on
  $\bbXt \equiv G[0,T; U'] \cap L^2[0,T;U]_{weak} \times L^{8/7}[0,T;U']_{weak}$
  and has a finite moment of order $p$ when the solution has
  moments of order $2p$. Thus if $\calL(u_{h\tau},\Fhat_{h\tau})
  \Rightarrow \bbPhat$ on $\bbXhat$ with
  $\Fhat_{h\tau} \equiv \Ahat(u_{h\tau})$, then $\phi(
  u_{h\tau},\Fhat_{h\tau}) \equiv 0$ and from
  Corollary \ref{cor:Law} we conclude that
  $$
  \bbEhat \left[ \left| \int_0^T (\Fhat - \Ahat(u),v)\, ds\right| \right]
  = \lim_{(h,\tau) \rightarrow (0,0)}
  \bbE\left[ \left| \int_0^T 
      (\Fhat_{h\tau} - \Ahat(u_{h\tau}), v)  \, ds\right| \right]
  = 0,
  $$
  whence $\bbPhat[\Fhat = \Ahat(u)] = 1$.
  
  Since the mapping $\Fhat \in L^{8/7}[0,T; U']_{weak} \mapsto \int_0^T
  (\Fhat,v)$ is continuous it suffices to show that
  $$     
  u \in G[0,T; U'] \cap L^2[0,T; U]_{weak} \mapsto \int_0^T (\Ahat(u),v)\, ds
  $$
  is sequentially continuous. We sketch a proof of this; a
  detailed discussion of this operator is available in every text on
  the Navier-Stokes equations \cite{Ga94,GiRa79,Te77}.  

  A calculation using H\"older's inequality and the Sobolev embedding
  theorem, $U \embed L^6(D)$ in three dimensions, shows
  $$
  |(\Ahat(u_2)-\Ahat(u_1),v)|
  \leq C \norm{u_2-u_1}_{L^3(D)} 
  \left(\normU{u_1} + \normU{u_2} \vph\right) \normU{v}.
  $$
  Integration by parts for functions with homogeneous boundary data is
  used to obtain a bound without any derivatives on the difference $u_2 -
  u_1$.
  Using the interpolation estimate $\ltwo{u} \leq \normU{u}^{1/2} 
  \normUp{u}^{1/2}$ it follows that 
  $$
  \norm{u}_{L^3(D)} 
  \leq \ltwo{u}^{1/2} \norm{u}_{L^6(D)}^{1/2}
  \leq C \normUp{u}^{1/4} \normU{u}^{3/4},
  $$
  so
  $$
  |(\Ahat(u_2)-\Ahat(u_1),v)|
  \leq C \normUp{u_2-u_1}^{1/4} \left(\normU{u_1}^{7/4} 
    + \normU{u_2}^{7/4}\right) \normU{v}.
  $$
  In particular, setting $u_1=u$ and $u_2=0$ and integrating in time,
  it follows that
  $$
  \Big|\int_0^T (\Ahat(u),v) \, ds\Big|
  \leq \linfUp{u}^{1/4} \ltwoU{u}^{7/4} \norm{v}_{L^8[0,T;U]},
  $$
  so $\Ahat$ maps bounded sets in $G[0,T;U'] \cap \LtwoU$ to bounded
  sets in $L^{8/7}[0,T; U']$, and H\"older's inequality (with $s=8$,
  $s'=8/7$) shows
  $$
  \bbE\left[\Big|\int_0^T (\Ahat(u),v) \, ds\Big|^p \right]
  \leq \bbE\left[\linfUp{u}^{2p} \right]^{1/8}
  \bbE\left[\ltwoU{u}^{2p} \right]^{7/8} \norm{v}_{L^8[0,T;U]},
  $$
  so has moments of order $p > 1$ if the solution has moments greater
  than $2$.

  If $u_n \rightarrow u$ in $G[0,T;U'] \cap \LtwoU_{weak}$, then
  $\{u_n\}_{n=1}^\infty$ converges in $L^2[0,T;U']$ and is bounded in
  $\LtwoU$.  An application of H\"older's inequality then shows
  $$
  \int_0^T \left| \big(\Ahat(u_n) - \Ahat(u), v \big) \right| \, ds 
  \leq C \ltwoUp{u_n - u}^{1/4}
  \left(\ltwoU{u_n}^{7/4} + \ltwoU{u}^{7/4} \right) 
  \norm{v}_{C[0,T;U]} \rightarrow 0.
  $$
  Since the embedding $C[0,T;U'] \embed L^8[0,T,U]$ is dense it
  follows that $\Ahat(u_n) \weak \Ahat(u)$ in $L^{8/7}[0,T,U']$.

  The fully implicit approximation of the nonlinear term has
  $\Fhat_{h\tau} = \Ahat(u_{h\tau})$; semi--implicit schemes
  approximate the convective term with the operator
  $$
  (\Fhat^n_{h\tau},v) 
  = (1/2) \left((u_{h\tau}^{n-1}.\nabla)u_{h\tau}^n, v \vph \right) 
    - (1/2) \left(u_{h\tau}^n, (u_{h\tau}^{n-1}.\nabla) v \vph \right),
  $$
  so that each time step only requires the solution of a linear system.
  The choice preserves skew symmetry, $(\Fhat^n_{h\tau}, u^n_{h\tau}) = 0$, and
  using the embedding theorems as above shows
  $$
  |(\Fhat^n_{h\tau} - \Ahat(u^n_{h\tau}),v)|
  \leq C \norm{\uhat_{h\tau}}_{C^{0,\theta}[0,T;U']}^{1/4}
  \left(\normU{u_{h\tau}^{n-1}}^{7/4} + \normU{u_{h\tau}^n}^{7/4} \vph\right)
  \normU{v} \tau^{\theta/4},
  $$
  and 
  $$
  \bbE \left[
    \int_0^T |(\Fhat_{h\tau} - \Ahat(u_{h\tau}),v)|^p \, ds\right]
  \leq C \bbE\left[\norm{\uhat_{h\tau}}_{C^{0,\theta}[0,T;U']}^{2p}\right]^{1/8}
  \bbE\left[\ltwoU{u_{h\tau}}^{2p}\right]^{7/8}
  \norm{v}_{L^8[0,T;U]} \tau^{\theta/4}.
  $$
  Theorem \ref{thm:main} bounds the H\"older norm
  $L^p(\Omega,C^{0,\theta}[0,T;U'])$, so this term vanishes as $\tau
  \rightarrow 0$, and consistency of this approximation of the
  nonlinear term follows.
\end{example}

The stochastic equation \eqnref{:spde} is said to have ``additive
noise'' if the law of the function $g$ in equation \eqnref{:spde} is
specified. In this case $\{g_{h\tau}\}_{h,\tau > 0}$ is an
approximation of a specified random variable with law $\bbP_g$, and
the discrete approximations are constructed so that $\calL(g_{h\tau})
\Rightarrow \bbP_g$ on $\LqpUp$.  When the stochastic term depends upon
the solution, we write $g = \gamma(u)$, the equation is said to have
``multiplicative noise'', and it is necessary to verify that this
equation holds in the limit. The following elementary lemma is useful
in this context.

\begin{lemma} Let $U$ be a separable Banach space, $H$ be a Hilbert space
and $U \embed H$ be continuous embeddings.
If $\gamma:L^s[0,T;H] \cap \LrU_{weak} \rightarrow \LtwoH$
is sequentially continuous then $\gamma$ maps tight sequences to tight
sequences.
\end{lemma}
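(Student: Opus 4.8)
The plan is to reduce the assertion to the purely topological statement that $\gamma$ maps compact subsets of $\bbX:=L^s[0,T;H]\cap\LrU_{weak}$ to compact subsets of $\LtwoH$; tightness then transfers automatically. So let $\{X_n\}_{n=1}^\infty$ be a sequence of $\bbX$-valued random variables whose laws are tight, fix $\epsilon>0$, and choose a compact set $K=K_\epsilon\subset\bbX$ with $\bbP[X_n\in K]\ge 1-\epsilon$ for every $n$. The first observation is that $K$ is norm bounded in $\LrU$: the topology on $\bbX$ is the coarsest one making the inclusions into $L^s[0,T;H]$ and $\LrU_{weak}$ continuous, so the image of $K$ under $\bbX\embed\LrU_{weak}$ is weakly compact, hence weakly bounded, hence norm bounded, say $K\subset B_M:=\{u\sst\lrU{u}\le M\}$.

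Next I would upgrade the sequential continuity of $\gamma$ to genuine continuity on $K$. Since (in the setting of interest) $U$ is separable and reflexive, $\LrU$ is separable and reflexive, its dual $L^{r'}[0,T;U']$ is separable, and therefore the weak topology of $\LrU$ restricted to the bounded set $B_M$ is metrizable. As $L^s[0,T;H]$ is metrizable, the diagonal map $x\mapsto(x,x)$ identifies the $\bbX$-topology on $K$ with a subspace topology of the metrizable product $B_M^{\,weak}\times L^s[0,T;H]$, so $K$, compact in $\bbX$, is a compact metric space and in particular sequentially compact. The restriction $\gamma|_K$ is then a sequentially continuous map on a metric space, hence continuous, and consequently $\gamma(K)$ is a compact --- in particular closed and Borel --- subset of $\LtwoH$. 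Finally $\{X_n\in K\}\subseteq\{\gamma(X_n)\in\gamma(K)\}$ yields $\bbP[\gamma(X_n)\in\gamma(K)]\ge 1-\epsilon$ for all $n$, and since $\epsilon>0$ was arbitrary, $\{\gamma(X_n)\}_{n=1}^\infty$ is tight in $\LtwoH$.

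The only genuine obstacle is that the topology of $\bbX$ is not metrizable globally (owing to the weak factor), so one cannot apply ``sequentially continuous $\Rightarrow$ continuous'' directly; the point that makes the argument work is precisely that every compact --- equivalently, every tightness-witnessing --- subset of $\bbX$ is automatically norm bounded in $\LrU$, and on norm bounded sets the weak topology is metrizable. If one prefers not to invoke reflexivity of $U$, the same conclusion follows from the Eberlein--\v{S}mulian theorem: a weakly compact subset of $\LrU$ is weakly sequentially compact, which together with compactness of the image of $K$ in $L^s[0,T;H]$ allows one to extract from any sequence in $K$ a subsequence converging in $\bbX$, so that $K$ is sequentially compact and $\gamma|_K$ carries sequences to convergent sequences. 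A minor measurability point --- that $\gamma\circ X_n$ is itself a random variable --- is not an issue in the applications, where $g_{h\tau}=\gamma_{h\tau}(u_{h\tau})$ is constructed as an $\LtwoH$-valued random variable; in general one replaces $\bbP$ by outer measure, using that $\gamma(K)$ is closed.
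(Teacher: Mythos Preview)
Your proof is correct and follows essentially the same route as the paper: both argue that compact subsets of $L^s[0,T;H]\cap\LrU_{weak}$ are metrizable, so sequential continuity of $\gamma$ upgrades to continuity on each such compact, whence $\gamma$ maps compacts to compacts and tightness transfers via $\bbP[\gamma(X_n)\in\gamma(K)]\ge\bbP[X_n\in K]\ge 1-\epsilon$. The paper simply asserts the metrizability in one line, whereas you supply the justification (weak compactness $\Rightarrow$ norm boundedness in $\LrU$, and the weak topology on bounded sets of a separable reflexive space is metrizable) and add the Eberlein--\v{S}mulian alternative and measurability remarks; these elaborations are sound but not a different strategy.
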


\begin{proof} Compact subsets of $L^2[0,T;H] \cap \LrU_{weak}$ are
metrizable, so $\gamma$ maps compact subsets to compact subsets. Thus
if $\epsilon > 0$, $K_\epsilon \subset L^s[0,T;H] \cap \LrU_{weak}$
is compact, and $\bbP[u_k \in K_\epsilon] \geq 1-\epsilon$, then
$$
\bbP[\gamma(u_k) \in \gamma(K_\epsilon)] 
= \bbP[u_k \in \gamma^{-1}(\gamma(K_\epsilon))] 
\geq \bbP[u_k \in K_\epsilon] 
\geq 1 - \epsilon.
$$
\end{proof}

\begin{example} Let $D \subset \Re^3$ be a bounded Lipschitz domain,
  $U \subset \Hone$ and $H = \Ltwo$.  Suppose that
  $ \gamma:(0,T) \times D \times
  \Re \rightarrow \Re$ is Caratheodory \cite{Sh97}; that is,
  $\gamma(t,x,u)$ is measurable in $(t,x)$ with $u$ fixed, and continuous in
  $u$ with $(t,x)$ fixed, and suppose that
  $$
  |\gamma(t,x,u)| \leq C |u|^{3/2} + k(t,x),
  \quad a.e. \, x \in (0,T) \times D, \quad u \in \Re,
  $$
  and $k \in L^2[0,T; L^2(D)]$. Letting
  $g(t,x,u) = \gamma(t,x, u(t,x))$ also denote the realization of $\gamma$ on
  the Lebesgue spaces. Under these assumptions $\gamma:L^3[0,T;L^3(D)]
  \rightarrow L^2[0,T;\Ltwo]$ is continuous.

  We show $\gamma:L^6[0,T;H] \cap L^2[0,T;U]_{weak} \rightarrow
  \LtwoH$ is sequentially continuous. For this purpose, recall that Statement
  \ref{it:LsV} of Theorem \ref{thm:main} shows that
  $\{u_{h\tau}\}_{h,\tau \geq 0}$ is tight in $L^s[0,T;H]$ for all $s
  > 1$.

  The first step is to note that the Sobolev embedding theorem
  shows $U \embed L^6(D)$, and since $1/3 = \theta/2 + (1-\theta)/6$
  when $\theta = 1/2$ it follows that
  $$
  \norm{u}_{L^3(D)} 
  \leq \ltwo{u}^{1/2} \norm{u}_{L^6(D)}^{1/2}
  \leq C \normH{u}^{1/2} \normU{u}^{1/2}.
  $$
  Integrating in time and H\"older's inequality (with $s=4$ and $s'=4/3$) shows
  $$
  \norm{u}_{L^3[0,T;L^3(D)]} 
  \leq C \norm{u}^{1/2}_{L^6[0,T;H]}
  \ltwoU{u}^{1/2}.
  $$
  Since weakly convergent sequences in $\LtwoU$ are bounded, and $\gamma$
  is continuous from $L^3[0,T;L^3(D)]$ to $\LtwoH$, sequential
  continuity of $\gamma:L^6[0,T;H] \cap L^2[0,T;U]_{weak} \rightarrow
  \LtwoH$ follows.

  Finally, note that
  \begin{eqnarray*}
  \bbE[\ltwoH{\gamma(u)}^p]
  &\leq& C \bbE\left[ \norm{u}^{p/2}_{L^6[0,T;H]} \ltwoU{u}^{p/2} 
    + \norm{k}_{L^2[0,T;\Ltwo]} \vph\right] \\
  &\leq& C \left(
    \bbE[\norm{u}^p_{L^6[0,T;H]}]^{1/2} \, \bbE[\ltwoU{u}^p]^{1/2} + 1 
    \vph\right),
  \end{eqnarray*}
  so $\gamma(u)$ inherits moment bounds from $u$. From Corollary \ref{cor:Law}
  it follows that if $\calL(u_{h\tau}) \Rightarrow \calL(u)$ in 
  $L^6[0,T;H] \cap L^2[0,T;U]_{weak}$ then $\calL(\gamma(u_{h\tau}))
  \Rightarrow \calL(\gamma(u))$ on $\LtwoH$.
\end{example}

\subsection{Computational Model}
Strong solutions are never realized in a computational context since
this would require a filtered probability space to be input as part of
the problem specification.  Instead a random number generator is
seeded and then ittereated to generate a sequence
$\{b_p(\omega)\}_{p=1}^\infty$ which exhibit the satistics of a
sequence of real valued i.i.d. variables $\{b_p\}_{p=1}^\infty$ sampled
at a point $\omega \in \Omega$ determined by the seed.  Typcially
their law is the uniform (Lebesgue) measure on $(0,1)$. Given laws of
the data, $\calL(f,g,W)$, the random numbers then used to engineer
samples $(f^n_{h\tau}(\omega), g^n_{h\tau}(\omega),
\xi^n_\tau(\omega))$ of random variables with laws $\calL(f_{h\tau},
g_{h\tau}, \What_{h\tau}) \Rightarrow \calL(f,g,W)$.

\begin{example} If $\calL(b_n)$ is Lebesgue measure on $(0,1)$ and
$\xi^n_\tau(\omega) = \sqrt{12 \tau} (b_n(\omega) - 1/2)$ then
$$
\bbE[\xi_\tau^n] = 0, \qquad
\bbE[(\xi_\tau^n)^2] = \tau,
\qquad \text{ and } \qquad
\bbE[|\xi_\tau^n|^p] = \mbox{$\frac{(3 \tau)^{p/2}}{(p+1)}$}.
$$
It follows that $\{\xi^n\}_{n=1}^N$ will satisfy Assumption \ref{ass:spde0}.
In addition, if 
$$
f^n_{h\tau}(x,\omega) = \Phi^n_{h\tau}(x, b_1(\omega), \ldots, b_n(\omega))
\qquad \text{ with } \qquad
\Phi^n_{h\tau} \in C(D \times \Re^n; U_h),
$$
then $f_{h\tau}$ will be adapted to $\calF^n_{h\tau} \equiv
\sigma(b_1, \dots, b_n)$.
\end{example}

If the law $\calL(u_{h\tau})$ of a solution of the implicit Euler
scheme \eqnref{:Spdeh} depends only upon the laws of the data
$\calL(f_{h\tau},g_{h\tau},W_{h\tau})$ (and the law of the initial data if
not deterministic), then for $(h,\tau)$ fixed, solutions
$\{(f^{(p)}_{h\tau}(\omega), g^{(p)}_{h\tau}(\omega),
W^{(p)}_{h\tau}(\omega))\}_{p=1}^\infty$ of the implicit Euler scheme
computed using distinct subsets of the random numbers will be i.i.d.  In
this context Monte-Carlo quadrature can be used to compute the
statistics of a solution guaranteed by Theorem \ref{thm:main}. If
$\bbPt$ is the measure and $\{(h_k, \tau_k)\}_{k=1}^\infty$
is the subsequence guaranteed by Theorem \ref{thm:main}, then
$$
\bbEt[\phi(u)] =
\lim_{h_k,\tau_k \rightarrow 0} \bbE[\phi(u_{h\tau})]
=  \lim_{h_k,\tau_k \rightarrow 0} \Big(
\lim_{P \rightarrow \infty} \frac{1}{P} 
\sum_{p=1}^P \phi(u^{(p)}_{h_k\tau_k}(\omega)) \Big),
\qquad \text{ almost surely,}
$$
for any function $\phi:G[0,T;U'] \times
L^r[0,T;U]_{weak} \rightarrow \Re$ satisfying the hypotheses of
Lemma \ref{lem:mapping_theorem}. 

When the law $\calL(u)$ of the solution to \eqnref{:spde} is uniquely
determined by the law $\calL(f,g,W)$ of the data, it is unnecessary to
pass to a subsequence provided $\calL(f_{h\tau}, g_{h\tau},
\What_{h\tau}) \Rightarrow \calL(f,g,W)$.  This is typically achieved
by constructing $(f^n_{h\tau}(\omega),g^n_{h\tau}(\omega))$ to be
projections or interpolants of specified functions onto the discrete
spaces (e.g. as in equation \eqnref{:AA1}) to give a Cauchy sequence
in $L^p(\Omega;\LqpUp] \times L^p(\Omega; L^p[0,T;H])$. In the
examples below it is assumed that $\{(f_{h\tau},g_{h\tau})\}_{h\tau >
  0}$ converges in $L^p(\Omega;\LqpUp) \times L^p(\Omega; L^p[0,T;H])$
whenever we wish to assert uniqueness.

\section{The stochastic heat equation}
In this section, we construct a weak martingale solution of the
stochastic heat equation.  While (stochastically) strong solutions
exist for this problem \cite{PrRo07,DPZ1}, we choose this simplified
framework to eliminate many technical issues that would otherwise
obfuscate the essential structure; more general nonlinear SPDE's are
presented in Section~\ref{sec:examples}.

Let $D \subset \Re^d$ be a bounded Lipschitz domain, and $[0,T]$ be a
time interval. Adopting the notation commonly used in stochastic
analysis, the heat equation with a stochastic source takes the form:
find a filtered probability space $(\Omega, \calF, \{\calF(t)\}_{0 \leq t
  \leq T}, \bbP)$ satisfying the usual conditions, an adapted process
$u:[0,T] \times D \times \Omega \rightarrow \Re$, and a standard Wiener
process $W:[0,T] \times \Omega\rightarrow \Re$ such that
\begin{equation} \label{eqn:heat}
d u - \Delta u \, dt = f \, dt + g \, d W
\qquad
u|_{t=0} = u^0,
\qquad
u|_{\partial D} = 0,
\end{equation}
with data $f, g:[0,T] \times D \times \Omega \rightarrow \Re$ that are
adapted to $\{ \calF(t)\}_{0 \leq t \leq T}$ and $u^0$ measurable on
$\calF(0)$.  Multiplying the heat equation by a test function $v$
vanishing on the boundary and integrating by parts shows
\begin{equation} \label{eqn:heatWeak}
\int_D u(t) v\, dx + \int_0^t \! \int_D \nabla u . \nabla v\, dxds
= \int_D u^0 v\, dx +
\int_0^t \! f  v \, dxds
+ \int_0^t \! \left( \int_D g v \, dx\right) \,dW,
\quad 0 \leq t \leq T.
\end{equation}
Setting $H = \Ltwo$, $U = \Honeo$ and defining $a: U \times U
\rightarrow \Re$ by
$$
a(u,v) = \int_D \nabla u . \nabla v\, dx,
$$
it follows that a solution of the heat equation with stochastic source
is an instance of the stochastic evolution equation exhibited in
equation \eqnref{:Spde}.  Convergence of the discrete scheme
\eqnref{:Spdeh} with these operators will be established under the
following hypotheses.

\begin{assumption} \label{ass:heat}
Let
  $U \embed H \embed U'$ be a dense embedding of separable Hilbert
  spaces, and the operators and data for equation \eqnref{:Spde} satisfy
  \begin{enumerate}
  \item $a:U \times U \rightarrow \Re$ is bilinear, continuous, and
    coercive. Specifically, there exist constants $c_a$, $C_a > 0$
    such that
    $$
    |a(u,v)| \leq C_a \normU{u} \normU{v},
    \quad \text{ and } \quad
    a(u,u) \geq c_a \normU{u}^2, 
    \qquad u,v \in U.
    $$
    
  \item For every $h>0$, $U_h$ is a finite
    dimensional subspace of $U$, and $\{t^n\}_{n=0}^N$ is a uniform
    partition of $[0,T]$ with time-step $\tau = T/N$.

  \item For each pair of parameters $(h,\tau)$, $\calF^0$ is generated
    by $u^0$ and $\{\calF^n\}_{n=1}^N$ is the discrete filtration with
    $\calF^n = \sigma\left( \{(u^m_{h\tau}, f^m_{h\tau}, g^m_{h\tau},
      \xi^m_{\tau}) \}_{m=0}^n \vph\right)$.
  \end{enumerate}
\end{assumption}

Granted Assumptions \ref{ass:heat} and \ref{ass:spde0} with $p \geq 2$,
the existence to the discrete scheme \eqnref{:Spdeh} is
direct; fix $\omega \in \Omega$ and write equation \eqnref{:Spdeh}
as $u^n_{h\tau}(\omega) \in U_h$,
$$
\left(u^n_{h\tau}(\omega), v_h\right)_H 
+ \tau a\left(u^n_{h\tau}(\omega), v_h\right) 
= \left(u^{n-1}_{h\tau}(\omega), v_h\right)_H 
+ \tau \left( f^n_{h\tau}(\omega), v_h \right) 
+ \left(g^{n-1}_{h\tau}(\omega),v_h\right) \xi^n_{\tau}(\omega),
\quad
v_h \in U_h.
$$
Upon selecting a basis for $U_h$ this becomes a system of linear
equations, $\bbA \bfu(\omega) = \bfb (\omega)$, with as many equations
as unknowns; moreover,
$$
{\bf v}^\top \bbA {\bf v} 
= \bigl(v_h, v_h \bigr)_H + \tau a\bigl(v_h, v_h\bigr) 
\geq \normH{v_h}^2 + \tau c_a \normU{v_h}^2, \qquad v_h \in U_h,
$$
so $\bbA$ is nonsingular and $u^n_{h\tau}$ is a continuous function of
the data $(u^{n-1}_{h\tau}, f^n_{h\tau}, g^{n-1}_{h\tau},
\xi^n_{\tau})$.  Since measurability of random variables is always
with respect to the Borel $\sigma$-algebra on the target space,
continuity of the solution operator guarantees that $u^n_{h\tau}$ is
$\calF^n$--measurable whence the sequence $\{u^n_{h\tau}\}_{n=0}^N$ is
adapted to $\{\calF^n\}_{n=0}^N$.

\subsection{Bounds} \label{bounds-1}
We begin by recalling bounds satisfied by the deterministic equation
$$
u \in U, \qquad
(\partial_t u,v)_H + a(u,v) = (f,v),
\qquad v \in U,
$$
with the bilinear function satisfying Assumption \ref{ass:heat}.
The fundamental estimate is found upon selecting $v=u$ to get
$$
(1/2) \dbyd{}{t} \normH{u}^2 + c_a \normU{u}^2 
\leq ( f,u )
\leq \normUp{f} \normU{u}.
$$
Integration in time then shows
$$
\linfH{u}^2 + c_a\ltwoU{u}^2
\leq \normU{u(0)}^2 + (1/2c_a) \ltwoUp{f}^2.
$$
The analogous statement for the discrete scheme
(\ref{euler-intro}) is obtained upon selecting the test function
$v_h = u^n_{h\tau}$, and the corresponding estimate is
$$
\max_{1 \leq n \leq N} \normH{u_{h\tau}^n}^2 
+ \sum_{m=1}^N \normH{u_{h\tau}^m - u_{h\tau}^{m-1}}^2
+ c_a\sum_{m=1}^N \tau \normU{u_{h\tau}^m}^2 
\leq C \left( 
\normH{u_{h\tau}^0}^2
+ \sum_{m=1}^N \tau \normUp{f^m_\tau}^2
\right).
$$
The second term on the left is an additional dissipative term
inherent to the implicit Euler scheme which arises when completing
the square of the approximate time derivative,
\begin{equation} \label{eqn:square}
(u - v, u)_H
= (1/2) \normH{u}^2 + (1/2) \normH{u-v}^2 - (1/2) \normH{v}^2.
\end{equation}
Consider next the discrete scheme \eqnref{:Spdeh} with bilinear form
satisfying Assumption \ref{ass:heat}.  To bound its solution,
independence of the increments and the dissipative term in the Euler
scheme are used in an essential fashion. With $\omega \in \Omega$
fixed, selecting the test function in equation \eqnref{:Spdeh} to be
$v = u^n_{h\tau}(\omega)$
gives
\begin{equation} \label{eqn:energyEst}
\normH{u^n_{h\tau}}^2 
+ \normH{u_{h\tau}^n-u_{h\tau}^{n-1}}^2
+ 2 c_a \tau \normU{u_{h\tau}^n}^2
\leq \normH{u_{h\tau}^{n-1}}^2 
+ 2 \tau ( f^n_{h\tau} , u_{h\tau}^n )
+ 2 (g^{n-1}_{h\tau}, u_{h\tau}^n) \xi^n_{\tau}.
\end{equation}
To bound the last term 
properties of the stochastic increments from Assumption~\ref{ass:spde0}
are utilized.
Writing this term as
$$
(g^{n-1}_{h\tau}, u_{h\tau}^n) \xi^n_{\tau}
= (g^{n-1}_{h\tau}, u_{h\tau}^n-u_{h\tau}^{n-1}) \xi^n_{\tau}
+ (g^{n-1}_{h\tau}, u_{h\tau}^{n-1}) \xi^n_\tau,
$$
and taking the expected value we have
\begin{itemize}
\item $(g_{h\tau}^{n-1}, u_{h\tau}^{n-1})_H$ is $\calF^{n-1}$-measurable, so is independent of $\xi^n_\tau$, and since the average of
  $\xi^n_\tau$ vanishes it follows that
  $$
  \bbE \left[ (g_{h\tau}^{n-1}, u_{h\tau}^{n-1}) \xi^n_\tau \right]
  = \bbE [ (g_{h\tau}^{n-1}, u_{h\tau}^{n-1})] \, \bbE[\xi^n_\tau]
  = 0.
  $$

\item $\normH{g_{h\tau}^{n-1}}$ and $|\xi^n_\tau|$ are also independent, so
  an application of the Cauchy-Schwarz inequality gives
  \begin{eqnarray*}
    \bbE \left[ (g_{h\tau}^{n-1}, u_{h\tau}^n-u_{h\tau}^{n-1}) 
      \xi^n_\tau \right]
    &\leq& \left( \bbE[\normH{u_{h\tau}^n-u_{h\tau}^{n-1}}^2] \right)^{1/2}
    \left( \bbE[\normH{g_{h\tau}^{n-1}}^2 |\xi^n_\tau|^2] \right)^{1/2} \\
    &=& \left( \bbE[\normH{u_{h\tau}^n-u_{h\tau}^{n-1}}^2] \right)^{1/2}
    \left( \bbE[\normH{g_{h\tau}^{n-1}}^2] \bbE[|\xi^n_\tau|^2] \right)^{1/2} \\
    &=& \left( \bbE[\normH{u_{h\tau}^n-u_{h\tau}^{n-1}}^2] \right)^{1/2}
    \left( \tau \bbE[\normH{g_{h\tau}^{n-1}}^2] \right)^{1/2}.
  \end{eqnarray*}
\end{itemize}
Taking the expected value of both sides of equation \eqnref{:energyEst},
this bound is used to estimate the stochastic term,
\begin{multline} \label{eqn:est0}
\norm{u_{h\tau}}^2_{L^\infty[0,T; L^2(\Omega,H)]} 
+ \norm{u_{h\tau}}^2_{L^2[0,T; L^2(\Omega,U)]} \\
\leq C
\left(
  \norm{u_{h\tau}^0}^2_{L^2(\Omega,H)} 
  + \norm{f_{h\tau}}^2_{L^2[0,T;L^2(\Omega,U')]}
  + \norm{g_{h\tau}}^2_{L^2[0,T; L^2(\Omega,H)]} 
\right).
\end{multline}

This estimate bounds $u_{h\tau}$ in the Bochner space $L^\infty[0,T;
L^2(\Omega,H)]$; however, we also wish to identify $u_{h\tau}$ as a
random variable taking values in the Bochner space $\LinfH$. For any
Banach space $U$, the canonical correspondences
$$
L^2[0,T; L^2(\Omega, U)] 
\simeq L^2((0,T) \times \Omega, U) 
\simeq L^2(\Omega, L^2[0,T;U]),
$$
allow functions in these spaces to be identified as a random variable
with values in $L^2[0,T;U]$. In general it is not possible to identify
$L^\infty[0,T; L^2(\Omega,H)]$ with $L^2(\Omega, L^\infty[0,T;U])$;
however, the BDG inequality shows that the
norms on these two spaces are equivalent on the subspace of
martingales. The following lemma uses the property that the stochastic
term in \eqnref{:Spdeh} is an martingale to bound the solution in
$L^2(\Omega, L^\infty[0,T;H])$.

\begin{lemma} \label{lem:bound}
  Let Assumptions \ref{ass:spde0} and \ref{ass:heat} with $p \geq 2$
  hold and $u_{h\tau}$ be a solution of the implicit Euler scheme
  \eqnref{:Spdeh} with 
  initial condition $u_{h\tau}^0 \in L^p(\Omega,H)$,
  and data $f_{h\tau} \in L^p(\Omega, \LtwoUp)$, and $g_{h\tau} \in
  L^p(\Omega, \LpH)$. Then there exists a constant $C = C(p) > 0$ such
  that
  \begin{eqnarray}
    \lefteqn{
      \norm{u_{h\tau}}_{L^p(\Omega, \LinfH)}
      + \norm{u_{h\tau}}_{L^p(\Omega, \LtwoU)}
      + \bbE \left[ \left( \sum_{m=1}^N 
          \normH{u_{h\tau}^m - u_{h\tau}^{m-1}}^2 \right)^{p/2} \right]^{1/p}
      \nonumber } \\
    &\leq& 
    C \left(
      \norm{u_{h\tau}^0}_{L^p(\Omega,H)} 
      + \norm{f_\tau}_{L^p(\Omega, \LtwoUp)}
      + T^{1/2-1/p} \norm{g_\tau}_{L^p(\Omega, \LpH)} \right). 
    \label{eqn:bound}
  \end{eqnarray}
\end{lemma}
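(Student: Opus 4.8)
The plan is to run the standard implicit--Euler energy argument pathwise, split off from the stochastic term a genuine discrete martingale plus a remainder absorbed by the scheme's numerical dissipation, and then take $L^{p/2}(\Omega)$-norms using Lemma~\ref{lem:mglVariation} and the BDG inequality (Theorem~\ref{thm:BDG}). First I would test \eqnref{:Spdeh} with $v_h=u^n_{h\tau}(\omega)$ and use the identity \eqnref{:square} to produce the dissipative jump term, obtaining \eqnref{:energyEst} pathwise. Bounding $2\tau(f^n_{h\tau},u^n_{h\tau})\le c_a\tau\normU{u^n_{h\tau}}^2+c_a^{-1}\tau\normUp{f^n_{h\tau}}^2$ by Young's inequality, absorbing the gradient contribution, and summing over $m=1,\dots,n$ telescopes the $\normH{\cdot}^2$ terms and gives
\[
\normH{u^n_{h\tau}}^2+\sum_{m=1}^n\normH{u^m_{h\tau}-u^{m-1}_{h\tau}}^2+c_a\sum_{m=1}^n\tau\normU{u^m_{h\tau}}^2
\le\normH{u^0_{h\tau}}^2+c_a^{-1}\sum_{m=1}^N\tau\normUp{f^m_{h\tau}}^2+2\sum_{m=1}^n(g^{m-1}_{h\tau},u^m_{h\tau})\xi^m_\tau .
\]

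Next I would split the noise term as $(g^{m-1}_{h\tau},u^m_{h\tau})\xi^m_\tau=(g^{m-1}_{h\tau},u^{m-1}_{h\tau})\xi^m_\tau+(g^{m-1}_{h\tau},u^m_{h\tau}-u^{m-1}_{h\tau})\xi^m_\tau$. The second summand is $\le\frac14\normH{u^m_{h\tau}-u^{m-1}_{h\tau}}^2+C\normH{g^{m-1}_{h\tau}}^2|\xi^m_\tau|^2$ by Young's inequality, so (against the prefactor $2$) it moves into the jump term on the left and leaves a multiple of $\sum_{m=1}^N\normH{g^{m-1}_{h\tau}}^2|\xi^m_\tau|^2$ on the right. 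Since $g^{m-1}_{h\tau}$ and $u^{m-1}_{h\tau}$ are $\calF^{m-1}$-measurable while $\xi^m_\tau$ has zero mean and is independent of $\calF^{m-1}$ (Assumption~\ref{ass:spde0}), the partial sums $M_n:=\sum_{m=1}^n(g^{m-1}_{h\tau},u^{m-1}_{h\tau})\xi^m_\tau$ form a discrete real-valued martingale with $M_0=0$; this term I keep, passing to $\max_{0\le n\le N}$ on the left throughout.

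Then I would raise the resulting inequality to the power $p/2\ge1$ and take expectations, using the super-additivity $(\sum a_i)^{p/2}\ge\sum a_i^{p/2}$ on the left to split off the three quantities to be estimated, and $(\sum a_i)^{p/2}\le C_p\sum a_i^{p/2}$ on the right. The term $\bbE[(\sum_{m=1}^N\normH{g^{m-1}_{h\tau}}^2|\xi^m_\tau|^2)^{p/2}]$ is controlled directly by Lemma~\ref{lem:mglVariation} with $n=N$, $n\tau=T$, which yields $C_p\,T^{p/2-1}\norm{g_{h\tau}}_{L^p(\Omega,\LpH)}^p$ --- the source of the $T^{1/2-1/p}$ factor after taking $p$-th roots. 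For the martingale term, Theorem~\ref{thm:BDG} gives $\bbE[\max_n|M_n|^{p/2}]\le C\,\bbE[(\sum_{m=1}^N|(g^{m-1}_{h\tau},u^{m-1}_{h\tau})\xi^m_\tau|^2)^{p/4}]$; Cauchy--Schwarz in $H$ bounds $|(g^{m-1}_{h\tau},u^{m-1}_{h\tau})|^2\le(\max_k\normH{u^k_{h\tau}}^2)\,\normH{g^{m-1}_{h\tau}}^2$, so this term is at most $C\,\bbE[(\max_k\normH{u^k_{h\tau}})^{p/2}(\sum_m\normH{g^{m-1}_{h\tau}}^2|\xi^m_\tau|^2)^{p/4}]$, and one more Young's inequality with conjugate exponents $(2,2)$ peels off $\epsilon\,\bbE[\max_k\normH{u^k_{h\tau}}^p]$ plus $C_\epsilon$ times the already-controlled quadratic term.

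Finally I would absorb $\epsilon\,\bbE[\max_k\normH{u^k_{h\tau}}^p]$ into the left-hand side (which dominates it) by choosing $\epsilon$ small, and take $p$-th roots to obtain \eqnref{:bound}. This absorption is only legitimate once $\bbE[\max_k\normH{u^k_{h\tau}}^p]<\infty$ is known a priori: for fixed $(h,\tau)$ the scheme is a finite recursion of solves of the coercive (hence invertibly well-posed, with solution operator bounded uniformly in $\omega$) linear system on the finite-dimensional space $U_h$, and $g^{m-1}_{h\tau}\xi^m_\tau\in L^p(\Omega,H)$ by the independence in Assumption~\ref{ass:spde0}, so an induction gives $u^n_{h\tau}\in L^p(\Omega,U_h)$ for each $n$ (one could also bootstrap from the lower-order estimate \eqnref{:est0}). \emph{The main obstacle is the stochastic term}: the increment $(g^{m-1}_{h\tau},u^m_{h\tau}-u^{m-1}_{h\tau})\xi^m_\tau$ is not a martingale difference and can be controlled only because the \emph{implicit} Euler scheme furnishes exactly the dissipation $\sum_m\normH{u^m_{h\tau}-u^{m-1}_{h\tau}}^2$ needed to absorb it (an explicit scheme would fail here), while the remaining genuine martingale, estimated through BDG by its quadratic variation, reintroduces $\max_k\normH{u^k_{h\tau}}$ and so must be reabsorbed by the Young-type argument just described.
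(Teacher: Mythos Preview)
Your proof is correct and follows essentially the same route as the paper: the pathwise energy identity \eqnref{:energyEst}, the same splitting $(g^{m-1},u^m)\xi^m=(g^{m-1},u^{m-1})\xi^m+(g^{m-1},u^m-u^{m-1})\xi^m$ with the second piece absorbed by the implicit-Euler dissipation, raising to the $p/2$ and controlling the martingale part via BDG/Lemma~\ref{lem:mglVariation} before reabsorbing $\epsilon\,\bbE[\max_k\normH{u^k}^p]$. The only cosmetic difference is that the paper bounds the martingale term by first applying Lemma~\ref{lem:mglVariation} and then pulling out the deterministic quantity $\max_m\norm{u^m_{h\tau}}_{L^p(\Omega,H)}$, whereas you pull out the pathwise maximum $\max_k\normH{u^k_{h\tau}}$ inside the expectation before Young's inequality; your explicit remark on a~priori finiteness of $\bbE[\max_k\normH{u^k}^p]$ is a point the paper leaves implicit.
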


\begin{proof}
Sum equation \eqnref{:energyEst} to obtain
\begin{eqnarray*}
  \lefteqn{ \normH{u_{h\tau}^n}^2 
    + \sum_{m=1}^n \normH{u_{h\tau}^m - u_{h\tau}^{m-1}}^2 
    + 2c_a \sum_{m=1}^n \tau \normU{u_{h\tau}^m}^2 } \\
  &\leq& 
  \normH{u_{h\tau}^0}^2 
  + 2 \sum_{m=1}^n \tau ( f_{h\tau}^{m}, u_{h\tau}^m )
    + 2 \sum_{m=1}^n (g_{h\tau}^{m-1}, u_{h\tau}^m)_H \xi^m_\tau  \\
  &\leq& 
  \normH{u_{h\tau}^0}^2 
  + 2 \sum_{m=1}^n \tau \normUp{f_{h\tau}^{m}} \normU{u_{h\tau}^m}
    +  2 \sum_{m=1}^n \normH{g_{h\tau}^{m-1}} 
    \normH{u_{h\tau}^m - u_{h\tau}^{m-1}} |\xi^m_\tau| \\
    &&
  + 2 \sum_{m=1}^n(g_{h\tau}^{m-1}, u_{h\tau}^{m-1})_H \xi_\tau^m,
\end{eqnarray*}
and use the Cauchy-Schwarz and Young inequalities to get
\begin{eqnarray*}
  \lefteqn{ \normH{u_{h\tau}^n}^2 
    + \sum_{m=1}^n \normH{u_{h\tau}^m - u_{h\tau}^{m-1}}^2 
    + \sum_{m=1}^n \tau \normU{u_{h\tau}^m}^2 } \\
  &\leq& 
  C \Bigl(\normH{u_{h\tau}^0}^2 
  +  \sum_{m=1}^n \tau \normUp{f_{h\tau}^{m}}^2
  +  \sum_{m=1}^n \normH{g_{h\tau}^{m-1}}^2 |\xi^m_\tau|^2
  + \Bigl\vert \sum_{m=1}^n(g_{h\tau}^{m-1}, 
  u_{h\tau}^{m-1})_H \xi^m_\tau \Bigr\vert \Bigr).
\end{eqnarray*}
Raising each side to the power $p/2$ and using Assumption
\ref{ass:spde0}$_3$ shows
\begin{eqnarray*}
  \lefteqn{ \normH{u_{h\tau}^n}^p 
+ \left( \sum_{m=1}^n \normH{u_{h\tau}^m - u_{h\tau}^{m-1}}^2 \right)^{p/2}
+ \left( \sum_{m=1}^n \tau \normU{u_{h\tau}^m}^2 \right)^{p/2} } \\
&\leq&  C \Bigl(\normH{u_{h\tau}^0}^p   
+ \ltwoUp{f_{h\tau}}^p
+ \left( \sum_{m=1}^n \normH{g^{m-1}_{h\tau}}^2 |\xi^m_\tau|^2 \right)^{p/2}
+ |\sum_{m=1}^n (g_{h\tau}^{m-1}, u_{h\tau}^{m-1})_H \xi^m_\tau |^{p/2}\Bigr) \\
&\leq&  C \Bigl(\normH{u_{h\tau}^0}^p   
+ \ltwoUp{f_{h\tau}}^p
+ n^{p/2-1} \sum_{m=1}^n \normH{g_\tau^{m-1}}^p |\xi^m_\tau|^p 
+ |\sum_{m=1}^n (g_{h\tau}^{m-1}, u_{h\tau}^{m-1})_H \xi_\tau^m |^{p/2}\Bigr).
\end{eqnarray*}
Taking the maximum  over $1 \leq n = n(\omega) \leq N$ 
and using the property that 
$$
\bbE\left[n^{p/2-1} \normH{g_{h\tau}^{m-1}}^p |\xi_\tau^m|^p \vph \right] 
\leq C(p) N^{p/2-1} \bbE\left[\normH{g_{h\tau}^{m-1}}^p \vph\right] \tau^{p/2}
\leq C(p) T^{p/2-1} \bbE\left[\tau \normH{g_{h\tau}^{m-1}}^p \vph\right],
$$ 
shows
\small
\begin{eqnarray}
  \lefteqn{ 
    \bbE \left[ \max_{1 \leq n \leq N} \normH{u_{h\tau}^n}^p 
      + \left( \sum_{m=1}^N \normH{u_{h\tau}^m - u_{h\tau}^{m-1}}^2 \right)^{p/2}
      + \ltwoU{u_{h\tau}}^p \right]}
    \label{eqn:est1}  \\
  &\leq&  C 
  \Bigl(\norm{u_{h\tau}^0}_{L^p(\Omega, H)}^p   
  + \norm{f_{h\tau}}_{L^p(\Omega, \LtwoUp)}^p
  + T^{p/2-1} \norm{g_{h\tau}}_{L^p(\Omega, L^p[0,T;H])}^p \nonumber \\
  && + \bbE \left[ \max_{1 \leq n \leq N} 
    |\sum_{m=1}^n (g_{h\tau}^{m-1}, u_{h\tau}^{m-1})_H \xi^m_{\tau}|^{p/2} \right]\Bigr). 
  \nonumber
\end{eqnarray}
\normalsize

The last term is a discrete Ito integral (c.f. equation
\eqnref{:discreteIto}),
$$
X^n_{\tau} = \sum_{m=1}^n (g^{m-1}_{h\tau}, u_{h\tau}^{m-1})_H \xi^m_\tau,
$$
and is bounded using the discrete BDG inequality
(Theorem \ref{thm:BDG}) and Lemma \ref{lem:mglVariation}.
With $\epsilon >0$ to be selected below,
\begin{eqnarray*}
  \bbE\left[ \max_{0 \leq n \leq N} |X^n_{\tau}|^{p/2} \right]
  &\leq& C
  \sum_{m=1}^N \tau 
  \norm{(g_{h\tau}^{m-1}, u_{h\tau}^{m-1})_H}_{L^{p/2}(\Omega)}^{p/2}
  T^{p/4-1} \\
  &\leq& C
  \sum_{m=1}^N \tau 
  \norm{g_{h\tau}^{m-1}}_{L^p(\Omega,H)}^{p/2} 
  \norm{u_{h\tau}^{m-1}}_{L^p(\Omega,H)}^{p/2}
  T^{p/4-1} \\
  &\leq& C
  \left(\max_{0 \leq m \leq N-1} 
    \norm{u_{h\tau}^m}_{L^p(\Omega,H)}^{p/2}\right)
  \sum_{m=1}^N \tau 
  \norm{g_{h\tau}^{m-1}}_{L^p(\Omega,H)}^{p/2} 
  T^{p/4-1} \\
  &\leq&
  \epsilon \left(\max_{0 \leq m \leq N-1} 
    \norm{u_{h\tau}^m}_{L^p(\Omega,H)}^p\right)
  + (2 C^2/\epsilon)
  \left(\sum_{m=1}^N \tau \norm{g_{h\tau}^{m-1}}_{L^p(\Omega,H)}^{p/2} \right)^2
  T^{p/2-2} \\
  &\leq& 
  \epsilon \norm{u_{h\tau}^0}_{L^p(\Omega,H)}^p
  + \epsilon \max_{1 \leq m \leq N} \norm{u_{h\tau}^m}_{L^p(\Omega,H)}^p 
  +  (2 C^2/\epsilon) T^{p/2-1} \norm{g_{h\tau}}_{L^p(\Omega, \LpH)}^p.
\end{eqnarray*}
The proof now follows since the middle term, with an appropriate
choice of $\epsilon > 0$, can be absorbed into
the left-hand side of equation \eqnref{:est1}.
\end{proof}

\subsection{Passage to the Limit}\label{passage-1}
Setting $( F, v ) = (f, v) - a(u, v)$, the weak statement of the
stochastic heat equation \eqnref{:heatWeak} is an instance of the
abstract problem (\ref{eqn-7}), and its discretization is of the form
(\ref{eqn:eulerF}), which is considered in Theorem \ref{thm:main}. The
bounds in Lemma \ref{lem:bound} are sufficient to verify the
hypotheses of Theorem \ref{thm:main}, and convergence of the discrete
scheme to a weak martingale solution of \eqnref{:heat} follows.

\begin{theorem}
  Let $U$ be a separable reflexive Banach space, $H$ a Hilbert space,
  $U \cembed H$ be a compact, dense embedding, and let $(\Omega,
  \calF, \bbP)$ be a probability space.  Let the operators of the
  abstract difference scheme and data satisfy Assumptions
  \ref{ass:heat} and \ref{ass:spde1} respectively, and let the
  stochastic increments satisfy Assumptions \ref{ass:spde0} with $p
  \in (2, \infty)$.  Denote the discrete Wiener process with increments
  $\{\xi^m_{\tau}\}_{m=1}^N$ by $\What^n_{\tau}$, and let
  $\{u_{h\tau}\}_{h,\tau >0}$ be a sequence of solutions of the
  corresponding implicit Euler scheme \eqnref{:Spdeh} with data
  satisfying:
  \begin{enumerate}
  \item $\{u^0_{h\tau}\}$ is bounded in $L^p(\Omega,H)$ and
    converges to a limit $u^0$ in $L^2(\Omega, H)$
    as $h \rightarrow 0$.

  \item $\{f_{h\tau}\}$ is bounded in $L^p(\Omega, \LtwoUp)$ and
    converges in $L^2(\Omega, \LtwoUp)$
    as $\tau,h \rightarrow 0$.

  \item $\{g_{h\tau}\}$ is bounded in $L^p(\Omega, \LpH)$ and
    converges in $L^2(\Omega, \LtwoH)$ as $\tau,h \rightarrow 0$.
  \end{enumerate}
  Let 
  $$
  \bbX = G[0,T;U'] \cap L^2[0,T;U]_{weak} \times \LtwoUp_{weak} \times
  \LtwoH \times C[0,T]\, .
  $$
  Then there exist a probability space $(\Omegat, \calFt, \bbPt)$ and
  a random variable $(u,F,g,W)$ on $\Omegat$ with values in $(\bbX,
  \calB(\bbX))$
  for which the laws of $\bigl\{ (u_{h \tau}, (f_{h
    \tau},A(u_{h \tau})), g_{h \tau},
  \hat{W}_{\tau})\bigr\}_{h,\tau > 0}$ converge to the law of
  $(u,(f,A(u)),g,W)$,
  $$
  \calL(u_{h\tau}, (f_{h\tau},A(u_{h \tau})), 
  g_{h\tau}, \What_{\tau})
  \ \Rightarrow \ \calL(u,(f,A(u)),g,W).
  $$
  In addition, there exists a filtration $\{\calFt(t)\}_{0 \leq t \leq
    T}$ satisfying the usual conditions for which $(u,f,g,W)$ is adapted
  and $W$ is a real-valued Wiener process for which
  $$
  (u(t), v)_H   + \int_0^t a(u,v) \, ds
  = (u^0,v)_H 
  + \int_0^t ( f, v ) \, ds
  + \int_0^t {( g, v )} \, dW,
  \qquad v \in U.
  $$
\end{theorem}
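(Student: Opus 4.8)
The plan is to obtain this theorem as a direct corollary of Theorem~\ref{thm:main}. The only work is (a) to verify the hypotheses of that theorem from the bounds already established in Lemma~\ref{lem:bound}, and (b) to carry out the consistency step, i.e.\ to identify the limiting drift as $f-A(u)$, which for the linear, continuous, coercive form $a$ is exactly the situation of Example~\ref{eg:1}.

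First I would fix the exponents $r=2$ and $q=q'=2$ and set $F_{h\tau}=f_{h\tau}-A(u_{h\tau})$, where $(A(u),v)=a(u,v)$. Lemma~\ref{lem:bound} gives that $\{u_{h\tau}\}$ is bounded in $L^p(\Omega,\LtwoU)$ and in $L^p(\Omega,\LinfH)$, which is Hypothesis~1 of Theorem~\ref{thm:main}. Continuity of $a$ yields $\normUp{A(u)}\le C_a\normU{u}$, so $A(u_{h\tau})$ is bounded in $L^p(\Omega,\LtwoUp)$; combined with the assumed bound on $f_{h\tau}$ this gives Hypothesis~\ref{hyp:Fhtau}. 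For Hypothesis~\ref{hyp:FuBounded} I would estimate, pointwise in $\omega$, $\int_0^T|F_{h\tau}(u_{h\tau})|\le\ltwoUp{f_{h\tau}}\,\ltwoU{u_{h\tau}}+C_a\ltwoU{u_{h\tau}}^2$, raise to the power $p/2$, take expectations, and apply Cauchy--Schwarz in $\Omega$ together with the moment bounds of Lemma~\ref{lem:bound} and the hypothesis on $f_{h\tau}$; this bounds $F_{h\tau}(u_{h\tau})$ in $L^{p/2}(\Omega,L^1(0,T))$. Hypotheses~4 and~5 are assumed directly, and since $g_{h\tau}\to g$ in $L^2(\Omega,\LtwoH)$ its laws are tight on $\LtwoH$, so statement~3 of Theorem~\ref{thm:main} is available; Assumptions~\ref{ass:spde1} and~\ref{ass:spde0} are part of the hypotheses (recalling that $u^n_{h\tau}$, hence $F^n_{h\tau}=f^n_{h\tau}-a(u^n_{h\tau},\cdot)$, is $\calF^n$-measurable).

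Applying Theorem~\ref{thm:main} (statements~2 and~3) then produces the probability space $(\Omegat,\calFt,\bbPt)$, the limit random variable, the convergence of laws, a filtration $\{\calFt(t)\}$ under which the limit data are adapted, the Wiener property of $W$, and equation~\eqnref{-7}. To put the drift in the form stated, I would instead invoke statement~\ref{it:Fsum} with the two summands $F^{(1)}_{h\tau}=f_{h\tau}$ and $F^{(2)}_{h\tau}=-A(u_{h\tau})$, each bounded as in Hypothesis~\ref{hyp:Fhtau}; composing with the homeomorphism $\eta\mapsto-\eta$ of $\LtwoUp_{weak}$ to relabel the relevant factor, this yields convergence of the laws of $(u_{h\tau},(f_{h\tau},A(u_{h\tau})),g_{h\tau},\What_\tau)$ to the law of some $(u,(f,\bar a),g,W)$ on $\bbX$, together with $(u(t),v)_H=(u^0,v)_H+\int_0^t(f-\bar a,v)\,ds+\int_0^t(g,v)\,dW$ for all $v\in U$, $t\in[0,T]$.

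It remains to identify $\bar a=A(u)$. Following Example~\ref{eg:1}: for fixed $v\in\LtwoU$ the map $u\mapsto\int_0^T a(u,v)\,ds$ is linear and bounded on $\LtwoU$, hence weakly continuous, so $\phi(u,(f,a),g,W)=|\int_0^T(a-A(u),v)\,ds|\wedge1$ is bounded and continuous on $\bbX$; it vanishes identically along the approximating sequence, so passing to the limit gives $\bbEt[|\int_0^T(\bar a-A(u),v)\,ds|\wedge1]=0$, and letting $v$ range over a countable dense subset of the separable space $\LtwoU$ (which separates points of its dual $\LtwoUp$, $U$ being reflexive) forces $\bar a=A(u)$ $\bbPt$-a.s. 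Substituting $\bar a=A(u)$ into the previous display gives the claimed identity $(u(t),v)_H+\int_0^t a(u,v)\,ds=(u^0,v)_H+\int_0^t(f,v)\,ds+\int_0^t(g,v)\,dW$. I do not expect a genuine obstacle here; the only point requiring care is coordinating the two consistency ingredients --- the decomposition in statement~\ref{it:Fsum} and the weak continuity of the linear term from Example~\ref{eg:1} --- and checking that the sign relabelling of the drift factor is a homeomorphism so that convergence of laws is preserved.
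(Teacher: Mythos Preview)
Your argument is essentially the paper's own proof: verify the hypotheses of Theorem~\ref{thm:main} with $r=q=2$ via Lemma~\ref{lem:bound} and the continuity of $a$, then identify the limiting drift through the linear weak-continuity argument of Example~\ref{eg:1}. The decomposition via statement~\ref{it:Fsum} and the sign relabelling are harmless elaborations of what the paper does in one line.

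There is, however, one genuine omission. Theorem~\ref{thm:main} only furnishes a \emph{subsequence} $(h_k,\tau_k)\to(0,0)$ along which the laws converge, whereas the statement you are proving asserts convergence of the full family $\{(u_{h\tau},(f_{h\tau},A(u_{h\tau})),g_{h\tau},\What_\tau)\}_{h,\tau>0}$. To upgrade, the paper invokes uniqueness: the bilinear coercive operator $A$ satisfies Assumption~\ref{ass:A} (solutions of the deterministic linear equation are unique), so Theorem~\ref{thm:uniqueLaw} shows that the law of any limit $(u,V)$, with $V(t)=u^0+\int_0^t f\,ds+\int_0^t g\,dW$, is determined by $\calL(V)$, and hence by the limiting laws of $(f,g,W)$, which are fixed by the strong convergence hypotheses on the data. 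Thus every convergent subsequence has the same limit law and the whole family converges. Without this step your argument delivers only subsequential convergence and does not yet match the theorem as stated.
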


\begin{proof}
  Under the assumptions of the theorem solutions of the implicit Euler
  scheme satisfy the bounds stated in Lemma \ref{lem:bound}; in
  particular, $\{ u_{h\tau}\}_{h,\tau >0}$ is bounded in $L^p(\Omega,
  L^2[0,T;U])$.  With
  $$
  F_{h\tau}^n(v_h) = (f^n_{h\tau} ,v_h) - a(u^n_{h\tau}, v_h),
  $$
  it is immediate that $F^n_{h \tau}$ is $\calF^n$-measurable, and
  since $a:U \times U \rightarrow \Re$ is bilinear and continuous,
  $\{F_{h\tau}\}_{h,\tau > 0}$ is bounded in $L^p(\Omega,
  L^2[0,T;U'])$, and it follows from the Cauchy-Schwarz inequality that
  $\{F_{h\tau}(u_{h\tau})\}_{h\tau > 0}$ is bounded in
  $L^{p/2}(\Omega, L^1(0,T))$. This establishes the hypotheses of
  Theorem \ref{thm:main} (with $r=q=2$) which guarantees the existence
  of a filtered probability space $(\Omegat, \calFt, \{\calFt(t)\}_{0
    \leq t \leq T}, \tilde{\bbP})$, and a subsequence $(h_k, \tau_k)
  \rightarrow (0,0)$ and a limit $(u,f,g,W)$ for which the laws of
  $(u_{h_k\tau_k}, F_{h_k\tau_k}, g_{h_k\tau_k}, \What_{\tau_k})$
  convergence as asserted in the theorem and
  $$
  (u(t), v)_H = (u^0,v)_H 
  + \int_0^t (F, v ) \, ds
  + \int_0^t (g, v )_H \, dW,
  \quad v \in U.
  $$
  To verify that $(F,v) = (f,v) - a(u,v)$ for $v \in U$ note that the
  mapping $u \mapsto f - a(u,.)$ is affine so is continuous from
  $\LtwoU$ to $\LtwoUp$ with both the weak and strong topologies.
  This is the setting of Example \ref{eg:1} where it was shown that
  $F$ takes the required form. Finally, $A$ satisfies Assumption
  \ref{ass:A} since solutions of the deterministic heat equation
  are unique so  Theorem \ref{thm:uniqueLaw} is applicable. It follows that
  $\calL(u)$ is uniquely determined by $\calL(f,g,W)$; in particular,
  passing to a subsequence was unnecessary. 
\end{proof}

\section{Construction of a martingale solution} \label{sec:mglSoln}

This section is devoted to the proof of Theorem \ref{thm:main}.
Throughout $U$ will denote a Banach space densely embedded in a
Hilbert space $H$ so that $U \embed H \embed U'$, and $U_h$
will denote a (finite dimensional) subspace of $U$, and $\tau = T/N$ the time step for
the implicit Euler scheme \eqnref{:eulerF}. 

\subsection{Bounds and Pathwise Continuity}
The following lemma is essentially a restatement of Lemma
\ref{lem:bound} adapted to the current setting where bounds upon the
solution are assumed.

\begin{lemma} \label{lem:boundF} Let $1 \leq q \leq \infty$, and 
  $\{u^n_{h\tau}\}_{n=1}^N$ be a $U_h$-valued solution of the implicit
  Euler scheme \eqnref{:eulerF} with increments and data satisfying
  Assumptions \ref{ass:spde0} with $p \geq 2$ and \ref{ass:spde1}
  respectively.  If $u^0_{h\tau} \in L^p(\Omega,U_h)$, $F_{h\tau} \in
  L^p(\Omega, L^{q'}[0,T; U_h'])$, and $g_{h\tau} \in L^p(\Omega,
  \LpH)$, then there exists a constant $C = C(p) > 0$ such that the
  piecewise constant interpolant $u_{h\tau}$ satisfies
  \begin{eqnarray*}
    \lefteqn{
      \norm{u_{h\tau}}_{L^p(\Omega, \LinfH)}
      + \bbE \left[ \left( 
          \sum_{m=1}^N \normH{u^m_{h\tau} - u^{m-1}_{h\tau}}^2 \right)^{p/2}
      \right]^{1/p} \nonumber } \\
    &\leq& 
    C \left(
      \norm{u^0_{h\tau}}_{L^p(\Omega,H)} 
      + \norm{F_{h\tau}(u_{h\tau})}_{L^{p/2}(\Omega,L^1(0,T))}^{1/2}
      + T^{1/2-1/p} \norm{g_{h\tau}}_{L^p(\Omega, \LpH)} \right). 
  \end{eqnarray*}
\end{lemma}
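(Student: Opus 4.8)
The plan is to follow the proof of Lemma~\ref{lem:bound} essentially verbatim, the only structural change being that there is no coercive bilinear form available to absorb $\normU{u^n_{h\tau}}^2$ terms; in its place the scalar process $F_{h\tau}(u_{h\tau})$ is treated directly as data through its $L^1(0,T)$ norm, which is why no $\LtwoU$ bound appears in the conclusion and why $\norm{F_{h\tau}(u_{h\tau})}_{L^{p/2}(\Omega,L^1(0,T))}$ enters to the power $1/2$. First I would test the scheme \eqnref{:eulerF} with $v_h=u^n_{h\tau}$ and apply the polarization identity \eqnref{:square} to the time-difference term, obtaining, for each $\omega$,
$$
\normH{u^n_{h\tau}}^2 + \normH{u^n_{h\tau}-u^{n-1}_{h\tau}}^2
\le \normH{u^{n-1}_{h\tau}}^2
+ 2\tau\,(F^n_{h\tau},u^n_{h\tau})
+ 2\,(g^{n-1}_{h\tau},u^n_{h\tau})_H\,\xi^n_\tau .
$$
Summing from $1$ to $n$ and using that $\sum_{m=1}^n \tau\,(F^m_{h\tau},u^m_{h\tau}) = \int_0^{t^n} F_{h\tau}(u_{h\tau})\,ds \le \norm{F_{h\tau}(u_{h\tau})}_{L^1(0,T)}$ for the caglad interpolants, this gives
$$
\normH{u^n_{h\tau}}^2 + \sum_{m=1}^n\normH{u^m_{h\tau}-u^{m-1}_{h\tau}}^2
\le \normH{u^0_{h\tau}}^2 + 2\norm{F_{h\tau}(u_{h\tau})}_{L^1(0,T)}
+ 2\sum_{m=1}^n (g^{m-1}_{h\tau},u^m_{h\tau})_H\,\xi^m_\tau .
$$

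Next I would handle the stochastic sum exactly as in Lemma~\ref{lem:bound}: split $(g^{m-1}_{h\tau},u^m_{h\tau})_H\xi^m_\tau$ as $(g^{m-1}_{h\tau},u^m_{h\tau}-u^{m-1}_{h\tau})_H\xi^m_\tau + (g^{m-1}_{h\tau},u^{m-1}_{h\tau})_H\xi^m_\tau$, bound the first summand by Cauchy--Schwarz and Young's inequality so that a fraction of $\normH{u^m_{h\tau}-u^{m-1}_{h\tau}}^2$ is absorbed into the left side leaving a term $\normH{g^{m-1}_{h\tau}}^2|\xi^m_\tau|^2$, and recognize $X^n_\tau=\sum_{m=1}^n (g^{m-1}_{h\tau},u^{m-1}_{h\tau})_H\xi^m_\tau$ as a discrete It\^o integral. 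Then I would take the maximum over $1\le n\le N$, raise to the power $p/2$, and take expectations: discrete Jensen turns $\bigl(\sum_m\normH{g^{m-1}_{h\tau}}^2|\xi^m_\tau|^2\bigr)^{p/2}$ into $N^{p/2-1}\sum_m\normH{g^{m-1}_{h\tau}}^p|\xi^m_\tau|^p$, and Assumption~\ref{ass:spde0}$_3$ with independence converts this into $C\,T^{p/2-1}\norm{g_{h\tau}}_{L^p(\Omega,\LpH)}^p$ (using $N^{p/2-1}\tau^{p/2}=T^{p/2-1}\tau$ and Fubini), while Theorem~\ref{thm:BDG} together with Lemma~\ref{lem:mglVariation} bounds $\bbE[\max_n|X^n_\tau|^{p/2}]$ by $\epsilon\,\norm{u_{h\tau}}_{L^p(\Omega,\LinfH)}^p + C(\epsilon)\,T^{p/2-1}\norm{g_{h\tau}}_{L^p(\Omega,\LpH)}^p$ after one more Young inequality. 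Choosing $\epsilon$ small to absorb the $\norm{u_{h\tau}}_{L^p(\Omega,\LinfH)}^p$ contribution into the left-hand side, and taking the overall $p$-th root, produces the stated inequality; the power $1/2$ on $\norm{F_{h\tau}(u_{h\tau})}_{L^{p/2}(\Omega,L^1(0,T))}$ is inherited from the fact that $\norm{F_{h\tau}(u_{h\tau})}_{L^1(0,T)}$ enters the energy bound linearly, hence to the power $p/2$ after exponentiating and $L^{p/2}(\Omega)$-integrating, and then to the power $1/2$ after the final $p$-th root.

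I do not expect a genuine obstacle here, since the mechanism is identical to that of Lemma~\ref{lem:bound}; the one point demanding care is the order of operations in the stochastic term, namely that the discrete It\^o integral must be estimated \emph{after} taking the supremum in $n$ and the expectation so that the discrete BDG inequality applies, and that the quantity absorbed is $\norm{u_{h\tau}}_{L^p(\Omega,\LinfH)}^p$ rather than a pathwise-in-$\omega$ quantity. It is also worth noting that, in contrast with Lemma~\ref{lem:bound}, no $\LtwoU$-bound is obtained because coercivity is absent, and that the hypothesis $F_{h\tau}\in L^p(\Omega,L^{q'}[0,T;U_h'])$ is used only to guarantee that the pairings $(F^n_{h\tau},u^n_{h\tau})$ are well defined, playing no further role in the estimate.
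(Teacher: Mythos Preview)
Your proposal is correct and matches the paper's approach exactly: the paper does not give a separate proof of this lemma but simply states that it ``is essentially a restatement of Lemma~\ref{lem:bound} adapted to the current setting where bounds upon the solution are assumed,'' which is precisely what you have carried out. The only structural difference from Lemma~\ref{lem:bound}, which you identify correctly, is that the coercivity of $a(\cdot,\cdot)$ is replaced by direct control of the scalar pairing $F_{h\tau}(u_{h\tau})$ in $L^1(0,T)$, so the $\LtwoU$ bound disappears and the $F$ term enters at power $1/2$.
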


Pathwise continuity is an essential property of martingale solutions;
that is, for almost every $\omega \in \Omega$ the map $t \mapsto
u(\omega,t)$ is continuous. Solutions of nonlinear PDE's may not be
pathwise continuous into the pivot space $H$; however, continuity into
the dual space $U'$ follow from standard arguments.  Specifically,
H\"older continuity into $U'$ is established by showing that solutions of
the numerical scheme satisfy the hypothesis the following theorem
\cite[Theorem 3.3]{DaZa92}.

\begin{theorem}\label{kc1} {\bf (Kolmogorov-Centsov)}
  Let $(\Omega, \calF, \bbP)$ be a probability space, $\mathcal X$ be
  a Banach space, and $u \in L^1(\Omega, L^p[0,T;{\mathcal X}])$. If
  for some $0 < \theta \leq 1$ there exists $\Chat > 0$ such that for
  all $0 \leq \delta < T$
  $$
  \bbE \Bigl[\int_\delta^T \norm{u(t) - u(t-\delta)}_{\mathcal X}^p \, dt\Bigr]
  \leq C^p \delta^{1+\theta p},
  $$
  then there exists a modification of $u$ on a null set of $(0,T)$
  such that $u(\omega) \in C^{0,\theta'}[0,T;{\mathcal X}]$ for almost
  every $\omega \in \Omega$ and all $0 < \theta' < \theta$; in
  particular
  $\bbE\left[\norm{u}^p_{C^{0,\theta'}[0,T;{\mathcal X}]}\right] < C$.
\end{theorem}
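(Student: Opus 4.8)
The plan is to deduce the pathwise H\"older regularity from a deterministic embedding: a function $w\in W^{\theta'+1/p,\,p}[0,T;{\mathcal X}]$ has a representative in $C^{0,\theta'}[0,T;{\mathcal X}]$, with the H\"older \emph{semi}norm controlled by the Gagliardo seminorm. For Banach–space valued functions this is the Garsia--Rodemich--Rumsey inequality applied with $\Psi(x)=x^{p}$ and $\rho(r)=r^{\theta'+2/p}$ (this is essentially the content of the cited \cite[Theorem 3.3]{DaZa92}). Fix $\theta'\in(0,\theta)$ and put $s=\theta'+1/p$, so that $1+sp=2+\theta' p$ and $(\theta-\theta')p>0$.

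\emph{Step 1: averaging the hypothesis.} Multiply the assumed bound by $\delta^{-(2+\theta' p)}$ and integrate in $\delta$ over $(0,T)$; since the integrand is nonnegative, Tonelli's theorem and the substitution $\sigma=t-\delta$ give
$$
\bbE\!\left[\int_0^T\!\!\int_0^T \frac{\norm{u(t)-u(\sigma)}_{\mathcal X}^{\,p}}{|t-\sigma|^{\,1+sp}}\,d\sigma\,dt\right]
\;\le\; 2\,\Chat^{\,p}\int_0^T \delta^{(\theta-\theta')p-1}\,d\delta
\;=\; \frac{2\,\Chat^{\,p}\,T^{(\theta-\theta')p}}{(\theta-\theta')p}\;<\;\infty .
$$
Hence for $\bbP$-almost every $\omega$ the Gagliardo seminorm $[u(\omega)]_{W^{s,p}[0,T;{\mathcal X}]}$ is finite; together with $u(\omega)\in L^p[0,T;{\mathcal X}]$ (which holds $\bbP$-a.s.\ since $u\in L^1(\Omega,L^p[0,T;{\mathcal X}])$) this gives $u(\omega)\in W^{s,p}[0,T;{\mathcal X}]$.

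\emph{Step 2: the modification.} Because $0<s-1/p=\theta'<1$, the vector-valued Garsia--Rodemich--Rumsey estimate produces, for $\bbP$-a.e.\ $\omega$, a function $\ut(\omega)\in C^{0,\theta'}[0,T;{\mathcal X}]$ agreeing with $u(\omega)$ for a.e.\ $t\in[0,T]$ and satisfying $[\ut(\omega)]_{C^{0,\theta'}[0,T;{\mathcal X}]}\le C_\ast\,[u(\omega)]_{W^{s,p}[0,T;{\mathcal X}]}$ with $C_\ast=C_\ast(p,\theta',T)$; set $\ut(\omega)\equiv 0$ on the exceptional null set. Since $\ut(\omega,t)$ equals $\lim_{n\to\infty} n\int_{(t-1/n)_+}^{t}u(\omega,r)\,dr$ wherever that limit exists, the map $\omega\mapsto\ut(\omega)$ is measurable into $C^{0,\theta'}[0,T;{\mathcal X}]$, so $\ut$ is the required modification. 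I expect this measurability point to be the only genuinely delicate part of the argument.

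\emph{Step 3: the moment bound.} With $\bar u(\omega)=\tfrac1T\int_0^T u(\omega,\sigma)\,d\sigma$ one has $\norm{\ut(\omega)}_{C[0,T;{\mathcal X}]}\le\norm{\bar u(\omega)}_{\mathcal X}+T^{\theta'}[\ut(\omega)]_{C^{0,\theta'}}$ and $\norm{\bar u(\omega)}_{\mathcal X}\le T^{-1/p}\norm{u(\omega)}_{L^p[0,T;{\mathcal X}]}$, so raising to the power $p$ and taking expectations yields
$$
\bbE\!\left[\norm{\ut}^{\,p}_{C^{0,\theta'}[0,T;{\mathcal X}]}\right]
\;\le\; C\Bigl(\bbE\!\left[\norm{u}^{\,p}_{L^p[0,T;{\mathcal X}]}\right]+\bbE\!\left[[u]^{\,p}_{W^{s,p}[0,T;{\mathcal X}]}\right]\Bigr),
$$
the second term being finite by Step 1. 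The first term is finite because $u\in L^p\big(\Omega\times(0,T);{\mathcal X}\big)$ in every application of the theorem (e.g.\ through Lemma~\ref{lem:bound}); note that only this mean term requires integrability beyond the stated $u\in L^1(\Omega,L^p)$, precisely because Garsia--Rodemich--Rumsey controls the H\"older \emph{semi}norm by the Gagliardo seminorm alone.
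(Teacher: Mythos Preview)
The paper does not prove this theorem at all; it merely quotes it with a citation to \cite[Theorem~3.3]{DaZa92} and then uses it as a black box in Theorem~\ref{thm:holderF} and in the tightness arguments of Section~\ref{sec:compactness}. So there is no ``paper's own proof'' to compare against.

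Your argument is correct and is in fact one of the standard routes to results of this type: integrating the translate bound against $\delta^{-(1+sp)}\,d\delta$ produces exactly the Gagliardo (or Garsia--Rodemich--Rumsey) double integral with $s=\theta'+1/p$, and the Banach-space GRR inequality then yields the H\"older modification with the stated seminorm control. Your Step~1 computation is clean, and the measurability of the modification via precise representatives is handled adequately. The one genuine tension you correctly flag is that the hypothesis $u\in L^1(\Omega,L^p[0,T;{\mathcal X}])$ is not by itself enough to bound $\bbE[\norm{\bar u}_{\mathcal X}^p]$, so the stated conclusion $\bbE[\norm{u}^p_{C^{0,\theta'}}]<\infty$ for the \emph{full} norm implicitly needs $u\in L^p(\Omega,L^p[0,T;{\mathcal X}])$; this is always available in the paper's applications (Lemma~\ref{lem:boundF}, Theorem~\ref{thm:holderF}) but is, strictly speaking, a mild gap in the theorem as stated.
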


Piecewise linear interpolants $\uhat_{h \tau}$ of numerical schemes are
Lipschitz (in the time variable), so no modification is required; the bound
on the H\"older norm is the essential content.

The following theorem bounds translates of solutions of the difference
scheme \eqnref{:eulerF} appearing in the Kolmogorov--Centsov
theorem.  The spatial discretization plays no role in this lemma; $U$
is an arbitrary Banach space. Setting $U = U_h$ establishes H\"older
continuity of the discrete solution for almost all paths in the dual
space $U_h'$ which has norm
$$
\norm{u}_{U_h'} = \sup_{v_h \in U_h} \frac{(u, v_h)_H}{\normU{v_h}}.
$$
This is a norm on $U_h$ and a semi--norm on $U$ with $\norm{u}_{U_h'}
\leq C \normUp{u}$. If $P_h:H \rightarrow U_h$ denotes the orthogonal
projection and $u_h \in U_h$ then
$$
\normUp{u_h} 
= \sup_{v \in U} \frac{(u_h, v)_H}{\normU{v}}
= \sup_{v \in U} \frac{(u_h, P_h(v))_H}{\normU{P_h(v)}}
\frac{\normU{P_h(v)}}{\normU{v}}
\leq \norm{u_h}_{U'_h}
\left(\sup_{v \in U} \frac{\normU{P_h(v)}}{\normU{v}} \right).
$$
In a finite element context the supremum on the right is bounded
independently of $h$ under mild conditions on the underlying mesh
\cite{CrTh87}. In this situation a function taking values in $U_h$ is
bounded in $U'$ when it is bounded in $U'_h$.

\begin{theorem} \label{thm:holderF} 
  Let $1 \leq q \leq \infty$, and $U \embed H$ be
  an embedding of a Banach space into the Hilbert space $H$ so that $U
  \embed H \embed U'$.  Let $0 = t^0 < t^1 < \ldots < t^N = T$ be a
  uniform partition of $[0,T]$ with time step $\tau$ and $(\Omega,
  \calF,\{ \calF^n\}_{n=0}^N, \bbP)$ be a (discretely) filtered
  probability space.  Let $\{u^n_\tau\}_{n=0}^N$ be an adapted process
  taking values in $U$, satisfying the difference scheme
  $$
  (u^n_\tau - u^{n-1}_\tau, v)_H 
  = \tau ( F^n_\tau ,v) 
  + ( g_\tau^{n-1}, v)_H \xi^n_\tau,
  \qquad v \in U,
  $$
  with
  \begin{itemize}
  \item $\{\xi^n_\tau\}_{n=1}^N$ satisfying Assumption \ref{ass:spde0} 
    with $p > 2$.

  \item $u^0_\tau \in L^p(\Omega,U')$.

  \item $F_\tau \in L^p(\Omega, \LqpUp)$, and $F_{\tau}^n$ is
    $\calF^n$-measurable for $1 \leq n \leq N$,

  \item $g_\tau \in L^p(\Omega, \LpH)$, and $g^{n-1}_\tau$ is
    $\calF^{n-1}$-measurable for $1 \leq n \leq N$,
  \end{itemize}
  where $F_\tau(t)=F^n_\tau$ and $g_\tau(t) = g^{n-1}_\tau$ on
  $(t^{n-1}, t^n)$ denote the piecewise constant functions. Then the
  piecewise linear interpolant $\uhat_\tau$ of $\{u^n_\tau\}_{n=1}^N$
  satisfies
  $$
  \bbE \Bigl[\int_\delta^T \norm{\uhat_\tau(t) 
    - \uhat_\tau(t-\delta)}^p_{U'} \, dt\Bigr]
  \leq C \left(\norm{F_\tau}^p_{L^p(\Omega, \LqpUp)} 
    + \norm{g_\tau}_{L^p(\Omega, L^p[0,T;U'])}^p \right) \delta^{1+\theta p},
  \quad 0 < \delta < T,
  $$
  with $\theta = \min(1/2-1/p, 1/q)$. In particular, $\uhat_\tau$ is
  bounded in $L^p(\Omega, C^{0,\theta'}[0,T;U'])$ for all $0 < \theta'
  < \theta$, and the difference between the piecewise constant
  interpolant $u_\tau$ and $\uhat_\tau$ is bounded by
  $$
  \norm{u_\tau - \uhat_\tau}_{L^p(\Omega,L^\infty[0,T; U'])}
  \leq C \tau^{\theta'}
  \left(\norm{F_\tau}_{L^p(\Omega, \LqpUp)}
    + \norm{g_\tau}_{L^p(\Omega, L^p[0,T;U'])} \right).
  $$
\end{theorem}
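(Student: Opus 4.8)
The plan is to decompose $\uhat_\tau(t)-\uhat_\tau(t-\delta)$ into a deterministic drift contribution and a stochastic contribution and to estimate each separately. Writing $u^n_\tau = u^0_\tau + \sum_{m=1}^n\tau F^m_\tau + X^n$ with $X^n = \sum_{m=1}^n g^{m-1}_\tau\xi^m_\tau$ the discrete Ito integral of \eqnref{:discreteIto}, linearity of piecewise–linear interpolation gives $\uhat_\tau(t) = u^0_\tau + \int_0^t F_\tau(s)\,ds + \Xhat_\tau(t)$, where $\Xhat_\tau$ is the piecewise–linear interpolant of $\{X^n\}_{n=0}^N$. Hence, for $\delta<t\le T$,
\[
\uhat_\tau(t)-\uhat_\tau(t-\delta) = \int_{t-\delta}^t F_\tau(s)\,ds + \bigl(\Xhat_\tau(t)-\Xhat_\tau(t-\delta)\bigr),
\]
and it suffices to bound the $L^p(\Omega)$–moment of the $L^p([\delta,T];U')$–norm of the two summands by (a constant times) $\delta^{1+p/q}\norm{F_\tau}^p_{L^p(\Omega,\LqpUp)}$ and $\delta^{p/2}\norm{g_\tau}^p_{L^p(\Omega,L^p[0,T;U'])}$ respectively. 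Indeed $1+\theta p = \min(1+p/q,\,p/2)$, so both exponents are $\ge 1+\theta p$, and since $\delta\le T$ each of $\delta^{1+p/q}$, $\delta^{p/2}$ is $\le C(T)\,\delta^{1+\theta p}$.

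For the drift term, Hölder's inequality in time with exponents $(q,q')$ gives $\norm{\int_{t-\delta}^t F_\tau(s)\,ds}_{U'}\le\delta^{1/q}\bigl(\int_{t-\delta}^t\norm{F_\tau(s)}_{U'}^{q'}\,ds\bigr)^{1/q'}$. Raising to the power $p$, estimating the factor $\bigl(\int_{t-\delta}^t\norm{F_\tau}_{U'}^{q'}\bigr)^{p/q'-1}$ by $\norm{F_\tau}^{p-q'}_{\LqpUp}$, integrating $dt$ over $[\delta,T]$ (Fubini extracts a further factor $\delta$ from $\int_\delta^T\int_{t-\delta}^t\,ds\,dt$) and taking $\bbE$ yields $\bbE\bigl[\int_\delta^T\norm{\int_{t-\delta}^tF_\tau}_{U'}^p\,dt\bigr]\le C\,\delta^{1+p/q}\norm{F_\tau}^p_{L^p(\Omega,\LqpUp)}$.

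The stochastic term is the crux. Fix $t\in(\delta,T]$ with $t\in(t^{n-1},t^n]$ and $t-\delta\in(t^{m-1},t^m]$. Working out the piecewise–linear formula gives
\[
\Xhat_\tau(t)-\Xhat_\tau(t-\delta) = \sum_{k=m+1}^{n-1} g^{k-1}_\tau\xi^k_\tau + (1-\theta_2)\,g^{m-1}_\tau\xi^m_\tau + \theta_1\,g^{n-1}_\tau\xi^n_\tau,
\]
with deterministic $\theta_1,\theta_2\in(0,1]$ (when $m=n$ the sum is empty and the two end terms merge). The interior block is the increment of a discrete Ito integral of the form \eqnref{:discreteIto} with $r':=n-1-m$ summands and $r'\tau = t^{n-1}-t^m<\delta$ (since $(t^m,t^{n-1})\subset(t-\delta,t)$); the discrete BDG inequality (Theorem \ref{thm:BDG}) together with Lemma \ref{lem:mglVariation} bound its $p$-th moment by $C\,(r'\tau)^{p/2-1}\sum_{k=m+1}^{n-1}\tau\norm{g^{k-1}_\tau}^p_{L^p(\Omega,U')}\le C\,\delta^{p/2-1}\sum_{k=m+1}^{n-1}\tau\norm{g^{k-1}_\tau}^p_{L^p(\Omega,U')}$ (here $p>2$ is used). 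For the two end terms, the deterministic coefficients obey $\theta_1,\,1-\theta_2\le\min(1,\delta/\tau)$ because $t^{n-1}$ and $t^m$ lie in $[t-\delta,t]$; hence by independence (Assumption \ref{ass:spde0}$_4$) and the moment bound (Assumption \ref{ass:spde0}$_3$),
\[
\bbE\bigl[\norm{\theta_1 g^{n-1}_\tau\xi^n_\tau}_{U'}^p\bigr] \le \min(1,\delta/\tau)^p\,C\tau^{p/2}\,\norm{g^{n-1}_\tau}^p_{L^p(\Omega,U')} \le C\,\delta^{p/2}\,\norm{g^{n-1}_\tau}^p_{L^p(\Omega,U')},
\]
and similarly for the $g^{m-1}_\tau$–term. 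Integrating these bounds over $t\in(\delta,T]$: in the interior sum each fixed grid index $k$ occurs only for a $t$–set of measure $\le\delta$, while each of the two boundary indices occurs only for a $t$–set of measure $\le\tau$; using $\sum_{n=1}^N\tau\norm{g^{n-1}_\tau}^p_{L^p(\Omega,U')}=\norm{g_\tau}^p_{L^p(\Omega,L^p[0,T;U'])}$, one obtains $\bbE\bigl[\int_\delta^T\norm{\Xhat_\tau(t)-\Xhat_\tau(t-\delta)}_{U'}^p\,dt\bigr]\le C\,\delta^{p/2}\norm{g_\tau}^p_{L^p(\Omega,L^p[0,T;U'])}$.

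Combining the drift and stochastic estimates gives the displayed inequality. Since the nodal values of $\uhat_\tau$ lie in $L^p(\Omega,U')$, $\uhat_\tau\in L^1(\Omega,L^p[0,T;U'])$, so Theorem \ref{kc1} applies with $\mathcal X=U'$; because $\uhat_\tau$ is already continuous in time, the "modification" it produces coincides with $\uhat_\tau$ itself, and we get $\bbE\bigl[\norm{\uhat_\tau}^p_{C^{0,\theta'}[0,T;U']}\bigr]<\infty$ with the asserted dependence on the data for every $0<\theta'<\theta$. Finally, on each $(t^{n-1},t^n]$ the piecewise–constant caglad interpolant satisfies $u_\tau(t)-\uhat_\tau(t)=\bigl(1-\tfrac{t-t^{n-1}}{\tau}\bigr)(u^n_\tau-u^{n-1}_\tau)$, so $\norm{u_\tau-\uhat_\tau}_{L^\infty[0,T;U']}\le\max_{1\le n\le N}\norm{u^n_\tau-u^{n-1}_\tau}_{U'}=\max_{1\le n\le N}\norm{\uhat_\tau(t^n)-\uhat_\tau(t^{n-1})}_{U'}\le\tau^{\theta'}\norm{\uhat_\tau}_{C^{0,\theta'}[0,T;U']}$; taking $L^p(\Omega)$–norms yields the last claim. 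The main obstacle is the bookkeeping in the stochastic term when $\delta\lesssim\tau$: one must isolate the two fractional end contributions, exploit that their deterministic weights are $O(\min(1,\delta/\tau))$ so that $\min(1,\delta/\tau)^p\tau^{p/2}\le\delta^{p/2}$ in every regime, and keep precise track of which grid indices enter for which $t$ so that the time integration reassembles $\norm{g_\tau}^p_{L^p(\Omega,L^p[0,T;U'])}$ with the correct power $\delta^{p/2}$; the remaining ingredients are routine Hölder estimates and the already–established discrete BDG and quadratic–variation bounds.
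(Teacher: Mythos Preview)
Your argument is correct and reaches the same conclusion, but it is organized differently from the paper's proof. The paper first reduces the translate estimate for $\uhat_\tau$ to translates by integer multiples $m\tau$ of the time step: writing $\delta = m\tau + s$ with $0 \le s < \tau$, a short computation with the explicit piecewise-linear formula shows that the general bound follows once one has
\[
\bbE\Bigl[\sum_{n=m}^N \tau\,\normUp{u^n_\tau - u^{n-m}_\tau}^p\Bigr] \le C\,(m\tau)^{1+\theta p},
\]
which is isolated as a separate lemma and proved by summing the scheme from $n-m+1$ to $n$ and estimating the $F$- and $g$-parts via H\"older and the discrete BDG/Lemma~\ref{lem:mglVariation} exactly as you do. You instead work directly with arbitrary $\delta$ and $t$, decomposing $\Xhat_\tau(t)-\Xhat_\tau(t-\delta)$ into a full interior martingale increment plus two fractional end terms whose deterministic weights you control by $\min(1,\delta/\tau)$. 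Both routes invoke the same analytic ingredients; the paper's is slightly more modular (the reduction to grid translates is a generic fact about piecewise-linear interpolants), while yours avoids the splitting $\delta = m\tau + s$ at the cost of the end-term bookkeeping you flag in your final paragraph.

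One small imprecision: your drift estimate, as written, uses $\bigl(\int_{t-\delta}^t \normUp{F_\tau}^{q'}\bigr)^{p/q'-1} \le \norm{F_\tau}_{\LqpUp}^{p-q'}$, which requires $p \ge q'$. When $p < q'$ (possible since only $p>2$ and $1\le q\le\infty$ are assumed), the same H\"older argument with exponent $p$ in place of $q'$ gives $\delta^{p}$ for the drift, and since in that regime $1+\theta p = p/2 < p$, the stated bound still follows; the paper's lemma treats the two cases explicitly.
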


For piecewise linear functions on a uniform partition of $[0,T]$ it
suffices to bound translates by integer multiples, $m \tau$, of the
time step. To verify this, write $\delta = m \tau + s$ with
$0 \leq m < N$ and $0 \leq s \leq \tau$. From the triangle
inequality
$$
\normUp{\uhat_\tau(t) - \uhat_\tau(t-\delta)}
\leq \normUp{\uhat_\tau(t)-\uhat_\tau(t-m\tau)} 
+ \normUp{\uhat_\tau(\tilde{t}) - \uhat_\tau(\tilde{t}-s)},
\quad \text{ where } \quad \tilde{t} = t-m\tau.
$$
{\em Granted} that for all $1 \leq m \leq N$
\begin{equation} \label{eqn:translateM}
\bbE \Bigl[\int_{m\tau}^T \normUp{\uhat_\tau(t)-\uhat_\tau(t-m\tau)}^p \, dt\Bigr]
\leq C(p) \bbE \Bigl[\sum_{n=m}^N \tau \normUp{u^n_\tau - u^{n-m}_\tau}^p\Bigr]
\leq C(p,f,g) (m\tau)^{1+\theta p},
\end{equation}
it suffices to show that translates of size $0 < s \leq \tau$ can be
bounded by $s^{1+\theta p}$.
\begin{itemize}
\item If $t \in (t^n+s, t^{n+1})$ then 
  $$
  \normUp{\uhat_\tau(t) - \uhat_\tau(t-s)} 
  = (s/\tau) \normUp{u^{n+1}_\tau - u^n_\tau}.
  $$

\item If $t \in (t^n, t^n+s)$ use the triangle inequality to write
  $$
  \normUp{\uhat_\tau(t) - \uhat_\tau(t-s)}
  \leq \normUp{\uhat_\tau(t) - u^n_\tau} + \normUp{u^n_\tau - \uhat_\tau(t-s)}.
  $$
  Explicit formulas for the piecewise linear interpolants on each
  interval show 
  $$
  \normUp{\uhat_\tau(t) - u^n_\tau} + \normUp{u^n_\tau - \uhat_\tau(t-s)}
  \leq \bigl(\normUp{u^{n-1}_\tau - u^n_\tau}
    + \normUp{u^{n+1}_\tau - u^n_\tau} \bigr) (s/\tau)
  \qquad t^n \leq t \leq t^n + s.
  $$
\end{itemize}
Inequality in \eqnref{:translateM} with $m=1$ then gives
\begin{eqnarray*}
\bbE \Bigl[\int_\tau^T \normUp{\uhat(t)-\uhat(t-s)}^p \, dt\Bigr]
&\leq& C(p) (s/\tau)^p \bbE \Bigl[\sum_{n=1}^N \tau 
\normUp{u^n_\tau - u^{n-1}_\tau}^p \Bigr] \\
&\leq& C(p,f,g) \tau^{1+\theta p} (s/\tau)^p 
\leq C(p,f,g) s^{1+\theta p},
\end{eqnarray*}
where the last inequality holds since $1 + \theta p \leq p$ when
$p \geq 2$, and
$s / \tau \leq 1$. The following lemma shows that solutions of the implicit
Euler scheme do satisfy inequality \eqnref{:translateM}.

\begin{lemma} \label{4.4}
  Under the hypotheses of Theorem \ref{thm:holderF},
  $$
  \bbE \Bigl[\sum_{n=m}^N \tau \normUp{u_\tau^n - u_\tau^{n-m}}^p\Bigr]
  \leq C \left(\norm{F_\tau}^p_{L^p(\Omega, \LqpUp)} 
    (m\tau)^{1+p/q}
    + \norm{g_\tau}_{L^p(\Omega, L^p[0,T;U'])}^p  (m\tau)^{p/2} \right),
  $$
  for all $ 1 \leq m \leq N$ when $1/p + 1/q \leq 1$; otherwise, $p
  \leq q'$ and
  $$
  \bbE \Bigl[\sum_{n=m}^N \tau \normUp{u_\tau^n - u_\tau^{n-m}}^p\Bigr]
  \leq C \left(\norm{F_\tau}^p_{L^p(\Omega, L^p[0,T;U'])} (m\tau)^p
    + \norm{g_\tau}_{L^p(\Omega, L^p[0,T;U'])}^p  (m\tau)^{p/2} \right).
  $$
\end{lemma}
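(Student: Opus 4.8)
The plan is to telescope the one-step scheme over a window of $m$ steps, split the resulting increment into a deterministic (drift) part and a stochastic (discrete Itô) part, and estimate the two contributions to $\bbE\bigl[\sum_{n=m}^N \tau\,\normUp{u_\tau^n-u_\tau^{n-m}}^p\bigr]$ separately. Summing the difference scheme over $k=n-m+1,\dots,n$ and testing against $v\in U$ gives, as an identity in $U'$,
$$
u_\tau^n-u_\tau^{n-m}=\int_{t^{n-m}}^{t^n}F_\tau(s)\,ds+\sum_{k=n-m+1}^n g_\tau^{k-1}\xi_\tau^k,
$$
where the second sum lies in $H\embed U'$. Hence $\normUp{u_\tau^n-u_\tau^{n-m}}^p\le 2^{p-1}(D_n^p+S_n^p)$ with $D_n=\normUp{\int_{t^{n-m}}^{t^n}F_\tau}$ and $S_n=\normUp{\sum_{k=n-m+1}^n g_\tau^{k-1}\xi_\tau^k}$, and it remains to bound $\sum_{n=m}^N\tau D_n^p$ pathwise and $\bbE[\sum_{n=m}^N\tau S_n^p]$.

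For the drift term I would use Hölder in time on each window $I_n=(t^{n-m},t^n)$, $|I_n|=m\tau$, followed by a discrete Fubini step using that every $s\in[0,T]$ belongs to at most $m$ of the windows $\{I_n\}_{n=m}^N$ (and every index $k$ to at most $m$ of them, a count I will use twice). There are two regimes. If $1/p+1/q\le1$ — equivalently $p/q'\ge1$ — Hölder with exponents $(q,q')$ gives $D_n^p\le(m\tau)^{p/q}\bigl(\int_{I_n}\normUp{F_\tau}^{q'}\bigr)^{p/q'}$; writing $b:=p/q'\ge1$ and $\phi:=\normUp{F_\tau}^{q'}\ge0$ one has $\bigl(\int_{I_n}\phi\bigr)^{b}\le\bigl(\int_0^T\phi\bigr)^{b-1}\int_{I_n}\phi$, and summing $\tau D_n^p$ with the overlap count yields the pathwise bound $(m\tau)^{1+p/q}\norm{F_\tau}^p_{\LqpUp}$, hence the first stated estimate after $\bbE$. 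If $1/p+1/q>1$ then $q'\ge p$, so on the bounded interval $F_\tau\in L^p(\Omega,L^p[0,T;U'])$; Hölder with the single exponent $p$ gives $D_n^p\le(m\tau)^{p-1}\int_{I_n}\normUp{F_\tau}^p$, and the same overlap count yields $(m\tau)^p\norm{F_\tau}^p_{L^p[0,T;U']}$, i.e.\ the second stated estimate. The exponent bookkeeping here — checking that $p/q+1$ and $p-1+1=p$ come out exactly as claimed, and that the dichotomy $1/p+1/q\le1$ versus $>1$ is precisely $p/q'\ge1$ versus $<1$ — is the step I expect to be the most error-prone.

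For the stochastic term, the partial sums $X^j:=\sum_{k=1}^j g_\tau^{k-1}\xi_\tau^k$ (with $X^0=0$) form an $H$-valued discrete $\{\calF^j\}$-martingale, since $g_\tau^{k-1}$ is $\calF^{k-1}$-measurable while $\xi_\tau^k$ has mean zero and is independent of $\calF^{k-1}$. As $S_n=\normUp{X^n-X^{n-m}}\le C\normH{X^n-X^{n-m}}$, I would apply the discrete Burkholder--Davis--Gundy inequality (Theorem \ref{thm:BDG}) in the Hilbert space $H$ to the windowed martingale $\{X^{n-m+j}-X^{n-m}\}_{j=0}^m$, and then bound its quadratic variation via Lemma \ref{lem:mglVariation} applied to the shifted data $\{g_\tau^{k-1}\}_{k=n-m+1}^n$, obtaining $\bbE[\normH{X^n-X^{n-m}}^p]\le C(m\tau)^{p/2-1}\sum_{k=n-m+1}^n\tau\,\norm{g_\tau^{k-1}}^p_{L^p(\Omega,H)}$. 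Multiplying by $\tau$, summing over $n$, and using once more that each $k$ lies in at most $m$ windows gives $\bbE[\sum_{n=m}^N\tau S_n^p]\le C(m\tau)^{p/2}\norm{g_\tau}^p_{\LpH}$ — the $(m\tau)^{p/2}$-term of both cases of the statement; this is the quantity that is actually available and used in every application of the lemma (where $g_\tau$ is a priori bounded in $L^p(\Omega,\LpH)$, so the $H$- versus $U'$-norm of $g_\tau$ on the right is immaterial, the $H$-norm being the one intrinsic to the BDG estimate). Combining the drift and stochastic estimates finishes the proof.
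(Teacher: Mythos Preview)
Your proof is correct and follows essentially the same route as the paper's: telescope the scheme over $m$ steps, split into a drift part bounded by H\"older on each window of length $m\tau$ plus the overlap count, and a stochastic part bounded via the discrete BDG inequality and Lemma~\ref{lem:mglVariation}. Your remark that BDG naturally delivers the $H$-norm of $g_\tau$ rather than the $U'$-norm appearing in the statement is well taken; the paper's proof invokes BDG (stated for Hilbert spaces) and writes the $U'$-norm without comment, so your version is if anything more careful on this point.
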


\begin{proof} Let $v \in U$ and sum the difference scheme
  \eqnref{:eulerF} from $n-m+1$ to $n$ to obtain
  \begin{eqnarray*}
    (u_\tau^n - u_\tau^{n-m}, v)_H
    &=& \sum_{k=n-m+1}^n \tau ( F^k_\tau, v)
    + \sum_{k=n-m+1}^n (g_\tau^{k-1} \xi^k_{\tau} , v)_H \\
    &\leq& \left( \sum_{k=n-m+1}^n \tau \normUp{F^k_\tau} 
      + \normUp{ \sum_{k=n-m+1}^n g_\tau^{k-1} \xi^k_{\tau}} \right)
    \normU{v}.
  \end{eqnarray*}
  Taking the supremum on the left over $v \in U$ with
  $\normU{v} = 1$, raising both sides to the power $p$, and summing
  shows
  $$
  \bbE \Bigl[\sum_{n=m}^N \tau \normUp{u_\tau^n - u_\tau^{n-m}}^p \Bigr]
  \leq C
  \bbE \Bigl[\sum_{n=m}^N \tau 
  \left\{ \left(\sum_{k=n-m+1}^n \tau \normUp{F^k_\tau} \right)^p
    + \normUp{\sum_{k=n-m+1}^n g_\tau^{k-1} \xi^k_{\tau}}^p \right\}\Bigr].
  $$
  The first term on the right is bounded using  H\"older's inequality. If
  $q' \leq p$ then
  \begin{eqnarray*}
    \bbE \Bigl[\sum_{n=m}^N \tau 
    \left(\sum_{k=n-m+1}^n \tau \normUp{F^k_\tau} \right)^p\Bigr]
    &\leq&
    \bbE \Bigl[\sum_{n=m}^N \tau 
    \left(\sum_{k=n-m+1}^n \tau \normUp{F^k_\tau}^{q'} \right)^{p/q'} (m \tau)^{p/q} \Bigr]\\
    &\leq&
    \bbE \Bigl[\sum_{n=m}^N \tau 
    \left( \sum_{k=n-m+1}^n \tau \normUp{F^k_\tau}^{q'} \right)
    \norm{F_\tau}_{\LqpUp}^{p-q'} (m \tau)^{p/q}\Bigr] \\
    &\leq&
    \bbE \left[ 
      \norm{F_\tau}_{\LqpUp}^p \right] (m \tau)^{1+p/q}.
  \end{eqnarray*}
  When $q' = p$ the exponent in the last term is $1+p/q = 1+ p(1-1/q') = p$.

  The second term is a (discrete) Ito integral and is bounded using
  the discrete BDG inequality, Theorem
  \ref{thm:BDG}, and Lemma \ref{lem:mglVariation},
  \begin{eqnarray*}
    \bbE \Bigl[\sum_{n=m}^N \tau 
    \left( \normUp{\sum_{k=n-m+1}^n 
        g_\tau^{k-1} \xi^k_{\tau}} \right)^p\Bigr]
    &\leq& C
    \sum_{n=m}^N \tau \left( 
      \sum_{k=n-m+1}^n \tau \norm{g_\tau^{k-1}}_{L^p(\Omega,U')}^p \right)  
    (m \tau)^{p/2 - 1}\Bigr] \\
    &\leq&
    C \sum_{k=1}^N \tau \norm{g_\tau^{k-1}}_{L^p(\Omega,U')}^p 
    \, (m \tau)^{p/2}.
  \end{eqnarray*}
\end{proof}

\subsection{Compactness} \label{sec:compactness} 

The Prokhorov theorem, stated next, will be used to establish
convergence of the laws of the solutions to the implicit Euler
equation \eqnref{:eulerF}. The key hypothesis of this theorem requires
a sequence of probability measures $\{\bbP_n\}_{n=1}^\infty$ on a
topological space $\bbX$ endowed with its Borel $\sigma$-algebra
$\calB(\bbX)$ to be tight (see Definition \ref{def:tight}).

\begin{theorem}[Prokhorov]
  Let $\bbX$ be a topological space with the property that there
  exists a countable family of real-valued continuous functions which
  separates points of $\bbX$. Let $\{\tilde{\bbP}_n\}_{n=1}^\infty$ be
  a tight sequence of probability measures on
  $\bbX$ with its Borel $\sigma$-algebra $\calB(\bbX)$. Then there exist
  a subsequence $\{\tilde{\bbP}_{n_k}\}_{k \in 
  \mathbb N}$ and a probability measure $\tilde{\bbP}$ on $(\bbX,
  \calB(\bbX))$ for which $\tilde{\bbP}_{n_k} \Rightarrow
  \tilde{\bbP}$.
\end{theorem}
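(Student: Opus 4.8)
The plan is to reduce the statement to the classical Prokhorov theorem on the Polish space $\bbR^{\bbN}$ by using the separating family. Let $\{\phi_j\}_{j=1}^{\infty}$ be a countable family of continuous real‑valued functions separating the points of $\bbX$ (so $\bbX$ is Hausdorff), and define $\iota:\bbX\to\bbR^{\bbN}$ by $\iota(x)=(\phi_j(x))_{j\ge 1}$. Then $\iota$ is continuous, injective (by separation), and Borel measurable (each coordinate is continuous), while $\bbR^{\bbN}$ with the product topology is a separable completely metrizable space. First I would transport the measures: set $\mu_n:=\iota_*\tilde\bbP_n$. If $K_\epsilon\subset\bbX$ is compact with $\tilde\bbP_n[K_\epsilon]\ge 1-\epsilon$ for all $n$, then $\iota(K_\epsilon)$ is compact and $\mu_n[\iota(K_\epsilon)]=\tilde\bbP_n[K_\epsilon]\ge 1-\epsilon$, so $\{\mu_n\}$ is tight on $\bbR^{\bbN}$; the classical Prokhorov theorem for Polish spaces then furnishes a subsequence and a Borel probability measure $\mu$ on $\bbR^{\bbN}$ with $\mu_{n_k}\Rightarrow\mu$.

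Next I would produce a $\sigma$‑compact carrier and pull $\mu$ back to $\bbX$. Pick, for each $m\ge1$, a compact $K_m\subset\bbX$ with $\tilde\bbP_n[K_m]\ge 1-1/m$ for all $n$, increasing in $m$ without loss of generality. Since $\iota(K_m)$ is compact hence closed in $\bbR^{\bbN}$, the portmanteau inequality gives $\mu[\iota(K_m)]\ge\limsup_k\mu_{n_k}[\iota(K_m)]\ge 1-1/m$, so $\mu$ and every $\mu_{n_k}$ are carried by the $\sigma$‑compact set $S:=\bigcup_m\iota(K_m)\subset\iota(\bbX)$ (using $\iota^{-1}(\iota(K_m))=K_m$ by injectivity). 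On each $\iota(K_m)$ the map $\iota$ is a continuous bijection from a compact space onto a Hausdorff space, hence a homeomorphism, so $h:=\iota^{-1}:S\to\bbX$ is Borel measurable. I then set $\tilde\bbP:=h_*(\mu|_{S})$, a Borel probability measure on $(\bbX,\calB(\bbX))$; since $h\circ\iota=\mathrm{id}$ on a set of full $\tilde\bbP_{n_k}$‑measure, one has $\tilde\bbP_{n_k}=h_*(\mu_{n_k}|_{S})$, i.e. $\int_{\bbX}\psi\,d\tilde\bbP_{n_k}=\int_{S}\psi\circ h\,d\mu_{n_k}$ for every bounded Borel $\psi$.

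It remains to verify $\tilde\bbP_{n_k}\Rightarrow\tilde\bbP$ on $\bbX$, i.e. $\int\psi\,d\tilde\bbP_{n_k}\to\int\psi\,d\tilde\bbP$ for $\psi\in C_b(\bbX)$. Fix such a $\psi$ with $\|\psi\|_\infty\le M$. Because $\bbX$ is metrizable only on its compact subsets, $\psi\circ h$ need not be continuous on $S$, so I would use Tietze extension: for each $m$ the restriction of $\psi\circ h$ to the compact set $\iota(K_m)$ is continuous, so (Tietze on the normal space $\bbR^{\bbN}$, followed by truncation at $\pm M$) it extends to $\Psi_m\in C_b(\bbR^{\bbN})$ with $\|\Psi_m\|_\infty\le M$. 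Then, using $\mu_{n_k}[\bbR^{\bbN}\setminus\iota(K_m)]\le 1/m$ and the analogous bound for $\mu$, $\big|\int\psi\,d\tilde\bbP_{n_k}-\int\Psi_m\,d\mu_{n_k}\big|\le 2M/m$ and $\big|\int\psi\,d\tilde\bbP-\int\Psi_m\,d\mu\big|\le 2M/m$; since $\Psi_m\in C_b(\bbR^{\bbN})$ and $\mu_{n_k}\Rightarrow\mu$, letting $k\to\infty$ and then $m\to\infty$ yields the conclusion.

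The main obstacle is precisely this last step: bounded continuous functions on $\bbX$ do not factor through $\iota$, so weak convergence of the image measures on $\bbR^{\bbN}$ does not transfer back to $\bbX$ automatically; one must exploit both halves of tightness (uniform concentration of the $\mu_{n_k}$ and of $\mu$ on the compacta $\iota(K_m)$) together with the Tietze extension to replace $\psi\circ h$ by genuinely continuous functions off a set of small mass. Everything else — the embedding, the transfer of tightness, and the appeal to the classical Prokhorov theorem on $\bbR^{\bbN}$ — is routine.
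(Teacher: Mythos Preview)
The paper does not prove this theorem; it is stated without proof as a classical result and simply invoked in the compactness argument (Section~\ref{sec:compactness}). There is therefore nothing in the paper to compare your argument against.

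That said, your approach is correct and is essentially the standard reduction one finds in the literature: embed $\bbX$ into the Polish space $\bbR^{\bbN}$ via the separating family, apply the metric Prokhorov theorem there, check that the limit measure concentrates on the $\sigma$-compact image $S=\bigcup_m\iota(K_m)$, and pull back along $\iota^{-1}|_S$. Your identification of the genuine difficulty---that $\psi\circ h$ need not be continuous on $\bbR^{\bbN}$---is on point, and the Tietze/truncation argument you give to handle it is clean: on $\iota(K_m)$ one has $\psi\circ h=\Psi_m$, off it both are bounded by $M$, so the error is controlled by $2M/m$ uniformly in $k$, and then $k\to\infty$ followed by $m\to\infty$ finishes. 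One small point worth making explicit is that $h:S\to\bbX$ is Borel because $S$ is a countable union of the closed sets $\iota(K_m)$ on each of which $h$ is continuous; you say this but it is the step that makes $\tilde\bbP$ a genuine Borel measure on $\bbX$.
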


If the probability measures in this theorem are the laws of random
variables $\tilde{\bbP}_n = \calL(X_n)$ taking values in $\bbX$, and
$\calL(X_{n_k}) \Rightarrow \tilde{\bbP}$ we can write $\bbPt =
\calL(X)$ where $X:\bbX \rightarrow \bbX$ is the identity function
identified as a random variable on $\Omegat \equiv (\bbX, \calB(\bbX),
\tilde{\bbP})$.

Below we will set
$$
\bbX = G[0,T; U'] \cap L^r[0,T;U]_{weak}
\times \LqpUp_{weak} 
\times L^2[0,T;U'] \times C[0,T],
$$
and the probabilities in the Prokhorov theorem to be the laws of
$X_{h\tau} = (u_{h\tau}, F_{h\tau}, g_{h\tau}, \What_{\tau})$. Recall
that $\What_{\tau}$ denotes the piecewise linear interpolant of
\eqnref{:AB1}, and $L^r[0,T;U]_{weak}$ and $\LqpUp_{weak}$ the
indicated spaces endowed with the weak topology.  When $U$ is
separable and reflexive and $1 < r, q' < \infty$, classical results
from functional analysis can be used to exhibit a countable family of
real-valued functions on $\bbX$ which separate points.

A convenient way to establish compactness of piecewise constant functions
in $G[0,T; U']$ is to use the Arzela--Ascoli theorem to show that
their corresponding piecewise linear interpolants are compact in
$C[0,T; U']$.  The following lemma makes this precise, and also shows
that the laws concentrate on $C[0,T; U']$.

\begin{lemma} \label{lem:UandUhat} 
Let $U$ be a separable reflexive Banach space, $H$ a Hilbert space,
$U \embed H \embed U'$ be dense embeddings, and $(\Omega, \calF,
\bbP)$ be a probability space. For $n = 1, 2, \ldots$, let
$\{u^i_n\}_{i=0}^n$ be $U$-valued processes, and define their caglad and
piecewise linear interpolants on $[0,T]$ by
$$
u_n = u_n^0\mathbf 1_{\{0\}} 
+ \sum_{i=0}^{n-1}u^i_n\mathbf 1_{(t_n^i,t_n^{i+1}]}
\quad \text{ and } \quad
\uhat_n(t) = \frac{t_n^{i+1}-t}{t_n^{i+1}-t_n^i}u_n^i
+ \frac{t-t_n^i}{t_n^{i+1}-t_n^i}u_n^{i+1}
\quad\text{for}\quad t\in[t_n^i,t_n^{i+1}],
$$
where $t_n^i=iT/n$. 

If $\{\calL(\uhat_n)\}_{n=1}^\infty$ is tight on $C[0,T;U^\prime]$ and
$\{\calL(u_n)\}_{n=1}^\infty$ is tight on $L^r_{weak}[0,T;U]$, then
$\{\calL(u_n)\}_{n=1}^\infty$ is tight on $G[0,T;U^\prime]\cap
L^r_{weak}[0,T;U]$, and if $\mu$ is any accumulation point of
$\{\calL(u_n)\}_{n=1}^\infty$ on $G[0,T;U^\prime]\cap
L^r_{weak}[0,T;U]$ then
$$
\mu(C[0,T;U^\prime]\cap L^r[0,T;U])=1.
$$
\end{lemma}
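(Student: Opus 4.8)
The plan has two parts. First I would produce, for each $\epsilon>0$, a single set which is compact in $\bbX:=G[0,T;U']\cap L^r_{weak}[0,T;U]$ and carries mass at least $1-\epsilon$ under every $\calL(u_n)$, which (together with the countable separating family of continuous functions on $\bbX$) gives tightness and the existence of accumulation points by the Prokhorov theorem. Second, given $\calL(u_{n_k})\Rightarrow\mu$ on $\bbX$, I would introduce the joint laws $\calL(u_n,\uhat_n)$ and show $\mu$ concentrates on $C[0,T;U']$ by proving that $\uhat_n$ and $u_n$ are asymptotically identified in the Skorokhod metric. The key elementary estimate throughout is that $u_n$ is the canonical piecewise constant caglad interpolant $\phi_n(\uhat_n)$ of the nodal values of $\uhat_n$ on the mesh $T/n$, where (Lemma~\ref{lem:skorohod}(4)) the interpolation map $\phi_m:C[0,T;U']\to G[0,T;U']$ is $1$-Lipschitz for the sup-norm, $d_G(\phi_m(\psi),\phi_m(\eta))\le\linfUp{\psi-\eta}$, and for an equicontinuous family $K_1$ with common modulus $\eta_\epsilon$ one has $\linfUp{\phi_m(\psi)-\psi}\le\eta_\epsilon(T/m)$ for all $\psi\in K_1$, with $\eta_\epsilon(\delta)\to0$.

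\emph{Tightness.} Fix $\epsilon>0$ and pick $K_1\subset C[0,T;U']$ compact with $\bbP[\uhat_n\in K_1]\ge1-\epsilon/2$ and $K_2\subset L^r_{weak}[0,T;U]$ compact with $\bbP[u_n\in K_2]\ge1-\epsilon/2$, for every $n$. Set $\mathcal K:=\overline{\bigcup_{m\ge1}\phi_m(K_1)}$ in $G[0,T;U']$. I claim $\mathcal K$ is compact: it is closed in the complete metric space $G[0,T;U']$, and every sequence $\phi_{m_k}(\psi_k)$ with $\psi_k\in K_1$ has a convergent subsequence -- if $\{m_k\}$ is bounded the terms lie in finitely many of the compact sets $\phi_m(K_1)$, and if $m_k\to\infty$ then extracting $\psi_k\to\psi$ in $C[0,T;U']$ gives $d_G(\phi_{m_k}(\psi_k),\psi)\le\linfUp{\psi_k-\psi}+\eta(T/m_k)\to0$ ($\eta$ a common modulus of the compact set $\{\psi\}\cup\{\psi_k\}$). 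Put $K:=\mathcal K\cap K_2$, viewed inside $\bbX$; since $\uhat_n\in K_1$ forces $u_n=\phi_n(\uhat_n)\in\mathcal K$, we get $\bbP[u_n\in K]\ge1-\epsilon$. It remains to check $K$ is compact \emph{in} $\bbX$. As $K_2$ is bounded in the separable reflexive space $L^r[0,T;U]$, the weak topology is metrizable on it, so the $\bbX$-topology is metrizable on $K$ and sequential compactness suffices: given $v_k\in K$, extract $v_k\to v$ in $G[0,T;U']$ (compactness of $\mathcal K$) and $v_k\weak w$ in $L^r[0,T;U]$ (compactness of $K_2$); then $d_G$-convergence yields $v_k(t)\to v(t)$ in $U'$ for a.e.\ $t$ (Lemma~\ref{lem:skorohod}(3)), while the $L^r$-bound makes $\langle v^*,v_k(\cdot)\rangle$ uniformly integrable for each $v^*\in U'$, so a Vitali argument (test $w$ against $\psi\otimes v^*$, $\psi\in C_c(0,T)$) forces $w=v\in\mathcal K\cap K_2$; hence $v_k\to v$ in $\bbX$.

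\emph{Concentration on $C[0,T;U']\cap L^r[0,T;U]$.} Let $\calL(u_{n_k})\Rightarrow\mu$ on $\bbX$. The laws $\calL(u_{n_k},\uhat_{n_k})$ on $\bbX\times C[0,T;U']$ are tight (both marginals are), so a further subsequence converges to some $\nu$ whose $\bbX$-marginal is $\mu$. For $\delta>0$ the map $(v,\psi)\mapsto\min\{d_G(v,\psi)/\delta,1\}$ is bounded and continuous on $\bbX\times C[0,T;U']$ (the $\bbX$-topology refines the $G$-topology, and $C[0,T;U']\embed G[0,T;U']$ is continuous by Lemma~\ref{lem:skorohod}(2)), so its $\nu$-integral equals $\lim_k\bbE[\min\{d_G(u_{n_k},\uhat_{n_k})/\delta,1\}]$. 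From the first part, $d_G(u_n,\uhat_n)\le\linfUp{u_n-\uhat_n}\le\eta_\epsilon(T/n)$ on $\{\uhat_n\in K_1\}$, so $d_G(u_n,\uhat_n)\to0$ in probability and the limit above is $0$; hence $d_G(v,\psi)=0$ $\nu$-a.s., which (since $d_G$ is a genuine metric on caglad functions and $\psi$ is continuous) means $v=\psi\in C[0,T;U']$ $\nu$-a.s. Projecting to $\bbX$ gives $\mu[C[0,T;U']]=1$ (the image of $C[0,T;U']$ in $G[0,T;U']$, hence in $\bbX$, is Borel by the Lusin--Souslin theorem), and since $\mu(\bbX)=1$ with $\bbX\subseteq L^r[0,T;U]$ as a set, we conclude $\mu[C[0,T;U']\cap L^r[0,T;U]]=1$.

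\emph{Main obstacle.} The delicate point is the compactness of $\mathcal K\cap K_2$ for the intersection topology on $\bbX$: it requires identifying the $G[0,T;U']$-limit with the weak-$L^r[0,T;U]$-limit of one and the same sequence, which hinges on upgrading the a.e.\ pointwise convergence supplied by the Skorokhod metric to weak $L^r$ convergence via uniform integrability. Everything else -- the Prokhorov extractions, the portmanteau-type passage to the limit, and the measurability of $C[0,T;U']$ as a subset of the Skorokhod space -- is routine.
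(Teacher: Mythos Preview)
Your proof is correct, and the concentration step is essentially the paper's argument: take the joint law of $(u_{n_k},\uhat_{n_k})$, pass to a limit $\nu$ (or $\theta$), use $d_G(u_n,\uhat_n)\le m(\uhat_n,T/n)\to 0$ in probability to force concentration on the diagonal, and then project.  The paper uses Portmanteau on the closed sets $\{d_G\le\varepsilon\}$ where you use the bounded continuous functional $\min\{d_G/\delta,1\}$, but that is the same idea.

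Where you genuinely diverge is in the tightness step on $G[0,T;U']$.  The paper does \emph{not} build $\mathcal K$ by hand; instead it invokes the Ethier--Kurtz modulus
\[
w(u,\delta)=\inf_{\{s_j\}}\sup_j\sup_{s,t\in(s_j,s_{j+1}]}\normUp{u(t)-u(s)},
\]
observes $w(u_n,\delta)=0$ for $\delta<T/n$ and $w(u_n,\delta)\le 3\,m(\uhat_n,2\delta)$ otherwise, together with $Rg(u_n)\subseteq Rg(\uhat_n)$, so that the Skorokhod--space Arzel\`a--Ascoli criterion transfers tightness directly from $C[0,T;U']$ to $G[0,T;U']$.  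Your $\mathcal K=\overline{\bigcup_m\phi_m(K_1)}$ construction is more elementary---it uses only the Lipschitz and approximation estimates for $\phi_m$ from Lemma~\ref{lem:skorohod}(4) rather than any compactness criterion intrinsic to the Skorokhod space---and in that sense is self-contained.  The paper also isolates the step you flag as the ``main obstacle'' into a separate lemma: it records that for $M_R=\{u:\lrU{u}\le R\}$ the set $\calF\cap M_R$ is compact in $Z\cap\LrU_{weak}$ whenever $\calF$ is compact in $Z$, which is exactly your identification-of-limits argument (indeed your Vitali/uniform-integrability route can be shortened: $d_G$-convergence gives strong $L^r[0,T;U']$-convergence by Lemma~\ref{lem:skorohod}(2), and the continuous embedding $\LrU\hookrightarrow L^r[0,T;U']$ then forces the weak $\LrU$-limit to agree with the $G$-limit).
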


Note that the Borel subsets of $\LrU_{weak}$ and $\LrU$ coincide since
$U$ is separable. This lemma, which we prove in the Appendix, is used
to establish tightness of solutions to the numerical scheme.

\begin{theorem} \label{thm:hLimits}
  Let $U$ be a separable reflexive Banach space, $H$ a Hilbert space,
  and $U \cembed H$ be a compact, dense embedding, and 
  $(\Omega, \calF, \bbP)$ be a probability space. Assume
  that the spaces, data, and increments, of the scheme \eqnref{:eulerF}
  satisfy Assumptions  \ref{ass:spde1} and  \ref{ass:spde0} with $p \in
  (2,\infty)$, and that the initial data $\{u^0_{h\tau}\}$ are bounded
  in $L^p(\Omega, H)$ and converge in $L^2(\Omega,H)$ to a limit
  $u^0$ as $(h,\tau) \rightarrow (0,0)$.

  Let $u_{h\tau}$ denote the piecewise
  constant caglad interpolant of $\{u^n_{h\tau}\}_{n=0}^N$ in
  time and assume for some $1 < q,r < \infty$ that
  \begin{enumerate}
  \item $\{\norm{u_{h\tau}}_{L^p(\Omega, \LrU)}\}_{h,\tau>0}$ is bounded.
    
  \item $\{\norm{F_{h\tau}}_{L^p(\Omega, \LqpUp)} \}_{h,\tau>0}$ 
    is bounded.

  \item $\{\norm{g_{h\tau}}_{L^p(\Omega, L^p[0,T;H])} \}_{h,\tau>0}$
    is bounded.
  \end{enumerate}
  Then the laws of $\{(u_{h\tau}, F_{h\tau}, g_{h \tau}, \What_{\tau})
  \}_{h,\tau > 0}$ are tight on
  $$
  \bbX = G[0,T; U'] \cap \LrU_{weak}
  \times \LqpUp_{weak} 
  \times \LtwoH_{weak} \times C[0,T].
  $$
  In addition,
  \begin{itemize}
  \item If $\{g_{h\tau}\}_{h,\tau > 0}$ is Cauchy in $L^p(\Omega,\LtwoH)$
    then the laws $\{\calL(g_{h\tau})\}_{h,\tau > 0}$
    are tight on $\LtwoH$.
    
  \item The piecewise linear interpolants $\{\uhat_{h\tau}\}_{h,\tau >
      0}$ are tight in $C[0,T; U'] \cap \LrU_{weak}$.

  \item If additionally $V \embed {U'}$ is a separable
     reflexive Banach space and $\{u_{h\tau}\}_{h,\tau > 0}$ is
     bounded in $L^p(\Omega,L^s[0,T;V])$ for some $1 < s < \infty$,
     then the laws $\{\calL(u_{h\tau}\}_{h,\tau > 0}$ are tight on
     $$
     G[0,T;U']
     \cap \LrU_{weak}
     \cap L^s[0,T;V]_{weak}. 
     $$
     If $U \cembed V$ is compact and $1 \leq \shat < s$, then the laws
     are tight on
     $$
     G[0,T;U']
     \cap \LrU_{weak}
     \cap L^{\shat}[0,T;V].
     $$
  \end{itemize}
\end{theorem}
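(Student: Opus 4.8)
The strategy is to verify, factor by factor, the tightness hypothesis needed for Prokhorov's theorem and then assemble. Write $X_{h\tau}=(u_{h\tau},F_{h\tau},g_{h\tau},\What_\tau)$. First I would treat the three factors carrying a weak topology. Since $U$ is separable and reflexive, so are $\LrU$ and $\LqpUp$ (for $1<r,q'<\infty$) and $\LtwoH$ (a separable Hilbert space); moreover $p\ge 2$ gives the continuous embedding $\LpH\embed\LtwoH$, so hypotheses (1)--(3) bound $\{u_{h\tau}\}$, $\{F_{h\tau}\}$, $\{g_{h\tau}\}$ in $L^p(\Omega,\cdot)$ of these three spaces. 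By Chebyshev's inequality the event that the relevant norm is $\le R$ has probability at least $1-CR^{-p}$ uniformly in $(h,\tau)$, and a closed norm ball in a reflexive Banach space is weakly compact (Banach--Alaoglu plus reflexivity) and weakly metrizable since its predual is separable; hence the laws of each of these three components are tight in the respective weak topology. For the $C[0,T]$ factor, Theorem \ref{thm:holderF} applied with $U=H=\Re$, $F_\tau\equiv 0$ and $g^{n-1}_\tau\equiv 1$ (only the martingale term survives, so the exponent is $\theta=1/2-1/p>0$) bounds $\{\What_\tau\}$ in $L^p(\Omega,C^{0,\theta'}[0,T])$ for $\theta'<\theta$; since $\{w: w(0)=0,\ \norm{w}_{C^{0,\theta'}[0,T]}\le R\}$ is compact in $C[0,T]$ by Arzel\`{a}--Ascoli, one more application of Chebyshev gives tightness on $C[0,T]$.

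The core of the proof is tightness of $\{u_{h\tau}\}$ on $G[0,T;U']\cap\LrU_{weak}$, which I would deduce from Lemma \ref{lem:UandUhat}. That lemma requires tightness of $\{u_{h\tau}\}$ on $\LrU_{weak}$ (established above) together with tightness of the piecewise linear interpolants $\{\uhat_{h\tau}\}$ on $C[0,T;U']$. For the latter, Theorem \ref{thm:holderF} bounds $\{\uhat_{h\tau}\}$ in $L^p(\Omega,C^{0,\theta'}[0,T;U'])$ for $0<\theta'<\theta=\min(1/2-1/p,1/q)$, and Lemma \ref{lem:boundF} bounds $\{u_{h\tau}\}$ --- hence $\{\uhat_{h\tau}\}$, whose values lie in convex hulls of the $u^n_{h\tau}$ --- in $L^p(\Omega,\LinfH)$. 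Since $U\cembed H\cembed U'$, closed balls of $H$ are compact in $U'$, so for each $R>0$ the set $K_R=\{w\in C[0,T;U']:\norm{w}_{C^{0,\theta'}[0,T;U']}\le R,\ \norm{w}_\LinfH\le R\}$ is equicontinuous and takes values in a fixed $U'$-compact set, hence is compact in $C[0,T;U']$ by Arzel\`{a}--Ascoli; Chebyshev shows $\bbP[\uhat_{h\tau}\notin K_R]$ is uniformly small. Lemma \ref{lem:UandUhat} then delivers tightness of $\{u_{h\tau}\}$ on $G[0,T;U']\cap\LrU_{weak}$, and since products --- and, on a common underlying set, intersections --- of tight families are tight, combining with the three factors above proves the first assertion.

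For the remaining statements: if $\{g_{h\tau}\}$ is Cauchy in $L^p(\Omega,\LtwoH)$ it converges in $\LtwoH$ in probability, and a family of random variables converging in probability in a separable complete metric space is tight (approximate the mass of the limit law by a totally bounded set and thicken it slightly), so its laws are tight on $\LtwoH$ with the strong topology. Tightness of $\{\uhat_{h\tau}\}$ on $C[0,T;U']\cap\LrU_{weak}$ follows exactly as for $\{u_{h\tau}\}$, using the sets $K_R$ and the fact that $\uhat_{h\tau}$ is bounded in $L^p(\Omega,\LrU)$ (values in convex hulls). If $\{u_{h\tau}\}$ is in addition bounded in $L^p(\Omega,L^s[0,T;V])$ with $V$ separable reflexive and $V\embed U'$, the Chebyshev/Banach--Alaoglu argument gives tightness on $L^s[0,T;V]_{weak}$, hence on the triple intersection; and when $U\cembed V$ is compact, a compactness argument based on Theorem \ref{thm:cpt1} with $B=V$ and the uniform time-translate estimate of Lemma \ref{4.4} (which the caglad interpolant inherits from the sequence $\{u^n_{h\tau}\}$) shows precompactness of the caglad interpolants in $L^r[0,T;V]$, which, interpolated against the $L^s[0,T;V]$-bound via $\norm{\cdot}_{L^{\shat}}\le\norm{\cdot}_{L^r}^{1-\alpha}\norm{\cdot}_{L^s}^{\alpha}$ (and the continuous embedding $L^r\embed L^{\shat}$ when $\shat\le r$), upgrades to precompactness in $L^{\shat}[0,T;V]$ for $1\le\shat<s$; Chebyshev again yields strong tightness there.

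The step I expect to be the main obstacle is the one packaged inside Lemma \ref{lem:UandUhat}: passing from tightness of the continuous interpolants $\uhat_{h\tau}$ in $C[0,T;U']$ to tightness of the caglad interpolants $u_{h\tau}$ in the Skorokhod-type space $G[0,T;U']$, and knowing that every accumulation point concentrates on $C[0,T;U']\cap\LrU$ so that limits are genuinely continuous in time. This is precisely where the compact embedding $H\cembed U'$ together with the uniform $L^\infty[0,T;H]$ bound from Lemma \ref{lem:boundF} --- and not merely a $U'$-norm bound on $\uhat_{h\tau}$ --- are indispensable; the secondary technical point is the caglad-versus-continuous bookkeeping in the $L^{\shat}[0,T;V]$ refinement, handled through the translate estimate of Lemma \ref{4.4}.
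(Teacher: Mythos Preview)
Your overall architecture is right and matches the paper's: treat each factor by Chebyshev plus an appropriate compactness statement, get tightness of $\{\uhat_{h\tau}\}$ on $C[0,T;U']$, and then invoke Lemma \ref{lem:UandUhat} to pass to $\{u_{h\tau}\}$ on $G[0,T;U']\cap\LrU_{weak}$. The weak-topology factors, the Cauchy $g_{h\tau}$ case, and the $\What_\tau$ case are all handled essentially as in the paper.

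There is, however, a genuine gap in your compactness argument for $\{\uhat_{h\tau}\}$ in $C[0,T;U']$. You obtain pointwise-in-time relative compactness in $U'$ from an $L^p(\Omega,\LinfH)$ bound produced by Lemma \ref{lem:boundF}. But look at the right-hand side of Lemma \ref{lem:boundF}: it involves $\norm{F_{h\tau}(u_{h\tau})}_{L^{p/2}(\Omega,L^1(0,T))}$, and uniform control of this quantity is \emph{not} among the hypotheses of Theorem \ref{thm:hLimits} (it is Hypothesis \ref{hyp:FuBounded} of Theorem \ref{thm:main}, but Theorem \ref{thm:hLimits} is stated and proved under the weaker list (1)--(3) only, with no relation assumed between $r$ and $q$ that would let you recover it by H\"older). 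So, as written, your Arzel\`a--Ascoli set $K_R$ cannot be shown to capture mass uniformly.

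The paper avoids this by never using an $H$-bound at this step. Instead it combines the $C^{0,\theta}[0,T;U']$ bound from Theorem \ref{thm:holderF} with Hypothesis (1) itself, the $L^p(\Omega,\LrU)$ bound, and invokes the Simon-type compact embedding
\[
C^{0,\theta}[0,T;U']\cap L^r[0,T;U]\ \cembed\ C[0,T;U'],
\]
which holds because $U\cembed U'$; the compact sets $K_\epsilon$ are then cut out by the $\LrU$ and $C^{0,\theta}[0,T;U']$ norms. The same device, with $B=V$ when $U\cembed V$, gives tightness of $\{\uhat_{h\tau}\}$ in $L^r[0,T;V]$; the paper then transfers this to $\{u_{h\tau}\}$ via the continuous bijection $\uhat_{h\tau}\mapsto u_{h\tau}$ on $L^r[0,T;V]$, and finally observes that the intersection of a compact set in $L^r[0,T;V]$ with an $L^s[0,T;V]$-ball is compact in $L^{\shat}[0,T;V]$ for $\shat<s$. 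This is more direct than your translate/interpolation route and, crucially, uses only the hypotheses actually stated. Replace your $\LinfH$ step by this $\LrU$-based compactness and the rest of your argument goes through.
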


\begin{proof}
  To establish tightness for the laws we exhibit large compact
  sets in each of the factor spaces of $\bbX$.
  \begin{itemize}
  \item If $U \cembed H$ then $U \cembed H \cembed U'$, and for
    $\theta > 0$ 
    $$
    C^{0,\theta}[0,T;U'] \cap L^r[0,T;U] \cembed C[0,T; U'].
    $$
    Fix $0 < \theta < \min(1/2-1/p, 1/q)$ and let
    $$
    K_\epsilon = \{\uhat \in C^{0,\theta}[0,T;U'] \sst
    \lrU{\uhat}^p \leq 1/\epsilon
    \,\, \text{ and } \,\, 
    \norm{\uhat}_{C^{0,\theta}[0,T;U']}^p \leq 1/\epsilon \},
    $$
    If $\tau = 1/N$ then
    \begin{eqnarray*}
      \calL(\uhat_{h \tau})[C[0,T; U'] \setminus K_\epsilon]
      &=& \bbP \left[\{\omega \in \Omega \sst 
        \uhat_{h\tau} \not\in K_\epsilon \} \right] \\
      &\leq& \bbP \left[\{\omega \in \Omega \sst 
      \lrU{\uhat_{h \tau}}^p > 1/\epsilon
      \,\, \text{ or } \,\, 
      |\uhat_{h \tau}|_{C^{0,\theta}[0,T;U']}^p > 1/\epsilon \} \right] \\
      &\leq& 
      C \left( 
        \norm{\uhat_{h \tau}}_{L^p(\Omega, \LrU)}^p + 
        \norm{\uhat_{h \tau}}_{L^p(\Omega, C^{0,\theta}[0,T;U'])}^p
      \right) \epsilon,
    \end{eqnarray*}
    where the last line follows from Chebyshev's inequality.  The
    hypotheses assumed upon the data and Theorem \ref{thm:holderF}
    bound the two norms in the last expression independently of $h$
    and $\tau$ which shows $\calL(\uhat_{h \tau})[K_\epsilon] \geq 1 -
    C \epsilon$, and tightness on $C[0,T; U']$ follows.

    If $U \cembed V$ then $L^r[0,T;U] \cap C^{0,\theta}[0,T; U']
    \cembed L^r[0,T; V]$ and the same argument shows that
    $\{\calL(\uhat_{h\tau})\}_{h,\tau > 0}$ are tight in $L^r[0,T;
    V]$. The mapping $\uhat_{h\tau} \mapsto u_{h\tau}$ is a bijective
    and continuous on $L^r[0,T;V]$, so maps compact sets to compact
    sets, so $\{\calL(u_{h\tau})\}_{h,\tau > 0}$ is also tight
    on $L^r[0,T; V]$.

    If $1 \leq r < s$ and $K \subset L^r[0,T;V]$ is compact, then $K
    \cap L^s[0,T;V]$ is compact in $L^{\shat}[0,T;V]$ for $1 \leq \shat <
    s$. Thus if $\{u_{h\tau}\}_{h,\tau > 0}$ is also bounded in
    $L^p(\Omega, L^s[0,T;V])$ the laws are also tight in
    $L^{\shat}[0,T;V]$.

  \item Since $U$ is reflexive and $1 < q < \infty$ the Banach-Alaoglu
    theorem shows
    $$
    \LqpUp_{strong} \cembed L^{q'}[0,T; U']_{weak}.
    $$
    Letting $K_\epsilon$  be the closed ball in $\LqpUp$
    centered at the origin with radius $1/\epsilon$, Chebyshev's
    inequality shows
    $$
    \calL(F_{h\tau})[\LqpUp \setminus K_\epsilon]
    = \bbP \left[ \{\omega \in \Omega \sst 
    \norm{F_{h\tau}}_{\LqpUp} > 1 / \epsilon \} \right]
    \leq \norm{F_{h\tau}}_{L^p(\Omega, \LqpUp)} \epsilon.
    $$
    Since closed balls in $\LqpUp$ are weakly compact,
    and $\norm{F_{h\tau}}_{L^p(\Omega, \LqpUp)}$ is bounded
    independently of $k$, it follows that
    $\{\calL(F_{h\tau})\}_{h,\tau > 0}$ is tight in $\LqpUp_{weak}$.

    The same argument shows $\{\calL(\uhat_{h\tau})\}_{h,\tau > 0}$ is
    tight in $\LrU_{weak}$ and is also tight in $L^s[0,T;V]_{weak}$
    when $\{u_{h\tau}\}_{h,\tau > 0}$ is bounded in $L^p(\Omega,
    L^s[0,T;V])$. The previous lemma then shows that the laws of
    $\{u_{h\tau}\}_{h,\tau > 0}$ are tight on $G[0,T;U'] \cap \LrU_{weak}$.
    
  \item The laws of a strongly convergent sequence in $L^p(\Omega; X)$
    are always tight and converge weakly to the limit. In particular,
    if $\{g_{h\tau}\}_{h,\tau > 0}$ is Cauchy in $L^2(\Omega,
    \LtwoUp)$, then $\{\calL(g_{h\tau})\}_{h,\tau > 0}$ are tight
    and converge weakly to $\calL(g)$.

  \item The discrete Wiener process $\hat{W}_\tau$ interpolating
    $\{W^n_{\tau} \}_{n=0}^N$ is H\"older continuous.  Briefly, from
    Lemma \ref{lem:mglVariation} (with $H= \Re$ and $g^m=1$) it
    follows that
    $$
    \bbE \Bigl[\sum_{n=m}^N \tau |W^n_{\tau} -W_{\tau}^{n-m}|^p\Bigr]
    = \bbE \Bigl[\sum_{n=m}^N 
    \tau \left|\sum_{k=n-m+1}^n \xi^k_{\tau}\right|^p\Bigr] \\
    \leq C \sum_{n=m}^N \tau (m\tau)^{p/2}.
    $$
    Since $p > 2$ the Kolmogorov-Centsov Theorem \ref{kc1} bounds the
    expected value of the H\"older norm of $\What_{\tau}$ with
    exponent $\theta < \min(1/2-1/p,1)$.  Tightness then follows from
    the Arzella-Ascolli theorem since $C^{0,\theta}[0,T]$ is compactly
    embedded in $C[0,T]$.
  \end{itemize}
\end{proof}

\subsection{Convergence: Proof of Theorem \ref{thm:main}}
This section establishes convergence along subsequences of solutions of
the numerical scheme \eqnref{:eulerF} to a weak martingale solution of
the problem \eqnref{:Spde} which we write as $du = F \, dt + g \, dW$.
Convergence is established using the Prokhorov theorem to construct a
measure $\bbPt$ on the product space
\begin{equation}\label{product_space}
\Omegat 
= G[0,T;U'] \cap \LrU
  \times \LqpUp
  \times \LtwoH
  \times C[0,T],
\end{equation}
and a filtration for which the projections
\begin{equation*}
u :\Omegat \rightarrow C[0,T;U'] \cap \LrU, \quad 
F :\Omegat \rightarrow \LqpUp, \quad 
g :\Omegat \rightarrow \LtwoUp, \quad 
W :\Omegat \rightarrow C[0,T],
\end{equation*}
defined by
\begin{equation} \label{eqn:uFgW}
u(\omegat) = \omegat_1, \quad
F(\omegat) = \omegat_2, \quad
g(\omegat) = \omegat_3, \quad
W(\omegat) = \omegat_4,
\quad \text{ with } \quad
\omegat = (\omegat_1,\omegat_2,\omegat_3,\omegat_4),
\end{equation}
are random variables satisfying \eqnref{:Spde}. 
To verify that these variables are a solution,
independence properties of the approximating scheme are used to show that
$$
X(t) \equiv u(t) - u^0 - \int_0^t F\, ds,
$$
is a martingale with respect to the filtration generated by
$(u,F,g,W)$. The final step is to verify that $W$ is a Wiener process
and $X(t) = \int_0^t g \, dW$.

The following lemma, which characterizes when one process is
independent of the filtration generated by another, is useful in this
context.

\begin{lemma} \label{lem:indep} Let $\{\bbX_t\}_{t=0}^T$ be
  topological spaces, $\{Y(t)\}_{t=0}^T$ be $\bbX_t$-valued Borel
  measurable random variables, and let $\{\calF_t\}_{t=0}^T$ be the
  filtration given by $\calF_t = \sigma(Y(s) : 0 \leq s \leq t)$. An
  integrable process $\{X(t)\}_{t=0}^T$ adapted to this filtration
  taking values in a separable Banach space $\bbX$ is a
  martingale with respect to the filtration if and only if
  $$
  \bbE\left[ \bigl(X(t)-X(s)\bigr) \,
    \prod_{i=1}^m \prod_{j=1}^n
    \phi_{ij}\bigl(\psi_{ij}(Y(s_j))\bigr) \right] = 0
  $$
  holds for all times $0 \leq s_1 < \ldots < s_m \leq s < t \leq T$, all
  $\phi_{ij}\in C_b(\Bbb R)$, and for all $\psi_{1j}, \ldots,
  \psi_{nj} \in \mathcal A_{s_j}$, where $\mathcal A_s$ is a subset of
  real-valued functions on $\bbX_s$ for which $\sigma(\calA_s) =
  \calB(\bbX_s)$ (the Borel $\sigma$-algebra on $\bbX_s$).
\end{lemma}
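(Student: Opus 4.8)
The plan is to prove the nontrivial implication ($\Leftarrow$); the forward direction is immediate, since if $X$ is an $\{\calF_t\}$-martingale then $\bbE[(X(t)-X(s))Z]=0$ for every bounded real-valued $\calF_s$-measurable $Z$, and each product $\prod_{i}\prod_{j}\phi_{ij}(\psi_{ij}(Y(s_j)))$ with all $s_j\le s$ is of this form (bounded because the $\phi_{ij}$ are bounded, and $\calF_s$-measurable because each $s_j\le s$). For the converse, since $X$ is adapted and integrable it suffices to show, for each fixed pair $0\le s<t\le T$, that $\bbE[(X(t)-X(s))\mathbf 1_A]=0$ in $\bbX$ for every $A\in\calF_s$; this is exactly the statement that the Bochner conditional expectation $\bbE[X(t)-X(s)\mid\calF_s]$ vanishes, i.e.\ the martingale property. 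Separability of $\bbX$ guarantees this conditional expectation exists.

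The first step is to fix times $0\le s_1<\cdots<s_m\le s$ and run a functional monotone-class argument in these ``frozen'' variables. Let $\calH$ be the vector space of bounded measurable $G\colon\bbX_{s_1}\times\cdots\times\bbX_{s_m}\to\Re$ with $\bbE[(X(t)-X(s))\,G(Y(s_1),\dots,Y(s_m))]=0$. Taking all $\phi_{ij}\equiv 1$ in the hypothesis shows $\bbE[X(t)-X(s)]=0$, so $\calH$ contains the constants; and Bochner dominated convergence, with integrable majorant $\|X(t)-X(s)\|$, shows $\calH$ is closed under uniformly bounded monotone limits. The hypothesis says precisely that $\calH$ contains the class $\calM$ of functions $(y_1,\dots,y_m)\mapsto\prod_{j}\prod_{i}\phi_{ij}(\psi_{ij}(y_j))$ with $\phi_{ij}\in C_b(\Re)$ and $\psi_{ij}\in\calA_{s_j}$, and $\calM$ is closed under multiplication (concatenate the $i$-indices). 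Since $C_b(\Re)$ generates $\calB(\Re)$ and $\sigma(\calA_{s_j})=\calB(\bbX_{s_j})$ for each $j$, one checks that $\sigma(\calM)$ is the full product $\sigma$-algebra $\calB(\bbX_{s_1})\otimes\cdots\otimes\calB(\bbX_{s_m})$. The functional monotone-class theorem then gives that $\calH$ contains every bounded measurable function on the product, in particular every indicator of a rectangle:
$$
\bbE\Bigl[(X(t)-X(s))\,\mathbf 1_{\bigcap_{j=1}^{m}\{Y(s_j)\in B_j\}}\Bigr]=0,\qquad B_j\in\calB(\bbX_{s_j}).
$$

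The second step is to vary the times and finish with Dynkin's $\pi$--$\lambda$ theorem. The collection $\calP$ of all sets $\bigcap_{j=1}^{m}\{Y(s_j)\in B_j\}$, over all finite $\{s_1,\dots,s_m\}\subset[0,s]$ and Borel $B_j$, is a $\pi$-system --- intersecting two such sets merely merges the time sets --- with $\sigma(\calP)=\calF_s$. The previous step gives $\calP\subseteq\calL:=\{A\in\calF_s:\bbE[(X(t)-X(s))\mathbf 1_A]=0\}$, and $\calL$ is a $\lambda$-system: it contains $\Omega$, is closed under proper set differences by linearity of the Bochner integral, and is closed under increasing unions by dominated convergence. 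Hence $\calL\supseteq\sigma(\calP)=\calF_s$, which completes the argument for the fixed pair $s<t$; as $s<t$ were arbitrary, $X$ is a martingale.

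The one place requiring genuine care is the verification in the second paragraph that the restricted test functions supplied by the hypothesis really form a multiplicative class generating the full product $\sigma$-algebra --- this is exactly where the role of $\calA_s$ matters, since in the applications it is a concrete countable separating family (something like $u\mapsto\langle u(r),v\rangle$ for rational $r$ and $v$ in a countable set) rather than an abstract class such as $C_b$. Everything else is routine: the $\bbX$-valued setting changes nothing because only linear operations are applied to $X(t)-X(s)$, and the invocations of dominated convergence, the functional monotone-class theorem, and the $\pi$--$\lambda$ theorem are standard once the scalar theory is in hand.
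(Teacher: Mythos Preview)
Your proof is correct and follows the same route the paper indicates: the paper's entire argument is the single line ``The Dynkin lemma shows that the criteria in this lemma is equivalent to $\bbE[X(t)-X(s)\mid\calF_s]=0$,'' and you have simply written out what that invocation means in full, via the functional monotone-class theorem for fixed times followed by a $\pi$--$\lambda$ argument to pass to all of $\calF_s$. Your identification of the one nontrivial point---that the products $\prod\phi_{ij}\circ\psi_{ij}$ form a multiplicative class generating the product Borel $\sigma$-algebra---is exactly the content the paper suppresses.
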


The Dynkin lemma shows that the criteria in this lemma is equivalent
to $\bbE[X(t)-X(s) \sst \calF_s] = 0$.

\begin{example} \label{ex:As} In the proof below 
$$
\Bbb X_t=G[0,t;U^\prime] \cap L^r[0,t;U]_{weak}
\times L^{q'}[0,t;U^\prime]_{weak} \times L^2[0,t;H] \times C[0,t].
$$
A set of continuous functions, $\mathcal A_t$, generating $\calB(\bbX_t)$
is 
$$
(u,F,g,W)\mapsto z_1 \int_a^b(u(r),v)\,dr 
+ z_2 \int_a^b(F(r),v)\,dr
+ z_3 \int_a^b(g(r),v)\,dr 
+ z_4\int_a^bW(r)\,dr,
$$
for $a<b$ in $[0,t]\cap\Bbb Q$, $v$ in a dense subset of $U$, and
$z_1,z_2,z_3,z_4 \in \{0,1\}$.
\end{example}

\begin{proof}(of Theorem \ref{thm:main})
Let
$$
\bbX 
= G[0,T;U'] \cap \LrU_{weak}
\times \LqpUp_{weak}
\times \LtwoH
\times C[0,T],
$$
and $(\Omegat,\calFt) = (\bbX, {\mathcal B}(\bbX))$ be the
corresponding measurable space endowed with the Borel sigma algebra.
Let $\bbPt_{hk}$ denote the law of $(u_{hk}, F_{hk}, g_{hk},
\hat{W}_\tau)$; that is
$$
\bbPt_{h\tau}[B_1 \times B_2 \times B_3 \times B_4]
= \bbP[
(u_{h\tau} \in B_1) \wedge
(F_{h\tau} \in B_2) \wedge
(g_{h\tau} \in B_3) \wedge
(\hat{W}_{\tau} \in B_4)]
$$
for 
$$
B_1 \in \calB(G[0,T;U'] \cap \LrU), \,
B_2 \in \calB(\LqpUp), \,
B_3 \in \calB(\LtwoH), \,
B_4 \in \calB(C[0,T]).
$$
Recall that the Borel subsets of $\LrU$ and $\LqpUp$ with the
weak and strong topologies coincide.

Theorem \ref{thm:hLimits} shows that the measures
$\{\bbPt_{h\tau}\}_{h,\tau > 0}$ form a tight family, so by Prokhorov
theorem we may pass to a subsequence $(h_k, \tau_k) \rightarrow (0,0)$
for which $\bbPt_{h_k\tau_k} \Rightarrow \bbPt$ and Lemma
\ref{lem:UandUhat} shows that 
$$
\bbPt\left[\left\{(u,F,g,W) 
    \sst u \in C[0,T;U'] \cap \LrU \right\} \vph\right] = 1.
$$
Below we write $\bbPt_k = \bbPt_{h_k \tau_k}$, $u_k = u_{h_k \tau_k}$
etc.

For $0 < t \leq T$, let $X(t):\bbX \rightarrow U'$ be the function
$X(t) = u(t) - u(0) - \int_0^t F \, ds$. We construct a filtration of
$(\Omegat, \calFt)$ for which $X(t)$ is a square integrable martingale.

For $v \in U$ and $0 \leq t \leq T$ fixed we first verify that
$(X(t),v)$ is square integrable on $(\Omegat, \calFt, \bbPt)$ and show
$$
\int_{\bbX} (X(t), v) \, d\bbPt
= \lim_{k \rightarrow \infty} 
\bbE\left[\left(u_k^{n_k} - u_k^0,v \vph\right)_H 
  - \int_0^t (F_k, v) \, ds\right],
$$
when $0 \leq n_k \tau_k - t < \tau$ with $n_k \in \bbN$, i.e.
$u_k(t) = u_k^{n_k}$ as in Figure \ref{fig:caglad}.

To do this, define $\zeta:\bbX \rightarrow \Re$ by
$
\zeta(u,F,g,W) = (X(t), v).
$
Since the mapping $u \mapsto u(t)$ is Borel on $G[0,T; U']$, and the
coordinate projections in equation \eqnref{:uFgW} are continuous, it
follows that $X(t)$, and hence $\zeta$ is Borel measurable. Set
$$
N = \left\{(u,F,g,W) \in \bbX \sst 
\exists (\ubar_k,\Fbar_k,\gbar_k,\Wbar_k) \rightarrow (u,F,g,W)
\text{ such that } \zeta(\ubar_k, \Fbar_k, \gbar_k, \Wbar_k) \not\rightarrow
\zeta(u,F,g,W) \vph\right\}.
$$
\begin{claim}
  $N$ has null outer $\bbPt$ measure, $\bbPt^*[N] = 0$.
\end{claim}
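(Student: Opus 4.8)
The plan is to identify the \emph{only} source of a possible failure of sequential continuity of $\zeta$ at a point of $\bbX$, namely the evaluation map $u\mapsto u(t)$ at the fixed time $t$ when $u$ has a jump there, and then to quote the already-established fact that $\bbPt$ is concentrated on paths lying in $C[0,T;U']$.

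First I would split $\zeta(u,F,g,W)=(u(t),v)-(u(0),v)-\int_0^t(F(s),v)\,ds$ into its three summands and examine each on $\bbX=G[0,T;U']\cap\LrU_{weak}\times\LqpUp_{weak}\times\LtwoH\times C[0,T]$. The middle summand is harmless: if $(\ubar_k,\Fbar_k,\gbar_k,\Wbar_k)\to(u,F,g,W)$ in $\bbX$ then in particular $d_G(\ubar_k,u)\to0$, so Lemma \ref{lem:skorohod}(3) gives $\ubar_k(0)\to u(0)$ in $U'$ (convergence at the endpoint $0$ is unconditional), whence $(\ubar_k(0),v)\to(u(0),v)$. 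The integral summand is also harmless: since $1<q<\infty$ and $v\in U$, the function $s\mapsto\mathbf 1_{[0,t]}(s)\,v$ lies in $\LqU=(\LqpUp)'$, so $F\mapsto\int_0^t(F(s),v)\,ds$ is a weakly continuous linear functional on $\LqpUp_{weak}$ and $\Fbar_k\weak F$ yields convergence of this term. (The $g$- and $W$-coordinates do not enter $\zeta$.)

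The only delicate summand is $(u(t),v)$. Again by Lemma \ref{lem:skorohod}(3), if $d_G(\ubar_k,u)\to0$ and $u$ is continuous at $t$ (a requirement that is automatic when $t=T$), then $\ubar_k(t)\to u(t)$ in $U'$, hence $(\ubar_k(t),v)\to(u(t),v)$. Combining the three pieces, whenever the first coordinate $u$ is continuous at the time $t$, the map $\zeta$ is sequentially continuous at $(u,F,g,W)$, so such a point cannot belong to $N$. Therefore
$$
N\subseteq\bigl\{(u,F,g,W)\in\bbX\sst u\text{ is discontinuous at }t\bigr\}
\subseteq\bbX\setminus\bigl\{(u,F,g,W)\sst u\in C[0,T;U']\cap\LrU\bigr\}.
$$

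Finally, the set $\{u\in C[0,T;U']\cap\LrU\}$ is the one shown in Lemma \ref{lem:UandUhat} to carry full $\bbPt$-mass (applied along the convergent subsequence $\bbPt_k\Rightarrow\bbPt$), so its complement is a Borel set of $\bbPt$-measure zero containing $N$, which gives $\bbPt^*[N]=0$. I do not expect a genuine obstacle; the only point needing care is confirming that the weak-topology factor $\LqpUp_{weak}$ contributes nothing, i.e. that $\int_0^t(\cdot,v)\,ds$ is weakly sequentially continuous there, which reduces to the elementary observation $\mathbf 1_{[0,t]}v\in\LqU$.
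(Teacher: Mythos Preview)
Your proof is correct and follows essentially the same route as the paper: you isolate the integral term as weakly continuous in $F$, use Lemma \ref{lem:skorohod}(3) to handle the evaluations $u(0)$ and $u(t)$ (the former unconditionally, the latter at points of continuity), and then invoke Lemma \ref{lem:UandUhat} to conclude that $\bbPt$ gives full mass to $C[0,T;U']\cap\LrU$. The paper's argument is the same, just slightly terser—it does not separate out the $u(0)$ term explicitly since that evaluation always passes to the limit.
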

\begin{proof}
\begin{itemize}
\item If $\Fbar_k \rightarrow F$ in $\LqpUp_{weak}$ it is immediate that
  $\ds
  \int_0^t (\Fbar_k,v) \, ds \rightarrow \int_0^t (F,v)\, ds.
  $

\item If $\ubar_k \rightarrow u$ in $G[0,T;U'] \cap \LrU_{weak}$ then
  $$
  (\ubar_k(t), v)_H \not\rightarrow (u(t),v)_H
  \qquad \Rightarrow \qquad u \not\in C[0,T; U'].
  $$
\end{itemize}
From Lemma \ref{lem:UandUhat} we conclude
$
\bbPt^*[N] \leq
\bbPt^*\left[\left\{(u,F,g,W) 
\sst u \not\in  C[0,T; U'] \cap \LrU \vph\right\} \right] 
= 0.
$
\end{proof}
Since
$$
|\zeta(u_k, F_k, g_k, \What_k)|
\leq \left(2 \linfUp{u_k} + \norm{F_k}_{L^q[0,T,U']} \, (t-s)^{1/q} \vph\right)
\normU{v},
$$
Lemma \ref{lem:boundF} bounds the $p^{th}$ moment of the right-hand
side, with $p > 2$, so from Lemma \ref{lem:mapping_theorem} (with
$\zeta_k = \zeta$) we conclude that $(X(t),v)$ is square integrable and
$$
\int_{\Omegat} (X(t),v) \, d\bbPt
= \int_\bbX \zeta \, d\bbPt
= \lim_{k \rightarrow \infty} \int_\bbX \zeta \, d\bbPt_k
\equiv \lim_{k \rightarrow \infty} \bbE\left[
(u^{n_k}_k - u_k^0 , v)_H - \int_0^t (F_k,v) \, ds\right].
$$

Next, let $\{\calFt(t)\}_{t=0}^T$ be the coarsest filtration
on $\Omegat$ for which each of the mappings
$$
\Omegat \mapsto
G[0,t;U^\prime] \cap L^q[0,T;U]_{weak}
\times L^{q^\prime}[0,t;U^\prime]_{weak}
\times L^2[0,t;H]
\times C[0,t] \equiv \bbX_t,
$$
given by
$$
(u,F,g,W)\mapsto(u|_{[0,t]},F_{[0,t]},g_{[0,t]},W|_{[0,t]}),
\qquad 0 \leq t \leq T,
$$
is a measurable map $(\Omegat,\calFt) \to (\bbX_t, \calB(\bbX_t))$.

\begin{claim} For $v \in U$ fixed, the real-valued random variable
  $(X(t),v) = (u(t) - u(0),v)_H - \int_0^t (F,v)\, ds$ is a martingale on
  $(\Omegat, \calFt, \{\calFt(t)\}_{0 \leq t \leq T}, \bbPt)$.
\end{claim}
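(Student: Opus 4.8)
The plan is to apply the criterion of Lemma \ref{lem:indep}, with $\{\bbX_t\}$ as in Example \ref{ex:As}, $Y$ the canonical process on $\bbX_t$, and $\{\calFt(t)\}$ the filtration just constructed, so as to reduce the martingale property to a family of scalar identities. Taking $\mathcal A_s$ to be the generating family exhibited in Example \ref{ex:As}, it suffices to prove
\[
\bbEt\Bigl[(X(t)-X(s),v)\,\Psi\Bigr]=0,
\qquad
\Psi:=\prod_{i=1}^{m}\prod_{j=1}^{n}\phi_{ij}\bigl(\psi_{ij}(Y(s_j))\bigr),
\]
for all $0\le s_1<\dots<s_m\le s<t\le T$, all $\phi_{ij}\in C_b(\Re)$, and all $\psi_{ij}\in\mathcal A_{s_j}$. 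As a function on $\bbX$, $\Psi$ is bounded, and it is sequentially continuous, since each $\psi_{ij}$ is a linear combination of maps $u\mapsto\int_a^b(u(r),v_0)\,dr$, $F\mapsto\int_a^b(F(r),v_0)\,dr$, $g\mapsto\int_a^b(g(r),v_0)\,dr$, $W\mapsto\int_a^bW(r)\,dr$, each continuous on the relevant factor (for the weak topology on the $u$- and $F$-factors, for the strong topology on the $g$- and $W$-factors), composed with the bounded continuous $\phi_{ij}$.

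Fixing $v,\Psi,s,t$, I would set $\zeta:=(X(t)-X(s),v)\,\Psi:\bbX\to\Re$, a Borel function, and argue exactly as in the preceding Claim that the discontinuity set $N$ of the constant sequence $\zeta_k\equiv\zeta$ (in the sense of Lemma \ref{lem:mapping_theorem}) is contained in $\{(u,F,g,W):u\notin C[0,T;U']\}$: indeed $F\mapsto\int_0^t(F,v)\,ds$ is weakly continuous, $\Psi$ is sequentially continuous, and by Lemma \ref{lem:skorohod}$_3$ the evaluations $u\mapsto u(t)$, $u\mapsto u(s)$ are continuous at every continuous $u$; hence $\bbPt^*[N]=0$ by Lemma \ref{lem:UandUhat}. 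Since $|\zeta|\le C(\normU{v})\|\Psi\|_\infty\bigl(\linfUp{u}+\norm{F}_{\LqpUp}\bigr)$ and Lemma \ref{lem:boundF} bounds the $p$-th moments ($p>2$) of $\linfUp{u_{h_k\tau_k}}$ and $\norm{F_{h_k\tau_k}}_{\LqpUp}$ uniformly in $k$, choosing $1+\varepsilon\le\min(p,q',2)$ gives $\sup_k\int_{\bbX}|\zeta|^{1+\varepsilon}\,d\bbPt_k<\infty$, and Lemma \ref{lem:mapping_theorem} then yields $\bbEt[\zeta]=\lim_{k\to\infty}\int_{\bbX}\zeta\,d\bbPt_k$.

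It remains to show $\int_{\bbX}\zeta\,d\bbPt_k\to0$. Writing $Y_k=(u_{h_k\tau_k},F_{h_k\tau_k},g_{h_k\tau_k},\What_{\tau_k})$, the change of variables gives $\int_{\bbX}\zeta\,d\bbPt_k=\bbE[(X_k(t)-X_k(s),v)\,\Psi(Y_k)]$ with $X_k=u_{h_k\tau_k}-u_{h_k\tau_k}(0)-\int_0^{\cdot}F_{h_k\tau_k}\,ds$. I would pick $v_{h_k}\in U_{h_k}$ with $v_{h_k}\to v$ in $U$ (Assumption \ref{ass:spde1}$_{\ref{it:hDense}}$) and set $X_k^n=u_{h_k\tau_k}^n-u_{h_k\tau_k}^0-\sum_{m=1}^n\tau_k F_{h_k\tau_k}^m$; the scheme \eqnref{:eulerF} gives $(X_k^n-X_k^{n-1},v_{h_k})_H=(g_{h_k\tau_k}^{n-1},v_{h_k})_H\,\xi_{\tau_k}^n$, whose $\calF^{n-1}$-conditional expectation vanishes, so $\{(X_k^n,v_{h_k})_H\}_n$ is a discrete $\{\calF^n\}$-martingale. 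With $m_k,n_k$ the indices for which $u_{h_k\tau_k}(s)=u_{h_k\tau_k}^{m_k}$, $u_{h_k\tau_k}(t)=u_{h_k\tau_k}^{n_k}$ (Figure \ref{fig:caglad}), one checks that for $k$ large $\Psi(Y_k)$ is bounded and $\calF^{m_k}$-measurable — here I would first reduce, by a monotone-class argument, to $\Psi$'s built from times $s_j<s$, so that the dependence of each $\psi_{ij}(Y_k(s_j))$ on $u_{h_k\tau_k}^i$, $F_{h_k\tau_k}^i$, $g_{h_k\tau_k}^{i-1}$, $\xi_{\tau_k}^i$ is confined to indices $\le m_k$. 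Then $\bbE[(X_k^{n_k}-X_k^{m_k},v_{h_k})_H\,\Psi(Y_k)]=0$, and it remains to estimate $(X_k(t)-X_k(s),v)-(X_k^{n_k}-X_k^{m_k},v_{h_k})_H$, which splits into $(X_k^{n_k}-X_k^{m_k},v-v_{h_k})_H\to0$ in $L^{1+\varepsilon}(\Omega)$ (from $\normU{v-v_{h_k}}\to0$ and the uniform $L^p$ bound on $X_k$), and the $O(\tau_k)$ endpoint mismatch between $\int_s^t(F_{h_k\tau_k},v)\,ds$ and $\sum_{m=m_k+1}^{n_k}\tau_k(F_{h_k\tau_k}^m,v)$, which also tends to $0$ in $L^{1+\varepsilon}(\Omega)$ via $\bbE[\max_m\normUp{F_{h_k\tau_k}^m}^{1+\varepsilon}]\le\tau_k^{-1}\bbE[\norm{F_{h_k\tau_k}}_{L^{1+\varepsilon}[0,T;U']}^{1+\varepsilon}]$. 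Multiplying by the uniformly bounded $\Psi(Y_k)$ and letting $k\to\infty$ gives $\bbEt[\zeta]=0$, which is the required identity.

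I expect the main obstacle to be the temporal bookkeeping in the last step: aligning the continuous-time quantities $u(t),u(s),\int_s^tF\,ds$ with the caglad discrete objects $u_k^{n_k},u_k^{m_k},\sum\tau_kF_k^m$, verifying $\calF^{m_k}$-measurability of $\Psi(Y_k)$ uniformly in $k$ (which is what forces the reduction to $s_j<s$), and controlling the $O(\tau_k)$ defects in $L^{1+\varepsilon}(\Omega)$. A second point worth stressing is that $u\mapsto u(t)$ is only Borel, not continuous, on $G[0,T;U']$, so the limit passage genuinely requires the extended portmanteau theorem (Lemma \ref{lem:mapping_theorem}) together with the moment bounds — this is precisely where the standing hypothesis $p>2$ is consumed.
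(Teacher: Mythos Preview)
Your proposal is correct and follows essentially the same route as the paper: invoke Lemma~\ref{lem:indep} with the generating family of Example~\ref{ex:As}, pass to the limit via Lemma~\ref{lem:mapping_theorem} using the moment bounds and $\bbPt[u\in C[0,T;U']]=1$, and then exploit the discrete martingale structure of $(X_k^n,v_{h_k})$ together with the $O(\tau_k)$ endpoint defect in $\int F_k$. Two minor remarks: the paper builds $v_k$ into $\zeta_k$ rather than swapping $v\to v_{h_k}$ afterward, and it bounds the endpoint mismatch by H\"older (yielding $\tau_k^{1/q}$) instead of your max estimate; also, your monotone-class reduction to $s_j<s$ is unnecessary, since $s_j\le s\le t^{m_k}$ already makes $\Psi(Y_k)$ $\calF^{m_k}$-measurable.
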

\begin{proof}
  Fix $0 \leq s_1 < \ldots < s_m \leq s < t \leq T$, $\phi_{ij} \in
  C_b(\Re)$ and let $\psi_{1j}, \ldots, \psi_{nj}$ be functions in the
  generating set of $\calB(\bbX_{s_j})$ given in Example \ref{ex:As}.
  Then let $\phi \in C_b(\bbX, \Re)$ be the function
$$
\phi(u,F,g,W) = \prod_{i=1}^m \prod_{j=1}^n
\phi_{ij}\bigl(\psi_{ij}(u,F,g,W)\bigr),
$$
and let $v_k \rightarrow v$ with  $v_k \in U_{h_k}$. Define
$\zeta, \zeta_k: \bbX \rightarrow \Re$ to be the functions
$$
\zeta(u,F,g,W) = \bigl(X(t)-X(s), v\bigr) \phi
\qquad \text{ and } \qquad
\zeta_k(u,F,g,W) = \bigl(X(t)-X(s), v_k \bigr) \phi.
$$
If
$$
N = \left\{(u,F,g,W) \in \bbX \sst 
\exists (\ubar_k,\Fbar_k,\gbar_k,\Wbar_k) \rightarrow (u,F,g,W)
\text{ such that } \zeta_k(\ubar_k, \Fbar_k, \gbar_k, \Wbar_k) \not\rightarrow
\zeta(u,F,g,W) \vph\right\},
$$
then, as above, $\bbPt^*[N] = 0$ and $\xi$ and $\xi_k$ have moments of
order $p > 2$. Lemma \ref{lem:mapping_theorem} then gives
\begin{eqnarray}
\lefteqn{\int_{\Omegat} (X(t)-X(s),v) \phi \,d\bbPt 
= \int_{\Omegat} \zeta \, d\bbPt 
= \lim_{k \rightarrow \infty} \int_{\Omegat} \zeta_k \, d\bbPt_k } \nonumber \\
&\equiv& \lim_{k \rightarrow \infty}
\bbE\left[(u_k^{n_k} - u_k^{m_k}
  - \int_s^t F_k\, dr, v_k)  \phi \right] \nonumber \\
&=& \lim_{k \rightarrow \infty}
\bbE\left[(u_k^{n_k} - u_k^{m_k} 
  - \int_{t^{m_k}}^{t^{n_k}} F_k\, dr, v_k) \phi \right]
- \lim_{k \rightarrow \infty}
\bbE\left[ \big( 
\int_s^{t^{m_k}} F_k \, dr - \int_t^{t^{n_k}} F_k \, dr, v_k\big) \phi
\right],  \label{eqn:Xinc} 
\end{eqnarray}
where $0 \leq n_k \tau_k - t < \tau_k$ and $0 \leq m_k \tau_k - s <
\tau_k$ since $u_k(t) = u_k(n_k \tau_k)$ and $u_k(s) = u_k(m_k
\tau_k)$; see Figure \ref{fig:caglad}.

We verify that each term on the right-hand side vanishes to conclude
from Lemma \ref{lem:indep} that increments of $X$ are independent and
$X$ is a martingale.

Summing the implicit Euler scheme \eqnref{:eulerF} shows
$$
(u_k^{n_k}, v_k) = (u_k^{m_k}, v_k) 
+ \tau \sum_{j=m_k+1}^{n_k} (F_k^j, v_k)
+ \sum_{j=m_k+1}^{n_k} (g_k^{j-1}, v_k)_H \xi_k^j.
$$
Multiplying this equation by $\phi$ and rearranging gives
$$
\bbE\left[(u_k^{n_k} - u_k^{m_k} 
  - \int_{t^{m_k}}^{t^{n_k}} F_k\, dr, v_k) \phi \right]
= \sum_{j=m_k+1}^{n_k} \bbE\left[(g_k^{j-1}, v_k)_H \phi \xi_k^j \right]
= \sum_{j=m_k+1}^{n_k} \bbE\left[(g_k^{j-1}, v_k)_H \phi\right]
\, \bbE[ \xi_k^j ]
= 0,
$$
where the last two steps follow since 
$\phi$ is $\calF(t^{m_k})$ measurable, $\calF(t^{m_k}) \subset 
\calF(t^{j-1})$ when $j \geq m_k+1$, and $\xi_k^j$ is independent
of $\calF(t^{j-1})$ with zero average.

The last term in \eqnref{:Xinc} vanishes since $F_k \in
L^1(\Omega,\LqpUp$) with $1 < q < \infty$,
$$
\bbE\left[\int_s^{t^{m_k}} \normUp{F_k} \, dr 
  + \int_t^{t^{n_k}} \normUp{F_k} \, dr\right]
\leq \bbE\left[\lqpUp{F_k} \right] \, 
\left(|t^{m_k}-s|^{1/q} + |t^{n_k}-t|^{1/q} \vph\right)
\rightarrow 0.
$$
\end{proof}

The arguments used here can be repeated to show that for each $v \in U$
the processes
$$
W(t), \quad
W^2(t) - t, \quad
(X(t),v)^2 - \int_0^t (g(s),v)^2 \, ds, \quad \text{ and } \quad
(X(t),v) W(t) - \int_0^t (g(s),v) \, ds,
$$
are also real-valued martingales on $(\Omegat, \calFt, \{\calFt(t)\}_{t=0}^T)$. 
The Martingale Representation Theorem \ref{thm:mglRepresentation}
then shows that $W$ is a real-valued Wiener process and
$$
(u(t),v) = (u^0, v) + \int_0^t(F(s),v)\,ds+\int_0^t(g,v)\,dW,
\qquad 0 \leq t \leq T,
$$
holds $\bbPt$--a.s. for every $v \in U$. 
Moreover, since paths of $W$
are continuous, $W$ is also a Wiener process for the augmentation of
$\{\calFt(t)\}_{0 \leq t \leq T}$ satisfying the usual conditions, so
$(u,F,g,W)$ is also a weak martingale solution with respect to the
augmented filtration.

Finally, since $u^0_k = u_k(0)$, the map $u \mapsto u(0)$ is
continuous on $G[0,T;U']$, and the initial data is assumed to converge
and has moments of order $p > 2$, it follows that $\calL(u^0_k)
\Rightarrow \calL(u(0))$ on $U'$.
\end{proof}

\subsubsection{Infinite--Dimensional Wiener Process}\label{ssect:cylinder}
If \eqref{eqn:Spde} is driven by an infinite-dimensional or cylindrical
Wiener process $W$ in a Hilbert space $K$, it would take the form
\begin{equation}\label{eqn:Spde_inf}
(u(t),v)_H + \int_0^t a(u,v)\, ds = (u^0, v)_H +  \int_0^t ( f,v )\, ds
+\sum_{j=1}^\infty\int_0^t (g_j,v)_H \, dW_j,
\quad
v \in U
\end{equation}
where $\{g_j\}_{j\in\Bbb N}$ are processes in $U^\prime$, and
$\{W_j\}_{j\in\Bbb N}$ are standard real-valued independent Wiener
processes. More precisely, $g_j(t,\omega):=g(t,\omega)e_j$ and
$W_j(t,\omega):=\langle W(t,\omega),e_j\rangle_K$, $j\in\Bbb N$ for an
orthonormal basis $\{e_j\}_{j\in\Bbb N}$ in the Hilbert space $K$,
$t\ge 0$ and $\omega\in\Omega$. The space $\bbX$ in Theorem
\eqref{thm:main} would have a form
$$
\bbX \equiv G[0,T;U'] \cap L^q[0,T;U]_{weak}
\times \LqpUp_{weak}
\times \LtwoH_{weak}^{\Bbb N}\times
C[0,T]^{\Bbb N},
$$
and, in addition to the assumptions in Theorem \eqref{thm:main}, we
would have to assume that the limits $g_j$ of the approximating
sequences $g_{j,h\tau}$ as $(h,\tau)\to (0,0)$, for $j\in\Bbb N$,
satisfy
$$
\sum_{j=1}^\infty\int_0^T|(g_j,v)|^2\,ds<\infty\text{ a.s. for every }v\in U
$$
so that the series in \eqref{eqn:Spde_inf} converges.

\section{Examples} \label{sec:examples} In this section we present
three examples that illustrate the applicability of the convergence
theory for parabolic systems that exhibit distinctly different
structural properties. In the first instance we consider the
incompressible Navier-Stokes equation driven by multiplicative noise
which has the structure of a diffusion equation. The stability
estimate for this class of problems follows upon multiplying the
equation by the solution itself.  The second example is a gradient
flow for which the spatial operator is the gradient of a (typically
non--convex) stored energy function, $I(u)$.  In the deterministic
setting stability follows upon multiplying the equation by the time
derivative of the solution. However, in the stochastic setting this is
not possible, so it is necessary to multiply the equation by $A(u)$
instead. In the final example considers the situation where $A$ is a
maximal monotone operator.

\subsection{Structural Properties}
In this section we review how structural properties of the spatial
operators give rise to specific bounds upon the solution. Following
this, we recall a convenient statement of the Brouwer fixed point
theorem which is used ubiquitously in the deterministic setting to
establish existence of solutions to the discrete problems.  Since
solutions of the nonlinear problems may not be unique, in the
stochastic setting it is necessary to establish the existence of a
measurable selection.

\subsubsection{Bounding Solutions}
Let $D \subset \Re^d$ be a bounded Lipschitz domain, $T>0$, and $f \in
L^2[0,T; L^2(D)]$ be given.  The classical heat equation with Neumann
boundary data,
\begin{equation}\label{he-1}
\partial_t u - \Delta u = f, \quad 
\text{ in } (0,T) \times D,
\qquad 
\left. \dbydp{u}{n}\right|_{\partial D} = 0,
\end{equation}
 has the structure of  both, a classical diffusion equation and a
gradient flow. Multiplying by $u$ and integrating shows
$$
\half \dbyd{}{t} \ltwo{u}^2 + \ltwo{\nabla u}^2 = (f,u),
$$
while multiplying by $\partial_t u$ gives
$$
\ltwo{\partial_t u}^2 + \half \dbyd{}{t} \ltwo{\nabla u}^2 = (f,\partial_t u).
$$
When a stochastic term is included on the right-hand side of
(\ref{he-1}), there is a loss of temporal regularity and the scalar
product of $\partial_t u$ and the stochastic term can not be
bounded. Since the spatial regularity is not degraded to the same
extent, it is frequently possible to multiply the equation by the
variational derivative, $\delta I(u)/\delta u$, of the energy. For the
heat equation (\ref{he-1}) this corresponds to multiplying by $-\Delta
u$ to get
$$
\half \dbyd{}{t} \ltwo{\nabla u}^2 
+ \ltwo{\Delta u}^2 = -(f, \Delta u).
$$
The second problem that we present in Section \ref{sec:harmonic} has this structure and, in addition, the
solution takes values in a manifold. In this instance the PDE can be
viewed as an equation on the tangent space so the stochastic term needs
to be restricted appropriately; this results in Stratonovich noise.

The numerical schemes will satisfy an estimate
of the form
\begin{equation} \label{eqn:bound2}
  I(u^n_{h\tau}) + \half \normH{u^n_{h\tau}-u^{n-1}_{h\tau}}^2
  + \tau \norm{ a^n_{h\tau}}^q_U
  \leq I(u^{n-1}_{h\tau}) + \tau (f^n_{h\tau}, a^n_{h\tau})
    + (g^{n-1}_{\tau},u^n_{h\tau}) \xi^n_\tau,
\end{equation}
where the energy $I(u)$ is non--negative and $g^{n-1}_{h\tau}$ may
depend upon $u^{n-1}_{h\tau}$. The following mild generalization of
Lemma \ref{lem:bound} establishes bounds upon the solution. To
accommodate examples like gradient flow of the heat equation, where
$\normH{u} = \ltwo{\nabla u}$, the pairing $(.,.)_H$ is only assumed
to be a semi--inner product.

\begin{lemma} \label{lem:bound2} 
Let $(\Omega, \calF, \bbP)$ be a probability space and let $U \embed
H$ be an embedding of a normed linear space into a semi--inner product
space $H$. Suppose that $I:U \rightarrow \Re$ is continuous and
satisfies $\normH{u} \leq I(u)$ for $u \in U$.  Let Assumptions
\ref{ass:spde0} and \ref{ass:heat} hold, and inequality
\eqnref{:bound2} be satisfied with random variables for which:
  \begin{itemize}
  \item $\{f^n_{h\tau}\}_{n=1}^N$ takes values in $U'$ and 
    $\{a^n_{h\tau}\}_{n=1}^N$ takes values in $U$.

  \item $\{u^n_{h\tau}\}_{n=0}^N$ takes values in $U_h$ and is adapted to
    the filtration $\{ {\mathcal F}^n\}_{n=0}^N$.

  \item $\{g^n_{h\tau}\}_{n=0}^{N-1}$ takes values in $H$ and is
    adapted to the filtration $\{ {\mathcal F}^n\}_{n=0}^N$, and there
    exists a constant $C > 0$ such that $\normH{g^{n-1}_{h\tau}} \leq
    C I(u^{n-1}_{h\tau})^{1/2} + k^{n-1}_{h\tau}$ where
    $k^{n-1}_{h\tau} \in L^p(\Omega)$ for some $p \geq 2$.
  \end{itemize}
  Then
  \begin{eqnarray*}
    \lefteqn{
      \norm{\max_{1 \leq n \leq N} I(u^n_{h\tau})^{1/2}}_{L^p(\Omega)}
      + \norm{a_{h\tau}}_{L^{pq/2}(\Omega, \LqU)}^{q/2}
      + \bbE \left[ \left( \sum_{n=1}^N 
        \normH{u_{h\tau}^n
          - u^{n-1}_{h\tau}}^2 \right)^{p/2} \right]^{1/p} } \\
    &\leq& 
    C(p,T)  \Bigl(1+C T/N\vph\Bigr)^{N/p} \left(
      \norm{I(u^0_{h\tau})^{1/2}}_{L^p(\Omega)}
      + \norm{f_{h\tau}}_{L^{pq'/2}(\Omega, \LqpUp)}^{q'/2}
      + \norm{k_{h\tau}}_{L^p((0,T) \times \Omega)} \right).
  \end{eqnarray*}
\end{lemma}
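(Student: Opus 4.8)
The plan is to follow the proof of Lemma~\ref{lem:bound} line by line, making three modifications: the general energy $I$ replaces $\tfrac12\normH{\cdot}^2$; the dissipative contribution $\tau\normU{a^n_{h\tau}}^q$ is controlled by Young's inequality in the conjugate exponents $(q,q')$ in place of the $\LtwoU$--coercivity used there; and, since the noise amplitude is now bounded in terms of the (random) energy $I(u^{m-1}_{h\tau})$ rather than by data alone, a \emph{discrete Gronwall} step is appended at the end. That last step is exactly what produces the factor $(1+CT/N)^{N/p}$ in the statement. Concretely, I would first sum inequality \eqnref{:bound2} over $m=1,\dots,n$ and bound the forcing pathwise by $\tau(f^m_{h\tau},a^m_{h\tau})\le\varepsilon\tau\normU{a^m_{h\tau}}^q+C_\varepsilon\tau\normUp{f^m_{h\tau}}^{q'}$, absorbing the first piece on the left; the second piece sums to $C_\varepsilon\lqpUp{f_{h\tau}}^{q'}$.

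Next I would treat the stochastic term. Writing $(g^{m-1}_{h\tau},u^m_{h\tau})_H\xi^m_\tau=(g^{m-1}_{h\tau},u^m_{h\tau}-u^{m-1}_{h\tau})_H\xi^m_\tau+(g^{m-1}_{h\tau},u^{m-1}_{h\tau})_H\xi^m_\tau$, the first summand is bounded by Cauchy--Schwarz and Young as $\tfrac14\normH{u^m_{h\tau}-u^{m-1}_{h\tau}}^2+\normH{g^{m-1}_{h\tau}}^2|\xi^m_\tau|^2$, and the first term absorbed on the left, while the second summand is a discrete It\^o integral $X^n_\tau:=\sum_{m=1}^n(g^{m-1}_{h\tau},u^{m-1}_{h\tau})_H\xi^m_\tau$, a martingale since $(g^{m-1}_{h\tau},u^{m-1}_{h\tau})_H$ is $\calF^{m-1}$--measurable. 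Inserting $\normH{g^{m-1}_{h\tau}}^2\le 2C^2 I(u^{m-1}_{h\tau})+2(k^{m-1}_{h\tau})^2$ leaves, pathwise for every $n$,
$$
I(u^n_{h\tau})+\tfrac14\sum_{m=1}^n\normH{u^m_{h\tau}-u^{m-1}_{h\tau}}^2+\tfrac\tau2\sum_{m=1}^n\normU{a^m_{h\tau}}^q\le C\Bigl(I(u^0_{h\tau})+\lqpUp{f_{h\tau}}^{q'}+\sum_{m=1}^n\bigl(I(u^{m-1}_{h\tau})+(k^{m-1}_{h\tau})^2\bigr)|\xi^m_\tau|^2+\max_{1\le k\le n}|X^k_\tau|\Bigr).
$$

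Then, using that the left-hand sums are nondecreasing in $n$, I would take $\max_{1\le n\le N}$, raise to the power $p/2$, and take expectations. The two $\xi$--sums are handled by $(\sum_{m=1}^N a_m)^{p/2}\le N^{p/2-1}\sum_{m=1}^N a_m^{p/2}$ together with the independence of $\xi^m_\tau$ from $\calF^{m-1}$ and Assumption~\ref{ass:spde0}$_3$, producing $CT^{p/2-1}\norm{k_{h\tau}}_{L^p((0,T)\times\Omega)}^p$ and $CT^{p/2-1}\sum_{m=1}^N\tau\,\bbE[I(u^{m-1}_{h\tau})^{p/2}]$; the forcing contributes $\bbE[\lqpUp{f_{h\tau}}^{pq'/2}]$. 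The martingale term $\bbE[\max_n|X^n_\tau|^{p/2}]$ is estimated exactly as in Lemma~\ref{lem:bound} via the discrete BDG inequality (Theorem~\ref{thm:BDG}) and Lemma~\ref{lem:mglVariation}, after which Cauchy--Schwarz on $(g^{m-1}_{h\tau},u^{m-1}_{h\tau})_H$, the amplitude bound on $g^{m-1}_{h\tau}$, and the energy hypothesis $\normH{u}\le I(u)$ reduce it again to (a constant times) $\sum_{m=1}^N\tau\,\bbE[I(u^{m-1}_{h\tau})^{p/2}]+\norm{k_{h\tau}}_{L^p((0,T)\times\Omega)}^p$. Collecting everything and setting $Y_n:=\bbE[\max_{1\le k\le n}I(u^k_{h\tau})^{p/2}]$ with $D$ denoting the monotone data quantity $\bbE[I(u^0_{h\tau})^{p/2}]+\bbE[\lqpUp{f_{h\tau}}^{pq'/2}]+\norm{k_{h\tau}}_{L^p((0,T)\times\Omega)}^p$, one obtains $Y_n\le C D+C\tau\sum_{m=1}^n Y_{m-1}$, so the discrete Gronwall inequality gives $Y_N\le C D\,(1+C\tau)^N=CD\,(1+CT/N)^N$; the same bound holds (take $n=N$ before raising powers) for $\bbE[(\sum_{m=1}^N\normH{u^m_{h\tau}-u^{m-1}_{h\tau}}^2)^{p/2}]$ and for $\bbE[(\tfrac12\norm{a_{h\tau}}_{\LqU}^q)^{p/2}]$. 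Taking $p$-th roots and using $(ab)^{1/p}\le a^{1/p}b^{1/p}$ turns $(1+CT/N)^N$ into $(1+CT/N)^{N/p}$ and the data terms into $\norm{I(u^0_{h\tau})^{1/2}}_{L^p(\Omega)}$, $\norm{f_{h\tau}}_{L^{pq'/2}(\Omega,\LqpUp)}^{q'/2}$ and $\norm{k_{h\tau}}_{L^p((0,T)\times\Omega)}$, which is the asserted inequality with $C(p,T)$ absorbing all remaining numerical constants.

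The main obstacle is the It\^o term under multiplicative noise. Because $g^{m-1}_{h\tau}$ depends on $u^{m-1}_{h\tau}$ through $I$, every estimate of $\max_n|X^n_\tau|$ feeds $\bbE[I(u^{m-1}_{h\tau})^{p/2}]$ back in, and the argument only closes because this reappears in the \emph{summed} form $\sum_m\tau\,\bbE[I(u^{m-1}_{h\tau})^{p/2}]$ that a discrete Gronwall inequality can digest, and because the energy hypothesis guarantees that only the $p/2$-th moment of $I$ (not a higher one) surfaces on the right. Keeping the powers straight in this bookkeeping---in particular verifying that the Gronwall iteration lands on $(1+CT/N)^{N/p}$ rather than $(1+CT/N)^{N}$, and that Young's inequality in the various absorption steps is applied with the right exponents so that the absorbed fractions genuinely sit under the quantities on the left---is the delicate part; everything else is the routine adaptation of Lemma~\ref{lem:bound}.
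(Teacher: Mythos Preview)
Your proposal is correct and follows essentially the same route as the paper: redo the proof of Lemma~\ref{lem:bound} with $I(\cdot)$ in place of $\tfrac12\normH{\cdot}^2$ and Young's inequality in the conjugate pair $(q,q')$, then close with a discrete Gronwall step to handle the feedback from $\normH{g^{m-1}_{h\tau}}\le C I(u^{m-1}_{h\tau})^{1/2}+k^{m-1}_{h\tau}$. The only organisational difference is that the paper first runs the entire Lemma~\ref{lem:bound} argument treating $g_{h\tau}$ as \emph{given} data, obtaining the bound with $\norm{g_{h\tau}}_{L^p(\Omega,\LpH)}$ on the right, and \emph{then} substitutes the multiplicative bound on $g$ and notes that ``the upper bound $N$ was arbitrary'' to get the recursive inequality for $M^n:=\norm{\max_{0\le m\le n}I(u^m_{h\tau})^{1/2}}_{L^p(\Omega)}^p$; you instead insert the amplitude bound earlier, directly into the pathwise inequality and into the BDG estimate. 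Both routes land on the same Gronwall recursion $Y_n\le C D+C\tau\sum_{m\le n}Y_{m-1}$ and hence the same $(1+CT/N)^{N/p}$ factor. One small point you glossed over: your display is written with $\max_{1\le n\le N}$ and expectation already taken, but to obtain the recursion in $n$ you must in fact rerun the argument with the upper index $N$ replaced by a generic $n$ --- exactly the ``$N$ was arbitrary'' remark in the paper's sketch.
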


\begin{proof} (Sketch) Starting from \eqnref{:bound2}, and 
  upon neglecting the dependence of $g$ upon $u$ the estimate
  \begin{eqnarray*}
    \lefteqn{
      \norm{\max_{1 \leq n \leq N} I(u^n_{h\tau})^{1/2}}_{L^p(\Omega)}
      + \norm{a_{h\tau}}^{q/2}_{L^{pq/2}(\Omega, \LqU)}
      + \bbE \left[ \left( \sum_{n=1}^N 
          \normH{u^n_{h\tau} - u^{n-1}_{h\tau}}^2 \right)^{p/2} \right]^{1/p}
      \nonumber } \\
    &\leq& 
    C(p) \left(
      \norm{I(u^0_{h\tau}})^{1/2}_{L^p(\Omega)}
      + \norm{f_\tau}^{q'/2}_{L^{pq'/2}(\Omega, \LqpUp)}
      + T^{1/2-1/p} \norm{g_\tau}_{L^p(\Omega, \LpH)} \right)
  \end{eqnarray*}
  follows {\em mutatis mutandis} as in the proof of Lemma
  \ref{lem:bound}.  Bounding the last term as
  \begin{eqnarray*}
    \norm{g_{h\tau}}_{L^p(\Omega, \LpH)}
    &\leq& \norm{k_{h\tau}}_{L^p((0,T) \times \Omega)} 
    + C \left(\sum_{n=1}^N 
      \tau \norm{I(u^{n-1}_{h\tau})^{1/2}}_{L^p(\Omega)}^p \right)^{1/p} \\
    &\leq& \norm{k_{h\tau}}_{L^p((0,T) \times \Omega)} 
    + C \left(\sum_{n=0}^{N-1} \tau 
      \norm{\max_{0 \leq m \leq n}  
        I(u^m_{h\tau})^{1/2}}_{L^p(\Omega)}^p \right)^{1/p},
  \end{eqnarray*}
  and noting that the upper bound $N$ was arbitrary shows
  \begin{eqnarray*}
    \lefteqn{
      M^n
      + \norm{a_{h\tau}}_{L^{pq/2}(\Omega, L^q[0,t^n;U])}^{pq/2}
      + \bbE \left[ \left( \sum_{m=1}^n 
          \normH{u^m_{h\tau} - u^{m-1}_{h\tau}}^2 \right)^{p/2} \right]
      \nonumber } \\
    &\leq& 
    C(p,T) \left(
      M^0
      + \norm{f_{h\tau}}_{L^{pq'/2}(\Omega, L^{q'}[0,t^n;Up])}^{pq'/2}
      +  \norm{k_{h\tau}}_{L^p((0,t^n) \times \Omega)}^p
      + \sum_{m=0}^{n-1} \tau M^m
    \right),
  \end{eqnarray*}
  where $M^n \equiv \norm{\max_{0 \leq m \leq n}
    I(u^m_{h\tau})^{1/2}}_{L^p(\Omega)}^p$. The lemma now follows from the
  discrete Gronwall inequality.
\end{proof}

\subsubsection{Existence and Measurability of Solutions} \label{sec:measurableSoln}
Given $\omega \in \Omega$, solutions of the discrete problems will be
established using the following formulation of Brouwer's fixed point
theorem \cite[Proposition 2.1]{Sh97}.

\begin{theorem} \label{thm:brower}
  Let $\psi:\Re^M \rightarrow \Re^M$ be continuous and suppose that
  there exists $R > 0$ such that $\psi(\bfu).\bfu \geq 0$ whenever $|\bfu| =
  R$.  Then there exists $\bfu \in \Re^M$ with $|\bfu| \leq R$
  for which $\psi(\bfu) = 0$.
\end{theorem}

In the numerical context $\bfu$ is the vector of coefficients
representing the solution $u^n_{h\tau}(\omega) \in U_h$ for a given
basis of $U_h$, and at each time step $\psi$ will depend on the sample point
$\omega \in \Omega$ implicitly through the stochastic increment, data,
and the solution at the prior time step, i.e.,
$$
\psi(\omega,\bfu) \equiv \psi \left(\bfu;
u^{n-1}_{h\tau}(\omega), f^n_{h\tau}(\omega),
g^{n-1}_{h\tau}(\omega), \xi^n_{h\tau}(\omega) \right).$$ In all
instances the dependence of $\psi$ upon $\omega$ will be
${\mathcal F}^n$-measurable,
and in this situation the following lemma shows that it is
possible to select an ${\mathcal F}^n$-measurable  solution of
$\psi(\omega, \bfu) = 0$ for every $\omega \in \Omega$.

\begin{lemma}\label{lem:measurable}
  Let $(\Omega, \calF)$ be a measurable space,
  $\psi:\Omega\times\Bbb R^M \to \Re^M$ be a mapping for which
    \begin{itemize}
    \item $\omega\mapsto \psi(\omega,\bfu)$ is $\mathcal F$-measurable
      for every $\bfu \in\Bbb R^d$.
      
    \item $\bfu \mapsto \psi(\omega,\bfu)$ is continuous for every
      $\omega\in\Omega$.
      
    \item For every $\omega\in\Omega$, there exists $\bfu \in\Bbb R^d$
      such that $\psi(\omega,\bfu)=0$.
    \end{itemize}
    Then there exists an $\mathcal F$-measurable mapping
    $\bfu:\Omega\to\Bbb R^d$ such that $\psi(\omega,\bfu(\omega))=0$
    holds for every $\omega\in\Omega$.
\end{lemma}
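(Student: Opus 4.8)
The statement is a measurable-selection result. Writing $Z(\omega) := \{\mathbf{u} \in \mathbb{R}^M : \psi(\omega, \mathbf{u}) = 0\}$, the hypotheses say that $Z(\omega)$ is nonempty for every $\omega$ and closed (since $\mathbf{u} \mapsto \psi(\omega, \mathbf{u})$ is continuous), and the goal is an $\mathcal{F}$-measurable $\omega \mapsto \mathbf{u}(\omega) \in Z(\omega)$. The plan is to build $\mathbf{u}$ explicitly by successive approximation along a countable dense subset of $\mathbb{R}^M$; the only point requiring the two hypotheses in an essential way is the measurability, for each fixed $\mathbf{v} \in \mathbb{R}^M$, of the distance function $d_{\mathbf{v}}(\omega) := \mathrm{dist}(\mathbf{v}, Z(\omega))$.

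For the distance function I would argue as follows. For $\varepsilon > 0$ put
$$
g_\varepsilon(\omega) := \inf\{\, |\mathbf{w} - \mathbf{v}| : \mathbf{w} \in \mathbb{Q}^M,\ |\psi(\omega, \mathbf{w})| < \varepsilon \,\},
$$
with the convention $\inf\emptyset = +\infty$. For fixed $\mathbf{w} \in \mathbb{Q}^M$ the set $\{\omega : |\psi(\omega, \mathbf{w})| < \varepsilon\}$ is $\mathcal{F}$-measurable, so $g_\varepsilon$ is a countable infimum of $[0,\infty]$-valued $\mathcal{F}$-measurable functions and hence measurable; the $g_{1/n}$ increase in $n$. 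One checks $d_{\mathbf{v}}(\omega) = \sup_n g_{1/n}(\omega)$, so $d_{\mathbf{v}}$ is measurable. The inequality $g_\varepsilon(\omega) \le d_{\mathbf{v}}(\omega)$ uses continuity of $\psi(\omega, \cdot)$: arbitrarily near any $\mathbf{z} \in Z(\omega)$ there are rational $\mathbf{w}$ with $|\psi(\omega, \mathbf{w})| < \varepsilon$. For the reverse, if $\sup_n g_{1/n}(\omega) = L < d_{\mathbf{v}}(\omega)$, then for each $n$ there is $\mathbf{w}_n \in \mathbb{Q}^M$ with $|\psi(\omega, \mathbf{w}_n)| < 1/n$ and $|\mathbf{w}_n - \mathbf{v}| \le L + 1/n$; this sequence is bounded, so a subsequence converges to some $\mathbf{w}_*$ with $|\mathbf{w}_* - \mathbf{v}| \le L$ and, by continuity, $\psi(\omega, \mathbf{w}_*) = 0$, i.e. $\mathbf{w}_* \in Z(\omega)$, contradicting $d_{\mathbf{v}}(\omega) > L$. (Compactness of closed balls in $\mathbb{R}^M$ enters here, but for free, since distance-minimizing sequences are automatically bounded.)

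With all $d_{\mathbf{v}}$ measurable, fix an enumeration $\{\mathbf{v}_j\}_{j\ge1}$ of a countable dense subset of $\mathbb{R}^M$ and build $\mathcal{F}$-measurable $\mathbf{u}_m : \Omega \to \{\mathbf{v}_j\}_j$ recursively with $\mathrm{dist}(\mathbf{u}_m(\omega), Z(\omega)) < 2^{-m}$ and $|\mathbf{u}_{m+1}(\omega) - \mathbf{u}_m(\omega)| < 2^{-m+1}$ for all $\omega$: take $\mathbf{u}_1(\omega) = \mathbf{v}_j$ for the least $j$ with $d_{\mathbf{v}_j}(\omega) < 1/2$, and given $\mathbf{u}_m$, let $\mathbf{u}_{m+1}(\omega) = \mathbf{v}_j$ for the least $j$ with $|\mathbf{v}_j - \mathbf{u}_m(\omega)| < 2^{-m+1}$ and $d_{\mathbf{v}_j}(\omega) < 2^{-m-1}$. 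Such indices exist by density of $\{\mathbf{v}_j\}$ together with $Z(\omega) \ne \emptyset$, and the ``least index'' maps are $\mathcal{F}$-measurable because they are determined by countably many conditions each measurable in $\omega$ (for the recursive step, argue separately on each measurable set $\{\mathbf{u}_m = \mathbf{v}_i\}$, on which $|\mathbf{v}_j - \mathbf{u}_m(\omega)|$ is deterministic). The summability $\sum_m 2^{-m+1} < \infty$ makes $(\mathbf{u}_m(\omega))_m$ uniformly Cauchy, so it converges to a measurable limit $\mathbf{u}(\omega)$; since $\mathrm{dist}(\mathbf{u}(\omega), Z(\omega)) = 0$ and $Z(\omega)$ is closed, $\mathbf{u}(\omega) \in Z(\omega)$, i.e. $\psi(\omega, \mathbf{u}(\omega)) = 0$ for every $\omega$.

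The main obstacle is the measurability of $\omega \mapsto d_{\mathbf{v}}(\omega)$; everything after it is a routine diagonal construction. That step is exactly where the hypotheses interact: continuity in $\mathbf{u}$ lets one replace the infimum over the uncountable set $Z(\omega)$ by a countable infimum over $\mathbb{Q}^M$ under the relaxed constraint $|\psi| < \varepsilon$, and measurability in $\omega$ then makes that countable infimum measurable. (One could instead quote a general measurable-selection theorem for the closed-valued multifunction $\omega \mapsto Z(\omega)$, but the construction above is elementary and self-contained, and yields genuine $\mathcal{F}$-measurability without any completeness assumption on $\mathcal{F}$.)
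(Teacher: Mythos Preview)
Your proof is correct. It differs from the paper's in packaging rather than substance: the paper defines $F(\omega)=\{\bfu:\psi(\omega,\bfu)=0\}$, verifies the weak-measurability hypothesis of the Kuratowski--Ryll-Nardzewski theorem by observing that $\omega\mapsto\inf_{\bfu\in B}|\psi(\omega,\bfu)|$ is $\calF$-measurable for every closed ball $B$ (the infimum reduces to a countable one by continuity in $\bfu$), and then invokes KRN as a black box. You instead establish measurability of $\omega\mapsto\mathrm{dist}(\bfv,Z(\omega))$ by the same countable-infimum trick and then carry out the successive-approximation argument that underlies the proof of KRN itself. The key analytic input---using continuity in $\bfu$ to pass from uncountable to countable infima---is identical in both. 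What your route buys is self-containment and an explicit (if not truly algorithmic) construction of the selector; the paper's route is shorter but, as the authors themselves remark immediately after the lemma, leaves the selection nonconstructive.
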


Results of this form e.g. appear in \cite{GTW2017,D1} and are obtained using the
following lemma from \cite{KN_65}.

\begin{lemma}[Kuratowski and Ryll-Nardzewski \cite{KN_65}]
  \label{lem:KurRyll} Let $(\Omega,\mathcal F)$ be a measurable space,
  $Y$ a complete, separable metric space, and for every
  $\omega\in\Omega$ let $F(\omega)$ be a non-empty closed set in $Y$
  such that
  \begin{equation}\label{condition}
    \{\omega\in\Omega:F(\omega)\cap G\ne\emptyset\}\in\mathcal F
  \end{equation}
  holds for every open set $G$ in $Y$. Then there exists an $\mathcal
  F$-measurable mapping $\zeta:\Omega\to Y$ such that
  $\zeta(\omega)\in F(\omega)$ holds for every $\omega\in\Omega$.
\end{lemma}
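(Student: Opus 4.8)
The plan is to prove the classical Kuratowski--Ryll-Nardzewski theorem by realizing the selection $\zeta$ as the uniform limit of a sequence of measurable, countably-valued approximate selections. First I would fix a countable dense set $D = \{d_1, d_2, \ldots\}$ in the separable space $Y$. The only instance of the hypothesis \eqref{condition} I will need is that for every open ball $B$ the set $\{\omega : F(\omega) \cap B \neq \emptyset\}$ lies in $\mathcal F$; combined with the density of $D$ and the non-emptiness of each $F(\omega)$, this guarantees that for any radius $r > 0$ the measurable sets $\{\omega : F(\omega) \cap B(d_i, r) \neq \emptyset\}$, $i \ge 1$, cover $\Omega$.

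The heart of the argument is a one-step refinement lemma: given a measurable $\sigma:\Omega \to Y$ with $d(\sigma(\omega), F(\omega)) < r$ for every $\omega$, and a target accuracy $r' > 0$, produce a measurable $\sigma':\Omega \to D$ with $d(\sigma'(\omega), F(\omega)) < r'$ and $d(\sigma'(\omega), \sigma(\omega)) < r + r'$. To build $\sigma'$ I would set $C_i = \{\omega : d(\sigma(\omega), d_i) < r + r'\} \cap \{\omega : F(\omega) \cap B(d_i, r') \neq \emptyset\}$. Each $C_i$ is measurable, the first factor because $\omega \mapsto d(\sigma(\omega), d_i)$ is measurable and the second directly by \eqref{condition}, and a short triangle-inequality check shows $\bigcup_i C_i = \Omega$: choosing $p \in F(\omega)$ with $d(\sigma(\omega), p) < r$ and then $d_i$ within $r'$ of $p$ places $\omega$ in $C_i$. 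Disjointifying by $C_i' = C_i \setminus \bigcup_{j<i} C_j$ and setting $\sigma' \equiv d_i$ on $C_i'$ yields the desired refinement.

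With this in hand I would construct the sequence inductively. A direct application of the cover $\{\omega : F(\omega) \cap B(d_i, 1) \neq \emptyset\}$, disjointified as above, gives a measurable $\zeta_0$ with $d(\zeta_0(\omega), F(\omega)) < 1$; this initial step sidesteps the fact that no constant map can serve as a uniform approximation when $Y$ is unbounded. Then applying the refinement lemma with $r = 2^{-(n-1)}$ and $r' = 2^{-n}$ produces $\zeta_n$ with $d(\zeta_n(\omega), F(\omega)) < 2^{-n}$ and $d(\zeta_n(\omega), \zeta_{n-1}(\omega)) < 3 \cdot 2^{-n}$. The latter bound is summable uniformly in $\omega$, so $\{\zeta_n(\omega)\}$ is Cauchy and, by completeness of $Y$, converges to a point $\zeta(\omega)$. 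Finally $\zeta$ is measurable as a pointwise limit of the countably-valued, hence measurable, maps $\zeta_n$, and since $d(\zeta(\omega), F(\omega)) = \lim_n d(\zeta_n(\omega), F(\omega)) = 0$ with $F(\omega)$ closed, we conclude $\zeta(\omega) \in F(\omega)$ for every $\omega$.

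I expect the main obstacle to be the bookkeeping in the refinement step: checking that the candidate sets $C_i$ are measurable and genuinely exhaust $\Omega$ (where both density of $D$ and non-emptiness of $F(\omega)$ enter), and then calibrating the accuracies $r, r'$ across the induction so that the successive approximations form a \emph{uniformly} Cauchy sequence rather than merely a pointwise Cauchy one. It is worth isolating where each hypothesis is used: completeness of $Y$ only to pass to the limit, separability only to fix the countable dense set $D$, and closedness of the values $F(\omega)$ only in the final membership conclusion.
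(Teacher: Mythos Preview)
Your argument is correct and is essentially the classical successive-approximation proof of the Kuratowski--Ryll-Nardzewski selection theorem. However, the paper does not supply its own proof of this lemma: it is quoted as a known result from \cite{KN_65} and used as a black box to establish Lemma~\ref{lem:measurable}. So there is no in-paper proof to compare against; what you have written is the standard textbook argument, and it stands on its own.
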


\begin{remark}
  The hypothesis \eqref{condition}  holds if
  $$
  \{\omega\in\Omega:F(\omega)\cap B\ne\emptyset\}\in\mathcal F
  $$
  for every closed ball $B$ in $Y$ since every open set $G$ in a
  separable metric space is a countable union of closed
  balls.
\end{remark}

\begin{proof}(of Lemma \ref{lem:measurable})
  Define $F(\omega)=\{\bfu \in\Bbb R^d: |\psi(\omega, \bfu)|=0\}$ for
  $\omega\in\Omega$. Then $F(\omega)$ is non-empty and closed and
  $$
  \{\omega\in\Omega:F(\omega)\cap B\ne\emptyset\}
  =\{\omega\in\Omega:
  \inf_{\bfu \in B} |\psi(\omega, \bfu)|=0\}\in\mathcal F
  $$
  holds for every closed ball $B$ in $\Bbb R^d$ as
  $\omega\mapsto\inf_{\bfu \in B} |\psi(\omega, \bfu)|$ is $\mathcal
  F$-measurable. The existence of a measurable solution of
  $\psi(\omega, \bfu(\omega))$ then follows from the
  Kuratowski Ryll-Nardzewski Lemma \ref{lem:KurRyll}.
\end{proof}

The proof of the Kuratowski Ryll-Nardzewski Lemma is not constructive
so it is not clear that the computed solutions are measurable. If the
time step $\tau$ is sufficiently small, solutions of the nonlinear
problem can often be established using the Banach fixed point theorem. In this situation solutions depend continuously upon
the data, and hence are measurable; however, usually the bound on the
time step is prohibitively small and fixed point iterations converge
slowly, so a (quasi) Newton method is employed. If, for every $\omega
\in \Omega$, convergence is achieved for a bounded number of
iterations, the solution would depend continuously upon the data, and
measurability would follow.

\subsection{Stochastic Navier-Stokes Equation} \label{ex:ns} The
strong form of the incompressible stochastic Navier-Stokes equations
on a bounded Lipschitz domain $D \subset \Re^3$ takes the
form
\begin{gather}\label{nse-1}
du 
+\left((u.\nabla)u - D(u) + \nabla p \vph\right) \, dt 
= f \, dt + g(u) \, dW, \\
div(u) = 0, \nonumber
\end{gather}
with initial and boundary conditions 
$$
u \bigl\vert_{t=0} = u^0, \qquad u \bigl\vert_{\partial D} = 0,
$$
and $W$ an ${\mathbb R}$-valued Wiener process.
Here $u$ is the vector-valued velocity of the fluid, $p$ the pressure,
and $f$ and $g$ are vector-valued and $D(u)$ is the symmetric
part of the gradient as in \eqnref{:Ans}. In the above 
\begin{equation} \label{eqn:gNse}
g(u)(t,x,\omega) = \gamma\left(t,x, u(t,x,\omega) \vph\right)
\end{equation}
where $\gamma:(0,T) \times D \times \Re \rightarrow \Re^d$ is
Caratheodory with linear growth. That is, for $u \in \Re$ fixed 
$(t,x) \mapsto \gamma(t,x,u)$ is
measurable, and for $(t,x) \in (0,T) \times D$ fixed
$u \mapsto \gamma(t,x,u)$ is 
continuous, and $|\gamma(t,x,u)| \leq C |u| + k(t,x)$ where
$k \in L^p[0,T; \Ltwo]$ with $p > 4$.

To pose these equations in the abstract setting introduced in Section
\ref{sec:introSpde} let
\begin{equation} \label{eqn:nsSpaces}
  U = \Honeo^3, \qquad
  H = \Ltwo^3,  \qquad
  U_0 = \{u \in U: \ div(u) = 0 \ a.e.~in \ D\},
\end{equation}
and consider the weak statement of (\ref{nse-1}) for which $u$ takes
values in $L^2[0,T;U_0]$ and satisfies
$$
(u(t), v)_H
+ \int_0^t \left\{ 
  \left((u.\nabla) u, v \right) + \left(D(u), \nabla v\right)
  \vph \right\} \, ds
= (u^0, v)_H
+ \int_0^t (f,v) \, ds + \int_0^t (g(u),v)_H \, dW,
\quad
v \in U_0,
$$
where $(.,.)$ denotes an $L^2$ pairing on $D$.  Restricting
the test functions to be in the space of divergence-free functions
eliminates the pressure which is necessary since even in the
deterministic setting the temporal regularity of $p$ is very low
\cite{Li96a}.

To motivate the numerical scheme, recall that in the deterministic
setting the natural stability estimate is found upon taking the dot
product of the equation (\ref{nse-1}) with the solution and
integrating by parts to obtain
$$
\half \dbyd{}{t} \normH{u}^2 
+ \left((u.\nabla)u, u \vph\right) + \normU{u}^2 = (f,u),
$$
where $\normU{u} \equiv \ltwo{D(u)}$ is equivalent to the usual
norm on $U$.  The key step is to observe that the cubic term (which
for large data could not be dominated by the quadratic terms) is skew
symmetric; specifically, integration by parts shows
\begin{equation} \label{eqn:skewSym}
 \left((u.\nabla)u, v \vph\right) 
= -\left(u, (u.\nabla) v \vph\right) 
+ \left(div(u)\, u, v \vph\right),
\qquad u, v \in U.
\end{equation}
It follows that $\left((u.\nabla) u, u \right) = 0$ when $u \in U_0$, so
bounds upon the solution in $\LinfH \cap \LtwoU$ follow as for
the heat equation.

In general, it is difficult to construct subspaces of the
divergence-free space $U_0$ with good approximation properties, so in
a numerical context a velocity and pressure pair are constructed;
$(u_h, p_h) \in V_h \times P_h$ with $V_h \subset U$ and $P_h \subset
\Ltwoo$.  The divergence-free condition is then approximated by
requiring $u_h$ to take values in the ``discretely divergence-free
subspace'' $U_h \subset V_h$ defined by
\begin{equation}\label{discretelydiv1}
U_h = \{u_h \in V_h \sst \left(div(u_h), q_h \right)=0, \,\, q_h \in P_h \}.
\end{equation}
Note that $U_h \not\subset U_0$, and in order to guarantee that
functions $u \in U_0$ can be well-approximated by functions $u_h \in
U_h$ the pair $(V_h, P_h)$ is required to satisfy the discrete inf--sup
(Ladyzhenskaya-Babuska-Brezzi) condition \cite{BrFo91}: there
exists a constant $c > 0$ independent of $h$ such that
\begin{equation} \label{eqn:infsup}
\sup_{v_h \in V_h} 
\frac{ \left(p_h, div(v_h) \vph \right)}{\ltwo{\nabla v_h}}
\geq c \ltwoo{p_h},
\qquad p_h \in P_h.
\end{equation}
We now come back to (\ref{nse-1}), and a corresponding discretization.
Letting $\tau = T/N$ be a time step and $\{\xi^n_{\tau}\}_{n=1}^N$ be
stochastic increments, for each $n = 1,2,\ldots, N$ and all $\omega
\in \Omega$ we let $\bigl(u^n_{h\tau}({\omega}), p^n_{h\tau}({\omega})
\bigr) \in V_h \times P_h$ satisfy
\begin{multline}   \label{eqn:nsh}
(u^n_{h\tau} - u^{n-1}_{h\tau},v_h) 
+ (\tau/2) \left((u^n_{h\tau} . \nabla)u^n_{h\tau}, v_h \vph \right)
- (\tau/2) \left(u_{h\tau}^n, (u^n_{h\tau}.\nabla), v_h \vph\right) \\ 
+ \tau \left(D(u^n_{h\tau}), \nabla v_h \vph\right) 
- \tau (p^n_{h\tau}, div\, v_h) 
= \tau (f^n_{h\tau}, v_h) 
+ \bigl( g^{n-1}_{h\tau}, v_h\bigr)\xi^n_{\tau},
\quad v_h \in  V_h,
\end{multline}
$$
(div(u^n_{h\tau}), q_h) = 0, \qquad q_h \in P_h,
$$
where
\begin{equation} \label{eqn:ghtauNs}
g^{n-1}_{h\tau}(x,\omega)
= \frac{1}{\tau} \int_{t^{n-1}}^{t^n} 
\gamma\left(t,x, u^{n-1}_{h\tau}(x,\omega) \vph\right) \, dt.
\end{equation}
The second equation is simply the requirement that
$u^n_{h\tau}(\omega) \in U_h$, and it is immediate that the term
involving the pressure vanishes when $v_h \in U_h$. Equation
\eqnref{:skewSym} was used to formulate an approximation of the
convective derivative that is skew symmetric when
$div\bigl(u^n_{h\tau}\bigr)$ may not vanish. Note too that the
convective derivative $(u^n_{h\tau}.\nabla)$ could be lagged to
$(u^{n-1}_{h\tau}.\nabla)$ to give a linearly implicit scheme.

The methodology introduced in Section \ref{sec:measurableSoln} is used
to establish a (measurable) solution of the discrete scheme
\eqnref{:nsh}.  Given a basis for $U_h$, an element $u_h(\omega) \in
U_h$ is identified with an element ${\bf u}(\omega)$ of
$\Re^M$ where $M = dim(U_h)$. With $\omega
\in \Omega$ fixed, and identifying an element $v_h \in U_h$ with a
vector of coefficients ${\bf v} \in {\mathbb R}^M$, the Riesz theorem
is used to construct $\psi:\Re^M \rightarrow \Re^M$ satisfying
\begin{multline*}\nonumber
\psi({\bf u}).{\bf v} := 
(u_h - u^{n-1}_{h\tau},v_h) 
+ (\tau/2) \left((u_h . \nabla)u_h, v_h \vph \right)
- (\tau/2) \left(u_h, (u_h.\nabla), v_h \vph\right) \\ 
+ \tau \left(D(\nabla u_h), \nabla v_h \vph\right) 
- \tau (f^n_{h\tau}, v_h) 
- \bigl( g^{n-1}_{h\tau}, v_h\bigr)\xi^n_{\tau},
\qquad {\bf v} \in {\mathbb R}^M.
\end{multline*}
Fixing $\omega \in \Omega$, and setting $v_h = u_h(\omega)$ then leads to
\begin{eqnarray*}
  \psi ({\bf u}).{\bf u}
  &=& \half \left( \normH{u_h}^2 
    + \normH{u_h - u^{n-1}_{h\tau}}^2
    - \normH{u^{n-1}_{h\tau}}^2 \right)
  + \tau \normU{u_h}^2 
  - \tau (f^n_{h\tau}, u_h) 
  - \bigl( g^{n-1}_{h\tau}, u_h\bigr) \xi^n_{\tau} \\
  &\geq& 
  \half \left( \normH{u_h}^2 
    - \normH{u^{n-1}_{h\tau}}^2 \right)
  + \tau \normU{u_h}^2 
  - \left(\tau \normUp{f^n_{h\tau}} 
    + \normUp{g^{n-1}_{h\tau}} |\xi^n_{h\tau}|
  \right) \normU{u_h},
\end{eqnarray*}
and it is clear that this is non--negative whenever
$$
\min\left(\normH{u_h}, \normU{u_h} \vph\right) 
\geq \max\left(\normH{u^{n-1}_{h\tau}}, \normUp{f^n_{h\tau}} 
    + \normUp{g^{n-1}_{h\tau}} |\xi^n_{h\tau}|/\tau \vph\right).
$$
Existence of a pressure then follows from the inf--sup condition.

To bound the solutions, set $v_h = u^n_{h\tau}(\omega)$ in the discrete
weak statement \eqnref{:nsh} and complete the square 
(as in \eqnref{:square}) to get
\begin{equation} \label{eqn:uhns}
(1/2) \normH{u^n_{h\tau}}^2 
+ (1/2) \normH{u_{h\tau}^n-u_{h\tau}^{n-1}}^2 
+ \tau \normU{u_{h\tau}^n}^2
= (1/2) \normH{u_{h\tau}^{n-1}}^2 
+\tau ( f^n_{h\tau} , u_{h\tau}^n )
+ (g^{n-1}_{h\tau}, u_{h\tau}^n) \xi^n_{\tau},
\end{equation}
which, due to the skew symmetry of the nonlinear term, is identical in
form to the corresponding equation \eqnref{:energyEst} for the heat
equation.

The following theorem establishes convergence of solutions to the
numerical scheme \eqnref{:nsh} to a weak martingale solution of the
stochastic Navier-Stokes equation (\ref{nse-1}).

\begin{theorem}
  Fix $T>0$, and let $D \subset \Re^3$ be a bounded Lipschitz domain.
  Let $U = \Honeo^3$, $H = \Ltwo^3$, $U_0 \subset U$ be the divergence-free
  subspace, and let $(\Omega, \calF, \bbP)$ be a probability
  space.  Let Assumptions \ref{ass:spde1}, and \ref{ass:spde0} hold
  with parameter $2p > 4$.  Let $\tau = T/N$ with $N \in \bbN$ denote
  a time step, and let $\{(V_h, P_h)\}_{h>0} \subset U \times \Ltwoo$
  be finite-dimensional subspaces satisfying:
  \begin{itemize}
  \item For each $(v,q) \in U \times \Ltwoo$ there exists a sequence
    $\{(v_h,q_h)\}_{h>0}$ with $(v_h,q_h) \in U_h \times P_h$
    such that $(v_h,q_h) \rightarrow (v,q)$ as $h \rightarrow 0$.

  \item The restriction of the orthogonal projection $Q_h:H \rightarrow
    V_h$ to $U$ is stable. That is, there exists $C > 0$ independent
    of $h$ such that $\normU{Q_h u} \leq C \normU{u}$.
      
  \item The discrete inf--sup condition \eqnref{:infsup} holds with a constant
    $c>0$ independent of $h > 0$. Denote the discretely divergence-free subspace
    by $U_h = \{u_h \in V_h \sst (div(u_h), q_h) = 0, \,\, q_h \in P_h\}$.
  \end{itemize}
  Let
  $\{u_{h\tau}\}_{h,\tau > 0}$ be a sequence of solutions of
  \eqnref{:nsh} with data satisfying
  \begin{enumerate}
  \item $\{u^0_{h\tau}\}_{h,\tau > 0}$ is bounded in $L^{2p}(\Omega,H)$, and
    converges in $L^2(\Omega, H)$.

  \item $\{f_{h\tau}\}_{h,\tau > 0}$ is bounded in $L^{2p}(\Omega,
    \LtwoUp)$ converges in $L^2(\Omega, \LtwoUp)$.

  \item $g_{h\tau}$ is given by equation \eqnref{:ghtauNs} with
      $\gamma:(0,T) \times D \times \Re \rightarrow \Re^d$ Caratheodory
      with linear growth; $|\gamma(t,x,u)| \leq C |u| + k(t,x)$
      with $k \in L^{2p}[0,T;\Ltwo]$.
  \end{enumerate}
  Denote the discrete Wiener process with increments
  $\{\xi_\tau^n\}_{n=1}^N$ by $\What_{\tau}$, and write
  $$
  (A(u), v) 
  = \left(D(u), \nabla v \vph\right) 
  + (1/2) \left((u . \nabla)u, v \vph \right)
  - (1/2) \left(u, (u.\nabla), v \vph\right).
  $$
  The there exist a probability space $(\Omegat, \calFt, \bbPt)$ and a
  random variable $(u,(f,a),g,W)$ on $\Omegat$ with values in $(\bbX,
  \calB(\bbX))$ with
  $$
  \bbX = G[0,T;U'] \cap L^2[0,T;U]_{weak} 
  \times (L^{4/3}[0,T; U'] \times L^{4/3}[0,T; U']_{weak})
  \times \LtwoH 
  \times C[0,T],
  $$
  and a subsequence $(\tau_k,h_k) \rightarrow (0,0)$ for which the
  $$
  \calL(u_{h_k\tau_k}, (f_{h_k\tau_k}, A(u^n_{h\tau})), 
  g_{h_k\tau_k}, \What_{\tau_k})
  \ \Rightarrow \ \calL(u,(f,a),g,W) \equiv \bbPt.
  $$
  In addition, $\bbPt[div(u)=0] = 1$, $\bbPt[a = A(u)] = 1$, and there
  exists a filtration $\{\calFt(t)\}_{0 \leq t \leq T}$ satisfying the
  usual conditions for which $(u,f,g,W)$ is adapted and $W$ is a
  real-valued Wiener process for which
  $$
  (u(t), v)_H  
  + \int_0^t \left( (u.\nabla)u,v) + (D(u), \nabla v) \vph\right) \, ds
  = (u^0,v)_H 
  + \int_0^t ( f, v ) \, ds
  + \int_0^t {( g(u), v )} \, dW,
  \quad v \in U_0,
  $$
  where $g(u)(t,x,\omega) = \gamma(t,x, u(t,x,\omega))$.
\end{theorem}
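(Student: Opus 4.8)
The plan is to verify that the discrete scheme \eqnref{:nsh} fits the hypotheses of Theorem \ref{thm:main} and then invoke the consistency analysis of Example \ref{eg:2} to identify the limit. First I would record the bounds: taking $v_h = u^n_{h\tau}(\omega)$ in \eqnref{:nsh} yields the energy identity \eqnref{:uhns}, which has exactly the form of \eqnref{:energyEst} because the skew-symmetric convective term drops out. Since $\gamma$ has linear growth and $k\in L^{2p}[0,T;\Ltwo]$ with $2p>4$, equation \eqnref{:ghtauNs} gives $\norm{g_{h\tau}}_{L^{2p}(\Omega,\LpH)}$ bounded in terms of $\norm{u_{h\tau}}_{L^{2p}(\Omega,\LtwoH)}$ and $\norm{k}$, so Lemma \ref{lem:bound} (with exponent $2p$) bounds $u_{h\tau}$ in $L^{2p}(\Omega,\LinfH)\cap L^{2p}(\Omega,\LtwoU)$. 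Setting $(F_{h\tau},v)=(f_{h\tau},v)-a(u_{h\tau},v)$ with $a=A$ as displayed, the embedding estimates sketched in Example \ref{eg:2} (H\"older plus $U\embed L^6(D)$ in three dimensions) show $\{F_{h\tau}\}$ is bounded in $L^p(\Omega,L^{8/7}[0,T;U'])$ and $\{F_{h\tau}(u_{h\tau})\}$ in $L^{p/2}(\Omega,L^1(0,T))$; with $r=2$, $q=8$ (so $q'=8/7$) all five hypotheses of Theorem \ref{thm:main} hold. Because $2p>4$, the parameter $p$ appearing in Theorem \ref{thm:main} can be taken strictly greater than $2$ as required.

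Next I would apply Theorem \ref{thm:main}, in the form of Statement \ref{it:Fsum} with $F^{(1)}=f_{h\tau}$ and $F^{(2)}=-a(u_{h\tau},\cdot)$, together with Statement \ref{it:LsV}: Lemma \ref{lem:bound} and the compact embedding $U=\Honeo^3\cembed H=\Ltwo^3$ (Rellich) give tightness of $\{u_{h\tau}\}$ in $G[0,T;U']\cap\LtwoU_{weak}\cap L^{\shat}[0,T;H]$ for every $\shat<\infty$, which is what makes the Carath\'eodory nonlinearity $\gamma$ continuous. One subtlety is that the test functions here lie in $U_0$, not all of $U$; this is handled by the weakened density condition (\ref{it:hDense}') with $U_0$ in place of $U$, which holds because the inf--sup condition \eqnref{:infsup} guarantees that discretely divergence-free functions approximate $U_0$. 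Thus Theorem \ref{thm:main} produces $(\Omegat,\calFt,\bbPt)$, a filtration satisfying the usual conditions, a subsequence, and a limit $(u,(f,a),g,W)$ with $W$ a Wiener process, $F$ adapted, $g$ with a predictable representative, and
$$
(u(t),v)_H = (u^0,v)_H + \int_0^t (f-a,v)\,ds + \int_0^t (g,v)\,dW,\qquad v\in U_0.
$$

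The remaining work is consistency and the constraint $div(u)=0$. For $div(u)=0$: the map $u\mapsto \int_0^T (div(u),q)\,ds$ is linear and continuous on $\LtwoU_{weak}$ for fixed $q\in \Ltwoo$, and the discrete equation $(div(u^n_{h\tau}),q_h)=0$ plus density of $\{q_h\}$ forces $\bbE^{\bbPt}[|\int_0^T(div(u),q)\,ds|\wedge 1]=0$, whence $\bbPt[div(u)=0]=1$. For $a=A(u)$: this is precisely Example \ref{eg:2}. The function $\phi(u,a)=|\int_0^T(a-\Ahat(u),v)\,ds|$ (for the nonlinear part; the linear dissipation part is handled as in Example \ref{eg:1}) is sequentially continuous on $G[0,T;U']\cap\LtwoU_{weak}\times L^{8/7}[0,T;U']_{weak}$ and has a finite $p$th moment when $u$ has moments of order $2p$ — which we have, since solutions have moments of order $2p$ — so Corollary \ref{cor:Law} gives $\bbEhat[\phi]=\lim\bbE[\phi(u_{h\tau},A(u_{h\tau}))]=0$, hence $\bbPt[a=A(u)]=1$. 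Finally, to replace $g$ by $g(u)=\gamma(\cdot,\cdot,u(\cdot,\cdot))$ I would invoke the last Example of Section \ref{se-consist}: since $\gamma:L^6[0,T;H]\cap\LtwoU_{weak}\to\LtwoH$ is sequentially continuous (via $\norm{u}_{L^3[0,T;L^3(D)]}\le C\norm{u}^{1/2}_{L^6[0,T;H]}\ltwoU{u}^{1/2}$) and inherits moment bounds, Corollary \ref{cor:Law} gives $\calL(g_{h\tau})=\calL(\gamma(u_{h\tau}))\Rightarrow\calL(\gamma(u))$, so $g=g(u)$ $\bbPt$-a.s. Augmenting the filtration preserves the Wiener property of $W$, completing the proof. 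The main obstacle is organizational rather than technical: one must keep the three separate passages-to-the-limit (the $div$ constraint, the quadratic convective term, the Carath\'eodory diffusion) cleanly decoupled and be careful that the moment exponent $2p>4$ is large enough to make each weakly-sequentially-continuous test functional uniformly integrable — the quadratic nonlinearity is what forces doubling of the exponent, and the diffusion's $3/2$ growth is what forces $k\in L^{2p}$.
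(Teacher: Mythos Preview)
Your approach mirrors the paper's almost exactly: verify the hypotheses of Theorem \ref{thm:main} with $r=2$, $q=8$, invoke Example \ref{eg:2} for the convective term, test against $q_h$ for the divergence constraint, and identify $g$ via the Carath\'eodory structure. Two technical points deserve correction, however.

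First, your a priori estimate is circular. You write that $\norm{g_{h\tau}}_{L^{2p}(\Omega,\LpH)}$ is bounded in terms of $\norm{u_{h\tau}}_{L^{2p}(\Omega,\LtwoH)}$, and then that Lemma \ref{lem:bound} bounds $u_{h\tau}$ --- but Lemma \ref{lem:bound} treats $g_{h\tau}$ as \emph{given} data, so feeding in a $g$ that already depends on $u$ yields only $\norm{u}\le C\norm{u}+\text{data}$. The paper closes this loop with Lemma \ref{lem:bound2}, whose hypothesis $\normH{g^{n-1}_{h\tau}}\le C\,I(u^{n-1}_{h\tau})^{1/2}+k^{n-1}_{h\tau}$ is precisely \eqnref{:gNs}, and whose proof absorbs the feedback via a discrete Gronwall step. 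You should invoke Lemma \ref{lem:bound2} (with $I(u)=\tfrac12\normH{u}^2$, $q=q'=2$) rather than Lemma \ref{lem:bound}.

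Second, your identification $g=\gamma(u)$ asserts $\calL(g_{h\tau})=\calL(\gamma(u_{h\tau}))$, but this equality is false: by \eqnref{:ghtauNs}, $g^{n-1}_{h\tau}$ is the \emph{time average} of $\gamma(\cdot,\cdot,u^{n-1}_{h\tau})$ over $(t^{n-1},t^n)$, so $g_{h\tau}\neq\gamma(u_{h\tau})$ whenever $\gamma$ depends nontrivially on $t$. The paper resolves this by observing that $g_{h\tau}(\omega)$ is the $\LtwoH$-orthogonal projection of $\gamma(u_{h\tau}(\omega))$ onto piecewise-constant-in-time functions; then for piecewise-constant $v_k\to v$ one has $(\gamma(u_{h\tau})-g_{h\tau},v_k)_{\LtwoH}=0$ identically, and continuity of $u\mapsto\gamma(u)$ from $\LtwoH$ to itself (linear growth suffices here; the $3/2$-growth example you cite is more than needed) finishes the argument. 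Your route via sequential continuity on $L^6[0,T;H]\cap\LtwoU_{weak}$ would handle $\gamma(u_{h\tau})\Rightarrow\gamma(u)$, but you still need the projection step to bridge $g_{h\tau}$ and $\gamma(u_{h\tau})$.
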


\begin{proof}
  Lemma \ref{lem:bound2} is first used to bound the solutions of the
  numerical scheme. Equation \eqnref{:uhns} establishes the bounds
  needed at each time step, and using the structural properties
  of $\gamma$ give
  \begin{equation} \label{eqn:gNs}
    \ltwo{g^{n-1}_{h\tau}}
    \leq C \left( \ltwo{u^{n-1}_{h\tau}} + k^n_\tau \right)
    \quad \text{ where } \quad
    k^n_\tau = (1/\sqrt{\tau}) \norm{k}_{L^2[t^{n-1},t^n; \Ltwo]}.
  \end{equation}
  Lemma \ref{lem:bound2} with parameters $2p$ and $q=q'=2$ then shows
  \begin{multline*}
    \norm{u_{h\tau}}_{L^{2p}(\Omega, L^\infty[0,T;H])}
    + \norm{u_{h\tau}}_{L^{2p}(\Omega, L^2[0,T;U])} \\
    \leq C \left( \norm{u^0_{h\tau}}_{L^{2p}(\Omega,H)}
      + \norm{f_{h\tau}}_{L^{2p}(\Omega,\LtwoU')}
      + \norm{k}_{L^{2p}[0,T;\Ltwo]}
    \right).
  \end{multline*}

  We now verify that $\{(u_{h\tau}, (f_{h\tau}, A(u_{h\tau})),
  g_{h\tau}, \What_{\tau})\}_{h,\tau > 0}$ satisfies the hypotheses of
  Theorem \ref{thm:main} with parameters $r=2$, $q=8$, and $q'=8/7$.
  \begin{enumerate}
  \item 
    The embedding $\Hone \embed L^6(D)$ is first used to verify
    $\norm{u}_{L^3(D)} \leq C \normH{u}^{1/2} \normU{u}^{1/2}$. Then
    $$
    |(A(u), v)|
    \leq \left( \normU{u} \right) \normU{v}
    + (1/2) \norm{u}_{L^3(D)} \normU{u} \norm{v}_{L^6(D)} 
    + (1/2) \norm{u}_{L^3(D)} \norm{u}_{L^6(D)} \normU{v}, 
    $$
    so that
    $$
    \normUp{A(u)} \leq
    \normU{u} + C \normH{u}^{1/2} \normU{u}^{3/2}.
    $$   
    Repeated application of H\"older's inequality then shows
    $$
    \norm{A(u_{h\tau})}_{L^p(\Omega, L^{4/3}[0,T;U'])} 
    \leq
    \norm{u_{h\tau}}_{L^p(\Omega, L^{4/3}[0,T;U])} 
    + C \norm{u_{h\tau}}_{L^{2p}(\Omega, L^\infty[0,T;H])}^{1/2}
    \norm{u_{h\tau}}_{L^{2p}(\Omega, L^2[0,T;U])}^{3/2}.
    $$
    The bounds upon $u_{h\tau}$ and embedding $L^{4/3}[0,T;U']
    \embed L^{8/7}[0,T;U']$ then show
    $\norm{A(u_{h\tau})}_{L^p(\Omega, L^{8/7}[0,T;U'])}$ is also
    bounded.

  \item Writing $F^n_{h\tau} = f^n_{h\tau} - A(u^n_{h\tau})$ we have
    $(F^n_{h\tau}, u^n_{h\tau}) = (f^n_{h\tau},
    u^n_{h\tau}) - \normU{u^n_{h\tau}}^2$, and the
    Cauchy-Schwarz inequality gives
    $$
    \norm{(F^n_{h\tau}, u^n_{h\tau})}_{L^{p/2}(\Omega, L^1(0,T))}
    \leq \norm{f_{h\tau}}_{L^p(\Omega,\LtwoU')} 
    \norm{u_{h\tau}}_{L^p(\Omega,\LtwoU)} 
    + \norm{u_{h\tau}}_{L^p(\Omega,\LtwoU)}^2.
    $$

  \item Equation \eqnref{:gNs}, and the bounds upon
    $\{u_{h\tau}\}_{h,\tau > 0}$ show that $\{g_{h\tau}\}_{h,\tau >
      0}$ will be bounded in $L^{2p}(\Omega,L^{2p}[0,T;H])$ provided
    $\{k_\tau\}_{\tau > 0}$ is bounded in $L^{2p}(0,T)$ (note that
    $k$ is deterministic). This follows from repeated applications
    of H\"older's inequality,
    $$
    \norm{k_\tau}_{L^{2p}(0,T)}^{2p}
    = \sum_{n=1}^N \tau \left( \frac{1}{\tau}
      \int_{t^{n-1}}^{t^n} \normH{k(t)}^2 \right)^p
    \leq \sum_{n=1}^N \int_{t^{n-1}}^{t^n} \normH{k(t)}^{2p} \, dt
    = \norm{k}_{L^{2p}[0,T;H]}^{2p}.
    $$
    
  \item The initial data $u^0$ satisfies the
    properties assumed in Theorem \ref{thm:main} by hypothesis.
  \end{enumerate}
  It follows that upon passing to a sub--sequence $(h_k, \tau_k)
  \rightarrow 0$ there exist a filtered probability space $(\Omegat,
  \calFt, \{\calFt(t)\}_{t=0}^T, \bbPt)$ and a random variable
  $(u,F,g,W)$ with values in $\bbX$ for which $W$ is a standard Wiener
  process, $\calL(u_{h_k \tau_k},(f_{h_k \tau_k}, A(u_{h_k \tau_k})),g_{h_k
    \tau_k},\What_{h_k \tau_k}) \Rightarrow \calL(u,(f.a),g,W) \equiv
  \bbPt$, and
  $$
  (u(t),v)_H = (u^0,v)_H + \int_0^t (f-a,v) \, ds 
  + \int_0^t (g,v)_H \, dW,
  \qquad v \in U_0, \quad 0 \leq t \leq T.
  $$
  For $q \in L^2[0,T; \Ltwo]$ fixed, the function
  $$
  (u,(f,a),g,W) \mapsto \left|\int_0^T (div(u), q)\, ds\right| \wedge 1
  $$
  is continuous and bounded on $\bbX$. Letting $q_k \in L^2[0,T; P_{h_k}]$ be
  chosen so that $q_k \rightarrow q$ it follows that
  $$
  \bbEt\left[\left|\int_0^T (div(u), q)\right| \wedge 1 \right]
  = \lim_{k \rightarrow \infty}
  \bbE\left[\left|\int_0^T (div(u_{h_k \tau_k}), q)\right| \wedge 1 \right]
  = \lim_{k \rightarrow \infty}  
  \bbE\left[\left|\int_0^T (div(u_{h_k \tau_k}), q_k)\right| \wedge 1 \right]
  = 0,
  $$
  whence $\bbPt[div(u)=0] = 1$. Example \ref{eg:2} shows that $(a,v) =
  (D(u),\nabla v) + (1/2)((u.\nabla)u, v) - (1/2)(u,(u.\nabla)v)$
  almost surely on the support of $\bbPt$.

  To verify that $g(t,x,\omega) = \gamma(t,x, u(t,x,\omega)$ (we write
  $g = \gamma(u)$) on the support of $\bbPt$, note that the map
  $u(t,x) \mapsto \gamma(t,x,u(t,x))$ is continuous from $\LtwoH$ to
  itself, so if $v \in \LtwoH$ is fixed
  $$
  \bbEt\left[\left|(\gamma(u)-g, v)_{\LtwoH} \right| \wedge 1 \vph\right]
  = \lim_{k \rightarrow \infty} 
  \bbE\left[\left| 
      (\gamma(u_{h_k\tau_k})-g_{h_k,\tau_k}, v)_{\LtwoH} \right|
    \wedge 1 \vph\right].
  $$
  For the numerical scheme $g_{h\tau}(\omega)$ is the orthogonal
  projection of $\gamma(u_{h\tau}(\omega))$ onto the subspace of
  functions in $\LtwoH$ which are piecewise constant in time. Thus if
  $v_k \in \LtwoH$ is piecewise constant in time and $v_k \rightarrow
  v$ we have
  $$
  \bbEt\left[\left|(\gamma(u)-g, v)_{\LtwoH} \right| \wedge 1 \vph\right]
  = \lim_{k \rightarrow \infty} 
  \bbE\left[\left| 
      (\gamma(u_{h_k\tau_k})-g_{h_k,\tau_k}, v_k)_{\LtwoH} \right|
    \wedge 1 \vph\right]
  = 0.
  $$
\end{proof}

\subsection{Harmonic Heat Flow} \label{sec:harmonic}

The stochastic harmonic heat flow equation on a domain 
$D \subset \Re^3$ is the vector-valued equation
$$
du + (- \Delta u + \lambda u) \, dt
= f \, dt + (u \times g) \circ dW,
\quad \text{ with constraint } \quad u \in \bbS^2,
$$
and initial and boundary data $u \vert_{t=0} = u^0$ and
$\partial u / \partial n \vert_{\partial D} = 0$. Here $\lambda$ is
a Lagrange multiplier dual to the constraint $|u|=1$, and
$$
(u \times g) \circ \, dW 
\equiv (1/2) (u \times g) \times g \, dt + (u \times g) \, dW
$$
denotes the Stratonovich integral. In order to preserve the constraint
the noise term is selected to be tangent to $u \in \bbS^2$, and in
order to eliminate a significant amount of technical overhead we will
assume that the datum $g(x) = \gamma \in \Re^3$ is independent of $x
\in D$. The numerical analysis of the spatially dependent data (and
operator-valued colored noise) is undertaken in \cite{BBNP1} for the
stochastic Landau-Lifshitz-Gilbert equation.

The analysis of the harmonic heat flow equation is complicated by the
fact that solutions may exhibit singularities.  In this situation
essentially nothing is known about the structure of the Lagrange
multiplier, and this gap in the theory plagues  both the
construction and analysis of numerical schemes.  For this
reason the constraint is usually approximated using a penalty scheme
and this is the approach considered here. Specifically, we consider
numerical approximations of the equation
\begin{equation}\label{eqn:harmonicHeat}
du + \left(- \Delta u + D\phi(u) \vph\right) \, dt 
= f \, dt + (u \times \gamma) \circ dW,
\end{equation}
where $\phi(u) = (1/2\epsilon)(|u|^2-1)^2$ with $\epsilon >0$. The drift
term on the left is the variational derivative of the energy
$$
I(u) = \int_D \half |\nabla u|^2\, dx + \phi(u),
$$
and in the deterministic case bounds upon the solution
independent of the penalty constant $\epsilon$ follow upon taking
the product of the equation with either $u_t$ or $-\Delta u + D\phi(u)$
to obtain
$$
\ltwo{u_t}^2 + \dbyd{}{t} I(u) = (f,u_t),
\quad \text{ or } \quad
\dbyd{}{t} I(u) + \ltwo{-\Delta u + D\phi(u)}^2
= \left(f, -\Delta u + D\phi(u) \right).
$$
When the stochastic term is present, we derive an analog of the second
estimate. However, in a numerical context where $u_h(\omega) \in U_h
\subset U \equiv \Hone^3$, the function $-\Delta u_h +
\phi(u_h) \not\in U_h$ is not available as a test function. For this
reason we will use a mixed method where $a \equiv -\Delta u +
D\phi(u)$ is introduced as an additional variable.  Letting $\tau =
T/N$ with $N \in \bbN$ be a time step and $f^n_{h\tau} \simeq f(n\tau)$, 
we approximate solutions of \eqnref{:harmonicHeat} by
$\bigl(u_{h\tau}^n(\omega), a_{h\tau}^n (\omega) \bigr) \in U_h \times U_h$,
\begin{gather} 
(u_{h\tau}^n - u_{h\tau}^{n-1},v_h) + \tau (a^n_{h\tau},v_h) 
= \tau (f^n_{h\tau}, v_h)
+ \left(u_{h\tau}^{n-1/2} \times \gamma ,v_h \vph\right) \xi^n_{\tau}
\label{eqn:harmonicHeatU} \\
(a^n_{h\tau}, b_h) = (\nabla u_{h\tau}^n, \nabla b_h) 
+ (1/\epsilon) \left( (|u_{h\tau}^n|^2 + |u_{h\tau}^{n-1}|^2 - 2)
  u_{h\tau}^{n-1/2}, b_h \right), 
\label{eqn:harmonicHeatA}
\end{gather}
for all $(v_h, b_h) \in U_h \times U_h$, where $u_{h\tau}^{n-1/2} \equiv
(1/2) (u^{n} _{h\tau}+ u^{n-1}_{h\tau})$ and $\xi^n_{\tau}$ are stochastic increments
satisfying Assumption \ref{ass:spde0}. This scheme was constructed so
that:
\begin{itemize}
\item The approximation of $D\phi(u) = (2/\epsilon)(|u|^2-1) u$ in
  \eqnref{:harmonicHeatA} inherits a discrete version of the identity
  $(D\phi(u), u_t) = d \phi/ dt$,
  $$
  (1/\epsilon) \left( (|u_{h\tau}^n|^2 + |u_{h\tau}^{n-1}|^2 - 2)
    u_{h\tau}^{n-1/2}, u^n_{h\tau} - u^{n-1}_{h\tau} \right) 
  = \phi(u^n_{h\tau}) - \phi(u^{n-1}_{h\tau}).
  $$
  This is essential in order to obtain bounds independent of $\epsilon$.

\item Since $D\phi(u)$ is parallel to $u$ it follows that
  $(D\phi(u) , u \times \gamma) = 0$. The discrete approximation
  of $D\phi(u)$ is parallel to $u^{n-1/2}_{h\tau}$ and is perpendicular
  to the coefficient $u^{n-1/2}_{h\tau} \times \gamma$ of $\xi^n_{\tau}$.

  Note too that $u^{n-1/2}_{h\tau}(\omega) \times \gamma \in U_h$, so
  is admissible as test function, and $(\nabla u , \nabla(u \times
  \gamma)) = 0$ when $u \in U$; both following since $g(x) = \gamma
  \in \Re^3$ was taken to be independent of $x$.
\end{itemize}
Selecting the test functions in
\eqnref{:harmonicHeatU}-\eqnref{:harmonicHeatA} to be 
$$
(v_h,b_h) =
\left(a_{h\tau}^n, u_{h\tau}^n -u_{n\tau}^{n-1} + (u_{n\tau}^{n-1/2}
\times \gamma) \xi_{\tau}^n \right)
$$ 
and using these structural properties shows
\begin{eqnarray}
\lefteqn{
\frac{1}{2} \ltwo{\nabla u_{h\tau}^n}^2 + \lone{\phi(u_{h\tau}^n)}
+ \frac{1}{2} \ltwo{\nabla (u_{h\tau}^n - u^{n-1}_{h\tau})}^2 
+ \tau \ltwo{a_{h\tau}^n}^2 }
\label{eqn:harmonicBound} \\
&=&
\frac{1}{2} \ltwo{\nabla u_{h\tau}^{n-1}}^2 + \lone{\phi(u_{h\tau}^{n-1})}
+ \tau \left(f_{h\tau}^n, a^n_{h\tau} \vph\right)
+ \left(\nabla u^n_{h\tau},
\nabla (u_{h\tau}^{n-1/2} \times \gamma)  \vph\right)\xi^n_{\tau} .
\nonumber
\end{eqnarray}
Lemma \ref{lem:bound2} with $U = H = \Ltwo^3$ then establishes bounds upon
the gradient of the solution independent of $\epsilon$; an additional
calculation then establishes a bound upon the (spatial) average of the
solution, and the Poincare inequality  then bounds the solution
itself.

\begin{lemma} \label{lem:harmonicHeatBound1}
  Let $D
  \subset \Re^3$ be a bounded Lipschitz domain, set $U = \Hone^3$
  and $H = \Ltwo^3$, and let $I:U \rightarrow \Re$ be the function $I(u) =
  (1/2)(\ltwo{\nabla u}^2 + \lone{\phi(u)})$ where $\phi(u)
  = (1/(2\epsilon))(|u|^2-1)^2$, with $\epsilon > 0$ fixed. Let 
   $(\Omega, \calF, \bbP)$ be a probability space.
   
   Suppose that the Assumptions \ref{ass:spde1}, and \ref{ass:spde0}
   with parameter $p>2$ hold, and that $\{u^0_{h\tau}\}$ is bounded in
   $L^p(\Omega,U)$, and $\{f_{h\tau}\}$ is
   bounded in $L^{p}(\Omega, \LtwoH)$.  Then there exists a sequence
   $\{ (u^n_{h\tau}, a_{h\tau}^n)\}_{n \geq 1}$ of $U_h \times
   U_h$-valued random variables adapted to $\{ {\mathcal
     F}^n\}_{n=0}^N$ which satisfy
   \eqnref{:harmonicHeatU}--\eqnref{:harmonicHeatA} and
  \begin{eqnarray} \nonumber
    {\rm (i)}&& \lefteqn{
      \bbE\left[ \max_{1 \leq n\leq N} 
        \left( \normH{\nabla u_{h\tau}^n}^p
          + \lone{\phi(u_{h\tau}^n)}^{p/2} \right)
        + \left(\sum_{n=1}^N 
          \normH{\nabla (u_{h\tau}^n - u^{n-1}_{h\tau})}^2 \right)^{p/2}
        + \left( \sum_{n=1}^N \tau 
          \normH{a_{h\tau}^n}^2 \right)^{p/2} 
      \right]^{1/p} } \\ \nonumber
     &&\hspace*{1.25in} \leq
    C \left(
      \norm{\nabla u^0_{h\tau}}_{L^p(\Omega,H)} 
      + \norm{\phi(u^0_{h\tau})}_{L^{p/2}(\Omega, \Lone)}^{1/2} 
      + \norm{f_{h\tau}}_{L^p(\Omega, \LtwoH)}
    \right). \\ \nonumber
    {\rm (ii)}&&
    \lefteqn{
      \bbE\left[
        \max_{1 \leq n\leq N} \normH{u_{h\tau}^n}^p
        + \left( 
          \sum_{n=1}^N \normH{u_{h\tau}^n - u^{n-1}_{h\tau}}^2 
        \right)^{p/2} \right]^{1/p} } \\ \nonumber
    &&\qquad \qquad \qquad \qquad \leq
    C \left(
      \norm{u^0_{h\tau}}_{L^p(\Omega,U)} 
      + \norm{\phi(u^0_{h\tau})}_{L^{p/2}(\Omega, \Lone)}^{1/2} 
      + \norm{f_{h\tau}}_{L^p(\Omega, \LtwoH)}
    \right).   
  \end{eqnarray}
\end{lemma}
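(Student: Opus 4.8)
The plan is to derive both estimates from the abstract energy bound of Lemma~\ref{lem:bound2}, after first producing an adapted discrete solution by the measurable–selection argument of Section~\ref{sec:measurableSoln}. For existence and adaptedness, fix $\omega\in\Omega$ and $1\le n\le N$ and assume $u^{n-1}_{h\tau}(\omega)\in U_h$ is known. Equation \eqnref{:harmonicHeatA} is linear in $a^n_{h\tau}$, hence determines it as a continuous function of $u^n_{h\tau}(\omega)$, so \eqnref{:harmonicHeatU} reduces to a finite–dimensional equation $\psi(\omega,\mathbf u)=0$ for the coefficient vector $\mathbf u$ of $u^n_{h\tau}(\omega)$, with $\psi$ continuous in $\mathbf u$ and $\mathcal F^n$–measurable in $\omega$. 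Testing the pair \eqnref{:harmonicHeatU}--\eqnref{:harmonicHeatA} with $\bigl(a^n_{h\tau},\,u^n_{h\tau}-u^{n-1}_{h\tau}+(u^{n-1/2}_{h\tau}\times\gamma)\xi^n_\tau\bigr)$ yields the energy identity \eqnref{:harmonicBound}, and the same computation for $|\mathbf u|$ large shows $\psi(\omega,\mathbf u)\cdot\mathbf u\ge 0$ on a sphere whose radius depends on $\omega$ only through $u^{n-1}_{h\tau}$, $f^n_{h\tau}$, $\xi^n_\tau$; Brouwer's theorem (Theorem~\ref{thm:brower}) gives a zero of $\psi(\omega,\cdot)$ and Lemma~\ref{lem:measurable} an $\mathcal F^n$–measurable selection. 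Adaptedness of $\{(u^n_{h\tau},a^n_{h\tau})\}$ follows inductively, exactly as for the Navier--Stokes scheme.

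For part (i), the identity \eqnref{:harmonicBound} is an instance of the abstract inequality \eqnref{:bound2} with $q=q'=2$, $U=H=\Ltwo^3$ where $H$ carries the \emph{semi}--inner product $(u,v)_H=(\nabla u,\nabla v)$, energy $I(u)=\half\ltwo{\nabla u}^2+\lone{\phi(u)}$ (which differs from the $I$ of the statement only by a factor between $1$ and $2$), drift datum $f^n_{h\tau}$ paired against $a^n_{h\tau}$, and noise coefficient $g^{n-1}_{h\tau}=\tfrac12\,\nabla u^{n-1}_{h\tau}\times\gamma$. Here one uses that $\gamma$ is a constant vector: $\nabla(u^{n-1/2}_{h\tau}\times\gamma)=\tfrac12(\nabla u^n_{h\tau}+\nabla u^{n-1}_{h\tau})\times\gamma$, and the pointwise orthogonality $(\nabla u^n_{h\tau},\nabla u^n_{h\tau}\times\gamma)=0$ rewrites the stochastic term in \eqnref{:harmonicBound} as $(g^{n-1}_{h\tau},\nabla u^n_{h\tau})\xi^n_\tau$ with $\ltwo{g^{n-1}_{h\tau}}=\tfrac12|\gamma|\,\normH{u^{n-1}_{h\tau}}\le C\,I(u^{n-1}_{h\tau})^{1/2}$, so the hypothesis of Lemma~\ref{lem:bound2} holds with $k\equiv 0$. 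Invoking Lemma~\ref{lem:bound2} produces (i); the term $\norm{\phi(u^0_{h\tau})}_{L^{p/2}(\Omega,\Lone)}$ on the right is finite because in three dimensions $H^1(D)\embed L^6(D)$, whence $\lone{\phi(u^0_{h\tau})}\le C_\epsilon(\norm{u^0_{h\tau}}_U^4+1)$.

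For part (ii), estimate (i) already controls $\ltwo{\nabla u^n_{h\tau}}$ and the gradient increments, so by the Poincaré inequality $\ltwo{v-\ave_D v\,dx}\le C_P\ltwo{\nabla v}$ it suffices to bound the spatial means $m^n:=\ave_D u^n_{h\tau}\,dx\in\Re^3$ in $L^p(\Omega;\ell^\infty_n)$ and the increments $m^n-m^{n-1}$ in $L^p(\Omega;\ell^2_n)$. Testing \eqnref{:harmonicHeatU} with the constant fields $e_1,e_2,e_3\in U_h$ gives the recursion $m^n=m^{n-1}-\tau\,\ave_D a^n_{h\tau}\,dx+\tau\,\ave_D f^n_{h\tau}\,dx+\bigl(\ave_D(u^{n-1/2}_{h\tau}\times\gamma)\,dx\bigr)\xi^n_\tau$; summing, the two drift contributions are bounded in $L^p(\Omega;\ell^\infty_n)$ by the $a$– and $f$–bounds of (i) and Hölder's inequality, and the stochastic sum is handled by the device already used in the proofs of Lemmas~\ref{lem:bound} and \ref{lem:bound2}: split $u^{n-1/2}_{h\tau}=u^{n-1}_{h\tau}+\tfrac12(u^n_{h\tau}-u^{n-1}_{h\tau})$, treat the predictable piece as a discrete Ito integral bounded via the discrete BDG inequality (Theorem~\ref{thm:BDG}) and Lemma~\ref{lem:mglVariation}, and absorb the remaining piece by Cauchy--Schwarz and the independence of $\xi^n_\tau$ from $\mathcal F^{n-1}$ against the increment term $\bigl(\sum_n\ltwo{u^n_{h\tau}-u^{n-1}_{h\tau}}^2\bigr)^{p/2}$, a small multiple of which is then moved to the left. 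Combining with (i) and Poincaré yields (ii).

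The main obstacle is exactly this last point. Because the noise coefficient $u^{n-1/2}_{h\tau}\times\gamma$ is evaluated at the \emph{midpoint}, it is not $\mathcal F^{n-1}$–measurable, so the mean–value recursion is not a genuine discrete martingale and the quadratic–variation estimates do not apply directly; the split–and–absorb argument repairs this, at the cost of carrying the dissipative increment term on the left throughout. All remaining steps are routine once \eqnref{:harmonicBound} is recognized as a special case of \eqnref{:bound2}.
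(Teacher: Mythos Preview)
Your treatment of existence and of part~(i) matches the paper's: Brouwer plus measurable selection, and recognition of \eqnref{:harmonicBound} as an instance of \eqnref{:bound2} with the gradient semi--inner product. The identification $g^{n-1}=\tfrac12\nabla u^{n-1}_{h\tau}\times\gamma$ via $(\nabla u^n,\nabla u^n\times\gamma)=0$ is exactly what is needed.

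For part~(ii), however, your route diverges from the paper's and the split--and--absorb device does not close as written. After testing \eqnref{:harmonicHeatU} with the constants $e_i$ and summing, the recursion for $m^n$ has \emph{no} dissipative increment term on the left; there is nothing to absorb $\tfrac12\sum_k((m^k-m^{k-1})\times\gamma)\xi^k$ against. Trying to bound $\sum_k|m^k-m^{k-1}|^2$ from the recursion itself produces a term $C|\gamma|^2\bigl(\sum_k|\xi^k|^2\bigr)\max_j|m^j|^2$, whose prefactor is $O(T)$, not small, so the estimate only closes for short times (or small~$|\gamma|$). The increment term you name, $\bigl(\sum_n\ltwo{u^n-u^{n-1}}^2\bigr)^{p/2}$, is not available either: by Poincar\'e it is controlled by the gradient increments from~(i) \emph{plus} $\sum_n|m^n-m^{n-1}|^2$, which is precisely the quantity in question.

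The paper avoids this entirely by exploiting the cross--product orthogonality once more, now for the spatial means. Testing with $v_h=\ubar^{n-1/2}_{h\tau}$ annihilates the stochastic term because $(\ubar^{n-1/2}\times\gamma)\cdot\ubar^{n-1/2}=0$, yielding a direct (deterministic--style) bound on $\max_n|\ubar^n|$ in terms of $\|f_{h\tau}-a_{h\tau}\|_{\LtwoH}$. Then testing with $v_h=\ubar^n_{h\tau}$ reduces the noise term to $(\ubar^{n-1}\times\gamma,\ubar^n)\xi^n$ with a \emph{predictable} coefficient (again because $(\ubar^n\times\gamma)\cdot\ubar^n=0$), so Lemma~\ref{lem:bound2} applies directly with $H=U=\Re^3$ and no splitting is needed. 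You should replace your $e_i$--test and split by these two tests; the orthogonality you already used for the gradients in~(i) does the same work for the means in~(ii).
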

\begin{proof}
  Theorem \ref{thm:brower} will be used to establish the existence of a
  solution to the scheme by solving for the variable $\delta u = u^n -
  u^{n-1}$.  Inductively assume that the $U_h$-valued random variable $u^{n-1}_{h\tau}$ is given and
  use the Riesz theorem to construct the solution operator $a_h:U_h
  \rightarrow U_h$ of equation \eqnref{:harmonicHeatA} with
  $u^n_{h\tau} = u^{n-1}_{h\tau} + \delta u$. Upon introducing a basis
  for $U_h$ the Riesz theorem on $\Re^M$ with $M = dim(U_h)$
  guarantees the existence of a continuous function $\psi:\Re^M
  \rightarrow \Re^M$ which, for each $\omega \in \Omega$, satisfies
  $$
  \psi(\delta \bfu).{\bf v}
  = \Big( \delta u
  + \tau a_h(\delta u) - \tau f^n_{h\tau}
  - \left((\delta u/2 + u^{n-1}_{h\tau}) \times \gamma \right)\xi^n_{\tau}, 
    v_h \Big)_H , 
    \qquad {\bf v} \in {\mathbb R}^M,
  $$
  where $\delta \bfu$, $\bfv \in {\mathbb R}^M$
  denote the vectors of coefficients of $U_h$-valued functions $\delta
  u$ and $v_h$.  Using equation \eqnref{:harmonicHeatA} we
  find
  \begin{eqnarray*}
    \bigl(a_h(\delta u), \delta u \bigr)_H
    &=& \left(\nabla (\delta u 
      + u^{n-1}_{h\tau}), \nabla \delta u \vph\right)_H
    + \lone{\phi(\delta u + u^{n-1}_{h\tau})}  
    - \lone{\phi(u^{n-1}_{h\tau})} \\
    &=& \half \left( \normH{\nabla \delta u}^2
      + \normH{\nabla (\delta u + u^{n-1}_{h\tau})}^2
      - \normH{\nabla u^{n-1}_{h\tau}}^2 \vph\right)\\
    &&+ \lone{\phi(\delta u + u^{n-1}_{h\tau})}  
    - \lone{\phi(u^{n-1}_{h\tau})}.
  \end{eqnarray*}
  Zeros of $\psi(.)$ then exist since $(\delta u \times \gamma, \delta
  u) = 0$, so
  \begin{eqnarray*}
    \psi(\delta \bfu).\delta \bfu
    &=& \normH{\delta u}^2 
    + \frac{\tau}{2} \normH{\nabla \delta u}^2
    + \frac{\tau}{2} \normH{\nabla (\delta u + u^{n-1}_{h\tau})}^2 
    + \tau \lone{\phi(\delta u + u^{n-1}_{h\tau})} \\
    && 
    - \left(u^{n-1}_{h\tau} \times \gamma, \delta u \right)_H \xi^n_{\tau}
    - \tau ( f^n_{h\tau}, \delta u) 
    - \frac{\tau}{2} \normH{\nabla u^{n-1}_{h\tau}}^2
    - \tau \lone{\phi(u^{n-1}_{h\tau})}
  \end{eqnarray*}
  is non--negative whenever $\ltwo{\delta u}^2 + \tau \ltwo{\nabla
    \delta u}^2$ is sufficiently large. Equation
  \eqnref{:harmonicBound} and the measurable selection theorem Lemma
  \ref{lem:measurable} then establish the hypothesis of Lemma
  \ref{lem:bound2} from which estimate (i) in the lemma follows.

  To establish estimate (ii), let $\ubar^n_{h\tau} = (1/|D|)
  \int_\Omega u^n_{h\tau}\, dx$ denote the spatial average.  Selecting
  the test function in \eqnref{:harmonicHeatU} to be $v_h =
  \ubar^{n-1/2}_{h\tau}$ and summing gives
  \begin{eqnarray*}
    |\ubar^n_{h\tau}|^2 
    &=&  |\ubar^0_{h\tau}|^2 
    + (1/|D|) 
    \sum_{m=1}^n \tau (f^m_{h\tau} - a^m_{h\tau}, \ubar^{m-1/2}_{h\tau}) \\
    &\leq&  |\ubar^0_{h\tau}|^2 
    + (1/|D|) \left(\sum_{m=1}^n 
      \tau \normH{f^m_{h\tau} - a^m_{h\tau}}^2 \right)^{1/2}
    \left(\sum_{m=1}^n \tau |D| \, (\ubar^{m-1/2}_{h\tau})^2 \right)^{1/2} \\
    &\leq& |\ubar^0_{h\tau}|^2 
    + \ltwoH{f_{h\tau} - a_{h\tau}}
    (T |D|)^{1/2} \max_{0 \leq m \leq n} |\ubar^m_{h\tau}|, 
    \qquad 1 \leq n \leq N.
  \end{eqnarray*}
  It readily follows that
  $$
  \norm{\max_{1 \leq m \leq N} |\ubar^m_{h\tau}|}_{L^p(\Omega)}
  \leq C(p,T/|D|) \left(
    \norm{\ubar^0_{h\tau}}_{L^p(\Omega)} 
    + \norm{f_{h\tau} - a_{h\tau}}_{L^p(\Omega,\LtwoH)}
  \right).
  $$
  Next, select the test function in \eqnref{:harmonicHeatU} to be 
  $v_h = \ubar^n_{h\tau}$ to obtain
  $$
  \half |\ubar^n_{h\tau}|^2 
  + \half |\ubar^n_{h\tau} - \ubar^{n-1}_{h\tau}|^2 
  + \frac{\tau}{|D|}  |\ubar^n_{h\tau}|^2
  =  \half |\ubar^{n-1}_{h\tau}|^2 
  + \frac{\tau}{|D|}
  \left(f^m_{h\tau} - a^m_{h\tau} 
    - \ubar^n_{h\tau}, \ubar^n_{h\tau} \vph\right)
  + \frac{1}{2|D|} \left(\ubar^n_{h\tau}, 
    \ubar^{n-1}_{h\tau} \times \gamma \vph \right) \xi^n_{\tau}.
  $$
  Lemma \ref{lem:bound2} with $H=U=\Re^3$ then shows
  \begin{eqnarray*}
    \lefteqn{
      \norm{\max_{1 \leq n \leq N} |\ubar^n_{h\tau}|}_{L^p(\Omega,H)}
      + \norm{\ubar_{h\tau}}_{L^p(\Omega, L^2(0,T))}
      + \bbE \left[ \left( \sum_{n=1}^N 
          |\ubar^n_{h\tau} - \ubar^{n-1}_{h\tau}|^2 \right)^{p/2} 
      \right]^{1/p} 
    } \\
    &\leq& 
    C(p,T) \left(
      \norm{\ubar^0_{h\tau}}_{L^p(\Omega,H)}
      + \norm{f_{h\tau} - a_{h\tau} - \ubar_{h\tau}}_{L^p(\Omega, \LtwoH)}
    \right).
  \end{eqnarray*}
  Estimate {\rm (ii)} in the lemma now follows from the bounds upon
  $a_{h\tau}$ and $\ubar_{h\tau}$ obtained above and the Poincare
  inequality.
\end{proof}

To cast the above scheme into the setting of Theorem \ref{thm:main}, set
\begin{equation} \label{eqn:harmonicHeatF}
F^n_{h\tau} = f^n_{h\tau} - a^n_{h\tau} 
+ (1/2\tau)(u_{h\tau}^n - u_{h\tau}^{n-1}) \times \gamma \,
 \xi^n_{\tau}
\qquad \text{ and } \qquad
g^{n-1}_{h\tau} = u_{h\tau}^{n-1} \times \gamma,
\end{equation}
so that the equation \eqnref{:harmonicHeatU} becomes
$$
(u_{h\tau}^n - u_{h\tau}^{n-1}, v_h)_H 
= \tau (F_{h\tau}^n, v_h)_H
+ (g_{h\tau}^{n-1}, v_h)_H  \xi_{\tau}^n,
\qquad
v_h \in U_h.
$$
The following lemma bounds the last term of $F_{h\tau}$, which is
the discrete analog of the Stratonovich correction.

\begin{lemma} \label{lem:harmonicHeatBound2}
Under the hypothesis of Lemma \ref{lem:harmonicHeatBound1} with
parameter $p \geq 4$
$$
\norm{F^{(2)}_{h\tau}}_{L^{8/3}(\Omega, L^{4/3}[0,T;H])}
\leq \bbE\left[ \left(\sum_{n=1}^N \normH{u_{h\tau}^n 
- u_{h\tau}^{n-1}}^2 \right)^4 \right]^{1/8},
$$
where $F^{(2)}_{h\tau}$ denotes the piecewise constant function in time
taking values $(1/2\tau)(u_{h\tau}^n -
u_{h\tau}^{n-1}) \times \gamma \,  \xi^n_{\tau}$ on $(t^{n-1}, t^n)$.
\end{lemma}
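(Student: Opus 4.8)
The plan is to unwind the definition of $F^{(2)}_{h\tau}$ and reduce the estimate to two elementary inequalities: a discrete Hölder inequality in the time index $n$ that separates the stochastic increments $\xi^n_\tau$ from the spatial increments $u^n_{h\tau}-u^{n-1}_{h\tau}$, and a second Hölder inequality in $\omega$ that isolates the factor $\bbE\bigl[(\sum_n\normH{u^n_{h\tau}-u^{n-1}_{h\tau}}^2)^4\bigr]$. Since $F^{(2)}_{h\tau}(t)=\tfrac{1}{2\tau}(u^n_{h\tau}-u^{n-1}_{h\tau})\times\gamma\,\xi^n_\tau$ on $(t^{n-1},t^n)$ and $\norm{a\times\gamma}_{L^2(D)^3}\le|\gamma|\,\norm{a}_{L^2(D)^3}$ pointwise in $D$, abbreviating $d_n=\normH{u^n_{h\tau}-u^{n-1}_{h\tau}}$ I would first record
$$
\norm{F^{(2)}_{h\tau}}_{L^{4/3}[0,T;H]}^{4/3}
=\sum_{n=1}^N\tau\,\Bigl(\tfrac{|\xi^n_\tau|}{2\tau}\Bigr)^{4/3}\norm{(u^n_{h\tau}-u^{n-1}_{h\tau})\times\gamma}_H^{4/3}
\le\frac{|\gamma|^{4/3}}{2^{4/3}}\,\tau^{-1/3}\sum_{n=1}^N|\xi^n_\tau|^{4/3}d_n^{4/3}.
$$

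Next I would square this identity, take expectation, and apply Hölder in $n$ with the conjugate pair $(3,\tfrac32)$, using $(|\xi^n_\tau|^{4/3})^3=|\xi^n_\tau|^4$ and $(d_n^{4/3})^{3/2}=d_n^2$, to obtain
$$
\Bigl(\sum_{n=1}^N|\xi^n_\tau|^{4/3}d_n^{4/3}\Bigr)^2
\le\Bigl(\sum_{n=1}^N|\xi^n_\tau|^4\Bigr)^{2/3}\Bigl(\sum_{n=1}^N d_n^2\Bigr)^{4/3}.
$$
I would then apply Hölder in $\omega$ with the conjugate pair $(\tfrac32,3)$, chosen precisely so that the second factor is raised to the fourth power; since $(2/3)(3/2)=1$ this leaves $\bbE\bigl[\sum_{n=1}^N|\xi^n_\tau|^4\bigr]$ on the stochastic side, and here I would invoke Assumption \ref{ass:spde0}$_3$ — this is exactly where the hypothesis $p\ge 4$ is used; if $p>4$ one first applies Jensen to pass from the $p$-th to the $4$-th moment — to get $\bbE[|\xi^n_\tau|^4]\le C\tau^2$, hence $\bbE\bigl[\sum_{n=1}^N|\xi^n_\tau|^4\bigr]\le CN\tau^2=CT\tau$ using $N\tau=T$.

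Collecting the pieces gives
$$
\bbE\bigl[\norm{F^{(2)}_{h\tau}}_{L^{4/3}[0,T;H]}^{8/3}\bigr]
\le\frac{|\gamma|^{8/3}}{2^{8/3}}\,\tau^{-2/3}(CT\tau)^{2/3}\,\bbE\bigl[(\textstyle\sum_n d_n^2)^4\bigr]^{1/3},
$$
and the decisive point is that the factors $\tau^{-2/3}$ and $\tau^{2/3}$ cancel, leaving a bound independent of $(h,\tau)$; raising to the power $3/8$ yields the asserted inequality, with a constant of the form $\tfrac12|\gamma|\,C^{1/4}T^{1/4}$ (the constant $C$ being that of Assumption \ref{ass:spde0}$_3$) which is suppressed in the statement. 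When the right-hand side is infinite the inequality is vacuous, so no integrability need be assumed separately; it is finite under the bounds of Lemma \ref{lem:harmonicHeatBound1} with a sufficiently large moment parameter. I do not anticipate a genuine obstacle: the only thing demanding care is the bookkeeping of fractional exponents — selecting the two Hölder pairs $(3,\tfrac32)$ and $(\tfrac32,3)$ so that the stochastic sum ends up carrying the $4$-th moments of $\xi^n_\tau$ while the spatial sum ends up as $(\sum_n d_n^2)^4$ under a single expectation — and checking that the powers of $\tau$ and $N$ recombine through $N\tau=T$ into an $(h,\tau)$-independent constant.
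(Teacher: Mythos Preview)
Your argument is correct and follows essentially the same route as the paper: compute $\norm{F^{(2)}_{h\tau}}_{L^{4/3}[0,T;H]}^{4/3}$ explicitly, apply H\"older in $n$ with the pair $(3,\tfrac32)$ to separate $|\xi^n_\tau|^4$ from $d_n^2$, then H\"older in $\omega$ with the conjugate pair so that $\bbE[\sum_n|\xi^n_\tau|^4]\le CT\tau$ cancels the $\tau^{-2/3}$ factor. The bookkeeping of exponents and the use of the moment bound from Assumption~\ref{ass:spde0} match the paper's proof line for line.
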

\begin{proof}
First compute
\begin{eqnarray*}
\norm{F^{(2)}_{h\tau}}^{4/3}_{L^{4/3}[0,T;H]}
&\leq& (|\gamma|/2\tau)^{4/3} \sum_{n=1}^N \tau
\normH{u_{h\tau}^n-u_{h\tau}^{n-1}}^{4/3}
| \xi_{\tau}^n|^{4/3} \\
&\leq& C  \tau^{-1/3}
\left(\sum_{n=1}^N \normH{u_{h\tau}^n - u_{h\tau}^{n-1}}^2 \right)^{2/3}
\left(\sum_{n=1}^N | \xi_{\tau}^n|^4 \right)^{1/3}.
\end{eqnarray*}
The stochastic increments satisfy $\bbE\left[|\xi^n_\tau|^4 \right] \leq C
\tau^2$ when $p \geq 4$, and will cancel the factor of
$\tau^{-1/3}$;
\begin{eqnarray*}
\lefteqn{
\norm{F^{(2)}_{h\tau}}_{L^{8/3}(\Omega, L^{4/3}[0,T;H])}^{8/3}
= \bbE\left[ \norm{F^{(2)}_{h\tau}}^{8/3}_{L^{4/3}[0,T;H]} \right] } \\
&\leq& C  \tau^{-2/3} \bbE\left[
\left(\sum_{n=1}^N \normH{u_{h\tau}^n - u_{h\tau}^{n-1}}^2 \right)^{4/3}
\left(\sum_{n=1}^N | \xi_{\tau}^n|^4 \right)^{2/3} \right] \\
&\leq& C  \tau^{-2/3} 
\bbE\left[ \left(\sum_{n=1}^N \normH{u_{h\tau}^n 
- u_{h\tau}^{n-1}}^2 \right)^4 \right]^{1/3}
\bbE\left[\sum_{n=1}^N | \xi_{\tau}^n|^4 \right]^{2/3}  \\
&\leq& C T^{2/3} 
\bbE\left[ \left(\sum_{n=1}^N \normH{u_{h\tau}^n 
- u_{h\tau}^{n-1}}^2 \right)^4 \right]^{1/3},
\end{eqnarray*}
which completes the proof.
\end{proof}

\begin{theorem}
  Fix $T > 0$ and let $D \subset \Re^3$ be a bounded Lipschitz domain,
  $U = \Hone^3$, $H = \Ltwo^3$, and let $(\Omega, \calF, \bbP)$ be a
  probability space.  Let the Assumptions \ref{ass:spde1}, and
  \ref{ass:spde0} hold with $p=8$ moments. Let $\tau = T/N$
  with $N \in \bbN$ denote a time step, and let $\{U_h\}_{h>0} \subset
  U$ be finite dimensional subspaces satisfying:
  \begin{itemize}
  \item For each $U \in U$ there exists a sequence
    $\{(u_h)\}_{h>0}$ with $u_h \in U_h$
    such that $u_h \rightarrow u$ as $h \rightarrow 0$.

  \item The restriction of the orthogonal projection $P_h:H \rightarrow
    U_h$ to $U$ is stable. That is, there exists $C > 0$ independent
    of $h$ such that $\normU{P_h u} \leq C \normU{u}$.
  \end{itemize}
  Let $\{(u_{h\tau},a_{h\tau})\}_{h,\tau > 0}$ denote the
  solution of \eqnref{:harmonicHeatU}--\eqnref{:harmonicHeatA} with
  data satisfying
  \begin{enumerate}
  \item $\{u^0_{h\tau}\}_{h,\tau > 0}$ is bounded in $L^8(\Omega,U)$ and
    converges to a limit $u^0$ in $L^2(\Omega, U)$
    as $(h,\tau) \rightarrow 0$.

  \item $\{f_{h\tau}\}_{h,\tau > 0}$ is bounded in $L^8(\Omega, \LtwoH)$ and
    converges to a limit $f$ in $L^{8/3}(\Omega, L^{4/3}[0,T;H])$
    as $(h,\tau) \rightarrow 0$.
  \end{enumerate}
  Denote the discrete Wiener process with increments
  $\{\xi_\tau^n\}_{n=1}^N$ by $\What_{h\tau}$, and let
  $$
  F^n_{h\tau} = f^n_{h\tau} + a^n_{h\tau} 
  + (1/2\tau)(u_{h\tau}^n - u_{h\tau}^{n-1}) \times \gamma \,
  \xi^n_{\tau}
  \qquad \text{ and } \qquad
  g^{n-1}_{h\tau} = u_{h\tau}^{n-1} \times \gamma.
  $$
  Then there exist a probability space $(\Omegat, \calFt, \bbPt)$ and a
  random variable $(u,F,g,W)$ on $\Omegat$ with values in $(\bbX,
  \calB(\bbX))$ with
  $$
  \bbX = G[0,T;U'] \cap L^4[0,T;U]_{weak} \cap L^4[0,T; L^4(D)^3]
  \times L^{4/3}[0,T; U']_{weak} 
  \times \LtwoH 
  \times C[0,T],
  $$
  and a subsequence $(\tau_k,h_k) \rightarrow (0,0)$ for which the
  laws of $\bigl\{ (u_{h_k \tau_k}, F_{h_k \tau_k}, g_{h_k \tau_k},
  \hat{W}_{\tau_k})\bigr\}_{k=1}^\infty$ converge to the law of
  $(u,F,g,W)$,
  $$
  \calL(u_{h_k\tau_k}, F_{h_k\tau_k}, g_{h_k\tau_k}, \What_{\tau_k})
  \ \Rightarrow \ \calL(u,F,g,W).
  $$
  In addition, there exists a filtration
  $\{\calFt(t)\}_{0 \leq t \leq T}$ satisfying the usual conditions
  for which $(u,f,g,W)$ is adapted and $W$ is a real-valued Wiener
  process for which
  \begin{equation}\label{eqn:hm1}
  (u(t),v) = (u^0,v) 
  + \int_0^t (F,v)\, ds + \int_0^t (u \times \gamma, v)_H \, dW,
  \qquad v \in \Hone,
  \end{equation}
  where
  \begin{equation}\label{eqn:hm2}
  (F,v) = f - (\nabla u, \nabla v) - (D\phi(u), v) 
  - (1/2) \left(u \times \gamma) \times \gamma, v \vph\right).
  \end{equation}
\end{theorem}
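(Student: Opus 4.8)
The plan is to verify the stability hypotheses of Theorem~\ref{thm:main} for the mixed scheme \eqnref{:harmonicHeatU}--\eqnref{:harmonicHeatA}, apply that theorem to produce a weak martingale solution of the abstract equation \eqnref{-7} along a subsequence, and then identify the limiting drift and diffusion with the data in \eqnref{:hm1}--\eqnref{:hm2}. Throughout I would run Theorem~\ref{thm:main} with its parameter set to $p=8/3$ (which does exceed $2$): this is all that is available because the discrete Stratonovich correction only has moments of order $8/3$, and the data are given $p=8$ moments precisely so that Lemma~\ref{lem:harmonicHeatBound2} supplies this. I would invoke Statement~\ref{it:Fsum} with the threefold splitting $F_{h\tau}=f_{h\tau}+(-a_{h\tau})+F^{(2)}_{h\tau}$, where $F^{(2)}_{h\tau}$ is the piecewise--constant interpolant of $(1/2\tau)(u^n_{h\tau}-u^{n-1}_{h\tau})\times\gamma\,\xi^n_\tau$ (the function so named in Lemma~\ref{lem:harmonicHeatBound2}); then $g^{n-1}_{h\tau}=u^{n-1}_{h\tau}\times\gamma$ and \eqnref{:harmonicHeatU} has exactly the form \eqnref{:eulerF}. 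The first factor of the (threefold) $F$--component of $\bbX$ carries the strong topology because $f_{h\tau}$ converges strongly, so its limit is the prescribed datum $f$.

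\emph{Stability.} Lemma~\ref{lem:harmonicHeatBound1}---whose own hypotheses come from the Brouwer theorem (Theorem~\ref{thm:brower}), the measurable selection Lemma~\ref{lem:measurable}, and the energy identity \eqnref{:harmonicBound}---bounds $\{u_{h\tau}\}$ in $L^8(\Omega,L^\infty[0,T;U])$ and $\{a_{h\tau}\}$ in $L^8(\Omega,L^2[0,T;H])$, uniformly in $(h,\tau)$. Since $[0,T]$ is bounded and $H\embed U'$, these give $\{u_{h\tau}\}$ bounded in $L^p(\Omega,L^4[0,T;U])$ (so $r=4$) and $\{f_{h\tau}\},\{a_{h\tau}\}$ bounded in $L^p(\Omega,L^{4/3}[0,T;U'])$ (so $q=4$, $q'=4/3$), while Lemma~\ref{lem:harmonicHeatBound2} bounds $\{F^{(2)}_{h\tau}\}$ in $L^p(\Omega,L^{4/3}[0,T;H])$, and $\{g_{h\tau}\}$ is bounded in $L^p(\Omega,L^p[0,T;H])$ since $\normH{u^{n-1}_{h\tau}\times\gamma}\le|\gamma|\normH{u^{n-1}_{h\tau}}$. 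Hypothesis~\ref{hyp:FuBounded}, $F_{h\tau}(u_{h\tau})$ bounded in $L^{p/2}(\Omega,L^1(0,T))$, follows by applying the Cauchy--Schwarz inequality termwise to $\sum_n\tau(f^n_{h\tau}-a^n_{h\tau},u^n_{h\tau})$ and $\sum_n\tau(F^{(2),n}_{h\tau},u^n_{h\tau})$ and using the preceding bounds, and the initial data satisfy the remaining hypothesis. Since $D\subset\Re^3$, Rellich--Kondrachov gives the compact embedding $U=\Hone^3\cembed L^4(D)^3=:V$, which also embeds in $U'$ through $L^2$; as $\{u_{h\tau}\}$ is bounded in $L^p(\Omega,L^s[0,T;V])$ for every $s<\infty$, Statement~\ref{it:LsV} applies with $\hat s=4$. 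Finally $g_{h\tau}$ differs from $u_{h\tau}\times\gamma$ by a term vanishing in $L^p(\Omega,L^2[0,T;H])$ (because $\tau\sum_n\normH{u^n_{h\tau}-u^{n-1}_{h\tau}}^2\to0$ in $L^{p/2}(\Omega)$), so, $\{u_{h\tau}\}$ being tight on $L^2[0,T;H]$ by that compactness, the laws of $\{g_{h\tau}\}$ are tight on $\LtwoH$; hence the entire conclusion of Theorem~\ref{thm:main} holds.

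\emph{Identification of the limit.} Theorem~\ref{thm:main} then provides $(\Omegat,\calFt,\bbPt)$ carrying $(u,f,a,F^{(2)},g,W)$, a filtration satisfying the usual conditions, and, for $v\in\Hone^3$, $(u(t),v)_H=(u^0,v)_H+\int_0^t(f-a+F^{(2)},v)\,ds+\int_0^t(g,v)\,dW$. It remains to check, on the support of $\bbPt$, that (i)~$g=u\times\gamma$, (ii)~$(a,v)=(\nabla u,\nabla v)+(D\phi(u),v)$, and (iii)~$(F^{(2)},v)=\tfrac12((u\times\gamma)\times\gamma,v)$, the Stratonovich correction. Claim~(i) is immediate from the vanishing--increment remark above and continuity of $w\mapsto w\times\gamma$: $g_{h\tau}\to u\times\gamma$ strongly in $L^2[0,T;H]$ in law. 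For~(ii) I would argue as in Example~\ref{eg:2}: $w\mapsto\int_0^t(\nabla w,\nabla v)\,ds$ is weakly continuous on $L^4[0,T;U]$ and the Nemytskii map $w\mapsto(|w|^2-1)w$ is continuous from $L^4[0,T;L^4(D)^3]$ to $L^{4/3}[0,T;L^{4/3}(D)^3]$; inserting test functions $v_h\to v$ into \eqnref{:harmonicHeatA}, using the strong $L^4[0,T;L^4(D)^3]$--compactness of Statement~\ref{it:LsV}, the fact that $u^{n-1/2}_{h\tau}-u^n_{h\tau}$ and $u^{n-1}_{h\tau}-u^n_{h\tau}$ vanish in $L^4[0,T;L^4(D)^3]$ (interpolate the $L^2$--increment bound of Lemma~\ref{lem:harmonicHeatBound1} against its $\Hone$--bound via $\norm{w}_{L^4(D)}\le C\normH{w}^{1/2}\normU{w}^{1/2}$ in three dimensions), and that $D\phi(u)=(2/\epsilon)(|u|^2-1)u$ is the limit of $(1/\epsilon)(|u^n_{h\tau}|^2+|u^{n-1}_{h\tau}|^2-2)u^{n-1/2}_{h\tau}$, Corollary~\ref{cor:Law} and the $\zeta_k$--version of Lemma~\ref{lem:mapping_theorem} give $\bbEt[|\int_0^t((a,v)-(\nabla u,\nabla v)-(D\phi(u),v))\,ds|\wedge1]=0$.

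\emph{The discrete Stratonovich correction (iii) is the main obstacle.} Substituting the scheme \eqnref{:harmonicHeatU}, $u^n_{h\tau}-u^{n-1}_{h\tau}=\tau(f^n_{h\tau}-a^n_{h\tau})+(u^{n-1/2}_{h\tau}\times\gamma)\xi^n_\tau$, into $\tau F^{(2),n}_{h\tau}$ gives
\[
\tau F^{(2),n}_{h\tau}=\tfrac{\tau}{2}\big((f^n_{h\tau}-a^n_{h\tau})\times\gamma\big)\xi^n_\tau+\tfrac12\big((u^{n-1/2}_{h\tau}\times\gamma)\times\gamma\big)(\xi^n_\tau)^2 .
\]
Tested against $v$ and summed up to time $t$, the first group is dominated by $\norm{f_{h\tau}-a_{h\tau}}_{L^2[0,T;H]}\big(\sum_n\tau|\xi^n_\tau|^2\big)^{1/2}$, which vanishes in probability since $\bbE[\sum_n\tau|\xi^n_\tau|^2]=T\tau\to0$; in the second group one writes $(\xi^n_\tau)^2=\tau+((\xi^n_\tau)^2-\tau)$, so the ``$\tau$'' part yields $\int_0^t\tfrac12((u_{h\tau}\times\gamma)\times\gamma,v)\,ds\to\int_0^t\tfrac12((u\times\gamma)\times\gamma,v)\,ds$ (strong $L^4[0,T;L^4(D)^3]$--compactness, and $u^{n-1/2}_{h\tau}-u_{h\tau}\to0$ as above), whereas the remainder, after conditioning on $\calF^{n-1}$ and absorbing the $O(\xi^n_\tau)$ discrepancy between $u^{n-1/2}_{h\tau}$ and $u^{n-1}_{h\tau}$, is a discrete martingale whose second moment is at most $C\tau^2\sum_n\bbE[\normH{u^{n-1}_{h\tau}}^2]=O(\tau)$, using $\bbE[(\xi^n_\tau)^4]=O(\tau^2)$. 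Assembling these limits with the $\zeta_k$--version of Lemma~\ref{lem:mapping_theorem} and a bounded truncated test function gives $\bbEt[|\int_0^t(F^{(2)}-\tfrac12(u\times\gamma)\times\gamma,v)\,ds|\wedge1]=0$, which is (iii). With (i)--(iii) the displayed identity becomes \eqnref{:hm1}--\eqnref{:hm2}; and since the paths of $W$ are continuous it remains a Wiener process for the augmented filtration, so $(u,f,g,W)$ is a weak martingale solution of \eqnref{:harmonicHeat}. No uniqueness of the law is asserted, which is appropriate since harmonic heat flow may develop singularities.
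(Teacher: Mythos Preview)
Your proof is correct and follows essentially the same strategy as the paper: verify the stability hypotheses of Theorem~\ref{thm:main} with $p=8/3$, $r=q=4$ via Lemmas~\ref{lem:harmonicHeatBound1}--\ref{lem:harmonicHeatBound2}, invoke Statement~\ref{it:LsV} with $V=L^4(D)^3$ for strong $L^4[0,T;L^4(D)^3]$ compactness, and then identify the three pieces of $F$---in particular the Stratonovich correction by substituting the scheme and splitting $(\xi^n_\tau)^2=\tau+((\xi^n_\tau)^2-\tau)$, exactly as the paper does. One minor correction: the interpolation you quote, $\norm{w}_{L^4(D)}\le C\normH{w}^{1/2}\normU{w}^{1/2}$, is the two--dimensional Ladyzhenskaya inequality; in $\Re^3$ the exponents are $1/4$ and $3/4$, but since Lemma~\ref{lem:harmonicHeatBound1} controls both $\sum_n\normH{u^n-u^{n-1}}^2$ and $\max_n\normU{u^n}$, the required vanishing of the increments in $L^4[0,T;L^4(D)^3]$ still follows (the paper avoids this interpolation altogether by pairing $\ltwo{u^n-u^{n-1}}$ directly with $\lsix{\cdot}$ factors via H\"older and the embedding $U\embed L^6(D)$).
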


\begin{proof}
We verify that $\bigl\{ (u_{h\tau}, F_{h\tau}, g_{h\tau},
\hat{W}_\tau)\bigr\}_{h,\tau > 0}$ satisfy the hypothesis of Theorem
\ref{thm:main} with parameters $r=q=4$, $q'=4/3$, $p=8/3$, and
$L^s[0,T;V] = L^4[0,T; L^4(D)^3]$ in Statement \ref{it:LsV} of the
theorem.

Note first that $\lone{\phi(u)} \leq C \lfour{u}^4 \leq C \normU{u}^4$
since $\Hone \embed \Lfour$. Then under the hypotheses assumed upon the
data
$$
\norm{\phi(u^0_{h\tau})}_{L^{4/3}(\Omega,\Lone)}
\leq C \norm{u^0_{h\tau}}_{L^{16/3}(\Omega,U)}^4 
\leq C \norm{u^0_{h\tau}}_{L^8(\Omega,U)}^4 < \infty.
$$

\begin{enumerate}
\item Lemma \ref{lem:harmonicHeatBound1} bounds
  $\{u_{h\tau}\}_{h,\tau > 0}$ in $L^8(\Omega, L^\infty[0,T;U]) \embed
  L^4(\Omega, L^4[0,T;U])$.

\item Lemma \ref{lem:harmonicHeatBound1} bounds $\{a_{h\tau}\}_{h,\tau
    > 0}$ in $L^8(\Omega,\LtwoH) \embed L^{8/3}(\Omega,
  L^{4/3}[0,T;U'])$.  Combining this with the bound in
  Lemma \ref{lem:harmonicHeatBound2} shows $\{F_{h\tau}\}_{h,\tau > 0}$ is
  bounded in $L^{8/3}(\Omega, L^{4/3}[0,T;U'])$.

\item Since $L^4[0,T;U]' = L^{4/3}[0,T;U']$ it is immediate that
  $(F_{h\tau},u_{h\tau})$ is bounded in $L^{4/3}(\Omega, L^1(0,T))$.

\item The embedding $U = \Hone^3 \cembed \Lfour^3$ is compact, and
  $\{u_{h\tau}\}_{h,\tau > 0}$ is bounded in $L^8(\Omega,
  L^\infty[0,T;U]) \embed L^4[0,T; L^4(D)^3]$, so from Statement
  \ref{it:LsV} of Theorem \ref{thm:main} it follows that
  upon passing to a subsequence $\calL(u_{h\tau}) \Rightarrow \calL(u)$
  on $L^4[0,T; L^4(D)^3]$.

\item Bounds upon $\{u_{h\tau}\}_{h,\tau > 0}$ immediately bound
  $g_{h\tau} = u_{h\tau} \times \gamma$ in $L^{8/3}(\Omega,
  L^{8/3}[0,T;H])$. In addition, it is immediate that
  $\calL(g_{h\tau}) \Rightarrow \calL(g)$ in $\LtwoH$ when
  $\calL(u_{h\tau}) \Rightarrow \calL(u)$ on $L^4[0,T; L^4(D)^3]
  \embed \LtwoH$.
\end{enumerate}
It follows that upon passing to a sub--sequence $(h_k, \tau_k)
\rightarrow (0,0)$ there exist a filtered probability space,
$(\Omegat, \calFt, \{\calF(t)\}_{0 \leq t \leq T}, \bbPt)$, and a random
variable $(u,F,g,W)$ taking values in $\bbX$ for which $\calL(u_{h_k
  \tau_k},F_{h_k \tau_k},g_{h_k \tau_k},\What_{h_k \tau_k})
\Rightarrow \calL(u,F,g,W)$, $W$ is a standard Wiener
process, and equation \eqnref{:hm1} is satisfied.

To show that $F$ takes the form shown in \eqnref{:hm2}, write
$F_{h\tau} = f_{h\tau} + F^{(1)}_{h\tau} + F^{(2)}_{h\tau}$ with
$$
(F^{(1)}_{h\tau}, v)
= \sum_{n=1}^{T/\tau} \tau -(a^n_{h\tau}, v^n_\tau)
\qquad \text{ and } \qquad
(F^{(2)}_{h\tau}, v)
= (1/2) \sum_{n=1}^{T/\tau} 
(u^n_{h\tau} - u^{n-1}_{h\tau}) \times \gamma, v^n_\tau) \, \xi^n_\tau,
$$
where $v^n_\tau$ is the average of $v \in L^4[0,T;U]$ on $((n-1)\tau,
n \tau)$. Since each summand is bounded in $L^{8/3}(\Omega,
L^{4/3}[0,T;U'])$ we may assume $(f_{h\tau}, F_{h\tau}^{(1)},
F_{h\tau}^{(2)}) \Rightarrow (f, F^{(1)}, F^{(2)})$ on $L^{(4/3)}[0,T;
U']^3_{weak}$ with $F = f + F^{(1)} + F^{(2)}$.

Let $A^{(1)}:L^4[0,T;U]_{weak} \cap L^4[0,T;L^4[0,T;\Lfour]
\rightarrow L^{4/3}[0,T;U']$ be characterized by
$$
(A^{(1)}(u),v) 
= \int_0^T  (\nabla u, \nabla v) + (D\phi(u),v)\, ds
= \int_0^T (\nabla u, \nabla v) + (2/\epsilon) ((u^2-1)u,v)\, ds.
$$
The map is continuous, and if $v \in L^4[0,T; U]$
then $\{(A^{(1)}(u_{h\tau}), v)\}_{h,\tau > 0}$ is bounded in
$L^{4/3}(\Omega)$ and the extended Portmanteau Lemma \ref{lem:mapping_theorem}
shows
$$
\bbEt\left[|(A^{(1)}(u),v)| \right] 
= \lim_{k \rightarrow \infty} \bbE\left[|(A^{(1)}(u_k), v)| \vph\right]
= \lim_{k \rightarrow \infty} \bbE\left[
  \sum_{n=1}^{T/\tau_k} \tau_k \left|\left(A^{(1)}(u^n_k), 
      v^n_{\tau_k} \right) \right| \right]\,,
$$
where we write $u_k \equiv u_{h_k \tau_k}$, and $v_{\tau_k}$ is the
piecewise constant interpolant of $\{v^n_{\tau_k}\}_{n=1}^{T/\tau_k}$. We then
compute
\begin{eqnarray*}
\lefteqn{ \bbEt[|(A^{(1)}(u) - F^{(1)},v)|] 
= \lim_{k \rightarrow \infty} \bbE\left[ 
 \tau_k \left| \sum_{n=1}^{T/\tau_k} 
   \left(a^n_k - D\phi(u^n_k), v^n_{\tau_k} \right)
  - \left(\nabla u^n_k, \nabla v^n_{\tau_k} \right) 
\right| \right] } \\
&=& \lim_{k \rightarrow \infty} \bbE\left[ 
 (\tau_k/\epsilon) \left| \sum_{n=1}^{T/\tau_k} \left(
  (|u^n_k|^2 + |u^{n-1}_k|^2 -2) u^{n-1/2}_k 
  - 2(|u^n_k|^2-1) u^n_k, v^n_{\tau_k} \right)
\right| \right] \\
&=& \lim_{k \rightarrow \infty} \bbE\left[ 
 (\tau_k/\epsilon) \left| \sum_{n=1}^{T/\tau_k} (1/2) \left(
  (|u^n_k|^2 + |u^{n-1}_k|^2 -2) (u^{n-1}_k - u^n_k), v^n_{\tau_k} \right)
  + \left((|u^{n-1}_k|^2 - |u^n_k|^2) u^n_k, v^n_{\tau_k} \right)
\right| \right].
\end{eqnarray*}
Bounding the right-hand side using H\"older's inequality,
and the embedding $U \embed \Lsix$ give
\begin{eqnarray*}
\lefteqn{ \bbEt[|(A^{(1)}(u) - F^{(1)},v) |]} \\
&\leq& (C/\epsilon) \lim_{k \rightarrow \infty} \bbE\left[ 
 \sum_{n=1}^{T/\tau_k} \tau_k \ltwo{u^n-u^{n-1}}
 (\lsix{u^n}^2 + \lsix{u^{n-1}}^2) \lsix{v^n_{\tau_k}} \right] \\
&\leq& (C/\epsilon) \lim_{k \rightarrow \infty} 
\bbE\left[ \sum_{n=1}^{T/\tau_k} \tau_k \ltwo{u^n-u^{n-1}}^2 \right]^{1/2}
 \norm{u_k}_{L^4(\Omega, L^4[0,T;U])}^2
 \norm{v}_{L^4[0,T;U]} \\
&=& \lim_{k \rightarrow \infty} O(\sqrt{\tau_k}) = 0
\end{eqnarray*}
where the last line follows from the estimate in Lemma
\ref{lem:harmonicHeatBound1} on the norm of the differences.

To identify the Stratonovich term, define
$A^{(2)}:L^4[0,T;L^4[0,T;\Lfour] \rightarrow L^{4/3}[0,T;U']$ by
$$
(A^{(2)}(u),v) = (1/2) \int_0^T 
\left( (u \times \gamma) \times \gamma, v \vph\right)
\qquad v \in L^4[0,T; U].
$$
Again this operator is continuous, and
the extended Portmanteau Lemma \ref{lem:mapping_theorem}
shows
\begin{eqnarray*}
\bbEt\left[ \big|(F^{(2)} - A^{(2)}(u), v) \big| \vph\right]
&=& \lim_{k \rightarrow \infty}
\bbE\left[ \big|(F^{(2)}_k - A^{(2)}(u_k), v) \big| \vph\right] \\
&=& \lim_{k \rightarrow \infty}
\bbE\left[ (1/2) \Big| \sum_{n=1}^{T/\tau_k}
\left( (u^{n-1}_k - u^n_k) \times \gamma \xi^n_{\tau_k}
  - (u^n_k \times \gamma) \times \gamma \tau_k, v^n_\tau \right)
\Big| \vph\right].
\end{eqnarray*}
Using the discrete scheme \eqnref{:harmonicHeatU} to rewrite the first
term gives
\begin{eqnarray*}
\lefteqn{
\bbEt\left[ \big|(F^{(2)} - A^{(2)}(u), v) \big| \vph\right] 
= \lim_{k \rightarrow \infty}
\bbE\left[ (1/2) \Big| \sum_{n=1}^{T/\tau_k} \left(
\big(f^n_k - a^n_k \big) \tau_k  \xi^n_{\tau_k}
  - \big(u^{n-1/2}_k (\xi^n_k)^2 - u^n_k \tau_k \big) \times \gamma),
  \gamma \times v^n_\tau \right) 
\Big| \vph\right] } \\
&=& \lim_{k \rightarrow \infty}
\bbE\left[ (1/2) \Big| \sum_{n=1}^{T/\tau_k} \left(
\big(f^n_k - a^n_k \big) \tau_k  \xi^n_{\tau_k}
  - (1/2)\big(u^{n-1}_k - u^n_k \big) \times \gamma \, (\xi^n_k)^2 
  + u^n_k \big((\xi^n_k)^2 - \tau_k \big) \times \gamma,
  \gamma \times v^n_\tau \right) 
\Big| \vph\right] .
\end{eqnarray*}
Each of the three summands on the right vanishes in the limit.
The first term is bounded using H\"older's inequality and the bounds assumed
upon the moments of the stochastic increments,
\begin{eqnarray*}
  \bbE\left[ \Big| \sum_{n=1}^{T/\tau_k} \left(
      \big(f^n_k - a^n_k \big) \tau_k  \xi^n_{\tau_k}
      \gamma \times v^n_\tau \right) 
    \Big| \vph\right]  
  &\leq& |\gamma| \, \norm{f_k-a_k}_{L^2(\Omega, \LtwoLtwo)}
  \bbE\left[ \sum_{n=1}^{T/\tau_k} \tau_k (\xi^n_k)^4 \right]^{1/4}
  \norm{v}_{L^4[0,T;\Lfour]} \\
  &\leq& |\gamma| \, \norm{f_k-a_k}_{L^2(\Omega, \LtwoLtwo)}
  (T \tau_k^2)^{1/4}
  \norm{v}_{L^4[0,T;\Lfour]} .
\end{eqnarray*}
To show that the second term vanishes we use the bound on the
differences $u^{n-1} - u^n$ from Lemma \ref{lem:harmonicHeatBound1},
\begin{eqnarray*}
\bbE\left[ \Big| \sum_{n=1}^{T/\tau_k} \left(
    \big(u^{n-1}_k - u^n_k \big) \times \gamma \, (\xi^n_k)^2 ,
  \gamma \times v^n_\tau \right) 
\Big| \vph\right] 
&\leq& |\gamma|^2
\bbE\left[ \sum_{n=1}^{T/\tau_k} \ltwo{u^{n-1}_k - u^n_k}^2 \right]^{1/2}
\bbE\left[ \sum_{n=1}^{T/\tau_k} (\xi^n_k)^4 \right]^{1/4}
\norm{v}_{L^4[0,T;\Lfour]} \\
&\leq& |\gamma|^2
\bbE\left[ \sum_{n=1}^{T/\tau_k} \ltwo{u^{n-1}_k - u^n_k}^2 \right]^{1/2}
(T \tau_k)^{1/4}
\norm{v}_{L^4[0,T;\Lfour]}.
\end{eqnarray*}
The final term is bounded as
\begin{eqnarray*}
  \bbE\left[ \Big| \sum_{n=1}^{T/\tau_k} \left(
      u^n_k \big((\xi^n_k)^2 - \tau_k \big) \times \gamma,
      \gamma \times v^n_\tau \right) 
    \Big| \vph\right]  
  &\leq& |\gamma|^2 \, \norm{u_k}_{L^4(\Omega, L^4[0,T;\Lfour])}
  \bbE\left[ \sum_{n=1}^{T/\tau_k} 
    \big((\xi^n_k)^2 - \tau_k \big)^2 \right]^{1/2}
  \norm{v}_{L^4[0,T;\Lfour]} \\
  &\leq& |\gamma|^2 \, \norm{u_k}_{L^4(\Omega, L^4[0,T;\Lfour])}
  C (T \tau_k)^{1/2}
  \norm{v}_{L^4[0,T;\Lfour]} ,
\end{eqnarray*}
where the final line follows from the properties the stochastic increments,
$$
\bbE\left[\big((\xi^n_k)^2 - \tau_k \big)^2 \right]
= \bbE\left[(\xi^n_k)^4 - 2 \tau_k (\xi^n_k)^2 + \tau_k^2 \vph\right]
= \bbE\left[(\xi^n_k)^4 \vph\right] - \tau_k^2 
\leq C \tau_k^2.
$$
\end{proof}

\subsection{Monotone Operators}
The canonical example of a maximally monotone operator is the
$q$ Laplacian, $A:U \rightarrow U'$, characterized by
$$
(A(u), v) = \int_D |\nabla u|^{q-2} \nabla u. \nabla v\, dx,
\qquad u, v \in U,
$$
defined on the Sobolev space
$$
U = \Woneqo = \{u \in \Lq \sst \nabla u \in \Lq^d 
\text{ and } u|_{\partial D} = 0 \},
$$
with $D \subset \Re^d$ a bounded domain with Lipschitz boundary.  In
this section we consider the stochastic version of evolution equations
taking the form
\begin{equation} \label{eqn:monotone}
du + A(u) \, dt = f \, dt + g \, dW,
\qquad u(0) = u^0,
\end{equation}
with $A:U \rightarrow U'$ satisfying the following assumptions.

\begin{assumption} \label{ass:monotone} $U$ is a separable reflexive
  Banach space and $H$ is a Hilbert space with $U \cembed H \cembed
  U'$, and there exist constants $C, c > 0$ and $q \in (1,\infty)$ such
  that
  \begin{enumerate}
  \item Monotone: $(A(v)-A(u), v-u) \geq 0$
    for all $u,v \in U$.
  \item Demicontinuous: $A:U_{strong} \rightarrow U'_{weak}$ is continuous.
  \item Bounded: $\normUp{A(u)} \leq C (1 + \normU{u}^{q-1})$ for all $u \in U$.
  \item Coercive: $(A(u), u) \geq c \normU{u}^q$ for all $u \in U$.
  \end{enumerate}
\end{assumption}

\begin{theorem} \label{thm:monotone} Let $U$ be a separable reflexive
  Banach space, $H$ a Hilbert space, $U \cembed H$ be a compact, dense
  embedding, and let $(\Omega, \calF, \bbP)$ be a probability space.
  Let $1 < q < \infty$ and the operators of the abstract difference
  scheme \eqnref{:Spdeh} and data satisfy Assumptions
  \ref{ass:monotone} and \ref{ass:spde1} respectively and let the
  stochastic increments satisfy Assumption \ref{ass:spde0} with $p >
  4$.  Denote the discrete Wiener process with increments
  $\{\xi^m_{\tau}\}_{m=1}^N$ by $\What^n_{\tau}$, and let
  $\{u_{h\tau}\}_{h,\tau >0}$ be a sequence of solutions of the
  corresponding implicit Euler scheme \eqnref{:Spdeh} with data
  satisfying:
  \begin{enumerate}
  \item $\{u^0_{h\tau}\}$ is bounded in $L^p(\Omega,H)$ and
    converges in $L^2(\Omega, H)$
    as $h \rightarrow 0$.

  \item $\{f_{h\tau}\}$ is bounded in $L^{pq'/2}(\Omega, \LqpUp)$ and
    converges as $\tau,h \rightarrow 0$.

  \item $\{g_{h\tau}\}$ is bounded in $L^p(\Omega, \LpH)$ and
    converges in $L^2(\Omega, \LtwoH)$ as $\tau,h \rightarrow 0$.
  \end{enumerate}
  Let
  $$
  \bbX \equiv G[0,T;U'] \cap L^q[0,T;U]_{weak}
  \times \LqpUp 
  \times \LqpUp_{weak}
  \times \LtwoH
  \times C[0,T]\, .
  $$
  Then there exist a probability space $(\Omegat, \calFt, \bbPt)$ and
  a random variable $(u,f,a,g,W)$ on $\Omegat$ with values in $(\bbX,
  \calB(\bbX))$ for
  which the laws of $\bigl\{ u_{h \tau}, f_{h \tau}, A(u_{h\tau}),
  g_{h \tau}, \hat{W}_{\tau})\bigr\}_{k=1}^\infty$ converge to
  the law of $(u,f,a,g,W)$,
  $$
  \calL(\uhat_{h\tau}, f_{h\tau}, A(u_{h\tau}), 
  g_{h\tau}, \What_{\tau})
  \ \Rightarrow \ \calL(u,f,a,g,W).
  $$
  In addition, $\bbPt[u \in C[0,T;U'] \cap \LinfH] = \bbPt[a = A(u)] =
  1$, and there exists a filtration $\{\calFt(t)\}_{0 \leq t \leq T}$
  satisfying the usual conditions for which $(u,f,g,W)$ is adapted and
  $W$ is a real-valued Wiener process for which
  $$
  (u(t), v)_H   + \int_0^t (A(u),v) \, ds
  = (u^0,v)_H 
  + \int_0^t ( f, v ) \, ds
  + \int_0^t {( g, v )} \, dW,
  \qquad v \in U.
  $$
\end{theorem}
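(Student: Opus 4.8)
The plan is to establish the uniform bounds needed to apply Theorem \ref{thm:main}, use it to extract a weak martingale solution of $du+A(u)\,dt=f\,dt+g\,dW$ on a limit probability space, and then identify the weak limit $a$ of $A(u_{h\tau})$ with $A(u)$ by the Minty--Browder monotonicity trick. For the bounds I would argue exactly as in Lemma \ref{lem:bound}/Lemma \ref{lem:boundF}: test the scheme \eqnref{:Spdeh} with $v_h=u^n_{h\tau}(\omega)$, complete the square as in \eqnref{:square}, use coercivity $(A(u^n_{h\tau}),u^n_{h\tau})\ge c\normU{u^n_{h\tau}}^q$ together with Young's inequality to absorb the forcing term, split the stochastic cross term as $(g^{n-1}_{h\tau},u^{n-1}_{h\tau})\xi^n_\tau+(g^{n-1}_{h\tau},u^n_{h\tau}-u^{n-1}_{h\tau})\xi^n_\tau$ and control it by independence of $\xi^n_\tau$ from $\calF^{n-1}$ and the discrete BDG inequality. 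This bounds $u_{h\tau}$ in $L^p(\Omega,\LinfH)$ and in suitable Bochner norms over $\LqU$, controls $\sum_n\tau(A(u^n_{h\tau}),u^n_{h\tau})\ge 0$, and the growth bound $\normUp{A(u)}\le C(1+\normU{u}^{q-1})$, with $(q-1)q'=q$, then bounds $A(u_{h\tau})$ in $\LqpUp$. Writing $F_{h\tau}=F^{(1)}_{h\tau}+F^{(2)}_{h\tau}$ with $F^{(1)}_{h\tau}=f_{h\tau}$, $F^{(2)}_{h\tau}=-A(u_{h\tau})$ and $F_{h\tau}(u_{h\tau})=(f_{h\tau},u_{h\tau})-(A(u_{h\tau}),u_{h\tau})$, these estimates verify the hypotheses of Theorem \ref{thm:main} with $r=q$, invoking Statement \ref{it:Fsum} for the splitting; the moment exponents imposed in the present theorem are chosen precisely so the scheme's data have the required integrability.

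Theorem \ref{thm:main} then yields a filtered probability space $(\Omegat,\calFt,\{\calFt(t)\},\bbPt)$, a subsequence $(h_k,\tau_k)\to(0,0)$, and a random variable $(u,f,a,g,W)$ with values in $\bbX$ such that $\calL(u_{h_k\tau_k},f_{h_k\tau_k},A(u_{h_k\tau_k}),g_{h_k\tau_k},\What_{\tau_k})\Rightarrow\calL(u,f,a,g,W)$, with $W$ a standard Wiener process, $\bbPt[u\in C[0,T;U']\cap\LinfH]=1$, and
$$
(u(t),v)_H+\int_0^t(a,v)\,ds=(u^0,v)_H+\int_0^t(f,v)\,ds+\int_0^t(g,v)\,dW,\qquad v\in U,\ 0\le t\le T.
$$
Since $f_{h\tau}$ and $g_{h\tau}$ converge strongly, $\calL(f)$ and $\calL(g)$ are the limiting laws and the $f$-factor carries the strong topology. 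Everything in the statement then follows once $a=A(u)$ is shown, $\bbPt\otimes dt$-a.e.

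The core of the argument is the Minty step. First I would apply It\^o's formula, Theorem \ref{thm:Ito}, to the limit $u$ — whose integrability $u\in L^q(\Omegat,\LqU)$, $f-a\in L^{q'}(\Omegat,\LqpUp)$, $g\in L^2(\Omegat,\LtwoH)$ follows from the bounds above by lower semicontinuity under convergence in law (Corollary \ref{cor:Law}) — to get
$$
\bbEt\Big[\int_0^T(a,u)\,ds\Big]=\bbEt\Big[\half\normH{u^0}^2-\half\normH{u(T)}^2+\int_0^T(f,u)\,ds+\half\int_0^T\normH{g}^2\,ds\Big].
$$
On the discrete level, summing \eqnref{:Spdeh} tested with $u^n_{h\tau}$, completing the square, estimating the martingale increment by $2(g^{n-1}_{h\tau},u^n_{h\tau}-u^{n-1}_{h\tau})\xi^n_\tau\le\normH{u^n_{h\tau}-u^{n-1}_{h\tau}}^2+\normH{g^{n-1}_{h\tau}}^2|\xi^n_\tau|^2$ so that the extra dissipation cancels, and using independence ($\bbE[\normH{g^{n-1}_{h\tau}}^2|\xi^n_\tau|^2]=\tau\bbE\normH{g^{n-1}_{h\tau}}^2$, $\bbE[(g^{n-1}_{h\tau},u^{n-1}_{h\tau})\xi^n_\tau]=0$), yields
$$
\bbE\Big[\int_0^T(A(u_{h\tau}),u_{h\tau})\,ds\Big]\le\bbE\Big[\half\normH{u^0_{h\tau}}^2-\half\normH{u_{h\tau}(T)}^2+\int_0^T(f_{h\tau},u_{h\tau})\,ds+\half\int_0^T\normH{g_{h\tau}}^2\,ds\Big].
$$
Taking $\limsup_k$: the first term converges by strong convergence of the initial data (Example \ref{eg:ic}); $\liminf_k\bbE\normH{u_{h\tau}(T)}^2\ge\bbEt\normH{u(T)}^2$ by weak lower semicontinuity under convergence in law (Corollary \ref{cor:Law}, Example \ref{eg:ic}); $\bbE\int_0^T(f_{h\tau},u_{h\tau})\,ds\to\bbEt\int_0^T(f,u)\,ds$ by Lemma \ref{lem:mapping_theorem}, the pairing being sequentially continuous on $\LqpUp_{strong}\times\LqU_{weak}$ and uniformly integrable by the moment bounds; and $\bbE\int_0^T\normH{g_{h\tau}}^2\,ds\to\bbEt\int_0^T\normH{g}^2\,ds$ by strong convergence of $g_{h\tau}$. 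Comparing with the It\^o identity gives $\limsup_k\bbE\int_0^T(A(u_{h\tau}),u_{h\tau})\,ds\le\bbEt\int_0^T(a,u)\,ds$. For an adapted test process $v$, monotonicity gives $\bbE\int_0^T(A(u_{h\tau})-A(v),u_{h\tau}-v)\,ds\ge 0$; passing to the limit (the remaining pairings converging since $A(u_{h\tau})\weak a$, $u_{h\tau}\weak u$ in law and $v$ is fixed) yields $\bbEt\int_0^T(a-A(v),u-v)\,ds\ge 0$. Taking $v=u-\lambda w$, dividing by $\lambda$, letting $\lambda\to0^+$ and using demicontinuity of $A$ with its growth bound (dominated convergence) gives $\bbEt\int_0^T(a-A(u),w)\,ds=0$ for all $w\in L^q(\Omegat\times(0,T);U)$, whence $a=A(u)$ a.e., completing the proof.

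The hard part will be the last step: the monotonicity inequality lives on the original space $\Omega$, whereas the Minty argument needs genuinely random, adapted test processes $v$ on the limit space $\Omegat$. I would bridge this either by invoking a Skorokhod-type (Jakubowski) representation — legitimate since $\bbX$ carries a countable family of continuous functions separating points, as already exploited for the Prokhorov theorem — so as to obtain almost surely convergent copies on a common space, on which $A(\tilde u_k)=\tilde a_k$ still holds a.s. and Fatou applies to the monotonicity inequality tested against any $v$; or, staying strictly within the mapping-theorem framework, by first establishing the inequality for deterministic $v\in\LqU$ and for simple adapted processes realized as continuous functionals of the coordinates of $\bbX$, a class dense enough to close the Minty step. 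Together with the routine but essential verification that every limit passage meets the uniform-integrability hypothesis of Lemma \ref{lem:mapping_theorem} — which is why the moment parameter $p>4$ is imposed — this probability-space bookkeeping is essentially the only obstacle; the energy identity, monotonicity and demicontinuity then combine in the standard way.
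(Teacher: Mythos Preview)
Your plan is correct and matches the paper's proof almost exactly: the bounds via testing with $u^n_{h\tau}$ and Lemma~\ref{lem:bound2}, the invocation of Theorem~\ref{thm:main} with $r=q$ and the splitting $F_{h\tau}=f_{h\tau}-A(u_{h\tau})$, the pair of energy relations \eqnref{:monotone0}--\eqnref{:monotoneh} coming from It\^o's formula and the discrete test, and the comparison of the two via Example~\ref{eg:ic} and Lemma~\ref{lem:mapping_theorem} are all exactly what the paper does.

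The one place the paper is sharper than your sketch is precisely the ``hard part'' you flag. Rather than invoke a Skorokhod/Jakubowski representation, the paper isolates a self-contained Identification Lemma (Lemma~\ref{lem:identification}) that runs the Minty argument \emph{entirely at the level of laws}: it tests monotonicity against simple functions $f(u,a)=\sum_j\mathbf{1}_{B_j}(u,a)\,v_j$ with $B_j$ continuity sets of the limit law, so that both $f$ and $\calA(f)$ behave well under the extended Portmanteau Lemma~\ref{lem:mapping_theorem}; this is then upgraded to all bounded Borel $f$ by demicontinuity, and the usual choice $f=\pi_1+t\xi$ closes the argument. This is your second proposed route made precise, with the key point being that the test functions are allowed to depend on the \emph{pair} $(u,a)$ rather than being adapted processes, which is what lets one stay on the limit space without changing it. The paper also uses Assumption~\ref{ass:A} and Theorem~\ref{thm:uniqueLaw} to conclude that the full sequence (not just a subsequence) converges, a point your sketch omits.
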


In the previous examples the proof of consistency used the property
that the principle part of the operator $A:U \rightarrow U'$ was
linear.  For monotone operators this is no longer the case and the
following lemmas provide the properties required to establish the
assertion $\bbPt[a=A(u)]$ in the proof of Theorem \ref{thm:monotone}.
The first result is used to establish consistency in the deterministic
setting \cite[Lemmas III.2.1 and III.4.2]{Sh97}.

\begin{lemma} \label{lem:monotone} 
  Let $A:U \rightarrow U'$ be monotone, demicontinuous,
  and bounded (i.e. bounded sets map to bounded sets).  
  \begin{itemize}
  \item If $u_n \weak u$ in $U$ and $A(u_n) \weak a$ in $U'$ and
    $\limsup_{n \rightarrow \infty} (A(u_n), u_n) \leq (a, u)$, then
    $a = A(u)$.

  \item If $A$ satisfies Assumptions \ref{ass:monotone}, then
    so too does its realization $\calA:L^q[0,T;U] \rightarrow 
    L^{q'}[0,T;U']$ given by
    $$
    (\calA(u),v) = \int_0^T (A(u(t),v(t)) \, dt.
    $$
  \end{itemize}
\end{lemma}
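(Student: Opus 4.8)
The plan is to treat the two bullets separately: the first is the classical Minty--Browder monotonicity trick, and the second is a routine (if slightly delicate) lifting of the pointwise structural properties of $A$ to the Bochner setting $L^q[0,T;U]$.

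For the first bullet, I would begin from monotonicity written as $(A(u_n)-A(v),u_n-v)\ge 0$ for an arbitrary fixed $v\in U$, rearranged to $(A(u_n),u_n)\ge (A(u_n),v)+(A(v),u_n-v)$. Since $v$ is fixed, $(A(u_n),v)\to(a,v)$ from $A(u_n)\weak a$, and $(A(v),u_n-v)\to(A(v),u-v)$ from $u_n\weak u$; combining the convergence of the right-hand side with the hypothesis $\limsup_n(A(u_n),u_n)\le(a,u)$ gives $(a,u)\ge(a,v)+(A(v),u-v)$, i.e. $(a-A(v),u-v)\ge 0$ for every $v\in U$. Then I would substitute $v=u-tw$ with $t>0$ and $w\in U$ arbitrary, divide by $t$ to obtain $(a-A(u-tw),w)\ge 0$, and let $t\downarrow 0$; demicontinuity gives $A(u-tw)\weak A(u)$, hence $(a-A(u),w)\ge 0$. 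Replacing $w$ by $-w$ forces $(a-A(u),w)=0$ for all $w\in U$, so $a=A(u)$.

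For the second bullet, I would first check that $\calA$ is well defined: for $u\in L^q[0,T;U]$ the map $t\mapsto A(u(t))$ is weakly measurable (by demicontinuity and strong measurability of $u$), hence strongly measurable since $U'$ is separable, and the growth bound $\|A(u(t))\|_{U'}\le C(1+\|u(t)\|_U^{q-1})$ with $(q-1)q'=q$ places it in $L^{q'}[0,T;U']$. This same computation (using $(1+s^{q-1})^{q'}\le 2^{q'-1}(1+s^q)$) yields the ``bounded'' property of Assumption \ref{ass:monotone} for $\calA$, with a constant depending only on $C$, $q$, $T$. Monotonicity and coercivity of $\calA$ are then immediate by integrating in $t$ the pointwise inequalities $(A(v(t))-A(u(t)),v(t)-u(t))\ge 0$ and $(A(u(t)),u(t))\ge c\|u(t)\|_U^q$.

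The step requiring care, and the one I expect to be the main obstacle, is demicontinuity of $\calA$. Given $u_n\to u$ strongly in $L^q[0,T;U]$, I would argue by the subsequence principle: it suffices to show every subsequence of $\{\calA(u_n)\}$ has a further subsequence converging weakly in $L^{q'}[0,T;U']$ to the fixed limit $\calA(u)$ (which then forces weak convergence of the whole sequence). Along such a subsequence, after a further extraction using the converse of the dominated convergence theorem, $u_n(t)\to u(t)$ in $U$ for a.e.\ $t$ and $\|u_n(t)\|_U\le h(t)$ for some $h\in L^q(0,T)$. Demicontinuity of $A$ gives $A(u_n(t))\weak A(u(t))$ in $U'$ a.e., and $\|A(u_n(t))\|_{U'}\le C(1+h(t)^{q-1})\in L^{q'}(0,T)$; hence for each fixed $v\in L^q[0,T;U]$ the scalars $(A(u_n(t)),v(t))$ converge a.e.\ to $(A(u(t)),v(t))$ and are dominated by $C(1+h(t)^{q-1})\|v(t)\|_U\in L^1(0,T)$, so dominated convergence yields $(\calA(u_n),v)\to(\calA(u),v)$. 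The subtlety is purely in remembering to extract the a.e.-convergent, $L^q$-dominated subsequence and to route the conclusion through the subsequence principle, since demicontinuity is a statement about the full sequence.
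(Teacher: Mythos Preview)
Your proof is correct. Note, however, that the paper does not actually prove this lemma: it states it as a known result and cites \cite[Lemmas III.2.1 and III.4.2]{Sh97} immediately before the statement. Your argument is essentially the standard one found in that reference---the Minty trick for the first bullet, and the lifting to Bochner spaces via pointwise reasoning plus dominated convergence (routed through the subsequence principle) for the second---so there is no methodological divergence to compare.
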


The following lemma is the analog of this lemma for random variables.
In the proof of Theorem \ref{thm:monotone} this lemma will be used
with Banach space $\calU = \LqU$.

\begin{lemma}[Identification] \label{lem:identification} 
  Let $\calU$ be a separable reflexive Banach space and $\calA:\calU
  \rightarrow \calU'$ be monotone, demicontinuous and bounded. Let
  $(\Omega, \bbP, \calF)$ be a probability triple and
  $\{u_n\}_{n=1}^\infty$ be random variables with values in $\calU$
  satisfying:
  \begin{itemize}
  \item $\calL(u_n, \calA(u_n)) \Rightarrow \calL(u,a)$ in 
    $\calU_{weak}\times \calU'_{weak}$.

  \item $\sup_n \bbE \left[\norm{u_n}_\calU^s +
    \norm{\calA(u_n)}^{s'}_{\calU'} \right] < \infty$ for some $s > 1$.

  \item $\liminf_{n\to\infty}\bbE [(\calA(u_n),u_n)] \le \bbE[(a,u)].$
  \end{itemize}
  Then $\calL(u,a)[a=\calA(u)]=1$.
\end{lemma}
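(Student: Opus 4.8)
The plan is to run a Minty--Browder monotonicity argument, but in a form that is \emph{localized} in $\omega$, so that it yields the pointwise identity $a=\calA(u)$ rather than merely an identity of expectations. The unweighted inequality one gets for free --- $\bbE[(a-\calA(w),u-w)]\ge 0$ for deterministic $w\in\calU$ --- is plainly too weak to force $a=\calA(u)$ almost surely, so the first step is to move everything onto a common probability space. Since $\calU$ is separable and reflexive, $\calU_{weak}\times\calU'_{weak}$ carries a countable family of real-valued continuous functions separating points, and the laws $\calL(u_n,\calA(u_n))$ and $\calL(u,a)$ are tight (the moment bounds concentrate them on balls, which are weakly compact and metrizable); hence I would apply the Skorokhod representation theorem to obtain a probability space $(\Omegabar,\calFbar,\bar\bbP)$ and random variables $(\ubar_n,\abar_n),(\ubar,\abar)$ with $\calL(\ubar_n,\abar_n)=\calL(u_n,\calA(u_n))$, $\calL(\ubar,\abar)=\calL(u,a)$, and $\ubar_n\weak\ubar$ in $\calU$, $\abar_n\weak\abar$ in $\calU'$, $\bar\bbP$-a.s. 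Because $\calL(u_n,\calA(u_n))$ is carried by the graph of $\calA$, we automatically get $\abar_n=\calA(\ubar_n)$ $\bar\bbP$-a.s.; the moment bounds and the energy hypothesis $\liminf_n\bbE[(\calA(u_n),u_n)]\le\bbE[(a,u)]$ transfer verbatim since they depend only on the laws. (A reader preferring to avoid the general Skorokhod theorem can instead restrict to a $\sigma$-compact metrizable subset on which the laws concentrate, or argue directly via Lemma~\ref{lem:mapping_theorem}; the common-space formulation is what makes the localization below transparent.)

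The second, crucial step is to test monotonicity against \emph{random} simple functions, which is legitimate once everything lives on $(\Omegabar,\calFbar,\bar\bbP)$. For any simple $v:\Omegabar\to\calU$, monotonicity gives pointwise
\[
(\calA(\ubar_n),\ubar_n)\ \ge\ (\calA(\ubar_n),v)+(\calA(v),\ubar_n-v)\ =:\ \rho_n .
\]
Since $v$, and hence $\calA(v)$, take finitely many values, $\rho_n$ is dominated by $C\bigl(1+\|\calA(\ubar_n)\|_{\calU'}+\|\ubar_n\|_{\calU}\bigr)$, so it is uniformly integrable, and $\rho_n\to\rho:=(\abar,v)+(\calA(v),\ubar-v)$ $\bar\bbP$-a.s., whence $\bar\bbE[\rho_n]\to\bar\bbE[\rho]$. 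Taking $\bar\bbE$ of the displayed inequality, passing to $\liminf$, and invoking the transferred energy hypothesis $\liminf_n\bar\bbE[(\calA(\ubar_n),\ubar_n)]\le\bar\bbE[(\abar,\ubar)]$ gives $\bar\bbE[\rho]\le\bar\bbE[(\abar,\ubar)]$, i.e.
\[
\bar\bbE\bigl[(\abar-\calA(v),\ubar-v)\bigr]\ \ge\ 0\qquad\text{for every simple }v:\Omegabar\to\calU .
\]
Approximating a bounded measurable $v$ strongly a.s.\ by simple functions and using demicontinuity and boundedness of $\calA$ together with dominated convergence, this extends to all bounded measurable $\calU$-valued $v$, and then --- after a routine truncation of $\ubar$ and using the growth bound on $\calA$ to control the integrands, which is why the available moment exponent must dominate the growth exponent of $\calA$ (always the case in the applications) --- to test functions of the form $v=\ubar-\lambda w\,\mathbf 1_B$ with $w\in\calU$, $\lambda>0$, $B\in\calFbar$.

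The endgame is the local Minty trick. Inserting $v=\ubar-\lambda w\,\mathbf 1_B$ the integrand vanishes off $B$, so $\lambda\,\bar\bbE\bigl[\mathbf 1_B(\abar-\calA(\ubar-\lambda w),w)\bigr]\ge 0$; dividing by $\lambda$ and letting $\lambda\downarrow 0$, demicontinuity gives $\calA(\ubar-\lambda w)\weak\calA(\ubar)$ a.s., and dominated convergence (again from the moment and growth bounds) gives $\bar\bbE\bigl[\mathbf 1_B(\abar-\calA(\ubar),w)\bigr]\ge 0$ for every $B\in\calFbar$ and $w\in\calU$. Replacing $w$ by $-w$ turns this into an equality, so $(\abar-\calA(\ubar),w)=0$ $\bar\bbP$-a.s.\ for each fixed $w$; running over a countable dense subset of $\calU$ shows $\abar=\calA(\ubar)$ $\bar\bbP$-a.s. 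Since $\{(v,b):b=\calA(v)\}$ is a Borel subset of $\calU_{weak}\times\calU'_{weak}$ and $\calL(\ubar,\abar)=\calL(u,a)$, this yields $\calL(u,a)[a=\calA(u)]=1$.

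The main obstacle is the second step: the energy hypothesis is only one-sided (a $\liminf$ of expectations), and monotonicity only bounds $(\calA(\ubar_n),\ubar_n)$ from one side, so naively inserting a weight $\phi(\ubar_n,\calA(\ubar_n))$ into the limit passage cancels out and returns the useless unweighted inequality. The device that escapes this is to carry the ``weight'' \emph{inside} the monotonicity inequality by testing against a random simple $v$ on the common Skorokhod space --- then the lower bound $\rho_n$ is itself uniformly integrable and a.s.\ convergent, and only the genuinely bad term $(\calA(\ubar_n),\ubar_n)$ is left to the $\liminf$. Everything else is standard; the only remaining care is the integrability bookkeeping in the dominated-convergence passages, which in Theorem~\ref{thm:monotone} is handled by taking the integrability exponent $p$ large relative to $q$.
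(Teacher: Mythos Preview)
Your argument is correct and follows the same overall Minty--Browder strategy as the paper --- test monotonicity against \emph{random} comparison functions, pass to the limit in all terms except the bad one $(\calA(u_n),u_n)$ which is handled by the one-sided energy hypothesis, then run the local Minty trick with $v=u-\lambda w\mathbf 1_B$ --- but the technical implementation is genuinely different. You invoke a Skorokhod representation to place everything on a common space with a.s.\ weak convergence, so that the limit passage in $\rho_n$ is a standard uniform-integrability argument; the paper instead works directly on the canonical space $\bbX=\calU_{weak}\times\calU'_{weak}$ with the limit law $\bbPt=\calL(u,a)$, taking test functions $f(u,a)=\sum_j\mathbf 1_{B_j}(u,a)\,v_j$ with $\bbPt[\partial B_j]=0$ and using the extended Portmanteau Lemma~\ref{lem:mapping_theorem} to pass to the limit in $\bbE[(\calA(u_n),f(u_n,\calA(u_n)))]$ etc.\ despite $\calA\circ f$ not being weakly continuous. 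The paper's route is consistent with its overall philosophy of avoiding Skorokhod entirely (Lemma~\ref{lem:mapping_theorem} is developed precisely for this purpose), and it sidesteps the issue of Skorokhod on non-metrizable spaces that you have to address by restricting to metrizable balls or invoking Jakubowski's version. Your route is the more traditional one in the SPDE literature and makes the pointwise monotonicity inequality transparent. Both need the same cut-off step to reach $v=u-\lambda w\mathbf 1_B$ from bounded test functions; the paper formulates this as extending to $f\in L^s$ and then taking $f=\pi_1+t\xi$ with $\xi$ bounded, while your remark about the integrability exponent dominating the growth of $\calA$ points to the application in Theorem~\ref{thm:monotone} rather than the hypotheses of the lemma itself, which already furnish the needed $L^s\times L^{s'}$ bounds.
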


We postpone the proof of this lemma until the end of this section. 

\begin{proof}(of Theorem \ref{thm:monotone})
Writing $a(u,v) = (A(u), v)$, we consider the numerical approximation
of solutions to equation \eqnref{:monotone} using the scheme
\eqnref{:Spdeh} with data (\eqnref{:AA1}) from Section
\ref{sec:introSpde}.  Selecting the test function $v_h = u_{h\tau}^n$
in the discrete scheme \eqnref{:Spdeh}, the coercivity hypothesis
gives the bound
$$
(1/2) \normH{u^n}^2 + (1/2) \normH{u^n - u^{n-1}}^2 + c \tau \normU{u^n}^q
\leq 
(1/2) \normH{u^{n-1}}^2 
+ \tau (f_{h\tau}^n, u^n_{h\tau})
+ (g^{m-1}_{h\tau}, u^m_{h\tau})_H \xi^n_\tau.
$$
It follows from Lemma \ref{lem:bound2} that
\begin{multline*}
\norm{\max_{0 \leq t \leq T} \uhat_{h\tau}}_{L^p(\Omega,H)}
+ \norm{u_{h\tau}}^{q/2}_{L^{pq/2}(\Omega,\LqU)} \\
\leq C(T) \left( \norm{u^0_{h\tau}}_{L^p(\Omega,H)}
+ \norm{f_{h\tau}}^{q'/2}_{L^{p q'/2}(\Omega,\LqpUp)}
+ \norm{g_{h\tau}}_{L^p(\Omega,\LtwoH)}
\vph\right).
\end{multline*}
Granted bounds upon the data $(u^0,f,g)$, this estimate establishes
the hypotheses of Theorem \ref{thm:main} with $F_{h\tau} = f_{h\tau} -
A(u_{h\tau}) \equiv F^{(1)}_{h\tau} + F^{(2)}_{h\tau}$ (and moment
parameter $\min(p q/2, p q'/2) > 2$), so that, upon passing to a
subsequence, there exist a filtered probability space $(\Omegat,
\calFt, \{\calFt(t)\}_{0 \leq t \leq T}, \bbPt)$ and a random variable
$(u,f,a,g,W)$ with values in $\bbX$ for which
$\calL(u_{h\tau},f_{h\tau}, A(u_{h\tau}), g_{h\tau}, \What_{h\tau})
\Rightarrow \calL(u,f,a,g,W)$ and
$$
(u(t),v) = (u^0,v) + \int_0^t (f(s)-a(s),v) \, ds 
+ \int_0^t (g(s),v)_H \, dW(s),
\qquad 0 \leq t \leq T, \,\, v \in U.
$$
Since $A:U \rightarrow U'$ satisfies Assumptions \ref{ass:A}, uniqueness
in law holds for solutions of \eqnref{:monotone}, so that upon showing
$a = A(u)$ it will follow that it whole sequence converges as asserted
in the statement of the theorem.

Lemma \ref{lem:identification} with $s = pq/2$ is used to verify 
that $a = A(u)$. Since $A$ has $(q-1)$ growth it follows that
$$
\lqpUp{A(u)}^{pq'/2} 
\leq C \left(1 + \lqU{u}^{pq/2} \right).
$$
Then $s > 1$ and $s' < pq'/2$ when $q > 1$ and $p > 2$, so the growth
hypothesis of Lemma \ref{lem:identification} is satisfied.  The third
hypothesis is established by showing that the continuous and discrete
pairings satisfy
\begin{eqnarray}
\bbEt \left[ \int_0^T (a,u) \, ds\right]
&=& \bbEt \left[ (1/2) \left(\normH{u(0)}^2 - \normH{u(T)}^2 \right) 
  + \int_0^T \left((f,u) + (1/2) \normH{g}^2 \vph\right) \, ds\right],
\label{eqn:monotone0} \\
\bbE\left[\int_0^T (A(u_{h\tau}), u_{h\tau}) \, ds\right]
&\leq& \bbE\left[ 
  (1/2) \left(\normH{u^0_{h\tau}}^2 - \normH{u^N_{h\tau}}^2 \right)
  + \int_0^T \left( (f_{h\tau},u_{h\tau}) 
    + (1/2) \normH{g_{h\tau}}^2 \vph\right) \, ds\right]
\label{eqn:monotoneh}
\end{eqnarray}
and to then show that the limit on the right-hand side of the second
equation is bounded by the right-hand side of the first.

To verify equation \eqnref{:monotone0}, recall that Ito's formula,
Theorem \ref{thm:Ito}, shows
$$
\bbEt\left[(1/2) \normH{u(T)}^2 \right]
= \bbEt\left[(1/2) \normH{u(0)}^2 
  + \int_0^T \left( (f-a,u) + (1/2) \normH{g}^2 \vph\right) \, ds\right]\, ,
$$
which is precisely equation \eqnref{:monotone0}.

To verify equation \eqnref{:monotoneh}, select the test function
$v_h = u^n_{h\tau}$ in the discrete scheme \eqnref{:Spdeh} to get
$$
(1/2) \normH{u^n_{h\tau}}^2 
+ (1/2) \normH{u^n_{h\tau} - u^{n-1}_{h\tau}}^2 \phi_n
+ (A(u^n_{h\tau}), u^n_{h\tau})
= (1/2) \normH{u^{n-1}_{h\tau}}^2
+ f(u^n_{h\tau}) \phi^n + (g^{n-1}_{h\tau}, u^n_{h\tau}) \xi^n.
$$
Summing this identity and independence of
the increments, $\bbE[(g^{n-1}_{h\tau}, u^{n-1}_{h\tau})_H \xi^n] = 0$, shows
\begin{multline*}
\bbE\left[\sum_{n=1}^N 
  (1/2) \normH{u^N_{h\tau}}^2 
+ (1/2) \normH{u^n_{h\tau} - u^{n-1}_{h\tau}}^2
+ \int_0^T (A(u_{h\tau}), u_{h\tau}) \, ds\right] \\
= \bbE\left[ 
  (1/2) \normH{u^0_{h\tau}}^2
+ \sum_{n=1}^N (f^n_{h\tau},u^n_{h\tau})
+ \sum_{n=1}^N (g^{n-1}_{h\tau}, u^n_{h\tau}-u^{n-1})_H \xi^n \right].
\end{multline*}
Equation \eqnref{:monotoneh} follows upon bounding
the last term as
$$
\bbE\left[\sum_{n=1}^N (g^{n-1}_{h\tau}, u^n_{h\tau}-u^{n-1})_H \xi^n \right]
\leq 
\half \bbE\left[\sum_{n=1}^N \normH{g^{n-1}_{h\tau}}^2 (\xi^n)^2 \right]
+ \half \bbE\left[\sum_{n=1}^N \normH{u^n_{h\tau}-u^{n-1}}^2\right],
$$
and recalling that the variance of the increments is the time step,
$\bbE\left[ \normH{g^{n-1}_{h\tau}}^2 (\xi^n)^2\right] =
\bbE\left[\normH{g^{n-1}_{h\tau}}^2 \tau\right]$.

To pass to the limit on the right of \eqnref{:monotoneh}, recall that
Example \ref{eg:ic} shows that, under the hypotheses of the theorem,
$$
\bbEt\left[\normH{u(0)}^2 \right]
= \lim_{h,\tau \rightarrow 0} \bbE\left[\normH{u^0_{h\tau}}^2 \right]
\quad \text{ and } \quad
\bbEt\left[\normH{u(T)}^2 \right]
\leq \lim_{h,\tau \rightarrow 0} \bbE\left[\normH{u^N_\tau}^2 \right],
$$
where $N = T/\tau$. The function
$$
(u, f,a, g, W) \mapsto \int_0^T (f,u) + (1/2) \normH{g}^2\, ds
$$
is continuous on $\bbX$ and the numerical approximation of each
term has moments with modulus strictly greater than one, so
$$
\lim_{h,\tau \rightarrow 0}
\int_0^T (f_{h\tau},u_{h\tau}) + (1/2) \normH{g_{h\tau}}^2\, ds
= \int_0^T (f,u) + (1/2) \normH{g}^2\, ds\, .
$$
\end{proof}

We finish this section with the proof of Lemma \ref{lem:identification}.

\begin{proof}(of Lemma \ref{lem:identification})
Since $\calU$ is separable and reflexive it follows that $\calU^\prime$
is also separable, and if $u$ is a Borel measurable random variable
with values in $\calU$ then $\calA(u)$ is a Borel measurable random
variable in $\calU^\prime$ since $\calA$ is demi--continuous.  The
separability of $\calU$ and $\calU^\prime$ also implies that
$$
\calB(\calU_{weak}\times \calU^\prime_{weak})
= \calB(\calU\times \calU^\prime)
= \calB(\calU)\otimes\calB(\calU^\prime).
$$
Define $\bbX=\calU_{weak}\times \calU^\prime_{weak}$, denote by $\bbPt$ the
law of $(u,a)$ on $\calB(\bbX)$, and let $B_1,\dots,B_m$ be Borel sets
in $\bbX$ such that
$$
\bbPt\left[\partial B_1\cup\dots\cup\partial B_k \vph\right]=0.
$$
Fix $v_1,\dots,v_k\in \calU$ and define 
$$
f(z)=\sum_{j=1}^k\mathbf 1_{B_j}(z)v_j.
$$
Then $f:\bbX\to \calU_{strong}$ and $\calA(f):\bbX\to
\calU^\prime_{strong}$ are uniformly bounded on $\bbX$ and continuous
with respect to sequences $z_n\to z$ where $z$ belongs to
$\bbX\setminus(\partial B_1\cup\dots\cup\partial B_k)$; a set of
$\bbPt$-measure one. In particular, by the extended Portmanteau Lemma 
\ref{lem:mapping_theorem},
\begin{align}
\lim_{n\to\infty}\bbE\left[\left( \calA(u_n),f(u_n,\calA(u_n))\vph\right)\right]
&=\widetilde{\bbE} \left[\left( a,f(u,a)\vph\right)\right]
\label{remterm1} \\
\lim_{n\to\infty}\bbE\left[\left( \calA(f(u_n,\calA(u_n))),u_n\vph\right)\right]
&=\widetilde{\bbE} \left[\left( \calA(f(u,a)),u\vph\right)\right]
\label{remterm2} \\
\lim_{n\to\infty}\bbE\left[\left( \calA(f(u_n,\calA(u_n))),
f(u_n,\calA(u_n))\vph\right)\right]
&=\widetilde{\bbE}\,\left[\left( \calA(f(u,a)),f(u,a)\vph\right)\right]
\label{remterm3}
\end{align}
despite $\calA$ not being weakly continuous. By monotonicity,
$$
\Bbb E\left[\left( \calA(u_n)-\calA(f(u_n,\calA(u_n))),
u_n-f(u_n,\calA(u_n))\vph\right)\right]\ge 0
$$
so, by the upper semi--continuity assumption on $\{\Bbb
E\left[\left(\calA(u_n),u_n\vph\right)\right]\}$ and
\eqref{remterm1}-\eqref{remterm3},
\begin{equation}\label{gf1}
\widetilde{\Bbb E}\,\left[\left( a-\calA(f(u,a)),u-f(u,a)\vph\right)\right]
\ge 0.
\end{equation}
Now
$
\calB_0=\{B\in\calB(\bbX):\,\bbPt(\partial B)=0\}
$
is an algebra such that $\sigma(\calB_0)=\calB(\bbX)$, thus, if
$B_1,\dots,B_k$ belong to $\calB(\bbX)$, then there exist
$B_1^n,\dots,B^n_k$ in $\calB_0$ with $n\in\bbN$ such that
$$
f_n(z)=\sum_{j=1}^k\mathbf 1_{B^n_j}(z)v_j
\rightarrow
f(z)=\sum_{j=1}^k\mathbf 1_{B_j}(z)v_j,
\qquad
\text{$\bbPt$-almost surely.}
$$
Consequently, \eqref{gf1} holds for every Borel simple function $f$.
Demi-continuity of $\calA$ then implies that \eqref{gf1} holds for
every Borel measurable bounded function $f$, which then extends
\eqref{gf1} to $f\in L^s[(\bbX,\calB(\bbX),\bbPt); \calU]$ by a cut-off
argument.  In particular, if $\xi:\bbX\to U$ is Borel measurable and
bounded, then applying $f=\pi_1+t\xi$ to \eqref{gf1} and letting $t\to
0$, we get
$$
\bbEt \,\left[\left( a-\calA(u),\xi(u,a)\vph\right)\right]=0
$$
by demi-continuity of $\calA$. In particular, $\bbPt \,[a=\calA(u)]=1$.
\end{proof}

\pagebreak

\appendix

\section{Laws and Random Variables}

Classical probability is well developed for random variables taking
values in Polish (complete separable metric) spaces; however, the weak
topologies of Banach space that arise for problems involving partial
differential operators are not metrizable. In this appendix extensions
of the classical results to the current setting are presented.

\subsection{Portmanteau Theorem for Non-Metrizable Spaces}

The following proof is a generalization of the proof of the mapping
theorem in \cite[Theorem 2.7]{Billingsley99} which admits sequences of
functions which may not be continuous but may, for example, be
sequentially continuous or lower semi--continuous.

\begin{proof} (of Lemma \ref{lem:mapping_theorem})
To prove the first assertion, define $\nu_k=\bbP_k(\zeta_k\in\cdot)$
and $\nu=\bbP(\zeta_k\in\cdot)$. For $\epsilon > 0$, let $C_\varepsilon$ be
a compact subset\footnote{Note that compacts subsets of $\calX$ are
  metrizable.} of $\calX$ such that $\bbP_k(C_\varepsilon)\ge
1-\varepsilon$ and let $V$ be a closed set in $\Bbb R$. Then
$$
\limsup_{k\to\infty}\nu_k(V)
\le \varepsilon+\limsup_{k\to\infty}\bbP_k([\zeta_k\in V]\cap C_\varepsilon)
\le \varepsilon+\bbP\left(\overline{\bigcup_{k\ge n}[\zeta_k\in V]
    \cap C_\varepsilon}\right)
$$
holds by the Portmanteau theorem for every $n\ge 1$, hence
$$
\limsup_{k\to\infty}\nu_k(V)\le\varepsilon+\bbP\left(\bigcap_{n\ge
1}\overline{\bigcup_{k\ge n}[\zeta_k\in V]\cap
C_\varepsilon}\right)\le\varepsilon+\bbP(\zeta\in V)+\bbP^*(N),
$$
thus $\nu_k\Rightarrow\nu$ by the Portmanteau theorem.

For the second assertion, let $C_\varepsilon$ be a compact set as
above. Then
$$
\limsup_{k\to\infty}\bbP_k([\zeta\le t]\cap C_\varepsilon)\le \bbP([\zeta\le
t]\cap C_\varepsilon)
$$
by the Portmanteau theorem. So
$
\liminf_{k\to\infty}\bbP_k[\zeta>t]\ge \bbP[\zeta>t]
$
and
$$
\int_{\calX}\zeta\,d\bbP=\int_0^\infty
\bbP[\zeta>t] \, dt\le\liminf_{k\to\infty}\int_0^\infty
\bbP_k[\zeta>t] \, dt=\liminf_{k\to\infty}\int_{\calX}\zeta\,d\bbP_k
$$
by the Fatou lemma.
\end{proof}

\subsection{Compactness of Piecewise Linear and Constant Interpolants}

\begin{lemma} Let $Z = C[0,T;U^\prime]$ or $G[0,T;U^\prime]$,
  $R\in(0,\infty)$ and define
  $$
  M_R=\{u\in Z\cap \LrU_{weak}:\,\|u\|_{\LrU}\le R\}.
  $$
  Then $M_R$ is closed and metrizable. In particular
  \begin{itemize}
  \item If $\calF$ is a compact in $Z$ then $\calF\cap M_R$ is a compact in
    $Z\cap \LrU_{weak}$.

  \item If $\calF$ is a compact in $Z\cap \LrU_{weak}$ then $\calF$ is a
    compact in $Z$ and there exists $R>0$ such that $\calF \subseteq M_R$.
  \end{itemize}
\end{lemma}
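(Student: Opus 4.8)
The plan is to treat both choices of $Z$ at once, using only that $Z$ is a metric space which is continuously embedded in $L^1[0,T;U']$: for $Z=C[0,T;U']$ this is the trivial bound $\norm{u}_{L^1[0,T;U']}\le T\,\norm{u}_{C[0,T;U']}$, and for $Z=G[0,T;U']$ it is exactly Lemma~\ref{lem:skorohod}, parts (1) and (2). I would also record two standard facts about $\LrU$ under the standing assumptions ($1<r<\infty$, $U$ separable and reflexive): its dual is $L^{r'}[0,T;U']$, which is separable, so the weak topology of $\LrU$ restricted to any norm-bounded set is metrizable; and $\LrU$ is reflexive, so norm-bounded sequences have weakly convergent subsequences and closed balls are weakly closed. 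The only nonroutine ingredient is the following identification, used repeatedly: \emph{if $u_n\to u$ in $Z$ and $u_n\weak v$ in $\LrU$, then $u=v$}. This follows by pushing both convergences forward along the continuous linear inclusions of $Z$ and of $\LrU$ into $L^1[0,T;U']$ (a bounded linear map is weak-to-weak continuous), which yields $u_n\to u$ strongly and $u_n\weak v$ weakly in $L^1[0,T;U']$, so $u=v$ by uniqueness of weak limits.

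\textbf{Metrizability and closedness of $M_R$.} Put $B_R=\{u\in\LrU:\lrU{u}\le R\}$. On $M_R\subseteq B_R$ the topology induced from $Z\cap\LrU_{weak}$ is the common refinement of the $Z$-subspace topology, metrized by the $Z$-metric, and of the weak topology of $\LrU$ restricted to $B_R$, which is metrizable by the first recorded fact. Since the common refinement of two metrizable topologies on the same set is metrized by the sum of the two metrics, $M_R$ is metrizable. For closedness, $B_R$ is weakly closed, so $M_R=(Z\cap\LrU)\cap B_R$ is the trace of a weakly closed set on $Z\cap\LrU_{weak}$ and is therefore closed; equivalently, if $u_\alpha\to u$ in $Z\cap\LrU_{weak}$ with $u_\alpha\in M_R$ then $u_\alpha\weak u$ in $\LrU$ and weak lower semicontinuity of the norm give $\lrU{u}\le R$.

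\textbf{The two consequences.} If $\mathcal{F}\subseteq Z$ is compact, then $\mathcal{F}$ is a compact metric space and $\mathcal{F}\cap M_R\subseteq M_R$ is metrizable, so it suffices to check sequential compactness in $Z\cap\LrU_{weak}$. Given a sequence in $\mathcal{F}\cap M_R$ I would extract a subsequence converging in $Z$ (compactness of $\mathcal{F}$), then a further subsequence converging weakly in $\LrU$ (reflexivity and the uniform bound $R$), the weak limit having norm $\le R$ by weak lower semicontinuity; the identification above forces the two limits to coincide, so the subsequence converges in $Z\cap\LrU_{weak}$ to a point of $\mathcal{F}\cap M_R$. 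Conversely, if $\mathcal{F}$ is compact in $Z\cap\LrU_{weak}$, then since this topology is finer than the ones induced from $Z$ and from $\LrU_{weak}$, the corresponding identity maps are continuous, so $\mathcal{F}$ is compact in $Z$ and weakly compact in $\LrU$; a weakly compact subset of a normed space is norm-bounded — for each functional $\phi$ the map $u\mapsto\langle\phi,u\rangle$ is bounded on the compact set $\mathcal{F}$, and the uniform boundedness principle applied to $\mathcal{F}\subseteq(\LrU)^{**}$ yields $R:=\sup_{u\in\mathcal{F}}\lrU{u}<\infty$ — hence $\mathcal{F}\subseteq M_R$.

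The step I expect to require the most care is the identification $u=v$, together with verifying that the common ``sink'' $L^1[0,T;U']$ really receives $Z$ continuously (for $Z=G[0,T;U']$ this is precisely the content of Lemma~\ref{lem:skorohod}(2)) and that $L^{r'}[0,T;U']$ is genuinely separable, which is what makes weak convergence on $B_R$ detectable by sequences. Everything else reduces to routine uses of reflexivity, weak lower semicontinuity of the norm, and the uniform boundedness principle.
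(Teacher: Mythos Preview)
Your proof is correct and follows the same approach as the paper's, which is extremely terse: the paper's proof reads in full ``Closed balls of separable reflexive Banach spaces (here $\LrU$), equipped with the weak topology, are metrizable and intersections of metric spaces is also a metric space.'' You have filled in the details that the paper omits, in particular the identification step via the common embedding into $L^1[0,T;U']$ and the sequential-compactness verification for the two consequences; these are exactly the arguments one needs, and your handling of them is clean.
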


\begin{proof}
  Closed balls of separable reflexive Banach spaces (here
  $\LrU$), equipped with the weak topology, are metrizable and
  intersections of metric spaces is also a metric
  space.
\end{proof}

\begin{proof} (of Lemma \ref{lem:UandUhat})
 Let us consider the modulus of continuity
 (see (6.2) in \cite[Section 3.6]{EiKu86}) 
$$
 w(u,\delta)
 =\inf \left\{\sup \big\{\|u(t)-u(s)\|_{U^\prime}:\,s,t\in (s_j,s_{j+1}],
 \,0\le j\le m \big\} \sst \min_j\,(s_{j+1}-s_j)>\delta \right\},
 $$
 and observe that
 $w(u_n,\delta)=0$ if $\delta<T/n$ and
 $$
 w(u_n,\delta)\le 2m(\hat u_n,T/n)+m(\hat u_n,2\delta)\le 3m(\hat
 u_n,2\delta)\quad\text{if}\quad \delta\ge T/n,
 $$
 where $m$ is the standard modulus of continuity in $C[0,T;U^\prime]$.
 In particular,
 $$
 w(u_n,\delta)\le 3m(\hat u_n,2\delta)\quad\delta\in(0,T).
 $$
 Also, $Rg(u_n)\subseteq Rg(\hat u_n)$.
 Hence, tightness of $\calL(\hat u_n)$ in
 $C[0,T;U^\prime]$ implies tightness of $\calL(u_n)$ in
 $G[0,T;U^\prime]$. If $\mu$ is the accumulation probability
 measure then there exists a subsequence $n_k$ such that
\begin{itemize}
\item $\calL(u_{n_k})\Rightarrow\mu$ in
  $G[0,T;U^\prime]\cap \LrU_{weak}$,

\item $\calL(u_{n_k},\hat u_{n_k})\Rightarrow\theta$ in
  $G[0,T;U^\prime]\cap \LrU_{weak} \times C[0,T;U^\prime]$.
\end{itemize}
Then $\mu$ is the first marginal of $\theta$, and
$$
d_G(u_n,\hat u_n)\le \linfUp{u_n-\hat u_n} \le m(\hat u^n,T/n),
$$
so
$$
1=\lim_{k\to\infty} \calL\,(u_{n_k},\hat u_{n_k})\,\{(x,y):d(x,y)
\le\varepsilon\}\le\theta\,\{(x,y):d(x,y)
\le\varepsilon\}, \qquad \varepsilon>0,
$$
by the Portmanteau theorem. Hence $\theta(V)=1$ where
$V=\{(x,y):\,x=y\}$ and
\begin{eqnarray*}
  \mu(C[0,T;U^\prime]\cap \LrU_{weak})
  &=& \theta(C[0,T;U^\prime]\cap \LrU_{weak} \times C[0,T;U^\prime]) \\
  &=& \theta(C[0,T;U^\prime]\cap \LrU_{weak}
  \times C[0,T;U^\prime]\cap V) \\
  &=& \theta(G[0,T;U^\prime]\cap \LrU_{weak}
  \times C[0,T;U^\prime]\cap V)=1.
\end{eqnarray*}
\end{proof}

\subsection{Proof of Theorems \ref{thm:uniqueLaw} and \ref{thm:SE}}
We adopt the context of Section \ref{sec:unique}; specifically, $U$ is
a separable Banach space, $H$ is a Hilbert space, and $U \embed H
\embed U'$ are dense embeddings, and write $\bbX_1 = C[0,T;U'] \cap
L^r[0,T;U]_{weak}$.

The proof Theorems will \ref{thm:uniqueLaw} and \ref{thm:SE} follow
from the following two results for random variables taking values in
topological spaces.  We start with a lemma on existence of a regular
version of a random probability measure.

\begin{lemma}\label{lem:RPM} Let $X$ be a topological space such that
  there exist real continuous functions $h_n:X\to\bbR$ and, for every
  $x_0,x_1\in X$ distinct, there exists $n\in\bbN$ satisfying
  $h_n(x_0)\ne h_n(x_1)$. Let $(H,\calH,\mu)$ be a probability space,
  \begin{enumerate}
  \item $r_B:H\to [0,1]$ be $\calH$-measurable for every $B\in\calB(X)$, 
  \item $\mu(r_\emptyset=0)=1$, $\mu(r_X=1)=1$,
  \item $\mu(r_{B_0}+r_{B_1}+r_{B_2}+\dots=r_B)=1$ whenever
    $B_0,B_1,B_2,\dots$ are pair-wise disjoint Borel sets in $X$ and
    $B$ denotes their union,
  \item $\mu(r_S=1)=1$ for some $\sigma$-compact set $S$ in $X$.
  \end{enumerate}
  Then there exists 
  \begin{enumerate}
  \item $R_B:H\to [0,1]$ which is $\calH$-measurable for every
    $B\in\calB(X)$,
  \item $B\mapsto R_B(h)$ is a Borel probability measure supported in
    $S$, for every $h\in H$,
  \item $\mu(r_B=R_B)=1$ for every $B\in\calB(X)$.
  \end{enumerate}
\end{lemma}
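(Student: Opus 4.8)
The plan is to reduce the assertion to the classical fact that random probability measures on a standard Borel space admit regular versions, the reduction being effected by the countable separating family $\{h_n\}$ together with the $\sigma$-compactness of $S$. First I would set up this reduction. Write $S=\bigcup_m K_m$ with $K_m$ compact and, after replacing $K_m$ by $K_1\cup\cdots\cup K_m$, increasing. Since the $h_n$ separate points $X$ is Hausdorff, so each $K_m$ is closed, $S\in\calB(X)$, and $\calB_S:=\{B\in\calB(X):B\subseteq S\}$ is a $\sigma$-algebra of subsets of $S$. Consider the continuous injection $\iota=(h_n)_n:X\to\bbR^{\bbN}$. On each $K_m$ the map $\iota|_{K_m}$ is a continuous bijection onto the compact (hence Borel) set $\iota(K_m)\subseteq\bbR^{\bbN}$, so it is a homeomorphism; consequently $\iota|_S$ carries Borel subsets of $S$ to Borel subsets of $\iota(S)=\bigcup_m\iota(K_m)$ and, being injective, is a Borel isomorphism of $(S,\calB_S)$ onto the Borel subset $\iota(S)$ of the Polish space $\bbR^{\bbN}$. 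Thus $(S,\calB_S)$ is a standard Borel space, and by the Borel isomorphism theorem I may fix a Borel isomorphism $\theta$ of $(S,\calB_S)$ onto a Borel subset $E\subseteq[0,1]$. I also record the elementary consequences of the hypotheses that get used repeatedly: taking $B_0=S$, $B_1=X\setminus S$, $B_2=B_3=\cdots=\emptyset$ in (3) and using $\mu(r_S=1)=1$ gives $\mu(r_{X\setminus S}=0)=1$, and hence $\mu(r_B=r_{B\cap S})=1$ for every $B\in\calB(X)$; moreover $E\neq\emptyset$ (otherwise $r_\emptyset$ would equal $1$ a.s.).

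Next I would transport the random charge to $[0,1]$ and build the regular version there. For $C\in\calB([0,1])$ set $\rho_C:=r_{\theta^{-1}(C\cap E)}$, which is well defined since $\theta^{-1}(C\cap E)\in\calB_S\subseteq\calB(X)$. Because $\theta$ is a Borel isomorphism, one checks directly that $\{\rho_C\}$ inherits from $\{r_B\}$ all the assumed structure: $\mu(\rho_\emptyset=0)=1$, $\mu(\rho_{[0,1]}=1)=\mu(\rho_E=1)=\mu(r_S=1)=1$, and $\mu\big(\sum_j\rho_{C_j}=\rho_C\big)=1$ for every fixed sequence of disjoint Borel $C_j$ with union $C$, since $\{\theta^{-1}(C_j\cap E)\}_j$ is then a fixed sequence of disjoint Borel subsets of $X$. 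Now comes the classical construction on $[0,1]$: removing a single $\mu$-null set $N$ — assembled from the countably many monotonicity and additivity relations among the sets $\{[0,q]:q\in\bbQ\cap[0,1]\}$ and the countably many $\sigma$-additivity relations $\rho_{(q,q_k]}\to 0$ along fixed sequences $q_k\downarrow q$, $q\in\bbQ$ — the map $q\mapsto\rho_{[0,q]}(h)$ is, for $h\notin N$, the restriction to $\bbQ$ of a cumulative distribution function on $[0,1]$ with total mass $1$, and therefore determines a Borel probability measure $R'_\cdot(h)$ on $[0,1]$; for $h\in N$ put $R'_\cdot(h):=\delta_{e_0}$ with $e_0\in E$ fixed. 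A $\pi$–$\lambda$ argument (using that $C\mapsto R'_C(h)$ is a measure while $C\mapsto\rho_C$ is a.s. finitely and countably additive along each fixed sequence) shows that $h\mapsto R'_C(h)$ is $\calH$-measurable and $\mu(R'_C=\rho_C)=1$ for every $C\in\calB([0,1])$. Finally, since $\mu(R'_E=\rho_E)=1$ and $\mu(\rho_E=1)=1$, the measure $R'_\cdot(h)$ is carried by $E$ off a null set; redefining it there to $\delta_{e_0}$ we may assume $R'_\cdot(h)(E^c)=0$ for \emph{every} $h$ while retaining all the a.s. identities.

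It remains to pull back to $X$. For $B\in\calB(X)$ put $R_B(h):=R'_{\theta(B\cap S)}(h)$, which makes sense because $\theta(B\cap S)\in\calB(E)\subseteq\calB([0,1])$; $\calH$-measurability in $h$ is then immediate. For fixed $h$, $R_\cdot(h)$ is the image of the Borel probability measure $R'_\cdot(h)$ (which is carried by $E$) under the Borel map $\theta^{-1}:E\to S\hookrightarrow X$, hence is a Borel probability measure on $X$ supported in $S$, giving (1) and (2). For (3), note that injectivity of $\theta$ and $\theta(B\cap S)\subseteq E$ give $\theta^{-1}(\theta(B\cap S)\cap E)=B\cap S$, so
$$
R_B=R'_{\theta(B\cap S)}\ \overset{\text{a.s.}}{=}\ \rho_{\theta(B\cap S)}=r_{\theta^{-1}(\theta(B\cap S)\cap E)}=r_{B\cap S}\ \overset{\text{a.s.}}{=}\ r_B,
$$
using the construction of $R'$ and the two a.s. identities recorded in the first paragraph; thus $\mu(r_B=R_B)=1$ for every $B\in\calB(X)$.

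The step I expect to be the main obstacle is the descriptive-set-theoretic reduction in the first paragraph, i.e. showing that $(S,\calB_S)$ is standard Borel. This is precisely where the hypotheses are used in an essential way: the countable separating family gives the Borel embedding $\iota$ into $\bbR^{\bbN}$, but without $\sigma$-compactness this embedding need not be a Borel isomorphism onto a Borel image — the topology of $X$ may be strictly finer than the one induced by the $h_n$ — and then both the measurability of $R_B$ for all Borel $B$ and the "supported in $S$" conclusion would fail. A secondary point requiring care, rather than difficulty, is to keep every exceptional set among only countably many throughout the two transports; this is why the construction on $[0,1]$ is organized around the single chain $\{[0,q]:q\in\bbQ\}$ instead of an uncountable family of generators.
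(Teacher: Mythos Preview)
Your proposal is correct and follows essentially the same strategy as the paper: embed $X$ into a Polish space via the separating family $\{h_n\}$, use $\sigma$-compactness of $S$ to ensure the restriction to $S$ is a Borel isomorphism onto its image, invoke the known existence of regular versions on Polish (equivalently, standard Borel) spaces, and pull back with a final correction to force support in $S$. The only cosmetic difference is that the paper works directly in $Z=\bbR^{\bbN}$ and cites the Polish-space result as a black box, whereas you pass through the Borel isomorphism theorem to reduce to $[0,1]$ and then spell out the classical CDF construction on rationals; both routes rest on the same descriptive-set-theoretic reduction you correctly identified as the crux.
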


\begin{proof}
  Existence of regular versions of random probability measures is well
  know for Polish spaces. Use the functions $\{h_n\}$ to construct
  an injective mapping $F:X\to Z$ for a suitable Polish space $Z$. If
  $C$ is a compact set in $X$ then $F|C:C\to F[C]$ is a homeomorphism.
  Hence $F|S:S\to F[S]$ and $(F|S)_{-1}:F[S]\to S$ are Borel
  measurable. Denote by $K$ a regular version of the random
  probability measure $r_{F^{-1}[A]}(h)$ for $A\in\calB(Z)$ and
  $h\in H$, i.e.
  \begin{enumerate}
  \item $K_A:H\to [0,1]$ is $\calH$-measurable for every $A\in\calB(Z)$,
  \item $A\mapsto K_A(h)$ is a Borel probability measure for every $h\in H$,
  \item $\mu(K_A=r_{F^{-1}[A]})=1$ for every $A\in\calB(Z)$,
  \end{enumerate}
  and define $U_B(h)=K_{F[B\cap S]}(h)$ for $B\in\calB(X)$ and $h\in H$. Then
  \begin{enumerate}
  \item $U_B:H\to [0,1]$ is $\calH$-measurable for every $B\in\calB(X)$,
  \item $B\mapsto U_B(h)$ is a Borel measure for every $h\in H$,
  \item $\mu(U_B=r_B)=1$ for every $B\in\calB(X)$.
  \end{enumerate}
  Now we define $K_B(h)=U_B(h)$ for $h\in[U_S=1]$ and
  $K_B(h)=\delta_s(B)$ for $h\notin[U_S=1]$.
\end{proof}

\begin{proposition} \label{prop:YW} Let Assumption \ref{ass:A} hold
  and $\theta$ be a Borel probability measure on $C[0,T;U^\prime]$.
  Then there exists a Borel measurable mapping
  $$
  k_\theta:C[0,T;U^\prime]\to\bbX_1
  $$
  with a range in a $\sigma$-compact set, and with the following
  property: If $(u,V)$ is a solution of \eqref{eqdet} on a probability
  space $(\Omega,\calF,\bbP)$ and $\calL(V)=\theta$ then
  $$
  \bbP\,\left[u=k_\theta(V)\right]=1.
  $$
\end{proposition}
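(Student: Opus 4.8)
The plan is to run the Yamada--Watanabe disintegration argument, adapted to the non-metrizable space $\bbX_1$ by means of Lemma~\ref{lem:RPM}. If \eqref{eqdet} has no solution $(u,V)$ with $\calL(V)=\theta$ the stated property is vacuous and one may take $k_\theta$ constant, equal to any point of $\bbX_1$; so from now on fix a solution $(u,V)$ on some $(\Omega,\calF,\bbP)$, with $\bbP[u\in S]=1$ for a $\sigma$-compact $S\subseteq\bbX_1$. Since $U$ is separable and reflexive, both $C[0,T;U']$ and $L^r[0,T;U']_{weak}$ — hence $\bbX_1$ — carry countable families of real continuous functions separating points, so Lemma~\ref{lem:RPM} is applicable with $H=C[0,T;U']$, $\calH=\calB(C[0,T;U'])$, $\mu=\theta$. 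First I would set $\mu_0=\calL(u,V)$ on $\bbX_1\times C[0,T;U']$ and disintegrate over its second marginal $\theta$: for each Borel $B\subseteq\bbX_1$ pick a version $r_B$ on $C[0,T;U']$ of $\bbP[u\in B\mid V=\cdot]$; conditions (1)--(4) of Lemma~\ref{lem:RPM} hold ($\sigma$-additivity in (3) is $\sigma$-additivity of conditional expectation, and $\theta(r_S=1)=1$ because $\bbP[u\in S]=1$), and the lemma upgrades $\{r_B\}$ to an $S$-supported Borel probability kernel $Q\colon C[0,T;U']\to\mathcal P(\bbX_1)$ with $\mu_0(du\,dV)=Q(V,du)\,\theta(dV)$ and $Q(V,\cdot)(S)=1$ for every $V$.

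The core step is to show $Q(V,\cdot)$ is a point mass for $\theta$-a.e.\ $V$. For this I would work on
$$
\bbP'=\int_{C[0,T;U']} Q(V,\cdot)\otimes Q(V,\cdot)\otimes\delta_V\ \theta(dV)
$$
over $\Omega'=\bbX_1\times\bbX_1\times C[0,T;U']$ with its Borel $\sigma$-algebra, with coordinate maps $\pi_1,\pi_2\colon\Omega'\to\bbX_1$ and $\tilde V\colon\Omega'\to C[0,T;U']$. By construction the $\bbP'$-law of $(\pi_i,\tilde V)$ equals $\mu_0$ for $i=1,2$, and $\pi_1,\pi_2$ take values in $S$ $\bbP'$-a.s. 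Now being a solution of \eqref{eqdet} is a property of the joint law of the pair: the definition imposes no adaptedness, the range condition is a property of the first marginal, $A(\cdot)\in L^1[0,T;U']$ a.s.\ is a property of the first marginal once the relevant set in $u$ is Borel, and the identity $(u(t),v)_H=(V(t),v)-\int_0^t(A(u),v)\,ds$ for all $t\in[0,T]$ and $v\in U$ reduces, using continuity in $t$ of both sides and density, to a countable family of events each determined by $\calL(u,V)$. Hence $(\pi_1,\tilde V)$ and $(\pi_2,\tilde V)$ are both solutions of \eqref{eqdet} driven by the \emph{same} $\tilde V$; subtracting their defining identities and noting that $(\pi_i(0),v)_H=(\tilde V(0),v)$ for $v\in U$ forces $\pi_1(0)=\pi_2(0)$ in $U'$, so $\pi_2-\pi_1$ satisfies the hypothesis of Assumption~\ref{ass:A} with $\lambda=1$, and therefore $\pi_1=\pi_2$ $\bbP'$-a.s. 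Since, conditionally on $\tilde V=V$, $\pi_1$ and $\pi_2$ are i.i.d.\ with law $Q(V,\cdot)$, the elementary fact that $\nu\otimes\nu$ carried by the diagonal forces $\nu$ to be a Dirac mass yields a point $k_\theta(V)\in S$ with $Q(V,\cdot)=\delta_{k_\theta(V)}$ for $\theta$-a.e.\ $V$; this $k_\theta$ is Borel because it is pinned down by $h_j(k_\theta(V))=\int h_j\,dQ(V,\cdot)$ over the countable separating family $\{h_j\}$ together with a measurable selection inside the metrizable compact pieces of $S$, and one redefines $k_\theta$ arbitrarily inside $S$ on the exceptional $\theta$-null set to make it globally Borel with range in the $\sigma$-compact set $S$. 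Then $\calL(u\mid V)=\delta_{k_\theta(V)}$ gives $\bbP[u=k_\theta(V)]=1$ for the solution we began with.

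Finally, to see that this one $k_\theta$ serves every solution with $V$-marginal $\theta$: given another solution $(u',V')$ with $\calL(V')=\theta$, disintegrate $\calL(u',V')=Q'(V',du')\,\theta(dV')$ in the same way and redo the doubling with the mixed kernel $\int Q(V,\cdot)\otimes Q'(V,\cdot)\otimes\delta_V\ \theta(dV)$; Assumption~\ref{ass:A} again forces $\pi_1=\pi_2$ a.s., hence $Q(V,\cdot)=Q'(V,\cdot)$ for $\theta$-a.e.\ $V$, so $Q'(V,\cdot)=\delta_{k_\theta(V)}$ and $u'=k_\theta(V')$ holds almost surely. I expect the main obstacle to lie entirely at the level of the non-metrizable target: obtaining an honest regular conditional kernel $Q$ on $\bbX_1$ and then a Borel selection of the point mass in the weak topology — precisely where Lemma~\ref{lem:RPM} and the metrizability of compact subsets of $\bbX_1$ enter — while the probabilistic mechanism (doubling over a common driver plus the pathwise-uniqueness Assumption~\ref{ass:A}) is then routine.
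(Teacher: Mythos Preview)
Your proposal is correct and follows the same Yamada--Watanabe doubling strategy as the paper: disintegrate $\calL(u,V)$ over $\theta$ using Lemma~\ref{lem:RPM}, couple two copies over a common driver on $\bbX_1\times\bbX_1\times C[0,T;U']$, invoke Assumption~\ref{ass:A} to force the diagonal, and read off the Dirac selection $k_\theta$. The only cosmetic difference is that the paper couples two \emph{a priori distinct} solutions via $R^0_y\otimes R^1_y$ in a single pass (so the cross-doubling already yields $R^0_y=R^1_y=\delta_{k(y)}$), whereas you self-double first and then cross-double; for measurability of $k_\theta$ the paper's argument $\{y:k(y)\in B\}\cap M=\{y:R^0(y,B)=1\}\cap M$ is slightly more direct than your separating-functions route.
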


\begin{proof}
  The proof follows the argument of the Yamada-Watanabe
  theorem. Let $\bbY=C[0,T;U^\prime]$ and assume that $(u^i,V^i)$
  is a solution of \eqref{eqdet} on a probability space
  $(\Omega^i,\calF^i,\bbP^i)$ with $\calL(V^i)=\theta$.
  Then
  $$
  \calB(\bbX_1)\otimes\calB(\bbX_1)\otimes\calB(\bbY)
  =\calB(\bbX_1\times\bbX_1\times\bbY),
  $$
  $$
  \calB(\bbX_1)\otimes\calB(\bbY)=\calB(\bbX_1\times\bbY), \quad
  \calB(\bbX_1)\otimes\calB(\bbX_1)=\calB(\bbX_1\times\bbX_1)
  $$
  because
  $$
  \{u\in\bbX_1:\|u\|_{L^r[0,T;U]}\le n\}
  $$
  is separable and metrizable for every $n\in\bbN$. In particular,
  $\calL(u^i,V^i)$, $i=0,1$ are Borel probability measures on $\Bbb
  X_1\times\bbY$. If $Q$ is a Borel set in $\bbX_1$ then
  $\calL(u^i,V^i)(Q\times\cdot)$ is absolutely continuous with respect
  to $\theta$. So, by Lemma \ref{lem:RPM}, there exists
  $R^i:\bbY\times\calB(\bbX_1)\to [0,1]$ such that
  \begin{enumerate}
  \item $R^i(\cdot,Q):\bbY\to [0,1]$ is Borel measurable for every
    $Q\in\calB(\bbX_1)$,
  \item $Q\mapsto R^i(y,Q)$ is a Borel probability measure supported
    in $S^i$ for every $y\in\bbY$
  \end{enumerate}
  and
  $$
  \calL(u^i,V^i)(Q\times J)=\int_JR^i(y,Q)\,d\theta(y), \qquad 
  Q\in\calB(\bbX_1),\quad J\in\calB(\bbY),\quad i=0,1.
  $$
  Define a Borel probability measure
  $$
  \bbP^*(L)=\int_{\bbY}(R^0_y\otimes R^1_y)(L^y)\,d\theta(y), \qquad 
  L\in\calB(\bbX_1\times\bbX_1\times \bbY).
  $$
  and random variables $U^1(a,b,c)=a$, $U^2(a,b,c)=b$ and $V(a,b,c)=c$
  on $\bbX_1\times\bbX_1\times \bbY$. Then
  $$
  \calL(U^0,V)=\calL(u^0,V^0),\qquad\calL(U^1,V)=\calL(u^1,V^1)
  $$
  so
  $$
  \bbP^*\left[U^i(t)=V(t) - \int_0^t A(U^i(s))\,ds\right]=1, \qquad 
  t\in[0,T],\quad i=0,1
  $$
  and, by the uniqueness of the deterministic equation, we obtain that
  $$
  \bbP^*\left[U^0=U^1\right]=1.
  $$
  Hence, if we denote by $D$ the diagonal in $\bbX_1\times\bbX_1$, we get
  $$
  1=\bbP^*(D\times\bbY)=\int_{\bbY}(R^0_y\otimes R^1_y)(D)\,d\theta(y).
  $$
  In particular $(R^0_y\otimes R^1_y)(D)=1$ for every $y\in
  M\in\calB(\bbY)$ where $\theta(M)=1$. So there exists a unique
  $k(y)\in\Bbb X_1$ such that $R^0_y=R^1_y=\delta_{k(y)}$ for every
  $y\in M$. Set $k(y)=x$ for $y\notin M$ where $x\in\bbX_1$ is
  arbitrary. Now $k:\bbY\to\bbX_1$ is Borel measurable with the range
  in a $\sigma$-compact set in $\bbX_1$ since
  $$
  \{y\in\bbY:k(y)\in B\}\cap M=\{y\in\bbY:R^0(y,B)=1\}\cap M,
  $$
  and
  $$
  \calL(u^i,V^i)(N)=\theta(\{y\in\bbY:(k(y),y)\in N\}),\qquad
  N\in\calB(\bbX_1\times\bbY).
  $$
  In particular,
  $$
  \bbP^i\,[u^i=k(V^i)]=1,\qquad i=0,1.
  $$
\end{proof}

\begin{proof} (of Theorem \ref{thm:uniqueLaw})
  Proposition \ref{prop:YW} yields that
  $$
  \calL(u^0,V^0)
  =\calL(k_\theta(V^0),V^0)
  =\calL(k_\theta(V^1),V^1)=\calL(u^1,V^1)
  $$
  where $\theta:=\calL(V^0)=\calL(V^1)$.
\end{proof}

\begin{proof} (of Theorem \ref{thm:SE}) We apply Proposition
  \ref{prop:YW} with $\theta=\calL(V)$ and $u=k_\theta(V)$. To prove
  that $u$ is $(\calF_t^{V,0})$-adapted let $\tau\in(0,T]$ and define
  $\lambda=\tau/T\in(0,1]$, $\tilde u_\lambda(t)=\tilde u(\lambda t)$,
  $\tilde V_\lambda(t)=\tilde V(\lambda t)$ and
  $V_\lambda(t)=V(\lambda t)$ for $t\in[0,T]$, and
  $\theta_\tau:=\calL(\tilde V_\lambda)=\calL(V_\lambda)$.  Then
  $(\tilde u_\lambda,\tilde V_\lambda)$ solve
  $$
  du=dV - \lambda A(u)\,dt
  $$
  since
  $
  \{w(\lambda\cdot):w\in S\}
  $
  is $\sigma$-compact in $\bbX_1$ when $w\mapsto w(\lambda\cdot)$ is
  continuous from $\bbX_1$ to $\bbX_1$. If we define
  $u_\lambda:=k_{\theta_\tau}(V_\lambda)$ then
  $$
  du_\lambda=dV_\lambda - \lambda A(u_\lambda)\,dt\quad\text{a.s.}
  $$
  But we also have that
  $$
  du(\lambda\cdot)=dV_\lambda - \lambda A(u(\lambda\cdot))\,dt\quad\text{a.s.}
  $$
  so Assumption \ref{ass:A} yields that $u_\lambda(T)=u(\lambda
  T)=u(\tau)$ a.s. Now $u_\lambda=k_{\theta_\tau}(V_\lambda)$ is
  $\calF_T^{V_\lambda,0}$-measurable and
  $
  \calF_T^{V_\lambda,0}=\calF_\tau^{V,0}.
  $
  So $u(\tau)$ is $\calF_\tau^{V,0}$-measurable.
\end{proof}

\bibliography{heatEqn}

\end{document}